\documentclass[11pt]{article}

\usepackage[margin=1in]{geometry}
\usepackage{amsmath,amssymb,amsfonts,amsthm,mathtools}
\usepackage{bm}
\usepackage{enumitem}
\usepackage{xcolor}
\usepackage{float}
\usepackage{tikz}
\usetikzlibrary{arrows.meta}
\usepackage[colorlinks=true,linkcolor=blue,citecolor=blue,urlcolor=blue]{hyperref}
\providecommand{\keywords}[1]{%
  \par\smallskip\noindent\textbf{Keywords: }#1\par}
\providecommand{\subjclass}[2][]{%
  \par\smallskip\noindent\textbf{#1 Mathematics Subject Classification: }#2\par}

\numberwithin{equation}{section}

\theoremstyle{plain}
\newtheorem{theorem}{Theorem}[section]
\newtheorem{proposition}[theorem]{Proposition}
\newtheorem{lemma}[theorem]{Lemma}
\newtheorem{corollary}[theorem]{Corollary}

\theoremstyle{definition}
\newtheorem{assumption}[theorem]{Assumption}
\newtheorem{definition}[theorem]{Definition}

\theoremstyle{remark}
\newtheorem{remark}[theorem]{Remark}

\newcommand{\R}{\mathbb R}
\newcommand{\eps}{\varepsilon}
\newcommand{\D}{\mathsf D}
\newcommand{\Q}{\mathsf Q}
\renewcommand{\Cap}{\operatorname{Cap}}
\newcommand{\supp}{\operatorname{supp}}
\newcommand{\dist}{\operatorname{dist}}
\newcommand{\E}{\mathbb E}
\newcommand{\Pbb}{\mathbb P}
\newcommand{\calD}{\mathcal D}
\newcommand{\calL}{\mathcal L}
\newcommand{\calC}{\mathcal C}
\newcommand{\calK}{\mathcal K}
\newcommand{\bfone}{\mathbf 1}

\title{Weak Equilibrium Measures and Capacity--Hitting Identities\\
for the Hypoelliptic Third-Order Langevin Diffusion}
\author{
Ping He\thanks{School of Mathematics, Shanghai University of Finance and Economics, Shanghai, People's Republic of China; \texttt{pinghe@mail.shufe.edu.cn}}
\and
Xiaodan Li\thanks{School of Mathematics, Shanghai University of Finance and Economics, Shanghai, People's Republic of China; \texttt{lixiaodan@mail.shufe.edu.cn}}
\and
Yingli Wang\thanks{School of Mathematical Sciences, Fudan University, Shanghai, People's Republic of China; \texttt{yingliwang@fudan.edu.cn}}
\and
Lingjiong Zhu\thanks{Department of Mathematics, Florida State University, Tallahassee, Florida, United States of America; \texttt{zhu@math.fsu.edu}}}
\date{\today}

\begin{document}
\maketitle

\begin{abstract}
We construct weak equilibrium measures and weak capacities for the
hypoelliptic third-order Langevin diffusion motivated by an accelerated sampling algorithm (Mou et al.
(2021) \textit{J. Mach. Learn. Res.}, \textbf{22}(42), 1--41).  In this
process, the Brownian noise acts only in the highest-order auxiliary variable
and reaches the physical variables through a third-order H\"ormander chain.
Characteristic points of phase-space balls therefore require an alternative to
the standard uniformly elliptic boundary-flux theory.
We prove an elliptic-regularization stability theorem for the
corresponding hitting laws and then define the weak equilibrium measure and
weak capacity.  
The
proof combines the boundary-hitting stability strategy of Lee--Ramil--Seo
(2026, \textit{arXiv:2503.12610v2}) with localized hypoelliptic heat-kernel
estimates (Pigato (2022) \textit{Stoch. Process. Appl.}, \textbf{145},
117--142) adapted to the third-order chain.  
We obtain the bounded-domain weak
capacity--hitting identity and a Lyapunov drift argument in the
spirit of Lee--Ramil--Seo that yields recurrence and extends the
construction to a whole-space weak equilibrium measure, and whole-space
capacity--hitting identity.
\end{abstract}

\keywords{weak equilibrium measure; weak capacity;
capacity--hitting identity; third-order Langevin diffusion;
hypoellipticity; elliptic regularization;
hitting-law stability; metastability.}

\subjclass[2020]{Primary 60J45;
Secondary 35H10, 60H10, 60J60.}


\section{Introduction}

\textit{Langevin algorithms} are popular Markov Chain Monte Carlo methods to sample from a given density $\pi^{\varepsilon}(\theta)\propto e^{-U(\theta)/\varepsilon}$ of interest
where $\theta\in\mathbb{R}^{d}$.
Langevin algorithms are widely used in \textit{Bayesian learning} problems, such as Bayesian formulations of inverse problems, and Bayesian classification and regression tasks in machine learning \cite{gelman1995bayesian,stuart2010inverse,andrieu2003introduction,teh2016consistency,DistMCMC19,GIWZ2024}.
Langevin algorithms have also been used for solving non-convex optimization problems that arise in machine learning \cite{Raginsky,xu2018global,Chau2019,GGZ,Chau2022,Zhang2019}.
The classical Langevin algorithm is based on the discretization of
the \textit{overdamped Langevin diffusion} \cite{Dalalyan,DM2017,DK2017,Raginsky,Barkhagen2021,Chau2019,EH2021,Zhang2019,BCESZ2022}:
\begin{equation}\label{eq:overdamped-2}
d\theta_{t}=-\nabla U(\theta_{t})dt+\sqrt{2\varepsilon}dB_{t},
\end{equation}
where $U:\mathbb{R}^{d}\rightarrow\mathbb{R}$ is often known as the \textit{potential function}, $\varepsilon>0$ is a scaling parameter, 
and $B_{t}$ is a standard $d$-dimensional Brownian motion with $\theta_{0}\in\mathbb{R}^{d}$. Under some mild assumptions on $U(\cdot)$, the diffusion \eqref{eq:overdamped-2} admits a unique stationary distribution with the density $\pi^{\varepsilon}(\theta)\propto e^{-U(\theta)/\varepsilon}$,
also known as the \emph{Gibbs distribution} \cite{chiang1987diffusion,stroock-langevin-spectrum}. 
In practice, \eqref{eq:overdamped-2} is implemented through its discretizations and one of the most commonly used is the Euler–Maruyama discretization of
\eqref{eq:overdamped-2}, often known as the \textit{unadjusted Langevin algorithm} (ULA) in the literature \cite{DM2017,DM2016}.
When the full gradient is replaced
by a stochastic gradient, the Euler–Maruyama discretization is known 
as the \textit{stochastic gradient Langevin dynamics} (SGLD) \cite{welling2011bayesian,Raginsky}.
When the scaling parameter $\varepsilon>0$ is small, the Gibbs distribution $\pi^{\varepsilon}(\theta)\propto e^{-U(\theta)/\varepsilon}$ will concentrate
around the global minimizer of $U$ \cite{Raginsky}, which is why Langevin algorithms have also been widely used 
to obtain global convergence guarantees for solving non-convex optimization problems that often arise in machine learning \cite{Raginsky,xu2018global,Chau2019,Zhang2019}.

In the literature, many variants
of the overdamped Langevin diffusion and the discretization schemes have been studied.
One popular Langevin dynamics is the {\it underdamped Langevin diffusion}
\cite{mattingly2002ergodicity,Villani2009,cheng2018underdamped,cheng-nonconvex,CLW2020,JianfengLu,dalalyan2018kinetic,Ma2019,GGZ}:
\begin{equation}\label{eqn:underdamped}
\begin{cases}
dr_{t}=-\gamma r_{t}dt-\nabla U(\theta_{t})dt+\sqrt{2\gamma\varepsilon}dB_{t},\\
d\theta_{t}=r_{t}dt,
\end{cases}
\end{equation}
where $B_{t}$ is a standard $d$-dimensional Brownian motion
with $\gamma>0$ being the friction coefficient and $\varepsilon>0$ being a scaling parameter.
Under some mild assumptions on $U$, the diffusion \eqref{eqn:underdamped} admits a unique stationary distribution with the density $\pi^{\varepsilon}(\theta,r) \propto e^{-(U(\theta)+\frac{1}{2}|r|^{2})/\varepsilon}$ \cite{Eberle}, whose $\theta$-marginal distribution coincides 
with the stationary distribution of \eqref{eq:overdamped-2}.
It is known that the underdamped Langevin diffusion \eqref{eqn:underdamped} might converge to the Gibbs distribution faster than the overdamped Langevin diffusion \cite{Eberle,JianfengLu}.
Various discretizations based on the underdamped Langevin diffusion have better iteration complexity in terms
of the dependence on the dimension
and the accuracy level \cite{cheng2018underdamped,GGZ}.
Underdamped Langevin samplers, viewed as lifted MCMC methods, have also been analyzed
through splitting schemes and Wasserstein contraction for the resulting
discrete chains \cite{Monmarche2021Splitting}.  
When the full gradient is replaced by a stochastic gradient, the discretization of \eqref{eqn:underdamped} is known as the \textit{stochastic gradient Hamiltonian Monte Carlo} (SGHMC) \cite{emilyfox-sghmc,carin-2015-langevin-integrators,GGZ}.
When $\varepsilon>0$, the $\theta$-marginal of the Gibbs distribution concentrates around the global minimizer of $U$ \cite{Raginsky,GGZ} and hence SGHMC 
has been used to obtain global convergence guarantees for solving the non-convex optimizations in the literature \cite{GGZ,Chau2022}. It was first shown in \cite{GGZ} that SGHMC can outperform SGLD in the context of non-convex optimization, and hence momentum-based acceleration is achievable. 

In this paper, we study the diffusion process that belongs to the family of high-order Langevin
dynamics introduced for accelerated sampling algorithms
\cite{MouMaWainwrightBartlettJordan2021}. We consider the \textit{third-order Langevin diffusion}
$Z_t=(\theta_t,p_t,r_t)\in\R^{3d}$ \cite{MouMaWainwrightBartlettJordan2021}:
\begin{equation}\label{eq:sde}
\begin{cases}
    d\theta_t=p_t dt,\\[1mm]
    dp_t=-L^{-1}\nabla U(\theta_t) dt+\gamma r_t dt,\\[1mm]
    dr_t=-\gamma p_t dt-\gamma r_t dt+
    \sqrt{2\gamma\eps/L} dB_t,
\end{cases}
\end{equation}
where $\gamma>0$ is the friction coefficient, $L>0$ is a smoothness parameter, $\eps>0$ is a scaling parameter, $B_t$ is a standard Brownian motion
in $\R^d$.  The general third-order family of
\cite{MouMaWainwrightBartlettJordan2021} allows separate coupling and
dissipation parameters.  Following the one-parameter convention for
higher-order Langevin dynamics in
\cite{DangGurbuzbalabanIslamYaoZhu2025}, we set them equal and denote their
common value by $\gamma$.  More explicitly, the coupling parameter
$\gamma$ and damping parameter $\xi$ in the notation of
\cite{MouMaWainwrightBartlettJordan2021} are both equal to the present
$\gamma$ as in \cite{DangGurbuzbalabanIslamYaoZhu2025}.  The infinitesimal generator of \eqref{eq:sde} is
given by
\begin{equation}\label{eq:generator}
    \calL_\eps f
    =p\cdot\nabla_\theta f
    +\left(-\frac1L\nabla U(\theta)+\gamma r\right)\cdot\nabla_p f 
    +(-\gamma p-\gamma r)\cdot\nabla_r f
    +\frac{\gamma\eps}{L}\Delta_r f,
\end{equation}
and its Hamiltonian is
\begin{equation}\label{eq:H}
    H(\theta,p,r):=U(\theta)+\frac L2|p|^2+\frac L2|r|^2,
\end{equation}
and the invariant density of \eqref{eq:sde} is given by
\begin{equation}\label{eq:pi}
    \pi^\eps(dz)=Z_\eps^{-1}e^{-H(z)/\eps} dz,
    \qquad z=(\theta,p,r)\in\R^{3d}.
\end{equation}
Introduce
\begin{equation}\label{eq:DQ}
\D:=
\begin{pmatrix}
0&0&0\\
0&0&0\\
0&0&\dfrac{\gamma}{L}I_{d}
\end{pmatrix},
\qquad
\Q:=
\begin{pmatrix}
0&\dfrac1L I_{d}&0\\
-\dfrac1L I_{d}&0&\dfrac{\gamma}{L}I_{d}\\
0&-\dfrac{\gamma}{L}I_{d}&0
\end{pmatrix}.
\end{equation}
Then $\D=\D^\top\succeq 0$, $\Q^\top=-\Q$, and we can re-write the infinitesimal generator \eqref{eq:generator} as
\begin{equation}\label{eq:div-form}
    \calL_\eps f
    =
    \eps e^{H/\eps}\nabla\cdot
    \left(e^{-H/\eps}(\D+\Q)\nabla f\right),
\end{equation}
and its adjoint in $L^2(\pi^\eps)$ is given by
\begin{equation}\label{eq:adjoint}
    \calL_\eps^* f
    =
    \eps e^{H/\eps}\nabla\cdot
    \left(e^{-H/\eps}(\D-\Q)\nabla f\right).
\end{equation}

The third-order Langevin diffusion \eqref{eq:sde} has recently been extended to general
arbitrary order Langevin Monte Carlo schemes
\cite{DangGurbuzbalabanIslamYaoZhu2025,high-order-Liu-2025}.  
Related generalized Langevin
diffusions have been studied through almost-sure and Wasserstein contraction
methods \cite{Monmarche2023AlmostSure}.  
We study fixed-temperature weak equilibrium measures and capacity identities
for the degenerate continuous-time process.

Our approach is a fixed-temperature weak potential-theoretic construction.
Let $A$ and $B$ be two disjoint phase-space neighborhoods of the metastable
states, and let $D$ be a bounded domain containing
$\overline A\cup\overline B$.  Set
\[
    C:=B\cup\partial D,
    \qquad
    \tau_E:=\inf\{t\geq 0:Z_t\in E\}.
\]

In the third-order Langevin diffusion \eqref{eq:sde}, the Brownian noise is $d$-dimensional while the phase space is
$3d$-dimensional.  It acts directly only in the $r$-coordinate and reaches
$p$ and $\theta$ through the chain
\[
    r\longrightarrow p\longrightarrow \theta .
\]
This degeneracy motivates a path-space construction in place of a classical
normal flux through the boundary $\partial A$ of the phase-space neighborhood
$A$.  Characteristic points lie beyond the direct scope of the standard
uniformly elliptic boundary-flux theory.  The propagation of
noise through the chain is an instance of H\"ormander hypoellipticity
\cite{Hormander1967}.  Related metastability and annealing problems for kinetic
Langevin diffusions exhibit the analytical
difficulties caused by non-reversibility and degenerate noise
\cite{Monmarche2018}; in capacity-based approaches to underdamped Langevin diffusion
metastability, these features lead to nontrivial boundary issues
\cite{LeeRamilSeo2026}.
For elliptic irreversible or non-reversible diffusions, sharp small-noise
transition and exit asymptotics, including Eyring--Kramers prefactors and
principal-eigenvalue/mean-exit relations, have been developed in
\cite{BouchetReygner2016,LePeutrecMichelNectoux2024}.

In the motivating double-well picture (Figure~\ref{fig:bounded-domain-geometry}), the
corresponding phase-space centers are $(m,0,0)$ and $(s,0,0)$, where $m$ and
$s$ are local minima of $U$, and $\sigma$ is a saddle connecting their basins;
Figure~\ref{fig:bounded-domain-geometry}
distinguishes the potential wells from their phase-space neighborhoods.
\begin{figure}[H]
    \centering
    \begin{tikzpicture}[x=1cm,y=1cm,>=Latex]
        \begin{scope}[shift={(-4.1,-0.85)},x=1.05cm,y=1cm]
            \fill[gray!10]
                (-2.35,-0.05) --
                plot[smooth,tension=0.7] coordinates {
                    (-2.35,2.35) (-1.85,0.95) (-1.25,0.20)
                    (-0.65,0.65) (0,1.50) (0.65,0.55)
                    (1.25,0.08) (1.85,0.85) (2.35,2.25)
                }
                -- (2.35,-0.05) -- cycle;
            \draw[very thick]
                plot[smooth,tension=0.7] coordinates {
                    (-2.35,2.35) (-1.85,0.95) (-1.25,0.20)
                    (-0.65,0.65) (0,1.50) (0.65,0.55)
                    (1.25,0.08) (1.85,0.85) (2.35,2.25)
                };
            \draw[-{Latex[length=1.8mm]},thin]
                (-2.55,-0.05) -- (2.55,-0.05) node[right] {$\theta$};
            \draw[-{Latex[length=1.8mm]},thin]
                (-2.55,-0.05) -- (-2.55,2.65) node[above] {$U(\theta)$};

            \fill[blue!65!black] (-1.25,0.20) circle (2pt);
            \fill (0,1.50) circle (2pt);
            \fill[red!70!black] (1.25,0.08) circle (2pt);
            \node[blue!65!black,below=2pt] at (-1.25,0.20) {$m$};
            \node[above=2pt] at (0,1.50) {$\sigma$};
            \node[red!70!black,below=2pt] at (1.25,0.08) {$s$};

            \node[font=\scriptsize,align=center] at (0,-0.92)
                {$m,s$: local minima of $U$;\quad
                 $\sigma$: a connecting saddle};
        \end{scope}

        \begin{scope}[shift={(3.45,0)},x=0.95cm,y=0.95cm]
            \shade[inner color=white,outer color=gray!22,opacity=0.65]
                (0,0) ellipse (3.2 and 1.8);

            \draw[red!40!black,dashed]
                (3.2,0) arc[start angle=0,end angle=180,
                            x radius=3.2,y radius=0.47];
            \draw[red!55!black]
                (-3.2,0) arc[start angle=180,end angle=360,
                             x radius=3.2,y radius=0.47];
            \draw[red!40!black,dashed]
                (0,1.8) arc[start angle=90,end angle=270,
                            x radius=1.0,y radius=1.8];
            \draw[red!55!black]
                (0,-1.8) arc[start angle=270,end angle=450,
                             x radius=1.0,y radius=1.8];

            \shade[ball color=blue!28,opacity=0.9]
                (-1.45,0.12) ellipse (0.72 and 0.58);
            \draw[blue!65!black,thick]
                (-1.45,0.12) ellipse (0.72 and 0.58);
            \shade[ball color=red!30,opacity=0.9]
                (1.55,0.34) ellipse (0.74 and 0.60);
            \draw[red!70!black,thick]
                (1.55,0.34) ellipse (0.74 and 0.60);

            \node[blue!65!black] at (-1.45,0.31) {$A$};
            \node[blue!65!black,font=\scriptsize]
                at (-1.45,-0.12) {$(m,0,0)$};
            \node[red!70!black] at (1.55,0.54) {$B\subset C$};
            \node[red!70!black,font=\scriptsize]
                at (1.55,0.10) {$(s,0,0)$};
            \node[red!70!black,fill=white,fill opacity=0.8,
                  text opacity=1,inner sep=1.2pt]
                at (0,1.83) {$\partial D\subset C$};
            \node at (2.62,-1.02) {$D$};

            \fill (0,-0.16) circle (1.7pt);
            \node[above right=-1pt] at (0,-0.16) {$z$};
            \draw[-{Latex[length=2mm]},blue!65!black,thick]
                (-0.08,-0.11) .. controls (-0.48,0.48) and (-0.8,0.18)
                .. (-0.75,0.16);
            \draw[-{Latex[length=2mm]},red!70!black,thick,dashed]
                (0.12,-0.08) .. controls (0.55,0.7) and (0.85,0.42)
                .. (0.82,0.38);
            \draw[-{Latex[length=2mm]},red!70!black,thick,dashed]
                (0.05,-0.3) .. controls (0.35,-0.9) and (0.75,-1.3)
                .. (1.08,-1.63);

            \coordinate (O) at (-2.55,-1.32);
            \draw[-{Latex[length=1.5mm]},thin] (O) -- ++(0.72,0)
                node[right,font=\scriptsize] {$\theta$};
            \draw[-{Latex[length=1.5mm]},thin] (O) -- ++(-0.38,0.24)
                node[above left=-1pt,font=\scriptsize] {$p$};
            \draw[-{Latex[length=1.5mm]},thin] (O) -- ++(0,0.65)
                node[above,font=\scriptsize] {$r$};

            \draw[very thick,red!70!black]
                (0,0) ellipse (3.2 and 1.8);
        \end{scope}

        \node[font=\small] at (-4.1,-2.18)
            {(a) Motivating double-well landscape};
        \node[font=\small] at (3.45,-2.18)
            {(b) Phase-space neighborhoods};
    \end{tikzpicture}
    \caption{Relation between the motivating energy landscape and the sets in
    the bounded-domain construction.  Panel (a) shows a double-well example;
    the bounded-domain theory allows geometries beyond a unique-saddle Morse
    structure.  Panel (b) shows that $A$ and $B$ are phase-space neighborhoods
    centered at $(m,0,0)$ and $(s,0,0)$, rather than the wells themselves, and
    that the competing target is $C=B\cup\partial D$.
    }
    \label{fig:bounded-domain-geometry}
\end{figure}
Rather than using a pointwise boundary-normal or conormal flux
representation for the hypoelliptic committor, we begin with the
path-space hitting probabilities
\[
    h(z):=\Pbb_z(\tau_A<\tau_C),
    \qquad
    h^*(z):=\Pbb_z^*(\tau_A<\tau_C),
\]
where $\Pbb_z$ and $\Pbb_z^*$ denote the laws of the forward and
adjoint processes, respectively, starting from $z$.
The equilibrium measure and the capacity are introduced only after these
hitting laws are shown to be stable under the auxiliary regularization.
The parameter $\delta$ serves as a technical device that supplies classical
identities at fixed $\delta>0$; probabilistic stability then transfers these
identities to the degenerate process.  The characteristic set of a phase-space
sphere lies in $\{r=0\}$.  We adapt the boundary analysis in
\cite{LeeRamilSeo2026} to show that boundary hits occur with
non-zero incoming $r$ with high probability and that small-$r$ boundary entries
have uniformly vanishing probability.
Compared with the underdamped case in \cite{LeeRamilSeo2026}, the third-order
extension requires more than replacing velocity by the highest auxiliary
variable: the noise reaches the physical coordinate only after the two
deterministic links $r\to p$ and $p\to\theta$, so the small-boundary-entry
estimate has to be coupled with a third-order density scale and with the
$\delta\downarrow0$ regularization limit.

The density condition needed for this boundary analysis is local in space.  We
replace the polynomial-growth dynamics, on a bounded neighborhood of the domain
and its boundary collars, by a chain-compatible cutoff system.  After the
reordering
\[
    X^1=r,\qquad X^2=p,\qquad X^3=\theta,
\]
the limiting cutoff dynamics form a three-layer chain of the type treated in
\cite{Pigato2022}.  The corresponding density estimates provide the required
local upper bound, and the additional
$\theta$- and $p$-noises in the elliptic regularizations contribute
nonnegative Malliavin covariance. Thus, the bracket chain
$r\to p\to\theta$ already present at $\delta=0$ controls the regularized
family.  This yields the uniform hitting-time stability needed for the weak
potential-theoretic limit.

In the reversible potential-theoretic approach to metastability, equilibrium
potentials, equilibrium measures, and capacities are the basic objects, and
last-exit averaged mean hitting times are related to capacities by the classical
capacity formula \cite[Corollary~7.30]{BovierDenHollander2015}. 
Non-self-adjoint elliptic diffusions admit
Dirichlet--Thomson variational principles for capacity
\cite{LandimMarianiSeo2019}.  In the present hypoelliptic setting, the
additional degeneracy motivates the weak equilibrium measure constructed below.

The auxiliary regularized family is obtained by adding small symmetric
gradient-type diffusions in the $\theta$- and $p$-coordinates.  
For $\varsigma\in\{+1,-1\}$ and $0\le\delta\le1$, define the signed
regularized diffusion $Z_t^{\delta,\varsigma}
=\left(\theta_t^{\delta,\varsigma},p_t^{\delta,\varsigma},r_t^{\delta,\varsigma}\right)$
by
\begin{equation}\label{eq:sde-delta}
\begin{cases}
    d\theta_t^{\delta,\varsigma}
    =\varsigma p_t^{\delta,\varsigma}dt
    -\delta\nabla U\left(\theta_t^{\delta,\varsigma}\right)dt
    +\sqrt{2\eps\delta}\,dW_t^\theta,\\[1mm]
    dp_t^{\delta,\varsigma}
    =-\varsigma L^{-1}\nabla U\left(\theta_t^{\delta,\varsigma}\right)dt
    +\varsigma\gamma r_t^{\delta,\varsigma}dt
    -\delta Lp_t^{\delta,\varsigma}dt
    +\sqrt{2\eps\delta}\,dW_t^p,\\[1mm]
    dr_t^{\delta,\varsigma}
    =-\varsigma\gamma p_t^{\delta,\varsigma}dt
    -\gamma r_t^{\delta,\varsigma}dt
    +\sqrt{2\gamma\eps/L}\,dB_t,
\end{cases}
\end{equation}
where $W^\theta,W^p,B$ are independent standard $d$-dimensional Brownian
motions.  We write $Z^\delta:=Z^{\delta,+1}$ for the forward process and
$Z^{\delta,*}:=Z^{\delta,-1}$ for the adjoint process.  At $\delta=0$ these
are the limiting degenerate forward and adjoint diffusions.
These additional
second-order parts are reversible with respect to the Gibbs weight when viewed
alone, but the full regularized dynamics remain non-reversible because the
antisymmetric transport matrix $\Q$ is unchanged.  This family supplies the
fixed-$\delta$ classical identities used in the probabilistic stability
argument as $\delta\downarrow0$.
For $0<\delta\le1$, the infinitesimal generator of the forward process in
\eqref{eq:sde-delta} is
\begin{equation}\label{eq:Ldelta}
\begin{aligned}
    \calL_{\eps,\delta}
    &:=\calL_\eps
    +\delta(\eps\Delta_\theta-\nabla U(\theta)\cdot\nabla_\theta)
    +\delta(\eps\Delta_p-Lp\cdot\nabla_p).
\end{aligned}
\end{equation}
and the adjoint process in \eqref{eq:sde-delta} has generator
$\calL_{\eps,\delta}^*$ in $L^2(\pi^\eps)$.  We use the shorthand
\begin{align}\label{defn:L:pm}
\calL_{\eps,\delta}^{(+)}:=\calL_{\eps,\delta},
    \qquad
    \calL_{\eps,\delta}^{(-)}:=\calL_{\eps,\delta}^*.
\end{align}
At the endpoint $\delta=0$, this notation means
$\calL_{\eps,0}^{(+)}=\calL_\eps$ and
$\calL_{\eps,0}^{(-)}=\calL_\eps^*$.
Equivalently,
\begin{equation}\label{eq:Ldelta-div}
    \calL_{\eps,\delta}f
    =
    \eps e^{H/\eps}\nabla\cdot
    \left(e^{-H/\eps}(\D_\delta+\Q)\nabla f\right),
\end{equation}
where
\begin{equation}\label{eq:Ddelta}
    \D_\delta:=
    \begin{pmatrix}
        \delta I_{d}&0&0\\
        0&\delta I_{d}&0\\
        0&0&\dfrac{\gamma}{L}I_{d}
    \end{pmatrix}.
\end{equation}
For fixed $0<\delta\le1$, the operator is uniformly elliptic on compact sets and
has the same infinitesimally invariant measure $\pi^\eps$. Indeed, for every
$f\in C_c^\infty(\R^{3d})$, the divergence form \eqref{eq:Ldelta-div}
implies
\[
    \int_{\R^{3d}}\calL_{\eps,\delta}fd\pi^\eps
    =
    \eps Z_\eps^{-1}\int_{\R^{3d}}
    \nabla\cdot\left(e^{-H/\eps}(\D_\delta+\Q)\nabla f\right)dz=0.
\]
Thus, $\pi^\eps$ is infinitesimally invariant for
$\calL_{\eps,\delta}$ on $C_c^\infty(\R^{3d})$.  Under the non-explosion and
well-posedness assumptions used below, the Echeverr\'ia invariant-measure
criterion \cite{Echeverria1982} promotes this identity to invariance for the
corresponding semigroup. Its adjoint is obtained by replacing $\Q$ by $-\Q$.

The related literature can be viewed through three strands.
First, the potential-theoretic approach to metastability, developed systematically in
\cite{BovierDenHollander2015}, expresses transition-time asymptotics through
equilibrium potentials, equilibrium measures, and capacities, and it provides a
standard route to Eyring--Kramers laws for reversible overdamped diffusions.
For irreversible elliptic diffusions, the normal-flux picture is supplemented
by non-self-adjoint capacity identities and
Dirichlet--Thomson variational principles \cite{LandimMarianiSeo2019}, and
sharp exit asymptotics are known in several small-noise regimes
\cite{BouchetReygner2016,LePeutrecMichelNectoux2024}.

Second, for kinetic and hypoelliptic Langevin diffusions, degeneracy of the noise
and non-reversibility create additional boundary and regularity issues.  We
follow the methodology in Lee--Ramil--Seo~\cite{LeeRamilSeo2026}, where a weak
equilibrium measure handles the characteristic boundary points lying beyond
the standard uniformly elliptic normal-flux theory.  Our
adaptation replaces their two-layer small-momentum analysis by a small-$r$
boundary-stability argument for the third-order chain $r\to p\to\theta$.

Third, high-order Langevin Monte Carlo algorithms are designed to improve sampling
behavior, while rare transitions and metastable exits remain governed by
phase-space potential theory rather than by mixing estimates alone.  The
third-order chain forces us to combine the weak-capacity viewpoint with
localized chain-type heat-kernel estimates in the sense of \cite{Pigato2022}.
A weak capacity theory for the third-order Langevin diffusion therefore gives a
fixed-temperature entrance law and capacity--hitting identity that can serve as
a basis for later low-temperature Eyring--Kramers asymptotics and for comparing
metastable transition mechanisms across overdamped, underdamped, and higher-order
Langevin samplers.

The contributions of this paper can be summarized as follows.
\begin{enumerate}[label=(\roman*)]
    \item We construct weak equilibrium measures and weak capacities for the
    hypoelliptic third-order Langevin diffusion in bounded phase-space ball
    geometries from path-space committors and hitting identities.
    \item We prove a regularization-uniform boundary-stability theorem for the
    third-order chain $r\to p\to\theta$, combining anisotropic density estimates
    with uniform small-$r$ boundary-entry bounds at the characteristic parts of
    phase-space spheres.
    \item We identify the limiting weak equilibrium measure through the adjoint
    committor and prove the corresponding capacity--hitting identity, together
    with a normalized version.
\end{enumerate}

The main technical novelty is a boundary-stability theory that is uniform
along the elliptic regularization for the third-order chain
$r\longrightarrow p\longrightarrow\theta$.
In the underdamped two-layer setting of \cite{LeeRamilSeo2026}, the directly
forced momentum reaches the physical coordinate after one deterministic link.
Here, the boundary-normal physical coordinate is two links away from the
$r$-noise, producing the anisotropic scales $t^{1/2}$, $t^{3/2}$, and
$t^{5/2}$.  Consequently, the small-boundary-entry argument requires a
three-layer density estimate whose constants remain uniform as the auxiliary
$p$- and $\theta$-noises vanish with $\delta\downarrow0$.  We obtain this
estimate through chain-compatible localization and uniform inverse-Malliavin-
covariance bounds.  The passage of the fixed-$\delta$ Green identities to the
degenerate process is carried out through pathwise hitting-law stability and
small-$r$ non-grazing estimates.

The rest of the paper is organized around these points: the next section (Section~\ref{sec:main}) states
the main assumptions and stability result, where Section~\ref{sec:small} proves the
small-$r$ boundary stability theorem, 
Section~\ref{sec:weak} constructs the
weak measure, capacity, and hitting identities, and Section~\ref{sec:passage} 
passes to the whole-space identity. Finally, we conclude in Section~\ref{sec:conclusion}. 
In Appendix~\ref{app:uniform-perturbation}, we provide uniform perturbation estimates for the cutoff covariance.

\paragraph{Notation.}
We write $z=(\theta,p,r)\in\R^{3d}$, where each component belongs to
$\R^d$; in arguments using the chain order, we set
$(X^1,X^2,X^3)=(r,p,\theta)$.  The open Euclidean ball with center $x$ and
radius $\rho$ is denoted by $B(x,\rho)$.  For a set $E$, we write
$\overline E$, $\partial E$, $E^c$, and $\bfone_E$ for its closure, boundary,
complement, and indicator, respectively, and use
$\dist(x,E)$ and $\dist(E,F)$ for Euclidean distances.  The symbol $|\cdot|$
denotes the Euclidean norm, while $\|\cdot\|$ denotes the operator norm for
matrices; $\|\cdot\|_F$ denotes the Frobenius norm, $M^\top$ the transpose of
$M$, and $a\wedge b=\min\{a,b\}$.  For a continuous process $Y$ and a Borel
set $E$, let
\[
    \tau_E(Y):=\inf\{t\ge0:Y_t\in E\},
    \qquad \inf\varnothing:=\infty,
\]
and suppress $Y$ when it is clear from the probability law; a superscript
$\delta$ indicates the regularized process.  The symbols
$\Pbb_z^\delta,\E_z^\delta$ and
$\Pbb_z^{\delta,*},\E_z^{\delta,*}$ denote probability and expectation for
the forward and adjoint regularized processes started at $z$, respectively.
At $\delta=0$ we use $\Pbb_z,\E_z$ and $\Pbb_z^*,\E_z^*$.
We use $C_c^\infty(G)$ for smooth compactly supported functions on $G$ and
$C_b^k(G)$ for $C^k$ functions whose derivatives through order $k$ are
bounded; $\supp f$ and $\supp\mu$ denote the supports of a function $f$ and
a measure $\mu$, respectively.  Weak
convergence of finite Borel measures is denoted by
$\mu_n\Rightarrow\mu$.  Constants denoted by $C,c$, with or without
subscripts, are finite and positive and may change from line to line; any
required uniformity is stated explicitly.

\section{Main Results}\label{sec:main}

\begin{assumption}[Potential]\label{ass:U}
The potential $U\in C^\infty(\R^d)$ is bounded from below.  After adding a
constant, assume $U\ge0$.  The sublevel sets of $U$ are compact.  Moreover,
there exist constants $c_0>0$ and $R_0>0$ such that
\begin{equation}\label{eq:radial-growth}
    \theta\cdot\nabla U(\theta)
    \ge c_0\left(|\theta|^2+U(\theta)\right),
    \qquad |\theta|\ge R_0.
\end{equation}
\end{assumption}

\begin{assumption}[Controlled derivatives and local moments]
\label{ass:controlled-coefficients}
We assume the following.
\begin{enumerate}[label=(\alph*)]
\item \emph{Controlled derivatives.}  The force and all its derivatives have at
most polynomial growth: for every multi-index $\alpha$ with
$|\alpha|\ge0$, there exist constants $C_\alpha,m_\alpha<\infty$ such that
\[
    |\partial^\alpha \nabla U(\theta)|
    \le C_\alpha(1+|\theta|^{m_\alpha}),
    \qquad \theta\in\R^d .
\]
\item \emph{Uniform finite-time well-posedness and moments.}  For every
$0\le\delta\le1$, the forward and adjoint stochastic differential equations
in \eqref{eq:sde-delta} admit unique non-explosive strong solutions.  In this
assumption, only $0<\delta\le1$ is used for the
regularized processes, while the endpoint $\delta=0$ denotes the unregularized
degenerate process. Moreover, for every $T<\infty$, every $q\ge1$, and
every compact
$K\subset\R^{3d}$,
the forward processes satisfy
\[
    \sup_{0\le\delta\le1}\sup_{z\in K}
    \E_z^\delta\left[
        \sup_{0\le t\le T}\left(1+|Z_t^\delta|\right)^q
    \right]<\infty,
\]
and the adjoint processes satisfy
\[
    \sup_{0\le\delta\le1}\sup_{z\in K}
    \E_z^{\delta,*}\left[
        \sup_{0\le t\le T}\left(1+|Z_t^{\delta,*}|\right)^q
    \right]<\infty .
\]
Thus the uniform bounds include both the regularized family
$0<\delta\le1$ and its limiting degenerate endpoint $\delta=0$.
\end{enumerate}
\end{assumption}

\begin{remark}[Role of Assumption~\ref{ass:controlled-coefficients}]
Part (a) is a structural condition on the potential.  Part (b) is imposed for
the general class considered in the main theorems; Proposition~\ref{prop:basic}
proves non-explosion of the unregularized forward process from
Assumption~\ref{ass:U}.  For polynomial double-well examples, the remaining
assertions in part (b) follow from the Hamiltonian estimates
\eqref{eq:polynomial-H-drift}--\eqref{eq:polynomial-moment-bound} in
Remark~\ref{rem:polynomial-double-well-moments}.  The boundary-layer argument
uses localized heat-kernel estimates for a chain-compatible cutoff process that
agrees with the original polynomial-growth dynamics on the bounded region
visited in the boundary argument.
\end{remark}

\begin{remark}[Polynomial double-well examples]
\label{rem:polynomial-double-well-moments}
For example, in one dimension,
\[
    U(x)=\frac14(x^2-1)^2
\]
satisfies the growth and coercivity parts of
Assumptions~\ref{ass:U} and \ref{ass:controlled-coefficients}.  It is
non-negative, has compact sublevel sets, and all derivatives of its force have
polynomial growth.  Moreover, $U'(x)=x(x^2-1)$ and
$x^2+U(x)=\frac14(x^2+1)^2$.
Hence, for $|x|\ge\sqrt2$,
\[
    xU'(x)=x^2(x^2-1)
    \ge \frac12\left(x^2+U(x)\right),
\]
which verifies the radial growth condition in Assumption~\ref{ass:U}, for
instance with $c_0=1/2$ and $R_0=\sqrt2$.  The finite-time local moment bounds
can be checked by applying the regularized forward and adjoint generators
$\calL_{\eps,\delta}^{(\pm)}$, defined in \eqref{defn:L:pm}, to
$V_m=(1+H)^m$, where
$H=U+L|p|^2/2+L|r|^2/2$.  The antisymmetric Hamiltonian part annihilates $H$, so
the forward and adjoint computations give the same identity.  For the quartic
example above, $d=1$.  We retain the dimension $d$ in the following formula
(because the same computation applies to multidimensional coercive polynomial
potentials; in the one-dimensional case, $\Delta U=U''$,
$\gamma d\eps=\gamma\eps$, and $\delta\eps Ld=\delta\eps L$):
\begin{equation}\label{eq:polynomial-H-drift}
    \calL_{\eps,\delta}^{(\pm)}H
    =
    -\gamma L|r|^2
    -\delta|\nabla U(\theta)|^2
    -\delta L^2|p|^2
    +\gamma d\eps
    +\delta\eps\Delta U(\theta)
    +\delta\eps Ld .
\end{equation}
For the polynomial double wells considered here,
$|\Delta U(\theta)|\le C(1+U(\theta))\le C(1+H)$.  Therefore the trace term in
$\calL_{\eps,\delta}^{(\pm)}V_m$ is bounded by $C_mV_m$, and, for each $m\ge1$,
\begin{equation}\label{eq:polynomial-Vm-drift}
    \sup_{0\le\delta\le1}\calL_{\eps,\delta}^{(\pm)}V_m\le C_mV_m .
\end{equation}
The corresponding carr\'e-du-champ terms satisfy the polynomial bounds
needed in the Burkholder-Davis-Gundy inequality.  It\^o's formula, localization, Burkholder-Davis-Gundy inequality, and
Gr\"onwall's lemma yield
\begin{equation}\label{eq:polynomial-moment-bound}
    \sup_{\substack{0\le\delta\le1\\z\in K}}
    \E_z^{\delta,\pm}\left[\sup_{0\le s\le T}V_m(Z_s^\delta)\right]
    \le C_{m,T,K}.
\end{equation}
Here the signs $+$ and $-$ refer to the forward and adjoint laws,
respectively.
Since $V_m$ dominates $(1+|\theta|+|p|+|r|)^q$ for $m$ sufficiently large, this
verifies Assumption~\ref{ass:controlled-coefficients}(b) for this example.
The same argument applies to coercive multidimensional polynomial double wells
with the corresponding radial growth.
\end{remark}

\begin{assumption}[Two metastable phase-space balls]\label{ass:two-balls}
Let $m,s\in\R^d$ be two distinguished local minima of $U$.  Fix $\rho>0$ such
that, with $B(x,r)$ denoting the open Euclidean ball, the phase-space balls
\[
    A=B((m,0,0),\rho),
    \qquad
    B=B((s,0,0),\rho)
\]
have disjoint closures.
\end{assumption}

\begin{assumption}[Whole-space growth]\label{ass:whole-space-growth}
For the whole-space exhaustion argument, assume in addition that
\begin{equation}\label{eq:whole-space-growth}
    \frac{|\nabla U(\theta)|^2}
    {(1+U(\theta))(1+\theta\cdot\nabla U(\theta))}
    \longrightarrow 0,
    \qquad |\theta|\to\infty .
\end{equation}
Equivalently, in view of \eqref{eq:radial-growth},
\begin{equation}\label{eq:whole-space-growth-equivalent}
    \frac{|\nabla U(\theta)|^2}{1+U(\theta)}
    =
    o\!\left(\theta\cdot\nabla U(\theta)\right),
    \qquad |\theta|\to\infty.
\end{equation}
\end{assumption}

For $R>0$, define
\begin{equation}\label{eq:whole-space-lyapunov-set}
    \Psi(\theta,p,r)
    :=U(\theta)+|p|^2+|r|^2+H(\theta,p,r)|r|^2,
    \qquad
    \mathsf K_R:=\{z\in\R^{3d}:\Psi(z)\le R\}.
\end{equation}
Assumption~\ref{ass:U} implies that every $\mathsf K_R$ is compact.

\begin{remark}[Polynomial potentials and the whole-space growth condition]\label{rem:quartic-lyapunov}
Condition~\eqref{eq:whole-space-growth} is the structural growth needed to
absorb the third-order chain term $r\cdot\nabla U(\theta)$ in the Lyapunov
estimate below.  It directly controls the gradient ratio in the form used by
that estimate.  It is enough for a coercive
polynomial potential to satisfy, outside a compact set,
\[
    U(\theta)\asymp |\theta|^m,\qquad
    \theta\cdot\nabla U(\theta)\asymp |\theta|^m,\qquad
    |\nabla U(\theta)|\lesssim |\theta|^{m-1},
\]
with $m\ge2$.  Then, as $|\theta|\rightarrow\infty$, 
\[
    \frac{|\nabla U(\theta)|^2}{1+U(\theta)}
    \lesssim |\theta|^{m-2}
    =
    o(|\theta|^m)
    =
    o\!\left(\theta\cdot\nabla U(\theta)\right),
\]
so that \eqref{eq:whole-space-growth} holds.  This covers standard even-degree
coercive double-well potentials, including separable multidimensional quartic
examples; the one-dimensional quartic double well is the case $m=4$.
\end{remark}

\begin{remark}[Comparison with \cite{LeeRamilSeo2026} assumptions]
The potential assumptions used for the bounded-domain weak-capacity identities
differ from those in Lee--Ramil--Seo~\cite[Assumptions~2.1
and~2.3]{LeeRamilSeo2026}.  The radial condition
\eqref{eq:radial-growth} is the analogue of the first growth condition in
\cite[Assumption~2.3]{LeeRamilSeo2026}.  Our bounded-domain setting allows
geometries beyond the Morse double-well structure with a unique saddle from
\cite[Assumption~2.1]{LeeRamilSeo2026}.  Polynomial derivative bounds, local
moment control, and chain-compatible localization take the place of the second
growth condition
\[
    \liminf_{|\theta|\to\infty}
    \left(|\nabla U(\theta)|-\beta\Delta U(\theta)\right)>0
\]
appearing in \cite[Assumption~2.3]{LeeRamilSeo2026}.  Thus the bounded-domain
construction is geometrically less restrictive than the low-temperature
Eyring--Kramers setting of \cite{LeeRamilSeo2026}.  The whole-space exhaustion
additionally uses Assumption~\ref{ass:whole-space-growth}, which supplies the
Lyapunov drift needed for the third-order chain.  The required accessibility is
established below by an explicit controlled-path argument.
\end{remark}

\paragraph{Bounded-domain setup.}\phantomsection\label{setup:domains}
Under Assumption~\ref{ass:two-balls}, fix $N>0$ sufficiently large that the compact set
$\overline A\cup\overline B$ satisfies
\[
    \overline A\cup\overline B\subset \calD:=B(0,N)\subset\R^{3d}.
\]

The three compact boundary components are pairwise disjoint.  Consequently,
\[
    d_\partial
    :=\min_{\substack{\Sigma,\Sigma'\in
        \{\partial A,\partial B,\partial\calD\}\\ \Sigma\ne\Sigma'}}
        \dist(\Sigma,\Sigma')>0.
\]
Fix $0<\eta_{\rm tub}<d_\partial/3$ and, for each boundary component, define
the distance collar
\[
    \mathsf T_{\eta_{\rm tub}}(\Sigma)
    :=\{z\in\R^{3d}:\dist(z,\Sigma)<\eta_{\rm tub}\}.
\]
The closures of these three collars are pairwise disjoint.

Set
\[
    \Omega:=\calD\setminus\left(\overline A\cup\overline B\right),
    \qquad
    \calC:=B\cup\partial\calD.
\]

\begin{remark}[Open-ball convention]\label{rem:open-ball-convention}
The hitting times $\tau_A$ and $\tau_B$ always refer to entry into the open
balls $A$ and $B$, as in the convention of Lee--Ramil--Seo.  The closures are
removed when defining the smooth PDE domain $\Omega$.  For every fixed
$\delta>0$, regular boundary points for the uniformly elliptic process identify
the open-set hitting time with the first contact time with the corresponding
sphere.  For the limiting process, the immediate-crossing and small-$r$
estimates proved below give the same identification in every hitting-law
argument used in the paper.  Values prescribed on $\partial A$ and $\partial B$
are boundary extensions of the committors.
\end{remark}

\begin{remark}[Component dimension]\label{rem:dimension}
Throughout the paper, $d\ge1$ denotes the dimension of each component variable
$\theta,p,r\in\mathbb R^d$.  The full phase space has dimension $3d$.
\end{remark}

The forward and adjoint regularized processes used below are the signed
diffusions in \eqref{eq:sde-delta}, with $\varsigma=+1$ and $\varsigma=-1$,
respectively.

For $z\in\Omega$, define the regularized and limiting committors by
\begin{equation}\label{eq:committors} h_\delta(z):=\Pbb_z^\delta\left(\tau_A^\delta<\tau_\calC^\delta\right),
    \qquad
    h_\delta^*(z):=\Pbb_z^{\delta,*}\left(\tau_A^\delta<\tau_\calC^\delta\right),
\end{equation}
and
\begin{equation}\label{eq:limiting-committors}
    h(z):=\Pbb_z(\tau_A<\tau_\calC),
    \qquad
    h^*(z):=\Pbb_z^*(\tau_A<\tau_\calC).
\end{equation}
They are extended by the boundary values $1$ on $\overline A$ and $0$ on
$\overline B\cup\partial\calD$.
For $M<\infty$, set
\begin{equation}\label{eq:gF-defs}
    g_{\delta,M}(z)=\E_z^\delta\left[\tau_\calC^\delta\wedge M\right],
    \qquad
    g_M(z)=\E_z\left[\tau_\calC\wedge M\right],
\end{equation}
and
\begin{equation}\label{eq:F-defs}
    F_{\delta,M}(z)=\Pbb_z^\delta\left(\tau_\calC^\delta\le M\right),
    \qquad
    F_M(z)=\Pbb_z(\tau_\calC\le M).
\end{equation}

The following theorem collects the regularization stability statements needed
to pass from the auxiliary regularized processes to the degenerate third-order
process in the bounded phase-space ball geometry.

\begin{theorem}[$\delta\downarrow0$ stability of hitting laws for phase-space balls]
\label{thm:ball-stability}
Suppose Assumptions~\ref{ass:U} and~\ref{ass:controlled-coefficients} hold,
and adopt the \hyperref[setup:domains]{bounded-domain setup}.  Then, for the
forward and adjoint processes in \eqref{eq:sde-delta}, the following holds.
\begin{enumerate}[label=(\roman*)]
    \item
    As $\delta\downarrow0$, 
    \[
        h_\delta^*\to h^*
        \qquad\text{in }L^1(\calD,\pi^\eps).
    \]

    \item 
     As $\delta\downarrow0$, 
    \[
        h_\delta^*F_{\delta,M}\to h^*F_M
        \qquad\text{in }L^1(\calD,\pi^\eps),
    \]
    where $F_{\delta,M},F_{M}$ are defined in \eqref{eq:F-defs} for $M<\infty$.

    \item 
     As $\delta\downarrow0$, 
    \[
        \sup_{z\in\partial A}|g_{\delta,M}(z)-g_M(z)|\to0,
    \]
    where $g_{\delta,M},g_{M}$ are defined in \eqref{eq:gF-defs} for $M<\infty$.
    Moreover $g_M\in C(\partial A)$.

    \item Let
    $\sigma^\delta=\tau_A^\delta\wedge\tau_\calC^\delta$ and
    $\sigma=\tau_A\wedge\tau_\calC$.  For $\pi^\eps$-almost every
    $z\in\Omega$ and every $M<\infty$,
    \[
        \sigma^\delta\wedge M
        \Longrightarrow
        \sigma\wedge M
    \]
    under $\Pbb_z$ as $\delta\downarrow0$, and the first-hit labels satisfy
    \[
        \bfone_{\{\tau_A^\delta<\tau_\calC^\delta\}}
        \Longrightarrow
        \bfone_{\{\tau_A<\tau_\calC\}} .
    \]
    Under the explicit SDE coupling of Theorem~\ref{thm:delta-coupling}, these
    convergences hold in
    probability at every starting point for which the limiting first killed hit
    is a non-characteristic crossing with a unique first-hit boundary component.

    \item The limiting killed process satisfies
    \[
        \Pbb_z(\tau_\calC<\infty)=1
        \qquad\text{for every }z\in\calD\setminus B.
    \]
\end{enumerate}
The analogous statements obtained by replacing each forward hitting functional
with its adjoint counterpart also hold.
\end{theorem}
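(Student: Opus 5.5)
We will work throughout with the explicit synchronous coupling in which all processes $Z^{\delta,\varsigma}$, $0\le\delta\le1$, are driven by the same triple $(W^\theta,W^p,B)$ from the same starting point. The proof reduces everything to one pathwise statement: at a limiting first hit, the hitting time and the identity of the boundary component are continuous functionals of the path. We then transfer this continuity to the regularized family by dominated convergence.

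\emph{Step 1: pathwise convergence.} Take the difference $Z^\delta_t-Z^0_t$. The drift is locally Lipschitz by Assumption~\ref{ass:controlled-coefficients}(a), and the extra terms $-\delta\nabla U$, $-\delta Lp$ and $\sqrt{2\eps\delta}\,W$ are $O(\sqrt\delta)$ on events where the paths stay bounded. Stopping at the exit from a large ball and using the uniform moment bounds of Assumption~\ref{ass:controlled-coefficients}(b) to control that exit, Gr\"onwall's lemma gives
\[
\sup_{t\le T}|Z^\delta_t-Z^0_t|\to0
\]
in probability, for every fixed $T$ and locally uniformly in the starting point. This is the coupling referred to in (iv).

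\emph{Step 2: non-grazing.} This is the main obstacle. We must show that, for $\pi^\eps$-a.e.\ $z$ (indeed for all $z$ off a null set), the limiting path almost surely satisfies two conditions. First, its first hit of $\partial A\cup\overline B\cup\partial\calD$ lands on a unique component; this is automatic, since the collars are disjoint. Second, that hit is a transversal crossing: it does not occur at a characteristic point, i.e.\ in $\{r=0\}$ relative to the normal direction, and the path enters the open ball immediately afterwards.

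The plan for the second condition has three parts.
\begin{itemize}
\item The normal velocity of the phase-space sphere is $\nu\cdot(p,\,-L^{-1}\nabla U+\gamma r,\,-\gamma p-\gamma r)$ up to the sign $\varsigma$. Tangency therefore forces a codimension-$d$ condition involving $r$ (and, at second order, $p$).
\item Using the localized chain-compatible cutoff and the three-layer Pigato density upper bound, with scales $t^{1/2},t^{3/2},t^{5/2}$ and constants uniform in $\delta$ thanks to the nonnegative extra Malliavin covariance, we bound
\[
\Pbb^\delta_z\bigl(\text{hit the collar at a point with }|r|\le\kappa\text{ before time }M\bigr)\le C(\kappa),
\]
uniformly in $\delta\in[0,1]$, with $C(\kappa)\to0$ as $\kappa\to0$. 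To prove this, we cover the small-$r$ part of the collar by boxes of anisotropic size and sum the occupation-time densities.
\item Where $|r|\ge\kappa$, a strong Markov/support argument together with the nondegenerate normal-velocity direction shows that a crossing is immediate. Hence hitting the closed sphere and hitting the open ball coincide (Remark~\ref{rem:open-ball-convention}), and a small perturbation of the path also crosses, at a nearby time.
\end{itemize}

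\emph{Step 3: conclusions.}
\begin{itemize}
\item Steps 1--2 give convergence in probability of $\sigma^\delta\wedge M$ and of the first-hit labels on non-grazing paths. This yields (iv).
\item Indicators and truncated times are bounded, so dominated convergence in $z$ over the bounded set $\calD$, combined with the a.e.\ convergence, yields (i) and (ii). In (ii) we use that $\tau_\calC\le M$ is a continuity event except on $\{\tau_\calC=M\}$; this set is $\pi^\eps$-null for a.e.\ $M$, and we handle it by monotonicity in $M$, or by noting that a density of the hitting time follows from the transversal crossing.
\item For (iii), starting points on $\partial A$ are not in $\Omega$. Here we use that from $z\in\partial A$ with $r\ne0$ the process either enters $A$ immediately or leaves transversally. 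The small-$r$ bound of Step 2 holds uniformly over compact starting sets, which gives uniform-in-$z$ convergence. Continuity of $g_M$ on $\partial A$ follows from the same continuity of the flow in the initial condition and uniform integrability, since the times are bounded by $M$.
\end{itemize}

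\emph{Step 4: (v).} Hypoellipticity (Hörmander's bracket condition along $r\to p\to\theta$) and the Stroock--Varadhan support theorem give a controlled path from any point of the compact set $\overline{\calD}$ to $\partial\calD\cup B$ within a fixed time $T_0$. Lower semicontinuity and compactness then give
\[
\inf_{z\in\overline\calD}\Pbb_z(\tau_\calC\le T_0)\ge c>0,
\]
and iterating with the Markov property gives $\Pbb_z(\tau_\calC>kT_0)\le(1-c)^k$. The adjoint statements follow verbatim by taking $\varsigma=-1$, since all estimates above are sign-independent.
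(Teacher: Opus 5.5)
\paragraph{Verdict.} There is a genuine gap, and it sits in the step you yourself call the main obstacle. Your overall architecture is sound and is essentially the paper's: synchronous coupling, continuity of the truncated hitting time and first-hit label at limiting paths whose first boundary contact is a unique-component crossing with $r\ne0$, then bounded convergence, with (v) removing the truncation.

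\paragraph{The main gap: turning a density bound into a hitting bound.} The problem is the second bullet of Step~2. A uniform density bound controls occupation times, not hitting probabilities. ``Summing occupation-time densities'' over boxes bounds $\Pbb_z(|r_{\tau_\Sigma}|\le b)$ only once you know how long the path must stay in a box after such a hit, and that is the missing idea. What is needed is the post-hit residence estimate of Lemma~\ref{lem:occupation-after-small-r-hit}. On $\{|r_{\tau_\Sigma}|\le b\}$, with conditional probability at least some $q>0$ independent of $b$ and of the hit point, the path stays for time $cb^2$ in the layer $\{\dist(\cdot,\Sigma)\le Cb^2,\ |r|\le 2b\}$. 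The reason is that in that time $r$ moves by $O(b)$, the normal martingale $2\sigma\int r\cdot dB$ is $O(b^2)$, and the drift moves $\rho_\Sigma$, $p$, $\theta$ by $O(b^2)$. The strong Markov property and Tonelli then give
\[
qcb^2\,\Pbb_z\bigl(|r_{\tau_\Sigma}|\le b,\ t_0<\tau_\Sigma\le M\bigr)
\le\int_{t_0}^{M+1}\Pbb_z\bigl(Z_t\in\text{layer}\bigr)\,dt
\le C\,b^2\cdot b^d .
\]
This is localized before exit from a bounded set. Hits before $t_0$ are excluded separately by the positive distance from $K$ to $\Sigma$, since the density bound holds only on $[t_0,M]$.

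\paragraph{Why the chain scales do not give the geometry.} The estimate closes because the residence time and the normal width of the layer are \emph{both} of order $b^2$. The chain scales $b,b^3,b^5$ do not describe this geometry, because the $O(1)$ drift moves $p$ and $\theta$ by $O(t)$. Take boxes of sides $b,b^3,b^5$ in $(r,p,\theta)$. Near boundary points where $|p|$ is of order one, the residence time is only $O(b^5)$, while the width of a box in the normal direction is of order $b^3$. The summed bound is then of order $b^{d-2}$, which is useless for $d\le2$.

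\paragraph{Uniformity in $\delta$.} For $\delta>0$ the residence estimate requires $\delta\lesssim b^2$, since the extra noise moves $(\theta,p)$ by about $\sqrt\delta\,b$ in time $b^2$. So your claimed uniformity over all $\delta\in[0,1]$ is not what this argument delivers. In your continuity-based architecture only the $\delta=0$ estimate is actually needed, provided (iii) is handled as below.

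\paragraph{Secondary points.} Four further points need repair.
\begin{enumerate}
\item \emph{Immediate crossing.} Your first bullet conflates drift tangency with the characteristic condition. Vanishing of the drift's normal component is a single scalar equation, and it is irrelevant because the drift contributes only $O(t)$. The relevant non-degeneracy is that the martingale part of $\rho_\Sigma(Z_t)$ is $2\sigma\int_0^t r_s\cdot dB_s$, with rate $4\sigma^2|r|^2$. Immediate two-sided crossing at $r\ne0$ then follows from a Dambis--Dubins--Schwarz time change and the law of the iterated logarithm. A support argument by itself gives only positive probability at a fixed scale.
\item \emph{Uniformity in (iii).} Almost-sure crossing is not uniform in the boundary point, so uniformity on $\partial A$ needs an extra argument. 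One option is a contradiction argument along $z_n\to z$, $\delta_n\downarrow0$: use the synchronous coupling plus continuity of $x\mapsto\tau_\calC(x)\wedge M$ at the limit path from $z$, which needs only $\delta=0$ facts. The other is the paper's quantitative route via Lemma~\ref{lem:fast-entry-collar} and the regularized bound of Proposition~\ref{prop:small-r-entry} at scale $b_\delta=\delta^{\lambda/4}$. Note also that entering $A$ is irrelevant for $g_M$.
\item \emph{The event $\{\tau_\calC=M\}$ in (ii).} This is a path event. It is null because $\Pbb_z(Z_M\in\partial B\cup\partial\calD)=0$ by the density at time $M$; no ``a.e.\ $M$'' statement is involved.
\item \emph{Controllability in (v).} H\"ormander's condition gives hypoellipticity, not controllability to $\partial\calD\cup B$, so you must construct the control. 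One way, as in Lemma~\ref{lem:finite-killing}, uses $\theta$ as a flat output:
\begin{itemize}
\item take a polynomial $\Theta$ with $\Theta(0)=\theta_0$, $\dot\Theta(0)=p_0$, $\ddot\Theta(0)=-L^{-1}\nabla U(\theta_0)+\gamma r_0$, and $\Theta(T)$ outside the $\theta$-projection of $\overline\calD$;
\item set $P=\dot\Theta$ and $R=\gamma^{-1}(\ddot\Theta+L^{-1}\nabla U(\Theta))$;
\item read the control off the $r$-equation.
\end{itemize}
The exit margin makes the tube estimate uniform on a neighborhood, and a finite cover plus the Markov property finish.
\end{enumerate}
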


\begin{remark}[Characteristic boundary points]\label{rem:ball-characteristic}
For the limiting diffusion, noise acts directly only in the $r$-variable.
Hence, for a phase-space sphere with defining function
\[
    \rho_m(\theta,p,r)=|\theta-m|^2+|p|^2+|r|^2-\rho^2,
\]
the diffusion-normal component is $\nabla_r\rho_m=2r$ and vanishes on
the subset $\{r=0\}$ of the boundary.  Theorem~\ref{thm:ball-stability}
is therefore proved through pathwise non-characteristic crossing and the
uniform small-$r$ boundary-entry estimate, rather than through uniform
elliptic boundary regularity as $\delta\downarrow0$.
\end{remark}

The next proposition gathers the basic analytic properties of the
unregularized generator that will be used throughout the weak potential theory.

\begin{proposition}[Basic properties]\label{prop:basic}
Under Assumption~\ref{ass:U}, the following statements hold.
\begin{enumerate}[label=(\roman*)]
    \item For every $\eps>0$,
    \[
        Z_\eps=\int_{\R^{3d}}e^{-H(z)/\eps} dz<\infty.
    \]
    Hence $\pi^\eps$ defined in \eqref{eq:pi} is a probability measure.

    \item The solution of \eqref{eq:sde} is non-explosive.

    \item For every $f\in C_c^\infty(\R^{3d})$,
    \[
        \int_{\R^{3d}}\calL_\eps f d\pi^\eps=0.
    \]

    \item The operator $\calL_\eps$ satisfies the H\"ormander bracket
    condition at every point of $\R^{3d}$.
\end{enumerate}
\end{proposition}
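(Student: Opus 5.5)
The four parts are independent and can be proved in the order listed, using only Assumption~\ref{ass:U} and the explicit form of the generator \eqref{eq:generator}.

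\emph{Part (i).} Write $e^{-H/\eps}=e^{-U(\theta)/\eps}e^{-L|p|^2/(2\eps)}e^{-L|r|^2/(2\eps)}$. The two Gaussian factors are integrable, so the claim reduces to $\int_{\R^d}e^{-U/\eps}\,d\theta<\infty$. Integrating \eqref{eq:radial-growth} along rays shows that $U$ grows at least quadratically. Concretely, for $|\theta|\ge R_0$ we have $\frac{d}{dt}U(t\omega)\ge c_0 t$ for unit vectors $\omega$ and $t\ge R_0$. This gives $U(\theta)\ge \frac{c_0}{2}(|\theta|^2-R_0^2)$ outside $B(0,R_0)$, because $U\ge0$. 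Hence $e^{-U/\eps}$ is bounded by a Gaussian outside a ball and by $1$ inside it, so it is integrable.

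\emph{Part (ii).} I would use $V=1+H$ as a Lyapunov function. The $\Q$-part annihilates $H$, so as in \eqref{eq:polynomial-H-drift} with $\delta=0$,
\[
\calL_\eps H=-\gamma L|r|^2+\gamma d\eps\le\gamma d\eps .
\]
Thus $\calL_\eps V\le \gamma d\eps\, V$. The coefficients are smooth, hence locally Lipschitz, so a unique strong solution exists up to the explosion time $\zeta=\lim_n\tau_n$, where $\tau_n$ is the exit time from $\{H<n\}$. These sublevel sets are compact, since $U$ has compact sublevel sets.

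Apply It\^o's formula to $e^{-\gamma d\eps t}V(Z_{t\wedge\tau_n})$; the stopped local martingale is a true martingale. This gives $\E_z[V(Z_{t\wedge\tau_n})]\le e^{\gamma d\eps t}V(z)$, and therefore
\[
(1+n)\,\Pbb_z(\tau_n\le t)\le e^{\gamma d\eps t}V(z).
\]
Letting $n\to\infty$ yields $\Pbb_z(\zeta\le t)=0$ for every $t$, which is non-explosion.

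\emph{Part (iii).} This is the computation already displayed after \eqref{eq:Ddelta}, specialized to $\delta=0$ via \eqref{eq:div-form}. The function $e^{-H/\eps}(\D+\Q)\nabla f$ is smooth and compactly supported, so its divergence integrates to zero. I would also check directly that \eqref{eq:div-form} reproduces \eqref{eq:generator}. The key point is that $\nabla\cdot(e^{-H/\eps}\Q\nabla f)=-\eps^{-1}e^{-H/\eps}\nabla H\cdot\Q\nabla f$, since $\Q$ is constant and antisymmetric, so the second-order terms cancel. Expanding $-\nabla H\cdot\Q\nabla f=\Q\nabla H\cdot\nabla f$ with $\nabla H=(\nabla U,Lp,Lr)$ gives exactly the transport terms of \eqref{eq:generator}. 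The $\D$ part produces $\frac{\gamma\eps}{L}\Delta_r f-\gamma r\cdot\nabla_r f$.

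\emph{Part (iv).} Write $\calL_\eps=X_0+\frac{\gamma\eps}{L}\sum_i X_i^2$ with $X_i=\partial_{r_i}$ and drift field
\[
X_0=p\cdot\nabla_\theta+(-L^{-1}\nabla U+\gamma r)\cdot\nabla_p-\gamma(p+r)\cdot\nabla_r .
\]
The brackets are computed as follows.
\begin{itemize}
\item $[\partial_{r_i},X_0]=\gamma\partial_{p_i}-\gamma\partial_{r_i}$.
\item Modulo $\partial_r$ and its bracket with $X_0$, $[[\partial_{r_i},X_0],X_0]$ equals $\gamma\partial_{\theta_i}$ plus terms in $\partial_p,\partial_r$.
\end{itemize}
The only additional contribution comes from $\gamma[\partial_{p_i},X_0]=\gamma\partial_{\theta_i}-\gamma^2\partial_{r_i}$; the $\nabla U$ term has no $p$-dependence, so it contributes nothing. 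Hence $\{\partial_{r_i}\}$, $\{[\partial_{r_i},X_0]\}$ and $\{[[\partial_{r_i},X_0],X_0]\}$ span $\R^{3d}$ at every point, by a block-triangular argument with invertible diagonal blocks $\gamma I_d$.

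The main obstacle is (ii), and specifically ensuring the martingale argument is rigorous up to explosion. This is handled by the stopping at $\tau_n$ described above, so I expect no real difficulty there.
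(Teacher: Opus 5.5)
Your proposal is correct and follows the paper's proof essentially step for step: the Gaussian reduction plus ray integration of \eqref{eq:radial-growth} for (i), the Lyapunov function $H$ with $\calL_\eps H=-\gamma L|r|^2+\gamma d\eps$ and stopping at $\tau_n$ for (ii), the divergence form \eqref{eq:div-form} for (iii), and the brackets $[\partial_{r_i},X_0]$ and $[\partial_{p_i},X_0]$ for (iv). The only differences are cosmetic. You use the exponentially weighted bound $\E_z[V(Z_{t\wedge\tau_n})]\le e^{\gamma d\eps t}V(z)$, whereas the paper uses the additive bound $\E_z[H(Z_{t\wedge\tau_n})]\le H(z)+\gamma d\eps t$; both yield non-explosion. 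You also explicitly verify that the divergence form reproduces \eqref{eq:generator}, which the paper takes as given.
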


\begin{proof}
The $p$- and $r$-integrals in $Z_\eps$ are Gaussian, so that
\[
    Z_\eps=\left(\frac{2\pi\eps}{L}\right)^d
    \int_{\R^d}e^{-U(\theta)/\eps} d\theta .
\]
For $\theta=\rho\omega$, $\rho=|\theta|$, $|\omega|=1$, and $\rho\ge R_0$,
\eqref{eq:radial-growth} gives
\[
    \rho\frac{d}{d\rho}U(\rho\omega)
    =\rho \omega\cdot\nabla U(\rho\omega)
    \ge c_0\rho^2.
\]
Integrating along the ray yields $U(\rho\omega)\ge c\rho^2-C$ for suitable
constants $c,C>0$. Thus, $e^{-U/\eps}$ is dominated at infinity by a Gaussian
density, which proves (i).

For (ii), we can compute that
\begin{align*}
    \calL_\eps H
    =p\cdot\nabla U
    +\left(-\frac1L\nabla U+\gamma r\right)\cdot Lp
    +(-\gamma p-\gamma r)\cdot Lr
    +\frac{\gamma\eps}{L}\Delta_r\left(\frac L2|r|^2\right)
    =-\gamma L|r|^2+\gamma d\eps.
\end{align*}
In particular $\calL_\eps H\le\gamma d\eps$.  Since $H$ has compact sublevel
sets, let $\tau_n=\inf\{t\ge0:H(Z_t)\ge n\}$. It\^o's formula gives
\[
    \E_z \left[H(Z_{t\wedge\tau_n})\right]
    \le H(z)+\gamma d\eps t.
\]
On $\{\tau_n\le t\}$, $H(Z_{t\wedge\tau_n})\ge n$, and therefore
\[
    \Pbb_z(\tau_n\le t)
    \le \frac{H(z)+\gamma d\eps t}{n}.
\]
Letting $n\to\infty$ proves non-explosion.

For (iii), the divergence form \eqref{eq:div-form} gives
\[
    \int_{\mathbb{R}^{3d}} \calL_\eps f d\pi^\eps
    =\eps Z_\eps^{-1}\int_{\mathbb{R}^{3d}}
    \nabla\cdot\left(e^{-H/\eps}(\D+\Q)\nabla f\right)dz=0.
\]

For (iv), set $X_i=\partial_{r_i}$, $1\le i\le d$, and let $X_0$ be the
first-order drift field in \eqref{eq:generator}.  Then
\[
    [\partial_{r_i},X_0]
    =\gamma\partial_{p_i}-\gamma\partial_{r_i},
    \qquad
    [\partial_{p_i},X_0]
    =\partial_{\theta_i}-\gamma\partial_{r_i}.
\]
Thus, the diffusion directions generate the $r$-directions directly, the
$p$-directions after one commutator, and the $\theta$-directions after a
second commutator.
\end{proof}

The heat-kernel estimate used later is local, so we first build cutoff dynamics
that agree with the original dynamics on the relevant bounded set while
preserving the third-order chain structure.

\begin{lemma}[Chain-compatible cutoffs]
\label{lem:chain-compatible-cutoffs}
Let $\mathsf O\subset\R^{3d}$ be a bounded smooth open set.  After the
reordering
\[
    x^1=r,\qquad x^2=p,\qquad x^3=\theta,
\]
there exist cutoff drifts, indexed by $0\le\delta\le1$,
\[
    B_1^{\mathsf O},B_2^{\delta,\mathsf O}
        \in C_b^\infty\left(\R^{3d};\R^d\right),
    \qquad
    B_3^{\delta,\mathsf O}
        \in C_b^\infty\left(\R^{2d};\R^d\right),
\]
with
\[
    B^{\delta,\mathsf O}(x)
    =\left(
        B_1^{\mathsf O}(x),
        B_2^{\delta,\mathsf O}(x),
        B_3^{\delta,\mathsf O}\left(x^2,x^3\right)
    \right),
\]
such that, for every $x\in\mathsf O$,
\begin{equation}\label{eq:cutoff-agreement}
    B^{\delta,\mathsf O}(x)
    =
    \begin{pmatrix}
        -\gamma x^2-\gamma x^1\\
        -L^{-1}\nabla U(x^3)+\gamma x^1-\delta Lx^2\\
        x^2-\delta\nabla U(x^3)
    \end{pmatrix}.
\end{equation}
Moreover, for every $k\ge0$,
\begin{equation}\label{eq:cutoff-uniform-Ck}
    \sup_{0\le\delta\le1}
    \left(
        \left\|B_1^{\mathsf O}\right\|_{C_b^k}
        +\left\|B_2^{\delta,\mathsf O}\right\|_{C_b^k}
        +\left\|B_3^{\delta,\mathsf O}\right\|_{C_b^k}
    \right)<\infty.
\end{equation}
On $\mathsf O$,
\begin{equation}\label{eq:cutoff-chain-links}
\begin{aligned}
    J_{x^1}B_2^{\delta,\mathsf O}=\gamma I_{d},
    \qquad
    J_{x^2}B_3^{\delta,\mathsf O}=I_{d}.
\end{aligned}
\end{equation}
The same assertions hold for cutoff drifts $B^{\delta,\mathsf O,*}$ of the
adjoint dynamics, with
\begin{equation}\label{eq:adjoint-cutoff-agreement}
    B^{\delta,\mathsf O,*}(x)
    =
    \begin{pmatrix}
        \gamma x^2-\gamma x^1\\
        L^{-1}\nabla U(x^3)-\gamma x^1-\delta Lx^2\\
        -x^2-\delta\nabla U(x^3)
    \end{pmatrix},
    \qquad x\in\mathsf O,
\end{equation}
and hence with chain links $-\gamma I_d$ and $-I_d$.
\end{lemma}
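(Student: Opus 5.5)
The construction multiplies the polynomially growing coefficients by smooth cutoffs, while the terms that carry the chain structure are left exactly linear. Fix a bounded open set $\mathsf O'$ with $\overline{\mathsf O}\subset\mathsf O'$. Let $R$ be large enough that $\mathsf O'\subset\{|x^1|,|x^2|,|x^3|<R\}$. Choose $\chi\in C_c^\infty(\R^d)$ with $\chi\equiv1$ on $B(0,R)$, and define $\psi(y):=\chi(y)\,y$. Then $\psi\in C_b^\infty(\R^d;\R^d)$, $\psi(y)=y$ for $|y|<R$, and $J\psi=I_d$ there. I would set
\[
B_1^{\mathsf O}(x):=-\gamma\psi(x^2)-\gamma\psi(x^1),
\]
\[
B_2^{\delta,\mathsf O}(x):=-L^{-1}\chi(x^3)\nabla U(x^3)+\gamma\psi(x^1)-\delta L\psi(x^2),
\]
\[
B_3^{\delta,\mathsf O}(x^2,x^3):=\psi(x^2)-\delta\chi(x^3)\nabla U(x^3).
\]
By construction $B_3^{\delta,\mathsf O}$ depends only on $(x^2,x^3)$, as required. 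On $\mathsf O$ every cutoff equals $1$ and $\psi$ is the identity, so \eqref{eq:cutoff-agreement} holds. Differentiating on $\mathsf O$ gives $J_{x^1}B_2^{\delta,\mathsf O}=\gamma J\psi(x^1)=\gamma I_d$ and $J_{x^2}B_3^{\delta,\mathsf O}=J\psi(x^2)=I_d$, which is \eqref{eq:cutoff-chain-links}.

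The uniform bound \eqref{eq:cutoff-uniform-Ck} comes from two facts. First, $\chi\nabla U$ is smooth with compact support, because $\nabla U$ is smooth by Assumption~\ref{ass:U}; the polynomial bounds of Assumption~\ref{ass:controlled-coefficients}(a) make the constants explicit. Second, each drift is affine in $\delta$, with $\delta$-independent coefficient functions lying in $C_b^\infty$. Hence, for each $k$, the $C_b^k$ norm is at most the sum of the norms of the $\delta^0$ and $\delta^1$ parts, and this bound is uniform over $0\le\delta\le1$.

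For the adjoint drifts, set $\varsigma=-1$ in the same formulas. Concretely, $B_1^{\mathsf O,*}=\gamma\psi(x^2)-\gamma\psi(x^1)$, $B_2^{\delta,\mathsf O,*}=L^{-1}\chi\nabla U(x^3)-\gamma\psi(x^1)-\delta L\psi(x^2)$, and $B_3^{\delta,\mathsf O,*}=-\psi(x^2)-\delta\chi\nabla U(x^3)$. The same verification gives \eqref{eq:adjoint-cutoff-agreement}, with chain links $-\gamma I_d$ and $-I_d$ on $\mathsf O$.

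There is no real analytic obstacle here. The one point needing care is to truncate the \emph{identity} map with $\psi$, rather than multiplying the whole drift by a single cutoff. Multiplying by one global cutoff would produce terms such as $x^2\nabla\chi(x)$ in $B_3$, which would make $B_3$ depend on $x^1$ and would spoil the exact chain links. With $\psi$, the required structure holds identically on $\mathsf O$. It also holds globally up to bounded smooth perturbations, which is the form the later three-layer density estimates of \cite{Pigato2022} need. Smoothness of $\mathsf O$ is not used, only its boundedness.
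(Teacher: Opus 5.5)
Your construction is correct and is essentially the paper's. The paper composes the original drifts with bounded smooth maps $\kappa_{123}$ and $\kappa_{23}$ that equal the identity near the relevant ball, with $\kappa_{23}$ acting only on $(x^2,x^3)$ so that $B_3$ stays independent of $x^1$; you instead truncate each variable separately via $\psi(y)=\chi(y)y$ and cut off $\nabla U$ multiplicatively, which makes the global triangular structure and the chain links slightly more transparent.

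One small correction to your closing remark: a single global cutoff equal to $1$ near $\overline{\mathsf O}$ would not spoil the chain links on $\mathsf O$. Its only real defect is that $B_3$ would then depend on $x^1$.
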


\begin{proof}
In the reordered variables $x=(x^1,x^2,x^3)=(r,p,\theta)$, the forward
regularized drift is
\[
\begin{aligned}
    b_1(x)
    &=-\gamma x^2-\gamma x^1,\\
    b_2^\delta(x)
    &=-L^{-1}\nabla U(x^3)+\gamma x^1-\delta Lx^2,\\
    b_3^\delta(x^2,x^3)
    &=x^2-\delta\nabla U(x^3).
\end{aligned}
\]
Choose $R$ so large that $\overline{\mathsf O}\subset B(0,R)$, and choose smooth bounded
maps $\kappa_{123}:\R^{3d}\to\R^{3d}$ and
$\kappa_{23}:\R^{2d}\to\R^{2d}$ with bounded derivatives of all orders, equal
to the identity on neighborhoods of $B(0,R)$ and of the projection of
$B(0,R)$ onto the $(x^2,x^3)$ variables, respectively.  Set
\[
    B_1^{\mathsf O}(x)
    =b_1\left(\kappa_{123}(x)\right),\quad
    B_2^{\delta,\mathsf O}(x)
    =b_2^\delta\left(\kappa_{123}(x)\right),\quad
    B_3^{\delta,\mathsf O}(x)
    =b_3^\delta\left(\kappa_{23}\left(x^2,x^3\right)\right).
\]
The third identity makes $B_3^{\delta,\mathsf O}$ independent of $x^1$.
Since both cutoff maps are the identity near $\overline{\mathsf O}$,
$B_i^{\delta,\mathsf O}=b_i^\delta$
on $\mathsf O$,
$i=1,2,3$,
where $b_1^\delta:=b_1$.  Therefore, on $\mathsf O$,
\[
    J_{x^1}B_2^{\delta,\mathsf O}=\gamma I_d,
    \qquad
    J_{x^2}B_3^{\delta,\mathsf O}=I_d.
\]
The boundedness of the cutoff images and the smoothness of $U$ give, for every
$k\ge0$,
\[
    \sup_{0\le\delta\le1}
    \max_{1\le i\le3}
    \left\|D^kB_i^{\delta,\mathsf O}\right\|_\infty<\infty.
\]
For the adjoint drift, the two chain links have the opposite signs; applying
the same cutoff maps gives the same triangular dependence and the same
derivative bounds.
\end{proof}

\begin{remark}[Density estimates and comparison with Lee--Ramil--Seo]
The role of Proposition~\ref{prop:localized-cutoff-heat-kernel} is analogous
to that of the localized density bound in Lee--Ramil--Seo's boundary-layer
analysis.  For the underdamped Langevin chain $p\to q$, they modify the
coefficients outside the bounded pre-killing region and apply the explicit
parametrix Gaussian upper bound of Konakov--Menozzi--Molchanov
\cite[Theorem~2.1]{KonakovMenozziMolchanov2010} to prove
\cite[Lemma~5.5]{LeeRamilSeo2026}.  Its boundary-layer consequence
\cite[Corollary~5.6]{LeeRamilSeo2026} is then used to control small-velocity
boundary entries in their regularization-stability argument.

The present third-order Langevin diffusion \eqref{eq:sde} has the longer chain
\[
    r\longrightarrow p\longrightarrow \theta ,
\]
The analogous boundary-layer step therefore requires a density estimate that
is uniform in $\delta$ and respects the three-layer triangular H\"ormander
structure.  We obtain this estimate through a chain-compatible cutoff and a
$\delta$-uniform adaptation of Pigato's Malliavin-covariance argument; the
details are given in Lemma~\ref{lem:uniform-pigato-covariance} and
Appendix~\ref{app:uniform-perturbation}.
\end{remark}

For $k\in\mathbb N_0$ and $p\ge1$, let $\mathbb D^{k,p}$ denote the standard
Malliavin--Sobolev space: a random variable belongs to $\mathbb D^{k,p}$ when
its Malliavin derivatives up to order $k$ have finite $L^p$ moments.  
Thus, $k$ is the Malliavin differentiability
order and $p$ is the integrability exponent.
We write
$\|\cdot\|_{k,p}$ for the corresponding norm, defined componentwise for
vector- and matrix-valued random variables; see
\cite[Section~1.2]{Nualart2006}.

\begin{lemma}[Uniform Pigato-type inverse covariance estimate]
\label{lem:uniform-pigato-covariance}
Fix $\eps>0$.  For the cutoff regularized family of
Lemma~\ref{lem:chain-compatible-cutoffs}, use the block order
\[
    X^1=r,\qquad X^2=p,\qquad X^3=\theta .
\]
Let $J_{t,s}^{\delta,\mathsf O}$ be the Jacobian flow from time $s$ to time
$t$ along the full cutoff regularized process.  The three constant noise
matrices in the block order $(r,p,\theta)$ are
\begin{equation}\label{eq:regularized-noise-blocks}
    \Sigma_r=
    \begin{pmatrix}
        \sqrt{2\gamma\eps/L}\,I_d\\ 0\\ 0
    \end{pmatrix},
    \qquad
    \Sigma_p^\delta=
    \begin{pmatrix}
        0\\ \sqrt{2\eps\delta}\,I_d\\ 0
    \end{pmatrix},
    \qquad
    \Sigma_\theta^\delta=
    \begin{pmatrix}
        0\\ 0\\ \sqrt{2\eps\delta}\,I_d
    \end{pmatrix}.
\end{equation}
Each matrix in \eqref{eq:regularized-noise-blocks} belongs to
$\R^{3d\times d}$.  Define the Malliavin covariance contribution generated
only by the $r$-Brownian motion by
\[
    \mathcal M_{t,r}^{\delta,\mathsf O}
    =
    \int_0^t
    J_{t,s}^{\delta,\mathsf O}
    \Sigma_r\Sigma_r^\top
    \left(J_{t,s}^{\delta,\mathsf O}\right)^\top\,ds.
\]
Set
\[
    T_t=\operatorname{diag}\left(t^{1/2}I_d,t^{3/2}I_d,t^{5/2}I_d\right).
\]
Then, for every $q\ge1$, every $M<\infty$, and every compact
$K_0\subset\mathsf O$,
\[
    \sup_{\substack{
        0\le\delta\le1\\
        z\in K_0\\
        0<t\le M}}
    \mathbb E_z\left[
        \lambda_{\min}\left(
            T_t^{-1}\mathcal M_{t,r}^{\delta,\mathsf O}T_t^{-1}
        \right)^{-q}
    \right]
    <\infty .
\]
The same estimate holds for the adjoint cutoff family.  All constants are for
fixed $\eps$ and may depend on $\eps$.
\end{lemma}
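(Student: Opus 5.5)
Throughout, write $\mathcal M:=\mathcal M_{t,r}^{\delta,\mathsf O}$ and $J_{t,s}:=J_{t,s}^{\delta,\mathsf O}$. We lower-bound the quadratic form $\langle v,T_t^{-1}\mathcal M T_t^{-1}v\rangle$ uniformly over unit vectors $v=(v^1,v^2,v^3)\in\R^{3d}$, following Pigato's Norris-lemma scheme for triangular chains. The ideal Pigato bound would come from $(\Sigma_r, [\cdot,X_0],[[\cdot,X_0],X_0])$-brackets; here the job is to make every constant independent of $\delta$.

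\emph{Step 1: frozen-linear reduction.} Write
\[
\langle v,T_t^{-1}\mathcal M T_t^{-1}v\rangle=\frac{2\gamma\eps}{L}\int_0^t\left|\Pi_r\left(J_{t,s}\right)^\top T_t^{-1}v\right|^2ds,
\]
where $\Pi_r$ projects onto the $r$-block. On the short-time scale, expand $J_{t,s}^\top$ around the chain structure. By \eqref{eq:cutoff-agreement} and \eqref{eq:cutoff-chain-links}, the linearized drift has subdiagonal blocks $\gamma I_d$ and $I_d$ on $\mathsf O$. The remaining entries are bounded uniformly in $\delta$ by \eqref{eq:cutoff-uniform-Ck}; this covers the $\delta$-terms $-\delta L$ and $-\delta\nabla^2U$ as well. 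For the nilpotent model,
\[
\Pi_r(J_{t,s}^{0})^\top T_t^{-1}v=t^{-1/2}v^1+\gamma(t-s)t^{-3/2}v^2+\tfrac{\gamma}{2}(t-s)^2t^{-5/2}v^3 .
\]
The time integral of its square is $c(v)\ge c_0>0$ uniformly on the unit sphere, since the Gram matrix of the polynomials $1,u,u^2$ on $[0,1]$ is positive definite. This is the source of the scales $t^{1/2},t^{3/2},t^{5/2}$.

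\emph{Step 2: perturbation control.} Write $J_{t,s}=J^0_{t,s}+R_{t,s}$, where $R$ collects the effects of the non-constant and bounded drift entries. The anisotropic estimate we need is
\[
\left|\Pi_r R_{t,s}^\top T_t^{-1}v\right|\le C\,\Xi\, t^{1/2}\cdot t^{-1/2},
\]
that is, one extra power of $t^{1/2}$ relative to the leading term in each layer. Here $\Xi$ is a random variable with all moments bounded uniformly in $\delta\in[0,1]$, $z\in K_0$, $t\le M$. Such a bound follows from the $C_b^k$ bounds \eqref{eq:cutoff-uniform-Ck} and Gronwall applied to the linear equation for $J$. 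Crucially, it is the triangular dependence $B_3=B_3(x^2,x^3)$ from Lemma~\ref{lem:chain-compatible-cutoffs} that keeps the perturbation compatible with the scaling: $B_3$ does not see $x^1$, so no $t^{-5/2}$ factor hits an $O(1)$ error. This is essentially the content I would place in Appendix~\ref{app:uniform-perturbation}. The additional $p$- and $\theta$-noises enter only through $X$ inside $J$, and $X$ stays in $\mathsf O$ near time $0$ with high probability; they never have to be inverted.

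\emph{Step 3: small times.} For $t\le t_0(\Xi)$, Steps 1--2 give $\lambda_{\min}\ge c_0/2$ on the event $\{C\Xi t^{1/2}\le c_0/4\}$. Its complement has probability $O(t^{k})$ for all $k$, uniformly in $\delta$. One also needs the process to remain in $\mathsf O$ up to time $t$. The moment bounds give $\mathbb P(\sup_{s\le t}|X_s-z|>\dist(K_0,\mathsf O^c))\le C_kt^k$; on this exit event we use instead the crude bound below.

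\emph{Step 4: combining via the small-ball criterion.} We convert to negative moments with the standard small-ball argument. It suffices to show $\mathbb P(\lambda_{\min}<\eta)\le C_k\eta^k$ uniformly in $\delta,z,t$. For $t$ small relative to $\eta$, Step 3 applies. For intermediate and large $t\le M$, the rescaling is harmless: the covariance is monotone in $t$, since $\mathcal M_{t}\succeq J_{t,t'}\mathcal M_{t'}J_{t,t'}^\top$. We therefore reduce to a fixed small time $t'$ and control $J_{t,t'}^{-1}$ in $L^p$ uniformly in $\delta$, which again comes from Gronwall on bounded coefficients.

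\emph{Main obstacle.} The hard part is Step 2: the anisotropic remainder estimate with $\delta$-uniform moments. The nonconstant $\nabla^2U(x^3)$ entries and the $\delta$-damping terms must be shown to contribute only at strictly higher order in each block of $T_t^{-1}$. I would handle this by writing Duhamel for $J^\top$ in the scaled coordinates $T_t^{-1}J_{t,s}^\top T_t$. In these coordinates the chain blocks become $O(1)$ and every other entry carries a positive power of $t$, because the drift is triangular. The supremum over $\delta$ is then controlled by \eqref{eq:cutoff-uniform-Ck}. Finally, for the adjoint family the same argument applies verbatim: the chain links are $-\gamma I_d,-I_d$, the sign does not affect the Gram-matrix positivity, and the cutoff bounds are identical.
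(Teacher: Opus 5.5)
Your Steps 1--3 are sound. They are a pathwise, forward-flow version of the paper's chain expansion (Lemmas~\ref{lem:appendix-local-chain}--\ref{lem:appendix-chain-remainder}), and you correctly identify $J_{x^1}B_3^{\delta,\mathsf O}=0$ as what makes the normalized remainder one factor $t^{1/2}$ smaller than the leading polynomial profile.

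The gap is in Step~4. Step~3 only delivers a high-probability bound at each time,
\[
\Pbb_z\left(\lambda_{\min}\left(T_t^{-1}\mathcal M_{t,r}^{\delta,\mathsf O}T_t^{-1}\right)<c_0/4\right)\le C_kt^k ,
\]
and on the exceptional event you have no lower bound at all. That event includes the exit event, for which the ``crude bound below'' you refer to is never supplied. For fixed $t>0$ this bound does not decay as $\eta\downarrow0$, so it says nothing about $\Pbb(\lambda_{\min}<\eta)$ for small $\eta$. Your reduction ``to a fixed small time $t'$'' via $\mathcal M_t\succeq J_{t,t'}\mathcal M_{t'}J_{t,t'}^\top$ is then circular. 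It transports a small-ball bound from $t'$ to $t$ (this is the paper's Lemma~\ref{lem:appendix-finite-time-extension}), but it needs that bound at $t'$ as input, and nothing in Steps~1--3 provides it.

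The missing idea is that the time window must shrink with $\eta$. The paper does this by keeping only $s\in[0,t\xi]$ in the covariance integral. The leading Gram matrix becomes $D_\xi QD_\xi$ with $\lambda_{\min}\ge q_0\xi^5$, and the stopping time $S_\xi$ shows the normalized remainder on $[0,t\xi]$ is dominated except on an event of probability $C_N\xi^N$ (Lemmas~\ref{lem:appendix-pigato-stopping}--\ref{lem:appendix-reduced-negative-moments}). This gives $\Pbb(\lambda_{\min}<c\xi^5)\le C_N\xi^N$ uniformly in $t\le t_0$.

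In your framework, the repair is to take $t'=\eta^\beta$ with $\beta<1/5$ whenever $t>\eta^\beta$. Because the noise is additive, $\|J_{t,t'}^{-1}\|\le e^{CM}$ deterministically, so
\[
\lambda_{\min}\left(T_t^{-1}\mathcal M_tT_t^{-1}\right)
\ge\left\|T_{t'}^{-1}J_{t,t'}^{-1}T_t\right\|^{-2}
\lambda_{\min}\left(T_{t'}^{-1}\mathcal M_{t'}T_{t'}^{-1}\right)
\ge c_M\,\eta^{5\beta}\,
\lambda_{\min}\left(T_{t'}^{-1}\mathcal M_{t'}T_{t'}^{-1}\right).
\]
Step~3 at time $t'$ then yields $\Pbb(\lambda_{\min}<\eta)\le C_k\eta^{\beta k}$ for small $\eta$, uniformly in $t\in(0,M]$; the case $t\le\eta^\beta$ is Step~3 directly. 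The anisotropic loss $(t/t')^5$ is exactly the rescaling you called ``harmless''. It is a polynomial loss in $\eta$, and it has to be beaten by the choice $\beta<1/5$.

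With this change your argument becomes a valid alternative to the paper's stopping-time route. It is also somewhat more economical: it works with the forward flow directly and never has to split off and then restore the endpoint Jacobian $Y_t$.
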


\begin{proof}
In the reordered cutoff system, the diffusion matrices are constant in space:
\[
    dX_t
    =
    B^{\delta,\mathsf O}(X_t)\,dt
    +\Sigma_r\,dW_t^r
    +\Sigma_p^\delta\,dW_t^p
    +\Sigma_\theta^\delta\,dW_t^\theta,
\]
where $W^r$, $W^p$, and $W^\theta$ are independent standard
$d$-dimensional Brownian motions, with $W^r=B$ in
\eqref{eq:sde-delta}.
The matrices $\Sigma_p^\delta$ and $\Sigma_\theta^\delta$ are the constant
regularizing diffusion matrices in the $p$- and $\theta$-coordinates given in
\eqref{eq:regularized-noise-blocks}.
Thus, the $r$-noise Malliavin derivative is
$D_s^rX_t^{\delta,\mathsf O}=J_{t,s}^{\delta,\mathsf O}\Sigma_r$, computed
along the full regularized trajectory.  The extra $p$- and $\theta$-noises
remain in this trajectory and enter the estimate through the semimartingale
remainders of the coefficient expansions.

Set
\[
    \overline{\mathcal M}_{t,r}^{\delta,\mathsf O}
    :=T_t^{-1}\mathcal M_{t,r}^{\delta,\mathsf O}T_t^{-1}.
\]
The normalized inverse-flow expansion in
Lemma~\ref{lem:appendix-chain-remainder}, the remainder-covariance estimate in
Lemma~\ref{lem:appendix-remainder-covariance}, and the stopping argument in
Lemma~\ref{lem:appendix-pigato-stopping} give negative moments of every order
for the reduced covariance.  Lemma~\ref{lem:appendix-scaled-inverse-jacobian}
then restores the endpoint Jacobian factor, and
Lemma~\ref{lem:appendix-negative-moments} gives the required short-time bound.
Finally, Lemma~\ref{lem:appendix-finite-time-extension} transports that bound
to every fixed interval $0<t\le M$ and verifies the same conclusion for the
adjoint cutoff family.  This proves the claim.
\end{proof}

The inverse-covariance estimate above provides the uniform non-degeneracy
input for the Malliavin density criterion.  Combining it with uniform
Malliavin derivative bounds for the cutoff flow yields the local heat-kernel
estimate needed in the boundary-layer argument.

\begin{proposition}[Localized cutoff heat-kernel bound]
\label{prop:localized-cutoff-heat-kernel}
Let $\mathsf O\subset\R^{3d}$ be a bounded smooth open set, and let
$Z^{\delta,\mathsf O}$ denote the cutoff process constructed in
Lemma~\ref{lem:chain-compatible-cutoffs}.  After the reordering
\[
    X^1=r,\qquad X^2=p,\qquad X^3=\theta,
\]
the cutoff drifts preserve Pigato's triangular dependence structure
\[
    B_j=B_j\left(x^{j-1},\ldots,x^3\right),\qquad j=2,3.
\]
Then, for every $0<t_0<M<\infty$ and compact sets
$K_0,K_1\subset\mathsf O$, the transition laws of
$Z^{\delta,\mathsf O}$ admit densities
$p_\delta^{\mathsf O}(t,z,y)$ satisfying
\[
    \sup_{\substack{
        0\le\delta\le1\\
        t\in[t_0,M]\\
        z\in K_0,\ y\in K_1}}
    p_\delta^{\mathsf O}(t,z,y)<\infty .
\]
The same estimate holds for the adjoint cutoff family.
\end{proposition}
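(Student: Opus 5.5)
The plan is the standard Malliavin density route. Lemma~\ref{lem:uniform-pigato-covariance} supplies the non-degeneracy, and the cutoff structure of Lemma~\ref{lem:chain-compatible-cutoffs} supplies uniform smoothness. Let $X_t=X_t^{\delta,\mathsf O,z}$ be the cutoff process in block order $(r,p,\theta)$. Its full Malliavin covariance is
\[
\mathcal M_t^{\delta,\mathsf O}=\mathcal M_{t,r}^{\delta,\mathsf O}+\int_0^t J_{t,s}^{\delta,\mathsf O}\left(\Sigma_p^\delta(\Sigma_p^\delta)^\top+\Sigma_\theta^\delta(\Sigma_\theta^\delta)^\top\right)(J_{t,s}^{\delta,\mathsf O})^\top ds,
\]
and the second summand is positive semidefinite. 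Hence $\lambda_{\min}(\mathcal M_t^{\delta,\mathsf O})\ge\lambda_{\min}(\mathcal M_{t,r}^{\delta,\mathsf O})$. For $t\in[t_0,M]$ the scaling matrix satisfies $T_t\succeq \min(t_0^{1/2},t_0^{5/2})I$, so
\[
\lambda_{\min}(\mathcal M_t^{\delta,\mathsf O})\ge c(t_0)\,\lambda_{\min}\left(T_t^{-1}\mathcal M_{t,r}^{\delta,\mathsf O}T_t^{-1}\right).
\]
Lemma~\ref{lem:uniform-pigato-covariance} then gives, for every $q$,
\[
\sup_{0\le\delta\le1,\ z\in K_0,\ t\in[t_0,M]}\E\left[\left(\det\mathcal M_t^{\delta,\mathsf O}\right)^{-q}\right]<\infty .
\]
Here I use $\det\ge\lambda_{\min}^{3d}$.

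Next I will establish uniform Malliavin--Sobolev bounds. By \eqref{eq:cutoff-uniform-Ck}, the cutoff drifts have all derivatives bounded uniformly in $\delta\in[0,1]$. The diffusion coefficients are constant, with norms bounded by $\sqrt{2\gamma\eps/L}+2\sqrt{2\eps}$. The standard estimates of \cite[Section~2.2]{Nualart2006} for SDEs with $C_b^\infty$ coefficients therefore give $\sup_{\delta,z\in K_0,t\le M}\|X_t\|_{k,p}<\infty$ for all $k,p$. This covers the Jacobian flow and its inverse, since these solve linear equations whose coefficients are derivatives of the drift. Because the drift is globally bounded and Lipschitz, these bounds hold without any localization of paths.

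With both ingredients in place, I will apply the integration-by-parts density formula. For a nondegenerate $F\in\mathbb D^\infty$,
\[
p_F(y)=\E\left[\bfone_{\{F>y\}}H_{(1,\dots,3d)}(F,1)\right],
\]
where the weight is built from $\mathcal M^{-1}$ and the iterated derivatives of $F$. Hölder's inequality gives $\|H_{(1,\dots,3d)}(F,1)\|_{L^1}\le C\,\|(\det\mathcal M)^{-1}\|_{L^{q}}^{a}\,\|F\|_{k,p}^{b}$ for suitable exponents, by \cite[Proposition~2.1.4 and (2.32)]{Nualart2006}. This yields $\sup_y p_\delta^{\mathsf O}(t,z,y)\le C$ uniformly in $\delta$, $z\in K_0$ and $t\in[t_0,M]$. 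The restriction $y\in K_1$ is then harmless; a Gaussian-tail refinement is possible but not needed. The $\delta=0$ endpoint is included because Lemma~\ref{lem:uniform-pigato-covariance} covers it. At $\delta=0$ the covariance is just $\mathcal M_{t,r}$, so the argument above applies verbatim. The adjoint family is handled identically using the adjoint cutoff drifts and the adjoint half of the lemma.

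I expect the main obstacle to have been absorbed into Lemma~\ref{lem:uniform-pigato-covariance}, namely the $\delta$-uniform negative moments. What remains to check is the monotonicity step, which holds because adding the extra noises only adds positive semidefinite terms to the covariance. I must also check that the Nualart constants depend on the coefficients only through the uniform $C_b^k$ norms in \eqref{eq:cutoff-uniform-Ck}. This is where the chain-compatible cutoff is essential: without it the polynomial-growth drift would not give uniform $\mathbb D^{k,p}$ bounds.
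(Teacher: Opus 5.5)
Your proposal is correct and follows essentially the same route as the paper. Both arguments use the positive-semidefinite domination $\mathcal M_t^{\delta,\mathsf O}\succeq\mathcal M_{t,r}^{\delta,\mathsf O}$, remove the $T_t$ normalization on $[t_0,M]$, invoke the uniform negative moments of Lemma~\ref{lem:uniform-pigato-covariance} and $\delta$-uniform Malliavin--Sobolev bounds coming from the cutoff $C_b^k$ norms, and conclude with a Malliavin integration-by-parts density bound; the only differences are cosmetic, since you use Nualart's weight estimate with negative moments of $\det\mathcal M_t^{\delta,\mathsf O}$ where the paper uses the empty-multi-index case of Pigato's Lemma~A.1 with negative moments of $\lambda_{\min}$.
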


\begin{proof}
For the cutoff family all coefficients have bounded derivatives, uniformly for
$0\le\delta\le1$, by Lemma~\ref{lem:chain-compatible-cutoffs}.  At
$\delta=0$ the process is hypoelliptic and its Malliavin non-degeneracy is
supplied by the chain $r\to p\to\theta$.  At $\delta>0$ the process is
elliptic, but the ellipticity constants degenerate as $\delta\downarrow0$.
Therefore the proof uses the same uniform chain estimate for every
$0\le\delta\le1$, rather than any lower ellipticity bound from the added
$p$- and $\theta$-noises.

Let $\mathcal M_t^{\delta,\mathsf O}$ denote the full Malliavin covariance of
the cutoff regularized process.  With the block order $(r,p,\theta)$ and the
constant matrices $\Sigma_r$, $\Sigma_p^\delta$, and $\Sigma_\theta^\delta$
from Lemma~\ref{lem:uniform-pigato-covariance},
\[
    \begin{aligned}
    \mathcal M_t^{\delta,\mathsf O}
    &=
    \int_0^t
    J_{t,s}^{\delta,\mathsf O}
    \left(
        \Sigma_r\Sigma_r^\top
        +\Sigma_p^\delta\left(\Sigma_p^\delta\right)^\top
        +\Sigma_\theta^\delta\left(\Sigma_\theta^\delta\right)^\top
    \right)
    \left(J_{t,s}^{\delta,\mathsf O}\right)^\top\,ds  \\
    &=
    \mathcal M_{t,r}^{\delta,\mathsf O}
    +\mathcal M_{t,p}^{\delta,\mathsf O}
    +\mathcal M_{t,\theta}^{\delta,\mathsf O}.
    \end{aligned}
\]
Here $\mathcal M_{t,r}^{\delta,\mathsf O}$ is the $r$-noise contribution
computed with the Jacobian of the full regularized cutoff process; the influence
of the $p$- and $\theta$-noises is retained through that Jacobian.  Since the
last two summands are positive semidefinite,
\[
    \mathcal M_t^{\delta,\mathsf O}
    \succeq
    \mathcal M_{t,r}^{\delta,\mathsf O}.
\]
Consequently,
\[
    \lambda_{\min}\left(
        T_t^{-1}\mathcal M_t^{\delta,\mathsf O}T_t^{-1}
    \right)
    \ge
    \lambda_{\min}\left(
        T_t^{-1}\mathcal M_{t,r}^{\delta,\mathsf O}T_t^{-1}
    \right),
\]
and Lemma~\ref{lem:uniform-pigato-covariance} gives uniform negative moments
for the full rescaled covariance on $0<t\le M$.

Put
\[
    \overline{\mathcal M}_t^{\delta,\mathsf O}
    =T_t^{-1}\mathcal M_t^{\delta,\mathsf O}T_t^{-1}.
\]
Since
\[
    \mathcal M_t^{\delta,\mathsf O}
    =T_t\overline{\mathcal M}_t^{\delta,\mathsf O}T_t,
\]
and
\[
    c_{t_0,M}:=
    \inf_{t\in[t_0,M]}\lambda_{\min}(T_t)>0,
\]
we have
\[
    \lambda_{\min}\left(\mathcal M_t^{\delta,\mathsf O}\right)
    \ge
    c_{t_0,M}^{2}
    \lambda_{\min}\left(
        \overline{\mathcal M}_t^{\delta,\mathsf O}
    \right),
    \qquad t\in[t_0,M].
\]
Consequently, Lemma~\ref{lem:uniform-pigato-covariance} gives, for every
$q\ge1$,
\begin{equation}\label{eq:uniform-smallest-eigenvalue-density}
    \sup_{\substack{
        0\le\delta\le1,\ z\in K_0\\
        t\in[t_0,M]}}
    \E_z\left[
        \lambda_{\min}\left(
            \mathcal M_t^{\delta,\mathsf O}
        \right)^{-q}
    \right]
    \le
    c_{t_0,M}^{-2q}
    \sup_{\substack{
        0\le\delta\le1,\ z\in K_0\\
        t\in[t_0,M]}}
    \E_z\left[
        \lambda_{\min}
        \left(\overline{\mathcal M}_t^{\delta,\mathsf O}\right)^{-q}
    \right]
    <\infty .
\end{equation}

The remaining Malliavin--Sobolev estimates needed for the density criterion are
also uniform.  Define the horizontal block concatenation
\[
    \Sigma^\delta
    :=\left[\,\Sigma_r\ \ \Sigma_p^\delta\ \ \Sigma_\theta^\delta\,\right]
    =
    \begin{pmatrix}
        \sqrt{2\gamma\eps/L}\,I_d&0&0\\
        0&\sqrt{2\eps\delta}\,I_d&0\\
        0&0&\sqrt{2\eps\delta}\,I_d
    \end{pmatrix}
    \in\R^{3d\times3d},
\]
where the three block columns are defined in
\eqref{eq:regularized-noise-blocks}.
Since the diffusion matrices are constant, the first Malliavin derivative
satisfies
\[
    D_sZ_t^{\delta,\mathsf O,z}
    =J_{t,s}^{\delta,\mathsf O,z}\Sigma^\delta,
    \qquad 0\le s\le t,
\]
and higher Malliavin derivatives satisfy the corresponding iterated
variational equations.  Their inhomogeneous terms are finite sums of products
of lower-order Malliavin derivatives and derivatives of
$B^{\delta,\mathsf O}$.  After cutoff, these coefficient derivatives are
bounded uniformly for
$0\le\delta\le1$, and $\Sigma^\delta$ is constant in space and uniformly
bounded.  Therefore,
for every Malliavin differentiability order $k$ and integrability exponent
$p$, repeated applications of the
Burkholder-Davis-Gundy inequality and Gr\"onwall's lemma to the variational
equations and their Malliavin derivatives yield
\[
    \sup_{\substack{
        0\le\delta\le1,\ z\in K_0\\
        t\in[t_0,M]}}
    \left\|Z_t^{\delta,\mathsf O,z}\right\|_{\mathbb D^{k,p}}
    <\infty .
\]
The constants depend only on the cutoff derivative bounds, $K_0$, $M$,
$k,p$, and the model parameters, and are uniform in $\delta$.

For $F=Z_t^{\delta,\mathsf O,z}$, its Malliavin covariance is
$\gamma_F=\mathcal M_t^{\delta,\mathsf O}$.  Thus
\eqref{eq:uniform-smallest-eigenvalue-density} is precisely the uniform bound
on
\[
    \Gamma_F(q)
    :=1+\E_z\left[\lambda_{\min}(\gamma_F)^{-q}\right]
\]
required in Pigato's density criterion.  Apply the empty-multiindex (density)
case of \cite[Lemma~A.1]{Pigato2022}, with the integrability exponent chosen as
in that lemma.  The preceding Malliavin--Sobolev estimates are uniform, and the
tail factor appearing there satisfies
$\Pbb_z(|F|>|y|/2)^b\le1$.  Hence the criterion yields
\[
    \sup_{\substack{
        0\le\delta\le1\\
        t\in[t_0,M]\\
        z\in K_0,\ y\in K_1}}
    p_\delta^{\mathsf O}(t,z,y)<\infty .
\]
The restriction $t\ge t_0>0$ removes the explicit short-time singular factor in
the density bound; the short-time rescaling has already been accounted for by
$T_t$ in Lemma~\ref{lem:uniform-pigato-covariance}.  For the adjoint cutoff
family the triangular chain is the same up to signs, so the relevant chain
matrices have the same singular values.  The preceding argument and constants
therefore apply unchanged to the adjoint densities.
\end{proof}

The next lemma records that, before exiting a fixed compact set, the auxiliary
regularized process is close to the degenerate process on finite time
intervals under the explicit SDE coupling.

\begin{lemma}[Compact-time pathwise stability]\label{lem:pathwise}
Fix $T<\infty$ and a compact set $K\subset\R^{3d}$.  Let $\calK$ be a compact
neighborhood of $K$, and let
\[
    \sigma_\delta:=\inf\left\{t\ge0:Z_t^\delta\notin\calK
    \text{ or } Z_t\notin\calK\right\}.
\]
If $Z_t^\delta$ and $Z_t$ start from the same point $z\in K$ and are coupled
by using the same Brownian motion $B$ in the $r$-coordinate, then, for every
$\eta>0$,
\[
    \sup_{z\in K}
    \Pbb_z\left(
        \sup_{0\le t\le T\wedge\sigma_\delta}\left|Z_t^\delta-Z_t\right|>\eta
    \right)
    \longrightarrow0
    \qquad\text{as }\delta\downarrow0.
\]
Moreover,
\begin{equation}\label{eq:pathwise-L2}
    \sup_{z\in K}
    \E_z\left[
        \sup_{0\le t\le T\wedge\sigma_\delta}
        \left|Z_t^\delta-Z_t\right|^2
    \right]
    \le C_{T,\calK}\delta .
\end{equation}
The same statement holds for the adjoint regularized processes.
\end{lemma}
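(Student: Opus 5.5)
The plan is a synchronous-coupling Gr\"onwall estimate on the stopped interval $[0,T\wedge\sigma_\delta]$. Fix $z\in K$ and realize $Z^\delta$ and $Z$ on one probability space, driven by the same $B$ in the $r$-equation; $Z^\delta$ additionally uses the independent $W^\theta,W^p$. Set $E_t:=Z_t^\delta-Z_t=(e^\theta_t,e^p_t,e^r_t)$ and subtract \eqref{eq:sde} from \eqref{eq:sde-delta} with $\varsigma=+1$. The $r$-noise cancels exactly, leaving
\[
    dE_t=\left(b(Z_t^\delta)-b(Z_t)\right)dt
    -\delta\left(\nabla U(\theta_t^\delta),\,Lp_t^\delta,\,0\right)dt
    +\sqrt{2\eps\delta}\,\left(dW_t^\theta,\,dW_t^p,\,0\right),
\]
where $b$ is the drift of \eqref{eq:sde}. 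Since $E_0=0$, we may integrate from $0$.

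For $t\le\sigma_\delta$, both processes lie in the compact set $\calK$. Because $U\in C^\infty$, the map $\nabla U$ is Lipschitz on the $\theta$-projection of $\calK$, so $|b(Z_t^\delta)-b(Z_t)|\le L_{\calK}|E_t|$. Similarly, $|\nabla U(\theta_t^\delta)|+L|p_t^\delta|\le C_\calK$. We then bound
\[
    \sup_{s\le t\wedge\sigma_\delta}|E_s|^2
    \le 3L_\calK^2T\int_0^{t\wedge\sigma_\delta}|E_s|^2ds
    +3\delta^2C_\calK^2T^2
    +6\eps\delta\sup_{s\le t\wedge\sigma_\delta}\left|\left(W_s^\theta,W_s^p\right)\right|^2 .
\]
Taking expectations and applying Doob's (or BDG's) inequality to the stopped Brownian term gives a contribution $\le C\delta T$. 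Set $\phi(t):=\E[\sup_{s\le t\wedge\sigma_\delta}|E_s|^2]$. Since $\int_0^{t\wedge\sigma_\delta}|E_s|^2ds\le\int_0^t\sup_{u\le s\wedge\sigma_\delta}|E_u|^2ds$, we obtain $\phi(t)\le C\int_0^t\phi(s)ds+C(\delta+\delta^2)$. The function $\phi$ is finite because $|E|\le2\operatorname{diam}\calK$ before $\sigma_\delta$.

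Gr\"onwall's inequality then yields $\phi(T)\le C_{T,\calK}\delta$ for $\delta\le1$. The constants depend only on $T$, $\calK$, $\eps$, $L$ and $\gamma$, not on $z$, which is \eqref{eq:pathwise-L2}. The probability statement follows from Chebyshev's inequality: the probability is at most $C_{T,\calK}\delta/\eta^2$, uniformly in $z\in K$.

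For the adjoint processes we repeat the argument with $\varsigma=-1$. Only the signs of the transport terms change, so the same local Lipschitz constant applies and the proof is verbatim.

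There is no real obstacle here. The only points needing care are these:
\begin{enumerate}[label=(\alph*)]
    \item The stopping at $\sigma_\delta$ is what makes the polynomially growing $\nabla U$ Lipschitz, so no moment assumption from Assumption~\ref{ass:controlled-coefficients} is needed.
    \item The integrand of the Gr\"onwall step must be written in a form that is monotone in $t$ before taking expectations.
    \item The regularizing noise contributes order $\delta$, not $\delta^2$, which determines the rate in \eqref{eq:pathwise-L2}.
\end{enumerate}
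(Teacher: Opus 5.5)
Your proof is correct and follows the paper's argument: the synchronous coupling cancels the $r$-noise, stopping at $\sigma_\delta$ makes $\nabla U$ bounded and Lipschitz, Gr\"onwall gives the $C_{T,\calK}\delta$ bound, and Chebyshev gives the probability statement. The only cosmetic difference is the order of operations. The paper applies Gr\"onwall pathwise to the unsquared stopped supremum, obtaining the bound $C\delta+C\sqrt\delta\sup_{s\le T}(|W_s^\theta|+|W_s^p|)$, and only then takes second moments, whereas you square and take expectations before applying Gr\"onwall to $\phi$; both routes work equally well here.
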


\begin{proof}
On $\calK$, the vector field $\nabla U$ is bounded and Lipschitz.  Write
$\Delta\theta_t=\theta_t^\delta-\theta_t$,
$\Delta p_t=p_t^\delta-p_t$, and $\Delta r_t=r_t^\delta-r_t$.  Up to time
$T\wedge\sigma_\delta$,
\begin{align}
    \Delta\theta_t
    &=\int_0^t\Delta p_s ds
      -\delta\int_0^t\nabla U\left(\theta_s^\delta\right) ds
      +\sqrt{2\eps\delta} W_t^\theta,
      \label{eq:pathwise-difference-theta}\\
    \Delta p_t
    &=-\frac1L\int_0^t
      \left(\nabla U\left(\theta_s^\delta\right)-\nabla U(\theta_s)\right) ds
      +\gamma\int_0^t\Delta r_s ds
      -\delta L\int_0^t p_s^\delta ds
      +\sqrt{2\eps\delta} W_t^p,
      \label{eq:pathwise-difference-p}\\
    \Delta r_t
    &=-\gamma\int_0^t\Delta p_s ds
      -\gamma\int_0^t\Delta r_s ds.
      \label{eq:pathwise-difference-r}
\end{align}
Therefore, for a constant $C=C(T,\calK,L,\gamma,U)$,
\[
    \sup_{0\le s\le t\wedge\sigma_\delta}\left|Z_s^\delta-Z_s\right|
    \le
    C\int_0^t\sup_{0\le u\le s\wedge\sigma_\delta}\left|Z_u^\delta-Z_u\right| ds
    +C\delta
    +C\sqrt\delta\sup_{0\le s\le T}\left(\left|W_s^\theta\right|+\left|W_s^p\right|\right).
\]
Gr\"{o}nwall's lemma gives
\begin{equation}\label{eq:pathwise-gronwall}
    \sup_{0\le s\le T\wedge\sigma_\delta}\left|Z_s^\delta-Z_s\right|
    \le
    C\delta
    +C\sqrt\delta\sup_{0\le s\le T}\left(\left|W_s^\theta\right|+\left|W_s^p\right|\right).
\end{equation}
Taking second moments in \eqref{eq:pathwise-gronwall} gives
\eqref{eq:pathwise-L2}, since the Brownian
suprema have finite second moments on $[0,T]$.  In particular, the right-hand
side converges to zero in probability, uniformly in $z\in K$.
The adjoint case is identical after replacing the drift by the adjoint drift.
\end{proof}

Lemma~\ref{lem:pathwise} gives convergence only before exit from a prescribed
compact set.  The uniform finite-time moment bounds now allow us to remove
this localization and obtain global finite-horizon convergence.

\begin{theorem}[$\delta$-coupling of the dynamics]\label{thm:delta-coupling}
Fix $T<\infty$ and a compact set $K\subset\R^{3d}$.  Couple
$Z^\delta$ and $Z$ by using the same Brownian motion $B$ in the $r$-coordinate
and the additional Brownian motions $W^\theta,W^p$ only in the regularized
equations \eqref{eq:sde-delta}.  Then, for every $\eta>0$,
\[
    \sup_{z\in K}
    \Pbb_z\left(
        \sup_{0\le t\le T}\left|Z_t^\delta-Z_t\right|>\eta
    \right)
    \longrightarrow0
    \qquad\text{as }\delta\downarrow0.
\]
The same statement holds for the adjoint regularized processes.
\end{theorem}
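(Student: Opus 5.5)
The plan is to remove the localization in Lemma~\ref{lem:pathwise} with a tightness argument built on the uniform moment bounds of Assumption~\ref{ass:controlled-coefficients}(b). Fix $\eta>0$ and $\kappa>0$. For $R>0$ let $\calK_R:=\overline{B(0,R)}$, and take $R$ large enough that $K\subset\calK_R$, so that $\calK_R$ is a compact neighborhood of $K$. Set
\[
    \sigma_\delta^R:=\inf\{t\ge0:Z_t^\delta\notin\calK_R\text{ or }Z_t\notin\calK_R\}.
\]
Then
\[
    \Pbb_z\Bigl(\sup_{0\le t\le T}|Z_t^\delta-Z_t|>\eta\Bigr)
    \le
    \Pbb_z\Bigl(\sup_{0\le t\le T\wedge\sigma_\delta^R}|Z_t^\delta-Z_t|>\eta\Bigr)
    +\Pbb_z\bigl(\sigma_\delta^R\le T\bigr).
\]

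The exit probability is handled uniformly in $\delta$. Since $\{\sigma_\delta^R\le T\}$ is contained in the union of the events $\{\sup_{t\le T}|Z_t^\delta|\ge R\}$ and $\{\sup_{t\le T}|Z_t|\ge R\}$, Markov's inequality gives
\[
    \Pbb_z(\sigma_\delta^R\le T)
    \le \frac{1}{R}\Bigl(\E_z^\delta\bigl[\sup_{t\le T}(1+|Z_t^\delta|)\bigr]+\E_z\bigl[\sup_{t\le T}(1+|Z_t|)\bigr]\Bigr).
\]
By Assumption~\ref{ass:controlled-coefficients}(b) with $q=1$, the right-hand side is at most $C_{T,K}/R$, uniformly in $0\le\delta\le1$ and $z\in K$.

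One point needs checking here. The moment bound is stated for the law of $Z^\delta$ alone. Under the coupling, the marginal law of $Z^\delta$ is still the law of the strong solution of \eqref{eq:sde-delta}, by strong uniqueness and because $(B,W^\theta,W^p)$ is a Brownian motion. So the marginal bounds transfer to the coupled probability space. I would state this explicitly.

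Now choose $R$ with $C_{T,K}/R<\kappa/2$. For this fixed $R$, Lemma~\ref{lem:pathwise} applied with $\calK=\calK_R$ makes the first term smaller than $\kappa/2$ for all small $\delta$, uniformly in $z\in K$. Since $\kappa$ is arbitrary, the claim follows. The adjoint case is identical: it uses the adjoint moment bound in Assumption~\ref{ass:controlled-coefficients}(b) together with the adjoint part of Lemma~\ref{lem:pathwise}.

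The only delicate point is the order of the two choices: $R$ must be fixed first, independently of $\delta$, and only then $\delta\downarrow0$. The $\delta$-uniformity of the moment bounds is exactly what allows this, so no step should be a genuine obstacle.
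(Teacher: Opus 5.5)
Your proposal is correct and follows essentially the paper's argument. The paper likewise uses the $\delta$-uniform moment bounds of Assumption~\ref{ass:controlled-coefficients}(b) to make the probability of leaving $\calK_R=\{|z|\le R\}$ before time $T$ small uniformly in $0\le\delta\le1$ and $z\in K$, applies Lemma~\ref{lem:pathwise} on the complementary event for fixed $R$, and lets $\delta\downarrow0$ before $R\to\infty$. Your explicit Markov-inequality bound and your remark that strong uniqueness transfers the marginal moment bounds to the coupled probability space are details the paper leaves implicit.
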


\begin{proof}
Let $\calK_R:=\{|z|\le R\}$.  By Assumption~\ref{ass:controlled-coefficients},
for every $q\ge1$,
\[
    \sup_{0\le\delta\le1}\sup_{z\in K}
    \E_z^\delta\left[\sup_{0\le t\le T}\left|Z_t^\delta\right|^q\right]
    +
    \sup_{z\in K}
    \E_z\left[\sup_{0\le t\le T}|Z_t|^q\right]
    <\infty .
\]
Therefore
\[
    \lim_{R\to\infty}\sup_{0\le\delta\le1}\sup_{z\in K}
    \Pbb_z\left(
        Z^\delta \text{ or } Z \text{ exits } \calK_R
        \text{ before }T
    \right)=0 .
\]
On the complementary event, Lemma~\ref{lem:pathwise} applies with
$\calK=\calK_R$.  Hence, for every fixed $R$,
\[
    \sup_{z\in K}
    \Pbb_z\left(
        \sup_{0\le t\le T}\left|Z_t^\delta-Z_t\right|>\eta,\ 
        \sup_{0\le t\le T}\left|Z_t^\delta\right|\le R,\ 
        \sup_{0\le t\le T}|Z_t|\le R
    \right)\longrightarrow0
    \qquad\text{as }\delta\downarrow0 .
\]
Letting first $\delta\downarrow0$ and then $R\to\infty$ proves the claim.  The
adjoint proof is the same, using the adjoint moment bound in
Assumption~\ref{ass:controlled-coefficients}.
\end{proof}

We also need a bounded-domain killing estimate to remove finite-time
truncations in the limiting hitting identities.

\begin{lemma}[Almost-sure killing in bounded domains]\label{lem:finite-killing}
Under Assumptions~\ref{ass:U} and~\ref{ass:controlled-coefficients}, and in
the \hyperref[setup:domains]{bounded-domain setup}, for every
$z\in\calD\setminus B$,
\[
    \Pbb_z(\tau_\calC<\infty)=1.
\]
Moreover, $\sup_{z\in\overline\calD\setminus B}\E_z[\tau_\calC]<\infty$.
The analogous statement holds for the adjoint process.
\end{lemma}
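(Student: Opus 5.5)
Since $\partial\calD\subset\calC$, we have $\tau_\calC\le\tau_{\calD^c}$ for paths started in $\overline\calD\setminus B$. Both claims therefore follow from a uniform exit estimate out of the bounded ball $\overline\calD=\overline{B(0,N)}$. The target is a time $t_1<\infty$ and a constant $c>0$ such that
\[
    \inf_{z\in\overline\calD}\Pbb_z\left(\tau_{\calD^c}\le t_1\right)\ge c .
\]
Granting this, the strong Markov property at times $kt_1$ gives
\[
    \sup_{z\in\overline\calD}\Pbb_z(\tau_{\calC}>kt_1)\le(1-c)^k,
\]
since on $\{\tau_\calC>kt_1\}$ the process still lies in $\overline\calD$. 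Hence $\sup_z\E_z[\tau_\calC]\le t_1/c<\infty$, and in particular $\tau_\calC<\infty$ almost surely. The adjoint case is identical, because the adjoint SDE has the same triangular chain structure with the signs of the links flipped.

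For the uniform lower bound I would use a support-theorem argument. Fix a smooth deterministic control $u:[0,t_1]\to\R^d$ that drives the controlled ODE
\[
    \dot\theta=p,\qquad \dot p=-L^{-1}\nabla U(\theta)+\gamma r,\qquad \dot r=-\gamma p-\gamma r+\sqrt{2\gamma\eps/L}\,u
\]
from $z$ to a point at distance at least $1$ outside $\overline\calD$. One concrete choice is to prescribe a smooth $\theta$-path $\theta(t)$ that moves radially far away, set $p=\dot\theta$ and $r=\gamma^{-1}(\dot p+L^{-1}\nabla U(\theta))$, and solve for $u$. The third-order chain makes this exact flatness inversion possible: $\theta$ is a flat output, and $u$ depends on $\theta$ through its fourth derivative. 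Initial conditions $(\theta_0,p_0,r_0)$ are matched by choosing $\theta$ with prescribed $\theta(0),\dot\theta(0),\ddot\theta(0)$. These are continuous functions of $z$, so the controls can be chosen with $\|u\|_{L^2}$ bounded uniformly over $z\in\overline\calD$, and the controlled paths stay in a fixed compact set $\calK$.

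Next, on $\calK$ the drift is Lipschitz. Stroock--Varadhan (or a direct Gr\"onwall comparison of the SDE with the ODE driven by the same control) gives the following: if $\sup_{t\le t_1}|B_t-\int_0^tu|<\eta$, then the path stays within $1/2$ of the controlled path, so it exits $\overline\calD$ before $t_1$. Gr\"onwall handles the integrated $r$-noise; the drift difference is controlled by writing $r$ minus its controlled version as a convolution integral of $B-\int u$. By Girsanov, the probability of this tube event is at least
\[
    \exp\left(-C\|u\|_{L^2}^2\right)\Pbb\left(\sup_{t\le t_1}|B_t|<\eta\right)>0,
\]
which is uniform in $z$.

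The main obstacle is exactly this uniformity over the compact set of starting points. It is resolved by the continuous dependence of the flat-output construction on the initial data, together with the fact that Lipschitz constants on $\calK$ are uniform. The moment bounds of Assumption~\ref{ass:controlled-coefficients} are not even needed, since everything is localized before exit from $\calK$.
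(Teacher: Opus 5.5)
Your argument is correct and follows essentially the same route as the paper. Both use $\theta$ as a flat output to build an exiting controlled path that matches $(\theta_0,p_0,r_0)$ through $\theta(0),\dot\theta(0),\ddot\theta(0)$, then a Brownian tube around the control obtained from Gr\"onwall with additive noise, Cameron--Martin positivity of the tube event, and a geometric tail from the Markov property. The one difference is that you get uniformity over $\overline\calD$ in a single step, from continuity of the construction in $z$ together with the quantitative bound $\Pbb(\|B-h\|_\infty<\eta)\ge e^{-\|h\|_{\mathcal H_T}^2/2}\,\Pbb(\|B\|_\infty<\eta)$, whereas the paper uses local neighborhoods and a finite subcover; also, as a minor correction, the control involves only $\dddot\theta$, not the fourth derivative.
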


\begin{proof}
It suffices to prove a uniform geometric tail on the compact set
$\overline\calD\setminus B$ and then restrict to
$\calD\setminus B$.  The boundary points
$\partial B\cup\partial\calD$ are included for compactness of the finite
cover.  Points of $\partial\calD$ are already killed, whereas points of
$\partial B$ are handled by the same controlled-exit construction as the
interior starting points.

Since $\calD$ is bounded, choose $R>0$ such that
$\overline\calD\subset\{|z|<R\}$.  Consider the controlled system associated
with \eqref{eq:sde}, obtained by replacing the Brownian increment in the
$r$-equation by a deterministic Cameron--Martin control.
Set
\[
    a_r=\sqrt{2\gamma\eps/L},
\]
and, for $T>0$, let
\[
    \mathcal H_T
    :=
    \left\{
        h\in\mathcal{AC}\left([0,T];\R^d\right):
        h(0)=0,\ \dot h\in L^2\left([0,T];\R^d\right)
    \right\}
\]
be the Cameron--Martin space of the $d$-dimensional Brownian motion on
$[0,T]$; see \cite[Section~1.1]{Nualart2006}.  Its norm is
\begin{equation}\label{eq:Cameron-Martin-norm}
    \|h\|_{\mathcal H_T}^2
    =\int_0^T|\dot h(t)|^2\,dt.
\end{equation}
For $h\in\mathcal H_T$, replacing $dB_t$ by $\dot h(t)dt$ gives the
controlled system
\[
    \frac{d\theta}{dt}=p,
    \qquad
    \frac{dp}{dt}=-L^{-1}\nabla U(\theta)+\gamma r,
    \qquad
    \frac{dr}{dt}=-\gamma p-\gamma r+a_r\dot h(t).
\]
Fix $z_i\in\overline\calD\setminus B$ and write
$z_i=(\theta_i,p_i,r_i)$.  Since $\overline\calD$ is compact, its
$\theta$-projection
\[
    \Pi_\theta\overline\calD
    =
    \left\{\theta:\text{ there are }p,r\text{ with }(\theta,p,r)\in\overline\calD\right\}
\]
is compact.  Choose $\theta_{\rm out}$ outside this projection and set
\[
    m_i=\frac14\dist\left(\theta_{\rm out},\Pi_\theta\overline\calD\right)>0 .
\]
Pick $T_i>0$ and a smooth polynomial interpolation
$\Theta_i:[0,T_i]\to\R^d$ satisfying
\[
    \Theta_i(0)=\theta_i,\qquad
    \dot\Theta_i(0)=p_i,\qquad
    \ddot\Theta_i(0)=-L^{-1}\nabla U(\theta_i)+\gamma r_i,
\]
and $\Theta_i(T_i)=\theta_{\rm out}$.  Such a polynomial is obtained by the
following explicit construction.  Set
\[
    a_i:=-L^{-1}\nabla U(\theta_i)+\gamma r_i
\]
and take
\[
    \Theta_i(t)
    =\theta_i+p_it+\frac12a_it^2
    +\frac{t^3}{T_i^3}
    \left(
        \theta_{\rm out}-\theta_i-p_iT_i-\frac12a_iT_i^2
    \right).
\]
These four interpolation conditions uniquely determine a vector-valued
polynomial of degree at most three.  Define
\[
    P_i(t)=\dot\Theta_i(t),\qquad
    R_i(t)=\gamma^{-1}
    \left(\ddot\Theta_i(t)+L^{-1}\nabla U(\Theta_i(t))\right),
\]
and set
\[
    v_i(t)=\dot R_i(t)+\gamma P_i(t)+\gamma R_i(t),
    \qquad
    h_i(t)=a_r^{-1}\int_0^t v_i(s)\,ds.
\]
Then $a_r\dot h_i=v_i$, so that $(\Theta_i,P_i,R_i)$ solves the controlled system
with initial condition $z_i$ and control $h_i$.  Since $v_i$ is smooth,
\[
    \|h_i\|_{\mathcal H_{T_i}}^2
    =a_r^{-2}\int_0^{T_i}|v_i(t)|^2\,dt<\infty.
\]
The path
\[
    K_i^0=\{(\Theta_i(t),P_i(t),R_i(t)):0\le t\le T_i\}
\]
is compact, and the terminal point has a fixed exit margin:
\[
\dist\left((\Theta_i(T_i),P_i(T_i),R_i(T_i)),\calD\right)
    \ge
    \dist\left(\theta_{\rm out},\Pi_\theta\overline\calD\right)
    =4m_i .
\]
Thus the compact set and the exit margin are fixed before any support-theorem
or stochastic-tube argument is invoked.

By continuous dependence of the controlled ODE on initial conditions, there are
an open neighborhood $U_i$ of $z_i$, a number $\eta_i>0$, and a compact set
$K_i$ such that, for every $z\in U_i$,
\[
    \operatorname{dist}\left(\phi_{T_i}^{z,h_i},\calD\right)\ge3\eta_i,
    \qquad
    \phi_s^{z,h_i}\in K_i^\circ,\quad 0\le s\le T_i .
\]
Choose a cutoff system which agrees with the original coefficients on the
$2\eta_i$-neighborhood of $K_i$.  For this cutoff system, the localized It\^o
map is uniformly continuous in the driving path and in the initial condition on
$U_i$.  Indeed, the noise is additive; after subtracting the corresponding
Brownian path from the stochastic equation, the localized system becomes a
random ODE with globally Lipschitz coefficients, whose solution map is uniformly
continuous in the initial point and in the driving path on bounded tubes.
Hence, there exists some $\varepsilon_i>0$ such that the Brownian tube event
\[
    E_i=
    \left\{
        \sup_{0\le s\le T_i}|B_s-h_i(s)|<\varepsilon_i
    \right\}
\]
implies, simultaneously for all $z\in U_i$,
\[
    \sup_{0\le s\le T_i}
    \left|Z_s^z-\phi_s^{z,h_i}\right|<\eta_i .
\]
On $E_i$ the stochastic path remains in the region where the cutoff and
original coefficients coincide and exits $\calD$ by time $T_i$.
Since $h_i\in\mathcal H_{T_i}$, the Cameron--Martin theorem states that the
laws of $B$ and $B-h_i$ on $C_0([0,T_i];\R^d)$ are equivalent; see
\cite{CameronMartin1944}.  Since Wiener measure has full support in this space
under the uniform topology,
\[
    \Pbb\left(\|B-h_i\|_\infty<\varepsilon_i\right)>0.
\]
Equivalently, for the localized degenerate diffusion this is a special case of
the Stroock--Varadhan support theorem
\cite[Theorem~5.2]{StroockVaradhan1972}.  Hence, with
$a_i:=\mathbb P(E_i)>0$,
\[
    \inf_{z\in U_i}
    \Pbb_z(\tau_\calC\le T_i)\ge a_i .
\]

The same construction is made on the compact set
$\overline\calD\setminus B$.  Points of $\partial\calD$ are already killed,
while the controlled paths starting on $\partial B$ are included in the
resulting finite family, which also covers $\calD\setminus B$.  Take a finite
subcover $U_1,\ldots,U_N$.
Set
\[
    T:=\max_{1\le i\le N}T_i,
    \qquad
    a:=\min_{1\le i\le N}a_i>0.
\]
Then
\[
    \inf_{z\in\calD\setminus B}
    \Pbb_z(\tau_\calC\le T)\ge a.
\]
Here hitting $B$ before exiting $\calD$ is also counted as killing, so the lower
bound remains valid for initial points close to $B$.

The strong Markov property now yields
\[
    \sup_{z\in\calD\setminus B}
    \Pbb_z(\tau_\calC>nT)
    \le (1-a)^n,
    \qquad n\ge0.
\]
Thus $\tau_\calC<\infty$ almost surely and
\[
    \sup_{z\in\calD\setminus B}\E_z[\tau_\calC]
    \le \sum_{n\ge0} T 
    \sup_{z\in\calD\setminus B}\Pbb_z(\tau_\calC>nT)
    \le \frac{T}{a}<\infty .
\]
For $z\in\partial\calD$ one has $\tau_\calC=0$, so the same bound gives the
stated supremum over $\overline\calD\setminus B$.
For the adjoint dynamics, choose instead
$P_i^*=-\dot\Theta_i$, with $\dot\Theta_i(0)=-p_i$, while retaining
$R_i^*=\gamma^{-1}(\ddot\Theta_i+L^{-1}\nabla U(\Theta_i))$.  Then the
adjoint controlled equations are satisfied by taking
$a_r\dot h_i^*=\dot R_i^*-\gamma P_i^*+\gamma R_i^*$; hence the same tube,
finite-cover, and strong Markov arguments apply.
\end{proof}

\subsection{Small-\texorpdfstring{$r$}{r} Boundary Stability for Phase-Space Balls}
\label{sec:small}


This section proves Theorem~\ref{thm:ball-stability}.  The proof adapts the
boundary-stability strategy of Lee--Ramil--Seo~\cite{LeeRamilSeo2026}: one first
proves immediate crossing at non-characteristic boundary hits and then shows
that boundary entries through the near-characteristic region have uniformly
vanishing probability.  In the present third-order setting, the highest
auxiliary variable $r$ plays the role of the momentum variable in the
underdamped Langevin setting.  For phase-space balls the martingale coefficient
of the signed boundary function is proportional to $r$, so the
non-characteristic condition is $r\ne0$.

\begin{definition}[Characteristic boundary sets]\label{def:characteristic-boundary}
Let
\[
    \Sigma_A:=\partial A,\qquad
    \Sigma_B:=\partial B,\qquad
    \Sigma_D:=\partial\calD .
\]
For a boundary component $\Sigma$, define its characteristic part by
\[
    \Gamma_\Sigma:=\{z=(\theta,p,r)\in\Sigma:r=0\}.
\]
For phase-space balls, $\Gamma_\Sigma$ is exactly the set on which the direct
diffusion direction is tangent to $\Sigma$.
\end{definition}

\begin{definition}[Signed defining functions]\label{def:signed-boundary-functions}
For the inner balls centered at $(c,0,0)$, $c=m,s$, set
\[
    \rho_c(\theta,p,r):=|\theta-c|^2+|p|^2+|r|^2-\rho^2 .
\]
For the outer boundary, set
\[
    \rho_N(z):=|z|^2-N^2 .
\]
We write $\rho_\Sigma$ for the corresponding quadratic defining function on a
boundary component $\Sigma\in\{\partial A,\partial B,\partial\calD\}$.
If $\rho$ denotes any of these functions, then the martingale part of
$\rho(Z_t)$ for the limiting process is
\[
    2\sigma\int_0^t r_s\cdot dB_s,
    \qquad
    \sigma=\sqrt{2\gamma\eps/L}.
\]
Thus the non-characteristic condition is $r\ne0$.
\end{definition}

\begin{remark}[Shortcut when $d\ge2$]
The proof uses the uniform small-$r$ estimate in every dimension.  In
dimensions $d\ge2$, polarity properties of the localized $r$-diffusion may
provide an alternative shortcut in some formulations.
\end{remark}

The first boundary ingredient shows that once the incoming $r$-component is
nonzero, the signed boundary function crosses both sides immediately.

\begin{lemma}[Immediate crossing when $r\ne0$]
\label{lem:crossing-nonzero-r}
Let $\rho$ be one of the signed defining functions in
Definition~\ref{def:signed-boundary-functions}. Suppose that the limiting
forward or adjoint process starts from a boundary point
$z_0=(\theta_0,p_0,r_0)$ with $r_0\ne0$. Then, for every $\eta>0$,
\[
\begin{aligned}
&\Pbb_{z_0}\left(
    \sup_{0<t\le\eta}\rho(Z_t)>0,\ 
    \inf_{0<t\le\eta}\rho(Z_t)<0
\right)=1,\\
&\Pbb_{z_0}^{*}\left(
    \sup_{0<t\le\eta}\rho(Z_t^*)>0,\ 
    \inf_{0<t\le\eta}\rho(Z_t^*)<0
\right)=1.
\end{aligned}
\]
For every fixed $\delta>0$, the same almost-sure immediate-crossing
statement holds for the corresponding regularized forward and adjoint
processes: for every $\eta>0$,
\[
\begin{aligned}
&\Pbb_{z_0}^{\delta}\left(
    \sup_{0<t\le\eta}\rho\left(Z_t^\delta\right)>0,\ 
    \inf_{0<t\le\eta}\rho\left(Z_t^\delta\right)<0
\right)=1,\\
&\Pbb_{z_0}^{\delta,*}\left(
    \sup_{0<t\le\eta}\rho\left(Z_t^{\delta,*}\right)>0,\ 
    \inf_{0<t\le\eta}\rho\left(Z_t^{\delta,*}\right)<0
\right)=1.
\end{aligned}
\]
\end{lemma}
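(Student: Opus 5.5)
The plan is to reduce the claim to the law of the iterated logarithm for a one-dimensional continuous local martingale with a quadratic variation rate bounded below near $t=0$. By It\^o's formula, $Y_t:=\rho(Z_t)$ is a semimartingale with $Y_0=0$:
\[
    Y_t = \int_0^t b(Z_s)\,ds + M_t .
\]
Here $b=\calL\rho$ is the drift. For the limiting process, $M_t=2\sigma\int_0^t r_s\cdot dB_s$ with $\sigma=\sqrt{2\gamma\eps/L}$, as recorded in Definition~\ref{def:signed-boundary-functions}. For $\delta>0$ the martingale acquires the extra terms
\[
    2\sqrt{2\eps\delta}\int_0^t\left((\theta_s-c)\cdot dW_s^\theta+p_s\cdot dW_s^p\right),
\]
with $c=0$ for the outer sphere. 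Since $\rho$ is a quadratic polynomial and $\nabla U$ is continuous, $b$ is continuous. So up to the exit time $\zeta$ from a fixed ball $B(z_0,1)$, the drift is bounded: $|b(Z_s)|\le C$.

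The key input is the quadratic variation rate. For the limiting process it is $4\sigma^2|r_s|^2$, and for the regularized processes it is at least this quantity. Since $r_0\ne0$ and paths are continuous, define the stopping time
\[
    \zeta'=\inf\{t:|r_t|\le|r_0|/2\}\wedge\zeta\wedge\eta ,
\]
which is almost surely positive. On $[0,\zeta']$ we then have
\[
    \frac{d\langle M\rangle_t}{dt}\ge \sigma^2|r_0|^2=:\kappa>0 .
\]
By the Dambis--Dubins--Schwarz theorem, $M_t=\beta_{\langle M\rangle_t}$ for a Brownian motion $\beta$, possibly on an enlarged space. Because $\langle M\rangle_t\ge\kappa t$ for small $t$, Khinchin's LIL at $0^+$ for $\beta$ implies that $M_t$ oscillates at the scale $\sqrt{t\log\log(1/t)}$ along times $t\downarrow0$. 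Concretely, almost surely there are $t_n\downarrow0$ with $M_{t_n}\ge c\sqrt{t_n\log\log(1/t_n)}$, and similarly with the opposite sign.

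These fluctuations dominate the drift contribution $|\int_0^t b\,ds|\le Ct$. Hence $Y$ takes both strictly positive and strictly negative values in every interval $(0,\eta]$. This gives both the $\sup>0$ and the $\inf<0$ statements with probability one.

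The step needing the most care is transferring the LIL through the random time change. The cleanest route is to apply Blumenthal's $0$--$1$ law to $\beta$ directly. Almost surely $\limsup_{u\downarrow0}\beta_u/\sqrt{2u\log\log(1/u)}=1$, and the same holds for the $\liminf$ with $-1$. The maps $t\mapsto\langle M\rangle_t$ are continuous, strictly increasing on $[0,\zeta']$, with $\langle M\rangle_t\ge\kappa t$. Therefore, along the times $t_n=\langle M\rangle^{-1}(u_n)$ we have $t_n\le u_n/\kappa$, and
\[
    \left|\int_0^{t_n}b\,ds\right|\le Cu_n/\kappa=o\!\left(\sqrt{u_n\log\log(1/u_n)}\right).
\]

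The argument is identical for the adjoint processes: only the sign of the Hamiltonian transport changes, which affects the drift $b$ but not the martingale part. It is also identical for the regularized processes, where the additional $W^\theta,W^p$ quadratic variation only increases $\langle M\rangle$. So no uniformity in $\delta$ is needed, and each statement holds for each fixed $\delta$ separately.
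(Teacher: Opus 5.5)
Your proposal is correct and follows essentially the same route as the paper. Both use It\^o's formula with an $O(t)$ drift up to exit from a compact patch, a linear lower bound on $\langle M\rangle_t$ forced by $r_0\ne0$ (the extra $W^\theta,W^p$ terms only increase it when $\delta>0$), and Dambis--Dubins--Schwarz plus the law of the iterated logarithm at $0^+$ to show that the martingale oscillations dominate the drift. The differences are cosmetic: you get the lower bound from the stopping time $\zeta'$ and compare in the Brownian clock $u=\langle M\rangle_t$, which spares you the two-sided bound $c_1t\le q_t\le c_2t$ that the paper extracts from $q_t/t\to v_\delta$.
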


\begin{proof}
We prove all four cases simultaneously.  For
$\diamond\in\{\mathrm f,\mathrm a\}$, use the conventions
\[
    Z^{0,\mathrm f}=Z,\qquad Z^{0,\mathrm a}=Z^*,
    \qquad
    Z^{\delta,\mathrm f}=Z^\delta,\qquad
    Z^{\delta,\mathrm a}=Z^{\delta,*}.
\]
For the three defining functions in
Definition~\ref{def:signed-boundary-functions}, set
\[
    c_\rho:=
    \begin{cases}
        m,&\rho=\rho_m,\\
        s,&\rho=\rho_s,\\
        0,&\rho=\rho_N.
    \end{cases}
\]
The last case corresponds to the outer boundary
$\partial\calD$, whose defining function $\rho_N(z)=|z|^2-N^2$ is centered
at the origin.
Since the forward and adjoint processes have
the same diffusion coefficients, It\^o's formula, up to the exit time from a
fixed compact coordinate patch containing $z_0$, gives
\begin{equation}\label{eq:crossing-semimartingale-decomposition}
    \rho\left(Z_t^{\delta,\diamond}\right)-\rho(z_0)
    =A_t^{\delta,\diamond}+M_t^{\delta,\diamond},
    \qquad
    \left|A_t^{\delta,\diamond}\right|\le Ct,
\end{equation}
where
\begin{align*}
    M_t^{\delta,\diamond}
    =2\sigma\int_0^t
        r_s^{\delta,\diamond}\cdot dB_s
    +2\sqrt{2\eps\delta}\int_0^t
        \left(\theta_s^{\delta,\diamond}-c_\rho\right)\cdot dW_s^\theta+2\sqrt{2\eps\delta}\int_0^t
        p_s^{\delta,\diamond}\cdot dW_s^p.
\end{align*}
For $\delta=0$, the last two integrals vanish. 
Since the three driving Brownian
motions are independent, we have
\begin{align}
    q_t^{\delta,\diamond}
    :=\left\langle M^{\delta,\diamond}\right\rangle_t 
    =4\sigma^2\int_0^t\left|r_s^{\delta,\diamond}\right|^2\,ds
      +8\eps\delta\int_0^t
        \left(
            \left|\theta_s^{\delta,\diamond}-c_\rho\right|^2
            +\left|p_s^{\delta,\diamond}\right|^2
        \right)ds .
    \label{eq:crossing-quadratic-variation}
\end{align}
Path continuity and $r_0\ne0$ imply
\begin{equation}\label{eq:crossing-qv-speed}
\lim_{t\downarrow0}\frac{q_t^{\delta,\diamond}}{t}
    =4\sigma^2|r_0|^2
      +8\eps\delta
        \left(|\theta_0-c_\rho|^2+|p_0|^2\right)
    =:v_\delta>0.
\end{equation}
In particular, almost surely there are random constants $t_*>0$ and
$0<c_1<c_2<\infty$ such that
\begin{equation}\label{eq:crossing-qv-comparison}
    c_1t\le q_t^{\delta,\diamond}\le c_2t,
    \qquad 0<t\le t_*.
\end{equation}

By the Dambis--Dubins--Schwarz theorem and the law of the iterated logarithm
for Brownian motion at the origin (see, e.g., \cite{RevuzYor1999}),
\begin{align}
    \limsup_{t\downarrow0}
    \frac{M_t^{\delta,\diamond}}
    {\sqrt{2q_t^{\delta,\diamond}
        \log\log(1/q_t^{\delta,\diamond})}}
    =1,
    \qquad
    \liminf_{t\downarrow0}
    \frac{M_t^{\delta,\diamond}}
    {\sqrt{2q_t^{\delta,\diamond}
        \log\log(1/q_t^{\delta,\diamond})}}=-1
    \qquad\text{a.s.}\label{eq:crossing-limsup-liminf}
\end{align}
Moreover, for each fixed $0\le\delta\le1$ and
$\diamond\in\{\mathrm f,\mathrm a\}$,
\eqref{eq:crossing-semimartingale-decomposition} and
\eqref{eq:crossing-qv-comparison} yield, as $t\downarrow0$,
\[
    \frac{|A_t^{\delta,\diamond}|}
    {\sqrt{2q_t^{\delta,\diamond}
        \log\log(1/q_t^{\delta,\diamond})}}
    \le
    \frac{Ct}{\sqrt{2c_1t\log\log(1/(c_2t))}}
    \longrightarrow0
    \qquad\text{a.s.}
\]
Combining this limit with \eqref{eq:crossing-limsup-liminf}
shows that $\rho(Z_t^{\delta,\diamond})$ takes both signs arbitrarily close to
$t=0$, almost surely.  Since the exit time from the coordinate patch is
strictly positive almost surely, the asserted conclusion follows for every
$\eta>0$.
\end{proof}

The next lemma removes exceptional hitting configurations that would make
first-hit decisions unstable under path perturbations.

\begin{lemma}[Separation of boundary hits and fixed-time atomlessness]
\label{lem:no-fixed-time-atoms-d1}
For the limiting and regularized forward and adjoint processes, simultaneous
first hits of distinct boundary components among $\partial A$, $\partial B$,
and $\partial\calD$ have probability zero.  Moreover, for every fixed $t>0$,
\[
    \Pbb_z\left(Z_t\in\partial A\cup\partial B\cup\partial\calD\right)=0,
\]
and
\[
    \Pbb_z^\delta\left(
        Z_t^\delta\in\partial A\cup\partial B\cup\partial\calD
    \right)=0.
\]
Consequently, the hitting times of these boundary components and their minima
are atomless at every deterministic time.  The same statements hold for the
adjoint processes.
\end{lemma}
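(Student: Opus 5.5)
The separation claim is immediate and comes first. By the \hyperref[setup:domains]{bounded-domain setup}, the three spheres $\partial A$, $\partial B$, $\partial\calD$ are pairwise at distance at least $d_\partial>0$. Every process considered has continuous paths, so $Z_t$ cannot lie on two distinct components at the same time. Hence the first hitting times of distinct components never coincide on the event that both are finite; this already gives the separation assertion for all four processes.

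The fixed-time assertion reduces to the absence of mass on a null set. Fix $t>0$ and set $S:=\partial A\cup\partial B\cup\partial\calD$, a finite union of smooth spheres of Lebesgue measure zero in $\R^{3d}$. It suffices to show that the law of $Z_t$ (respectively $Z_t^\delta$, $Z_t^*$, $Z_t^{\delta,*}$) charges no Lebesgue-null set. Choose a bounded smooth open set $\mathsf O$ containing $\overline\calD$ together with a neighbourhood of $z$, and let $Z^{\delta,\mathsf O}$ be the cutoff process of Lemma~\ref{lem:chain-compatible-cutoffs}, driven by the same Brownian motions. The two processes coincide up to the exit time $\tau_{\mathsf O}$ from $\mathsf O$. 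Since $S\subset\mathsf O$, we get
\[
\Pbb_z(Z_t\in S)\le \Pbb_z\left(Z_t^{\mathsf O}\in S,\ t<\tau_{\mathsf O}\right)+\Pbb_z\left(Z_t\in S,\ \tau_{\mathsf O}\le t\right).
\]
The first term vanishes because, by Proposition~\ref{prop:localized-cutoff-heat-kernel} (applied with $t_0\le t\le M$ and $K_1$ a compact neighbourhood of $S$), $Z_t^{\mathsf O}$ has a density on $K_1$.

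The second term is the main obstacle, since after leaving $\mathsf O$ the path may re-enter. I would handle it with the strong Markov property at successive exit and re-entry times. A cleaner route is to take $\mathsf O=B(0,R)$ and let $R\to\infty$. By Assumption~\ref{ass:controlled-coefficients}(b), $\Pbb_z(\tau_{B(0,R)}\le t)\to0$, and the first term is $0$ for every $R$, so $\Pbb_z(Z_t\in S)=0$. For the regularized processes at fixed $\delta>0$ the dynamics are uniformly elliptic on compacts, so absolute continuity of $Z_t^\delta$ is classical; alternatively the same cutoff argument applies verbatim. The adjoint processes are handled identically, using the adjoint part of Proposition~\ref{prop:localized-cutoff-heat-kernel}.

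The atomlessness of hitting times then follows. For a component $\Sigma$ and $t>0$, continuity of paths gives $\{\tau_\Sigma=t\}\subset\{Z_t\in\Sigma\}$, which has probability zero. For minima, $\{\tau_\Sigma\wedge\tau_{\Sigma'}=t\}\subset\{Z_t\in\Sigma\cup\Sigma'\}$, again a null event. At $t=0$ the statement concerns the starting point only and is trivial for $z\notin S$. If $z\in S$, then $\tau_\Sigma=0$ deterministically, so the claim is meant for deterministic times $t>0$.
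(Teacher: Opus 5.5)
Your proof is correct, but the fixed-time step takes a different route from the paper.

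\textbf{The paper's route.} The paper gets absolute continuity of the law of $Z_t$ qualitatively. The global bracket condition of Proposition~\ref{prop:basic}(iv) and H\"ormander's theorem give a smooth density for the limiting process. The localization of the polynomially growing coefficients is only asserted as ``the usual'' one. For $\delta>0$ the paper simply invokes ellipticity in the added directions.

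\textbf{Your route.} You instead use the $\delta$-uniform Pigato-type bound of Proposition~\ref{prop:localized-cutoff-heat-kernel} for the cutoff process, and you make the localization explicit. The original and cutoff solutions coincide up to the exit time from $B(0,R)$, by pathwise uniqueness with agreeing coefficients. Since the cutoff law charges no null set, you get $\Pbb_z(Z_t\in S)\le\Pbb_z(\tau_{B(0,R)}\le t)$. This tends to zero by the moment bounds in Assumption~\ref{ass:controlled-coefficients}(b). So your initial worry about re-entry, and the successive strong Markov decomposition, are not needed.

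\textbf{What each buys.} Your version treats all four processes (forward and adjoint, $\delta=0$ and $\delta>0$) in one argument. It also rigorously supplies the localization step the paper leaves implicit. The cost is that it relies on the appendix's Malliavin-covariance machinery, a uniform density bound, to prove what is only a qualitative absolute-continuity statement. The paper's route is lighter and independent of the appendix.

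Your separation argument and your explicit inclusion $\{\tau_\Sigma=t\}\subset\{Z_t\in\Sigma\}$ for the atomlessness consequence match the paper's reasoning. The separation argument uses disjoint closed spheres plus $Z_{\tau_\Sigma}\in\Sigma$ by path continuity. Both are, if anything, slightly more explicit than the paper's version.
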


\begin{proof}
The boundary components have pairwise disjoint distance collars by the
construction following the \hyperref[setup:domains]{bounded-domain setup}.
Path continuity therefore rules out simultaneous
first hits of two distinct components.

For fixed $t>0$, the limiting process has a smooth density by H\"ormander's
hypoellipticity theorem~\cite{Hormander1967}, since the bracket condition holds globally by
Proposition~\ref{prop:basic}.  This density-existence statement is local in
space and follows after the usual localization of the smooth polynomial-growth
coefficients.  For $\delta>0$, the regularized process is elliptic in the added
directions and hypoelliptic in any case; it also admits a density.  The same
statements hold for the adjoint family.  The set
$\partial A\cup\partial B\cup\partial\calD$ is a finite union of smooth
hypersurfaces and hence has Lebesgue measure zero.  Therefore the probability of
being on this boundary union at time $t$ is zero.
\end{proof}

To control near-characteristic hits, we estimate the occupation time of a thin
boundary layer in which both the boundary distance and $|r|$ are small.

\begin{remark}[Quantitative third-order propagation scale]
In the localized reordered variables
\[
    X^1=r,\qquad X^2=p,\qquad X^3=\theta,
\]
the stochastic displacement generated by the $r$-Brownian motion has the
short-time scale
\[
    \operatorname{Std}\left(X_t^1-X_0^1\right)\asymp t^{1/2},\qquad
    \operatorname{Std}\left(X_t^2-X_0^2\right)\asymp t^{3/2},\qquad
    \operatorname{Std}\left(X_t^3-X_0^3\right)\asymp t^{5/2}.
\]
Equivalently, the covariance generated by the direct $r$-noise is normalized
by
\[
    T_t=\operatorname{diag}\left(t^{1/2}I_d,t^{3/2}I_d,t^{5/2}I_d\right).
\]
In particular, for every fixed $c>0$ and $t=cb^2$,
\[
    T_{cb^2}
    =\operatorname{diag}\left(
        c^{1/2}bI_d,\ c^{3/2}b^3I_d,\ c^{5/2}b^5I_d
    \right),
\]
and hence
\[
    \operatorname{Std}\left(X_{cb^2}^1-X_0^1\right)\asymp b,\qquad
    \operatorname{Std}\left(X_{cb^2}^2-X_0^2\right)\asymp b^3,\qquad
    \operatorname{Std}\left(X_{cb^2}^3-X_0^3\right)\asymp b^5.
\]
To see these powers, freeze the coefficients at a point in a compact
coordinate patch and keep only
the noise path propagated by the chain $r\to p\to\theta$.  Up to nonsingular
constant matrices, the leading stochastic convolutions are
\[
    R_t=\sigma B_t,\qquad
    P_t=\gamma\sigma\int_0^t (t-s)\,dB_s,\qquad
    \Theta_t=\gamma\sigma\int_0^t \frac{(t-s)^2}{2}\,dB_s .
\]
Hence
\[
    \E|R_t|^2\asymp t,\qquad
    \E|P_t|^2\asymp t^3,\qquad
    \E|\Theta_t|^2\asymp t^5 .
\]
The cutoff coefficients have uniformly bounded derivatives and the matrices
$J_{x^1}B_2$ and $J_{x^2}B_3$ are uniformly non-degenerate on the boundary
patches.  The rigorous uniform covariance version of this scaling is
Lemma~\ref{lem:uniform-pigato-covariance}.
\end{remark}

This is also the scale used in the boundary-layer estimate below.
To relate this scale to the normal boundary layer, suppose that $|r_0|\le b$
and set
\[
    \tau_b:=\inf\{s\ge0:|r_s|>2b\}.
\]
For the martingale part $M_t^\rho=2\sigma\int_0^t r_s\cdot dB_s$ of a signed
boundary function and every fixed $c>0$,
\[
    \langle M^\rho\rangle_{cb^2\wedge\tau_b}
    =4\sigma^2\int_0^{cb^2\wedge\tau_b}|r_s|^2\,ds
    \le16c\sigma^2b^4.
\]
Thus the stopped normal martingale has scale $b^2$.  Until exit from a fixed
compact boundary patch, its finite-variation counterpart satisfies
$|A_{cb^2\wedge\tau_b}|\le Cb^2$.  This identifies the layer
\[
    E_b^C(\Sigma)
    =\left\{z:\dist(z,\Sigma)\le Cb^2,\ |r(z)|\le2b\right\},
\]
used in Lemma~\ref{lem:occupation-after-small-r-hit}.  The next lemma quantifies
the Lebesgue volume of this anisotropic layer.

\begin{lemma}[Thin boundary-layer volume]\label{lem:thin-layer-volume}
Let $\Sigma\in\{\partial A,\partial B,\partial\calD\}$ and let
$K_1\subset\R^{3d}$ be compact.  There exist constants $a_0,b_0,C>0$ such that,
for $0<a\le a_0$ and $0<b\le b_0$,
\[
    \operatorname{Leb}\left\{
        y\in K_1:\dist(y,\Sigma)\le a,\ |r(y)|\le b
    \right\}
    \le C a b^d .
\]
\end{lemma}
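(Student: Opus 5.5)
The plan is to use Fubini over the $r$-variable after a coarea-type decomposition in the remaining coordinates. Write $y=(x,r)$ with $x=(\theta,p)\in\R^{2d}$. Each $\Sigma$ is a sphere $\{\rho_\Sigma=0\}$ with $\rho_\Sigma(y)=|x-x_c|^2+|r|^2-R_\Sigma^2$. Here $(x_c,R_\Sigma)=((c,0),\rho)$ for the inner balls, and $(0,N)$ for $\partial\calD$. We have $\dist(y,\Sigma)=\bigl||y-(x_c,0)|-R_\Sigma\bigr|$. Hence the set in question is contained in
\[
    \left\{(x,r):|r|\le b,\ R_\Sigma-a\le\sqrt{|x-x_c|^2+|r|^2}\le R_\Sigma+a\right\}.
\]
The constraint on $K_1$ is harmless and will simply be dropped. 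It only guarantees finiteness, and here the set is already bounded.

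Fix $r$ with $|r|\le b\le b_0$, and choose $b_0,a_0\le R_\Sigma/4$. The $x$-slice is then the annulus
\[
    \left\{x:\ (R_\Sigma-a)^2-|r|^2\le|x-x_c|^2\le(R_\Sigma+a)^2-|r|^2\right\}.
\]
Its radii are $\rho_\pm(r)=\sqrt{(R_\Sigma\pm a)^2-|r|^2}$, and both lie in $[R_\Sigma/2,2R_\Sigma]$. By the mean value theorem applied to $s\mapsto\sqrt{s-|r|^2}$ on $s\in[(R_\Sigma-a)^2,(R_\Sigma+a)^2]$, we get
\[
    \rho_+-\rho_-\le\frac{(R_\Sigma+a)^2-(R_\Sigma-a)^2}{2\cdot R_\Sigma/2}=\frac{4R_\Sigma a}{R_\Sigma}=4a.
\]
The $2d$-dimensional volume of the annulus is therefore at most $\omega_{2d}\,2d\,(2R_\Sigma)^{2d-1}\cdot4a$, where $\omega_{2d}$ is the volume of the unit ball in $\R^{2d}$. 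Integrating in $r$ over $\{|r|\le b\}$ contributes the factor $\omega_d b^d$. This gives the bound $Cab^d$ with $C=8d\,\omega_{2d}\omega_d(2R_\Sigma)^{2d-1}$.

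The only step requiring care is ensuring that the slice radii stay bounded away from zero uniformly in $|r|\le b$. Without this, the derivative of the square root degenerates and the width estimate fails; the smallness conditions on $a_0$ and $b_0$ exist precisely to secure it. Taking the maximum of the constants over the three boundary components makes $C$ independent of $\Sigma$.
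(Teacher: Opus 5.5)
Your proposal is correct and follows essentially the same route as the paper. Both arguments use radial symmetry of the distance to the sphere to reduce to the set $\{R_\Sigma-a\le\sqrt{|x-x_c|^2+|r|^2}\le R_\Sigma+a,\ |r|\le b\}$, bound each $2d$-dimensional annular slice at fixed $r$ by $C_\Sigma a$ via the mean value theorem (with $a_0,b_0$ at most $R_\Sigma/4$), and integrate over $|r|\le b$ by Fubini to obtain the factor $\omega_d b^d$; the only cosmetic difference is that the paper applies the mean value theorem directly to the slice volume in squared radii, $\omega_{2d}\bigl(((R_\Sigma+a)^2-|r|^2)^d-((R_\Sigma-a)^2-|r|^2)^d\bigr)$, rather than first bounding the radius gap $\rho_+-\rho_-\le 4a$ as you do.
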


\begin{proof}
Each boundary component is a sphere of some radius $R_\Sigma>0$, centered at
$(c_\Sigma,0,0)$, where $c_\Sigma=m,s$, or $0$.  Put
$q=(\theta-c_\Sigma,p)\in\R^{2d}$.  Since distance to a sphere is radial, the
set to be estimated is contained in
\[
    \left\{(q,r):
    R_\Sigma-a\le\sqrt{|q|^2+|r|^2}\le R_\Sigma+a,
    \ |r|\le b\right\}.
\]
Choose $a_0,b_0<R_\Sigma/4$.  Write
$\omega_k$ for the volume of the unit ball in $\R^k$:
\[
    \omega_k:=\operatorname{Leb}_{\R^k}(B_1(0))
    =\frac{\pi^{k/2}}{\Gamma(k/2+1)}.
\]
For fixed $r$ with $|r|\le b_0$, the $2d$-dimensional volume of the
corresponding $q$-annulus is
\begin{align*}
    V_{a,r}
    =\omega_{2d}\left(
       \left((R_\Sigma+a)^2-|r|^2\right)^d
       -\left((R_\Sigma-a)^2-|r|^2\right)^d
       \right)
       \le C_\Sigma a.
\end{align*}
Indeed, the last inequality follows from the mean-value theorem, uniformly
for $0<a\le a_0$ and $|r|\le b_0$.  Fubini's theorem now gives
\[
    \operatorname{Leb}\{\dist(y,\Sigma)\le a,\ |r(y)|\le b\}
    \le \int_{|r|\le b}V_{a,r}\,dr
    \le C_\Sigma a\,\omega_d b^d.
\]
Intersecting with $K_1$ can only decrease the volume.  Taking the maximum of
the constants over the three boundary components proves the claim.
\end{proof}

Combining this geometric volume estimate with the localized heat-kernel bound
converts volume smallness into occupation-probability smallness.  This gives
the boundary-layer estimate below.

\begin{lemma}[Localized boundary-layer estimate near $\Gamma_\Sigma$]
\label{lem:boundary-layer-small-r}
Let $M<\infty$, $0<t_0<M$, and let
$\Sigma\in\{\partial A,\partial B,\partial\calD\}$.  Let
$\mathsf O\subset\R^{3d}$ be a bounded smooth open set containing a fixed
neighborhood of $\overline{\calD}$ and the distance collars of
$\partial A$, $\partial B$, and $\partial\calD$ used in the boundary occupation
estimates, and set
\[
    \tau_{\mathsf O^c}^\delta
    =
    \inf\left\{t\ge0:Z_t^\delta\notin\mathsf O\right\}.
\]
For every compact set $K\subset\mathsf O$, there exists a finite constant
$C=C(t_0,M,K,\mathsf O)$ such that, for all $0\le\delta\le1$, all $z\in K$,
and all sufficiently small $a,b>0$,
\[
    \int_{t_0}^{M}
    \Pbb_z^\delta\left(
        t<\tau_{\mathsf O^c}^\delta,\ 
        \dist\left(Z_t^\delta,\Sigma\right)\le a,\ \left|r_t^\delta\right|\le b
    \right) dt
    \le C a b^d .
\]
The same estimate holds for the adjoint processes.  Here $\delta=0$ denotes the
limiting process.
\end{lemma}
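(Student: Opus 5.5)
The plan is to compare the original process with the cutoff process of Lemma~\ref{lem:chain-compatible-cutoffs}, and then to combine the uniform density bound of Proposition~\ref{prop:localized-cutoff-heat-kernel} with the volume bound of Lemma~\ref{lem:thin-layer-volume}. Let $Z^{\delta,\mathsf O}$ be the cutoff process associated with $\mathsf O$, driven by the same Brownian motions and started from the same $z\in K$. Before $\tau_{\mathsf O^c}^\delta$ the two SDEs have identical coefficients, by \eqref{eq:cutoff-agreement} and \eqref{eq:adjoint-cutoff-agreement}. Pathwise uniqueness for the cutoff SDE, which has globally Lipschitz drift and additive noise, therefore gives $Z_t^\delta=Z_t^{\delta,\mathsf O}$ for $t<\tau_{\mathsf O^c}^\delta$. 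In the same way, $\tau_{\mathsf O^c}^\delta$ coincides with the exit time of the cutoff process from $\mathsf O$. Writing
\[
E_{a,b}:=\{y\in\overline{\mathsf O}:\dist(y,\Sigma)\le a,\ |r(y)|\le b\},
\]
each integrand is then bounded by $\Pbb_z\bigl(Z_t^{\delta,\mathsf O}\in E_{a,b}\bigr)$. This step removes the stopping time and reduces the statement to an unkilled estimate for the cutoff family, which covers all $0\le\delta\le1$ including $\delta=0$.

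Next I restrict the set to a fixed compact. Since $\Sigma$ is compact and lies inside $\mathsf O$ (the collars of the three boundary components are contained in $\mathsf O$ by hypothesis), for $a$ smaller than the collar width $\eta_{\rm tub}$ the set $E_{a,b}$ lies in the compact set $K_1:=\overline{\mathsf T_{\eta_{\rm tub}/2}(\Sigma)}\subset\mathsf O$. I then apply Proposition~\ref{prop:localized-cutoff-heat-kernel} with $K_0=K$ and this $K_1$, which yields a constant $C_*$ independent of $\delta\in[0,1]$, $t\in[t_0,M]$, $z\in K$ and $y\in K_1$ such that
\[
\Pbb_z\bigl(Z_t^{\delta,\mathsf O}\in E_{a,b}\bigr)=\int_{E_{a,b}}p_\delta^{\mathsf O}(t,z,y)\,dy\le C_*\operatorname{Leb}(E_{a,b}\cap K_1).
\]
Lemma~\ref{lem:thin-layer-volume} bounds the volume by $C a b^d$ for $a\le a_0$ and $b\le b_0$. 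Integrating over $t\in[t_0,M]$ gives the claim with constant $(M-t_0)C_*C$. For the adjoint case I repeat the same steps with the adjoint cutoff drift and the adjoint half of Proposition~\ref{prop:localized-cutoff-heat-kernel}.

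The only delicate point is the identification of the original and cutoff processes up to the exit time, uniformly in $\delta$, including the degenerate endpoint. I would handle it by a standard localization. Both processes are strong solutions on the same filtered space. On $[0,\tau_{\mathsf O^c}^\delta)$ the difference of their drifts is Lipschitz in the difference of the paths, because the coefficients agree on $\mathsf O$ and the cutoff drift is globally Lipschitz. Gr\"onwall's lemma applied up to the stopping time then forces the two paths to coincide. All of the analytic difficulty lies in the density bound, which is already provided by Proposition~\ref{prop:localized-cutoff-heat-kernel}, so once this localization is in place the remaining steps are routine.
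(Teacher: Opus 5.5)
Your proposal is correct and follows the same route as the paper: you identify the original process with the cutoff process before the exit from $\mathsf O$, bound the cutoff density uniformly via Proposition~\ref{prop:localized-cutoff-heat-kernel}, apply the volume bound of Lemma~\ref{lem:thin-layer-volume}, and integrate over $[t_0,M]$. Your Gr\"onwall argument for the pathwise agreement fills in a step the paper only asserts; one small correction is that you need $a\le\eta_{\rm tub}/2$, not merely $a<\eta_{\rm tub}$, for the layer to lie in your $K_1=\overline{\mathsf T_{\eta_{\rm tub}/2}(\Sigma)}$.
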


\begin{proof}
Choose a compact neighborhood $K_1\subset\mathsf O$ of the relevant boundary
component.  Let $Z^{\delta,\mathsf O}$ be the chain-compatible cutoff process
from Proposition~\ref{prop:localized-cutoff-heat-kernel}.  On the event
$\{t<\tau_{\mathsf O^c}^\delta\}$, the original and cutoff processes agree up
to time $t$.  Hence
\[
\Pbb_z^\delta\left(
    t<\tau_{\mathsf O^c}^\delta,\ Z_t^\delta\in E_{a,b}(\Sigma)
\right)
\le
\Pbb_z^{\delta,\mathsf O}\left(
    Z_t^{\delta,\mathsf O}\in E_{a,b}(\Sigma)
\right),
\]
where
\[
    E_{a,b}(\Sigma)=
    \{y\in K_1:\dist(y,\Sigma)\le a,\ |r(y)|\le b\}
\]
for all sufficiently small $a$.  By
Proposition~\ref{prop:localized-cutoff-heat-kernel}, the cutoff density is
bounded uniformly for $t\in[t_0,M]$, $z\in K$, and $y\in K_1$.  Moreover,
Lemma~\ref{lem:thin-layer-volume} gives $|E_{a,b}(\Sigma)|\le C a b^d$.
Therefore
\[
    \int_{t_0}^M
    \Pbb_z^\delta\left(
        t<\tau_{\mathsf O^c}^\delta,\ 
        Z_t^\delta\in E_{a,b}(\Sigma)
    \right)dt
    \le C(M-t_0)|E_{a,b}(\Sigma)|
    \le C a b^d .
\]
The adjoint case is identical.
\end{proof}

The following elementary estimate rules out reaching a compactly separated
boundary component in an arbitrarily short time, uniformly over the
regularization.

\begin{lemma}[Uniform small-time separation from a compactly separated boundary]
\label{lem:uniform-small-time-boundary}
Let $\Sigma\in\{\partial A,\partial B,\partial\calD\}$ and let
$K\subset\overline{\calD}\setminus\Sigma$ be compact.  Then
\[
    \lim_{t_0\downarrow0}
    \sup_{0\le\delta\le1}\sup_{z\in K}
    \Pbb_z^\delta\left(\tau_\Sigma^\delta\le t_0\right)=0 .
\]
The same estimate holds for the adjoint family.
\end{lemma}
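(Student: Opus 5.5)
The plan is to reduce the statement to a uniform small-time displacement bound for the regularized family, using only the drift bounds on a compact neighbourhood and the uniform moment control of Assumption~\ref{ass:controlled-coefficients}. Since $K$ is compact and disjoint from the closed set $\Sigma$, we have $\kappa:=\dist(K,\Sigma)>0$. Any path started at $z\in K$ that reaches $\Sigma$ by time $t_0$ must satisfy
\[
    \sup_{0\le s\le t_0}|Z_s^\delta-z|\ge\kappa.
\]
It therefore suffices to show that
\[
    \sup_{0\le\delta\le1}\sup_{z\in K}\Pbb_z^\delta\Bigl(\sup_{0\le s\le t_0}|Z_s^\delta-z|\ge\kappa\Bigr)\to0
    \qquad\text{as } t_0\downarrow0 .
\]

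To prove this, fix the compact neighbourhood $\calK:=\{y:\dist(y,K)\le\kappa\}$ and let $\sigma$ be the exit time of $Z^\delta$ from the interior of $\calK$. Before $\sigma$ the drift in \eqref{eq:sde-delta} is bounded by a constant $C_\calK$ that is uniform in $0\le\delta\le1$; this uses the continuity of $\nabla U$ and the factor $\delta\le1$ in front of the regularizing drifts. The diffusion coefficients are constant, with norms bounded by $\sqrt{2\gamma\eps/L}+2\sqrt{2\eps}$ uniformly in $\delta$. Hence, for $s\le t_0\wedge\sigma$,
\[
    |Z_s^\delta-z|\le C_\calK t_0+C\sup_{0\le s\le t_0}\bigl(|B_s|+|W_s^\theta|+|W_s^p|\bigr).
\]

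A path that reaches distance $\kappa$ from $z$ does so no later than $\sigma$, because $\calK$ contains the closed $\kappa$-ball around $z$. So on the event in question the displacement equals $\kappa$ at some time $s\le t_0\wedge\sigma$. Choose $t_0$ small enough that $C_\calK t_0<\kappa/2$. The event is then contained in $\{\sup_{s\le t_0}(|B_s|+|W_s^\theta|+|W_s^p|)\ge\kappa/(2C)\}$, whose probability does not depend on $z$ or $\delta$. By Doob's inequality or the reflection principle this probability is at most $C't_0/\kappa^2\to0$.

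The adjoint family is handled identically, since only the signs of the drift change. The bound is fully uniform and needs neither the heat-kernel estimates nor the moment assumption. The only point needing care is the localization, namely that exceeding displacement $\kappa$ forces an earlier exit from $\calK$; this is immediate from path continuity, so I expect no real obstacle. The endpoint $\delta=0$ is covered by the same estimate with the $W$-terms absent.
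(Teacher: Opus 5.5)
Your proof is correct and follows essentially the same route as the paper: you localize to a compact neighbourhood of $K$ on which the drifts are bounded uniformly in $0\le\delta\le1$, then bound the short-time displacement by the drift term $Ct_0$ plus a noise term controlled by a second-moment Chebyshev estimate. The differences are cosmetic. You use the additive noise to bound the martingale part directly by Brownian suprema, where the paper invokes BDG. You also stop at the first time the displacement reaches $\kappa$, which disposes of the exit from $\calK$ in the same step; the paper treats that exit as a separate estimate.
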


\begin{proof}
Let $a=\dist(K,\Sigma)>0$.  Choose a compact neighborhood $\calK$ of $K$ whose
$a/4$-neighborhood is still disjoint from $\Sigma$.  Up to the exit time from
$\calK$, all drifts and diffusion coefficients of the forward regularized
family are bounded uniformly for $0\le\delta\le1$.  Hence, for $z\in K$,
\[
    \left\{\tau_\Sigma^\delta\le t_0,\ \tau_{\calK^c}^\delta>t_0\right\}
    \subset
    \left\{\sup_{s\le t_0}\left|Z_s^\delta-z\right|\ge a/2\right\}.
\]
Writing $Z_s^\delta-z$ as drift plus martingale, the drift contribution is at
most $Ct_0$ and the martingale $M^\delta$ satisfies, by Burkholder-Davis-Gundy inequality,
\[
    \sup_{0\le\delta\le1}\sup_{z\in K}
    \E_z^\delta\left[\sup_{s\le t_0}\left|M_s^\delta\right|^2\right]\le Ct_0 .
\]
For $t_0$ small enough that $Ct_0\le a/4$, Chebyshev's inequality gives
\[
    \sup_{0\le\delta\le1}\sup_{z\in K}
    \Pbb_z^\delta\left(\tau_\Sigma^\delta\le t_0,\ \tau_{\calK^c}^\delta>t_0\right)
    \le Ct_0/a^2 .
\]
The same estimate, with the distance from $K$ to $\calK^c$, controls the
probability of leaving $\calK$ before time $t_0$.  Letting $t_0\downarrow0$
proves the claim.  The adjoint drifts have the same local boundedness, so the
same proof applies.
\end{proof}

The occupation estimate is converted into a hitting estimate by showing that a
small-$r$ boundary hit forces a short interval of time inside the same thin
layer with positive probability.

\begin{lemma}[Conditional occupation after a small-$r$ boundary hit]
\label{lem:occupation-after-small-r-hit}
Let $\Sigma\in\{\partial A,\partial B,\partial\calD\}$ and let
$\tau=\tau_\Sigma$.  There exist constants $c,q,C>0$ such that, for all
sufficiently small $b\in(0,1)$, on the event
$\{\tau<\infty,\ |r_\tau|\le b\}$,
\[
    \Pbb_z\left(
    Z_{\tau+s}\in
    \{y:\dist(y,\Sigma)\le Cb^2,\ |r(y)|\le2b\}
    \text{ for all }0\le s\le cb^2
    \middle|\mathcal F_\tau
    \right)\ge q .
\]
On the same event one may also require
\[
    \sup_{0\le s\le cb^2}
    \left(|p_{\tau+s}-p_\tau|+|\theta_{\tau+s}-\theta_\tau|\right)
    \le Cb^2 .
\]
For the regularized process, the same statement holds uniformly whenever
$0<\delta\le b^2$.  The constants are common to the three boundary components,
the forward and adjoint processes, the hitting point, and
$0\le\delta\le b^2$.
\end{lemma}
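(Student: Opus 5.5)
The plan is to apply the strong Markov property at $\tau$ and reduce the statement to a uniform lower bound for a process started from a boundary point $z_0=Z_\tau\in\Sigma$ with $|r_0|\le b$. On the conditional law it then suffices to find $c,q,C$, independent of $z_0$ in the compact set $\Sigma$, of $\delta\in[0,b^2]$ and of forward/adjoint, such that with probability at least $q$ the following three things hold on $[0,cb^2]$:
\begin{enumerate}[label=(\alph*)]
\item $|r_s|\le2b$, i.e.\ $\tau_b>cb^2$;
\item $|p_s-p_0|+|\theta_s-\theta_0|\le Cb^2$;
\item $|\rho_\Sigma(Z_s)|\le C'b^2$.
\end{enumerate}
Since $\rho_\Sigma$ is a quadratic defining function with nonvanishing gradient near $\Sigma$, item (c) gives $\dist(Z_s,\Sigma)\le Cb^2$. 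Throughout, I would localize to a fixed compact coordinate patch (for instance the $\eta_{\rm tub}$-collar of $\Sigma$ intersected with a ball), on which all drifts are bounded uniformly in $0\le\delta\le1$, exactly as in Lemma~\ref{lem:uniform-small-time-boundary}.

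For (a), write $r_s-r_0=\int_0^s(\text{drift})\,du+\sigma B_s$. The drift is bounded by $C_0$ on the patch, so its contribution is at most $C_0cb^2\le b/4$ for small $b$. By Brownian scaling, $\Pbb(\sup_{s\le cb^2}\sigma|B_s|\le b/2)=\Pbb(\sup_{u\le c}\sigma|B_u|\le1/2)=:q_1(c)$, which is close to $1$ when $c$ is small. For (b), the $p$- and $\theta$-drifts are bounded, giving a contribution $\le Ccb^2$. The regularizing noises contribute $\sqrt{2\eps\delta}\,W_s$ with $\delta\le b^2$, so $\sup_{s\le cb^2}\sqrt{2\eps\delta}|W_s|\le\sqrt{2\eps}\,b\sup_{s\le cb^2}|W_s|$. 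By scaling this is $\le b^2$ outside an event of probability $\Pbb(\sup_{u\le c}|W_u|>(2\eps)^{-1/2})$, which is again small for small $c$. This is the step where $\delta\le b^2$ is used: the added noise in $p,\theta$ is then of order $b\cdot b=b^2$, the same order as the layer, so it does not destroy the near-characteristic geometry.

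For (c), use the semimartingale decomposition \eqref{eq:crossing-semimartingale-decomposition} of $\rho(Z_s)$ with $\rho(Z_0)=0$. The finite-variation part satisfies $|A_s|\le Cs\le Ccb^2$. The martingale part stopped at $\tau_b$ has, by \eqref{eq:crossing-quadratic-variation}, quadratic variation at most $16\sigma^2cb^4+8\eps\delta Ccb^2\le C''cb^4$, using $\delta\le b^2$ again. Doob's inequality then gives $\Pbb(\sup_{s\le cb^2\wedge\tau_b}|M_s|>b^2)\le C''c$. Fix $c$ small enough that each of the three failure probabilities, together with the probability of leaving the patch before $cb^2$ (which is $O(cb^2)$ by the argument of Lemma~\ref{lem:uniform-small-time-boundary}), is at most $1/8$. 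A union bound then yields $q=1/2$ on the intersection of the good events; on that intersection $\tau_b>cb^2$, so the stopped and unstopped quantities coincide.

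The main obstacle is making the constants uniform: over hitting points, over the three spheres, over forward versus adjoint, and over $\delta\in[0,b^2]$, while conditioning on $\mathcal F_\tau$ at a random time. Uniformity is handled by the strong Markov property together with the fact that every bound above depends only on sup-norms of the coefficients on the fixed patch, which are common to all cases. A second point to check is that the conditioning event $|r_\tau|\le b$ needs nothing more than the bound $|r_0|\le b$ at the restart. The argument does not require $r_0\ne0$ and does not use any density estimate; it is a purely short-time Brownian-scaling estimate.
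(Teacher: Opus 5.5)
Your proof is correct. It shares the paper's skeleton: strong Markov restart at $\tau$, Brownian scaling on the window $[0,cb^2]$, drift bounds on a fixed collar, It\^o's formula for the quadratic defining function, and $\delta\le b^2$ making the added $p,\theta$-noise of size $b\cdot b$. The key estimates, however, are carried out differently.

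The paper works pathwise on a single Brownian small-ball event $E_b^r$ (intersected with $E_b^{p,\theta}$ in the regularized case). It bounds $\int r\cdot dB$ deterministically on that event by integration by parts, which implicitly uses It\^o's formula for $|B|^2$ to handle the $\sigma B$ part of $r$. It then excludes exit from the collar by a bootstrap: if the exit time $\kappa$ were at most $cb^2$, the stopped bound $|\rho|\le Cb^2$ would contradict $|\rho|\ge c_1a$ on the collar boundary. Its constant is the product $q_rq_{p,\theta}$ of the two tube probabilities.

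You instead bound the quadratic variation of the stopped $\rho$-martingale by $C''cb^4$ and apply Doob's inequality. You control patch exit separately by the Chebyshev argument of Lemma~\ref{lem:uniform-small-time-boundary}, and conclude with a union bound after shrinking $c$. Your route needs only standard maximal inequalities and avoids pathwise control of stochastic integrals. The paper's route yields every conclusion deterministically on one explicit event and needs no separate exit estimate.

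One detail to state explicitly: in the regularized case, the quadratic-variation bound for the $\sqrt{\delta}$-martingales uses boundedness of $|\theta-c_\rho|^2+|p|^2$. So the martingale must be stopped at the patch exit time as well as at $\tau_b$.
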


\begin{proof}
Since the three boundary components are compact and have disjoint tubular
neighborhoods, we fix common upper bounds for the forward and adjoint drifts,
their first two derivatives, the derivatives of the quadratic defining
functions, and all It\^o correction terms on these neighborhoods, uniformly for
$0\le\delta\le1$.  All constants below are chosen from these common bounds and
are therefore independent of the boundary component, the hitting point, the
direction of time, and $\delta$.

Choose a distance collar
$U_a=\{z:\operatorname{dist}(z,\Sigma)<a\}$ and let
$\rho=\rho_\Sigma$ be the corresponding quadratic defining function from
Definition~\ref{def:signed-boundary-functions}.  After decreasing $a$ if
necessary, there are constants $c_1,c_2>0$, common to the three boundary
components, such that
\[
    c_1\operatorname{dist}(z,\Sigma)
    \le |\rho_\Sigma(z)|
    \le c_2\operatorname{dist}(z,\Sigma),
    \qquad z\in \overline U_a .
\]
On $\{\tau<\infty,\ |r_\tau|\le b\}$ set
\[
    \kappa=\inf\{s\ge0: Z_{\tau+s}\notin U_a\}.
\]
All estimates below are first made for $Z_{\tau+s\wedge\kappa}$.  Let
$\sigma=\sqrt{2\gamma\eps/L}$ and choose $c>0$ small.  Define
\[
    E_b^r=
    \left\{
    \sup_{0\le s\le cb^2}
    \left|\sigma(B_{\tau+s}-B_\tau)\right|\le \frac b4
    \right\}.
\]
By Brownian scaling and the strong Markov property,
\[
    \Pbb(E_b^r\mid\mathcal F_\tau)
    =
    \Pbb\left(
        \sup_{0\le u\le c}|\sigma B_u|\le\frac14
    \right)
    =:q_r>0,
\]
with $q_r$ independent of $b$, $\tau$, and the hitting point.  On $E_b^r$,
while $s\le cb^2\wedge\kappa$, the bounded drift in the $r$-equation has size
$\mathcal O(b^2)$ and the Brownian increment is at most $b/4$; after decreasing $c$ and
then $b_0$ if necessary,
\begin{equation}\label{eq:small-r-stopped-r}
    \sup_{s\le cb^2\wedge\kappa}|r_{\tau+s}|\le2b .
\end{equation}
The bounded drifts and the link estimates then give
\begin{equation}\label{eq:small-r-stopped-ptheta}
    \sup_{s\le cb^2\wedge\kappa}
    \left(|p_{\tau+s}-p_\tau|+|\theta_{\tau+s}-\theta_\tau|\right)\le Cb^2 .
\end{equation}
It\^o's formula applied to $\rho(Z_{\tau+s\wedge\kappa})$ gives
\[
    \rho(Z_{\tau+s\wedge\kappa})
    =
    A_{s\wedge\kappa}
    +2\sigma\int_0^{s\wedge\kappa}r_{\tau+u}\cdot dB_{\tau+u}.
\]
Here $|A_{s\wedge\kappa}|\le Cb^2$ for $0\le s\le cb^2$, using
\eqref{eq:small-r-stopped-r}--\eqref{eq:small-r-stopped-ptheta} and bounded
coefficients in $U_a$.  Integration by parts and the Brownian bound on $E_b^r$
also give
\[
    \sup_{s\le cb^2\wedge\kappa}
    \left|\int_0^s r_{\tau+u}\cdot dB_{\tau+u}\right|\le Cb^2 .
\]
Consequently, on $E_b^r$,
\begin{equation}\label{eq:small-r-stopped-rho}
    \sup_{s\le cb^2\wedge\kappa}|\rho(Z_{\tau+s})|\le Cb^2 .
\end{equation}
Choose $b_0>0$ so small that $Cb_0^2<c_1a/2$.  If $b\le b_0$ and
$\kappa\le cb^2$, then
\[
    \operatorname{dist}(Z_{\tau+\kappa},\Sigma)=a,
\]
and hence $|\rho_\Sigma(Z_{\tau+\kappa})|\ge c_1a$, contradicting
\eqref{eq:small-r-stopped-rho}.  Thus, on $E_b^r$, $\kappa>cb^2$.
Removing the stopping gives
\[
    |\rho_\Sigma(Z_{\tau+s})|\le Cb^2,\qquad
    \dist(Z_{\tau+s},\Sigma)\le Cb^2,\qquad
    |r_{\tau+s}|\le2b,\qquad 0\le s\le cb^2,
\]
and also the $p,\theta$ increment bound
\eqref{eq:small-r-stopped-ptheta}.  This proves the limiting
conditional estimate with $q=q_r$.

For the regularized process write $\tau^\delta=\tau_\Sigma^\delta$ and
\[
    \kappa^\delta=\inf\left\{s\ge0:Z^\delta_{\tau^\delta+s}\notin U_a\right\}.
\]
On $\{\tau^\delta<\infty,\ |r^\delta_{\tau^\delta}|\le b\}$ use the same
$r$-Brownian event $E_b^r$, with $\tau$ replaced by $\tau^\delta$, and add
\[
    E_b^{p,\theta}=
    \left\{
    \sqrt{2\eps\delta}\sup_{s\le cb^2}
    \left(\left|W^p_{\tau^\delta+s}-W^p_{\tau^\delta}\right|
    +\left|W^\theta_{\tau^\delta+s}-W^\theta_{\tau^\delta}\right|\right)
    \le c_1b^2
    \right\}.
\]
If $\delta\le b^2$, Brownian scaling gives the uniform lower bound
\[
\begin{aligned}
\inf_{0<\delta\le b^2}
\Pbb\left(E_b^{p,\theta}\mid\mathcal F_{\tau^\delta}\right)
&=
\inf_{0<\delta\le b^2}
\Pbb\left(
    \sqrt{2\eps}\frac{\sqrt\delta}{b}
    \sup_{0\le u\le c}\left(|W_u^p|+\left|W_u^\theta\right|\right)
    \le c_1
\right) \\
&\quad\ge
\Pbb\left(
    \sqrt{2\eps}
    \sup_{0\le u\le c}\left(|W_u^p|+\left|W_u^\theta\right|\right)
    \le c_1
\right)
=:q_{p,\theta}>0 .
\end{aligned}
\]
Thus
\[
    \Pbb_z^\delta\left(E_b^r\cap E_b^{p,\theta}\mid\mathcal F_{\tau^\delta}\right)
    \ge q_rq_{p,\theta}=:q_1>0,
\]
after choosing $c,c_1$ fixed, with $q_1$ independent of $b$, $\delta$,
$\tau^\delta$, and the boundary point.  Repeating the stopped estimates on
$[0,cb^2\wedge\kappa^\delta]$
yields that
\[
    \sup_{s\le cb^2\wedge\kappa^\delta}\left|r^\delta_{\tau^\delta+s}\right|\le2b,
\qquad
    \sup_{s\le cb^2\wedge\kappa^\delta}
    \left(\left|p^\delta_{\tau^\delta+s}-p^\delta_{\tau^\delta}\right|
    +\left|\theta^\delta_{\tau^\delta+s}-\theta^\delta_{\tau^\delta}\right|\right)\le Cb^2,
\]
where the added $p$- and $\theta$-martingales are controlled by
$E_b^{p,\theta}$.
It\^o's formula for $\rho(Z^\delta_{\tau^\delta+s\wedge\kappa^\delta})$ has the
same $r$-martingale as above and an extra martingale whose supremum is bounded
by $Cb^2$ on $E_b^{p,\theta}$; the regularizing It\^o correction is bounded by
$C\delta cb^2\le Cb^4$. 
Hence,
\begin{equation}\label{eq:small-r-regularized-rho}
    \sup_{s\le cb^2\wedge\kappa^\delta}
    \left|\rho\left(Z^\delta_{\tau^\delta+s}\right)\right|\le Cb^2 .
\end{equation}
Decrease $b_0$ if necessary so that $Cb_0^2<c_1a/2$.  Then
\eqref{eq:small-r-regularized-rho} and the same exit argument imply that
$\kappa^\delta>cb^2$ on
$E_b^r\cap E_b^{p,\theta}$: otherwise
\[    \operatorname{dist}\left(Z^\delta_{\tau^\delta+\kappa^\delta},\Sigma\right)=a,
    \qquad    \left|\rho_\Sigma\left(Z^\delta_{\tau^\delta+\kappa^\delta}\right)\right|\ge c_1a,
\]
contradicting the stopped $\rho$-bound.  The unstopped regularized occupation
estimate follows with a constant $q'>0$ uniform in
$0<\delta\le b^2$.  Taking the minimum of the limiting and regularized
constants gives the common constant $q$ in the statement.  The adjoint processes
have the same boundedness and noise structure, so the proof is identical.
\end{proof}

Combining the preceding occupation and post-hit estimates gives the key
boundary-stability condition: first entries with small $r$ are uniformly
negligible.

\begin{proposition}[Uniform small-$r$ boundary entry estimate]\label{prop:small-r-entry}
Let $\Sigma\in\{\partial A,\partial B,\partial\calD\}$, $M<\infty$, and
$K\subset\overline\calD\setminus\Sigma$ be compact.  Then
\begin{equation}\label{eq:small-r-entry-limit}
    \lim_{b\downarrow0}
    \sup_{z\in K}
    \Pbb_z\left(|r_{\tau_\Sigma}|\le b,\ \tau_\Sigma\le M\right)=0 .
\end{equation}
Moreover, for every $\lambda\in(0,1/2)$ and every fixed $K_0>0$, with
$b_\delta=\delta^{\lambda/4}$,
\[
    \sup_{z\in K}
    \Pbb_z^\delta\left(
        |r^\delta_{\tau_\Sigma^\delta}|\le K_0b_\delta,\ 
        \tau_\Sigma^\delta\le M
    \right)\longrightarrow0
    \qquad\text{as }\delta\downarrow0 .
\]
The same statements hold for the adjoint processes.
\end{proposition}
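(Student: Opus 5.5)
The plan is to convert the occupation bound of Lemma~\ref{lem:boundary-layer-small-r} into a hitting bound using Lemma~\ref{lem:occupation-after-small-r-hit} and the strong Markov property. Fix $\Sigma$, $M$ and $K$. Choose $\mathsf O$ as in Lemma~\ref{lem:boundary-layer-small-r}, and write $E_b:=\{y:\dist(y,\Sigma)\le Cb^2,\ |r(y)|\le 2b\}$. Fix a small $t_0>0$ and split the event according to whether $\tau_\Sigma\le t_0$ or $t_0<\tau_\Sigma\le M$. By Lemma~\ref{lem:uniform-small-time-boundary}, the first piece satisfies
\[
\sup_{0\le\delta\le1}\sup_{z\in K}\Pbb_z^\delta(\tau^\delta_\Sigma\le t_0)=:\omega(t_0)\to0
\]
as $t_0\downarrow0$. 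The exit of $\mathsf O$ before $M$ also needs handling: $\mathsf O$ contains a neighborhood of $\overline\calD$, and the only hitting relevant up to $\tau_\Sigma$ is inside $\overline\calD$. Moreover, the post-hit event of Lemma~\ref{lem:occupation-after-small-r-hit} keeps the path within $Cb^2$ of $\Sigma\subset\overline\calD$. So on that event the path stays in $\mathsf O$ on $[\tau,\tau+cb^2]$, and no separate exit estimate is needed.

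For the main piece, integrate the occupation of $E_b$ over $[t_0,M+1]$. On $\{t_0<\tau_\Sigma\le M,\ |r_{\tau_\Sigma}|\le b\}$, Lemma~\ref{lem:occupation-after-small-r-hit} says that with conditional probability at least $q$ the process spends the whole interval $[\tau_\Sigma,\tau_\Sigma+cb^2]$ in $E_b\cap\mathsf O$, for $b\le 1$. Taking expectations of $\int_{t_0}^{M+1}\bfone\{t<\tau_{\mathsf O^c},\,Z_t\in E_b\}\,dt$ and conditioning on $\mathcal F_{\tau_\Sigma}$ gives
\[
q\,cb^2\,\Pbb_z\left(|r_{\tau_\Sigma}|\le b,\ t_0<\tau_\Sigma\le M\right)\le \int_{t_0}^{M+1}\Pbb_z\left(t<\tau_{\mathsf O^c},\ Z_t\in E_b\right)dt\le C\,b^2\,(2b)^d,
\]
where the last step is Lemma~\ref{lem:boundary-layer-small-r} with $a=Cb^2$. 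Hence the probability is at most $C'b^d$, uniformly in $z\in K$, and the constant depends on $t_0$. Taking $\limsup_{b\downarrow0}$ gives at most $\omega(t_0)$; then letting $t_0\downarrow0$ proves \eqref{eq:small-r-entry-limit}.

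For the regularized statement, put $b=K_0b_\delta=K_0\delta^{\lambda/4}$. Because $\lambda<1/2$, we have $\delta\le b^2=K_0^2\delta^{\lambda/2}$ for all small $\delta$. Therefore the regularized version of Lemma~\ref{lem:occupation-after-small-r-hit} applies with the same constants $c,q,C$. The density bound of Lemma~\ref{lem:boundary-layer-small-r} is uniform in $0\le\delta\le1$, so the same display holds under $\Pbb_z^\delta$. This gives
\[
\sup_{z\in K}\Pbb^\delta_z\left(|r^\delta_{\tau^\delta_\Sigma}|\le K_0b_\delta,\ t_0<\tau^\delta_\Sigma\le M\right)\le C_{t_0}(K_0b_\delta)^d\to0 .
\]
The piece with $\tau_\Sigma^\delta\le t_0$ is bounded by $\omega(t_0)$ uniformly in $\delta$, so the same two-limit argument finishes the proof. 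The adjoint cases are identical, since each cited lemma holds for the adjoint family.

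The main obstacle is the conditioning step. It requires the occupation interval after $\tau_\Sigma$ to lie inside the integration window $[t_0,M+1]$ and inside $\mathsf O$. It also requires the constants of Lemma~\ref{lem:occupation-after-small-r-hit} to be uniform in both the hitting point and $\delta\le b^2$; this uniformity is what forces $\lambda<1/2$. I would check these points carefully, using Fubini and the strong Markov property to write the expected occupation time as an expectation of the conditional probability given $\mathcal F_{\tau_\Sigma}$.
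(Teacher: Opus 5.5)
\textbf{The exit from $\mathsf O$ cannot be dropped.} Your architecture matches the paper's proof. You split at $t_0$ using Lemma~\ref{lem:uniform-small-time-boundary}, apply the conditional occupation bound at $\tau_\Sigma$ with Tonelli, then Lemma~\ref{lem:boundary-layer-small-r} with $a=Cb^2$, and take $b=K_0\delta^{\lambda/4}$ for the regularized family. The step that fails is your claim that no exit estimate is needed.

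The integrand $\bfone\{t<\tau_{\mathsf O^c}\}$ requires the path to stay in $\mathsf O$ on all of $[0,t]$, not only on the post-hit window $[\tau_\Sigma,\tau_\Sigma+cb^2]$. Here $\tau_\Sigma$ is the first hitting time of the single sphere $\Sigma$ by the \emph{unkilled} process. For $\Sigma=\partial A$ or $\partial B$ and $z\in K\subset\overline\calD\setminus\Sigma$, the path may cross $\partial\calD$, leave $\mathsf O$, return, and hit $\Sigma$ with small $r$ before time $M$. Such paths count on the left of your inequality $qcb^2\,\Pbb_z(|r_{\tau_\Sigma}|\le b,\ t_0<\tau_\Sigma\le M)\le\int_{t_0}^{M+1}\Pbb_z(t<\tau_{\mathsf O^c},\ Z_t\in E_b)\,dt$. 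On the right, the integrand vanishes for every $t\ge\tau_{\mathsf O^c}$, and here $\tau_{\mathsf O^c}<\tau_\Sigma$. So the inequality is not justified. Your remark that the path stays in $\overline\calD$ up to $\tau_\Sigma$ holds only for $\Sigma=\partial\calD$: then $K\subset\calD$, and the path cannot leave $\calD$ without first hitting $\partial\calD$.

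\textbf{The repair, as in the paper.} Run the conditional-occupation step only on the $\mathcal F_{\tau_\Sigma}$-measurable event $E_b^{\mathsf O}:=\{|r_{\tau_\Sigma}|\le b,\ t_0<\tau_\Sigma\le M,\ \tau_\Sigma<\tau_{\mathsf O^c}\}$. This gives $\sup_{z\in K}\Pbb_z(E_b^{\mathsf O})\le C_{t_0,\mathsf O}\,b^d$, and similarly in the regularized case with $b=K_0b_\delta$. Bound the remainder by $\sup_{0\le\delta\le1}\sup_{z\in K}\Pbb_z^\delta(\tau^\delta_{\mathsf O^c}\le M)$, since off $E_b^{\mathsf O}$ one has $\tau_{\mathsf O^c}\le\tau_\Sigma\le M$.

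This exit term does not vanish as $b\downarrow0$. You make it small by enlarging $\mathsf O$, using the uniform finite-time moment bounds of Assumption~\ref{ass:controlled-coefficients}(b) and Chebyshev's inequality. Lemma~\ref{lem:boundary-layer-small-r} permits any bounded smooth $\mathsf O$ containing the collars, with a constant depending on $\mathsf O$. The limits must therefore be taken in this order:
\begin{itemize}
\item $b\downarrow0$ (respectively $\delta\downarrow0$) with $t_0$ and $\mathsf O$ fixed;
\item then $\mathsf O\uparrow$;
\item then $t_0\downarrow0$.
\end{itemize}

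\textbf{A minor correction.} The condition $\delta\le(K_0b_\delta)^2$ needs only $\lambda<2$, so it is not what forces $\lambda<1/2$. That restriction is needed elsewhere, for instance for $\delta^{1-2\lambda}\to0$ in Proposition~\ref{prop:uniform-truncated-killing}.
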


\begin{proof}
We prove the forward estimate; the adjoint case is identical.  By
Lemma~\ref{lem:uniform-small-time-boundary},
\[
    \lim_{t_0\downarrow0}
    \sup_{0\le\delta\le1}\sup_{z\in K}
    \Pbb_z^\delta\left(\tau_\Sigma^\delta\le t_0\right)=0 .
\]
Fix a bounded smooth open set $\mathsf O$ containing a neighborhood of
$\overline{\calD}$ and the boundary distance collars used in
Lemma~\ref{lem:occupation-after-small-r-hit}.  Let
\[
    \tau_{\mathsf O^c}^\delta
    =
    \inf\left\{t\ge0:Z_t^\delta\notin\mathsf O\right\},
    \qquad
    \tau_{\mathsf O^c}=\tau_{\mathsf O^c}^0 .
\]
The argument applies to arbitrary hits of $\Sigma$; the exit term
$\tau_{\mathsf O^c}$ accounts for possible excursions away from the bounded
region.

For the limiting process, set
\[
    E_b=\left\{|r_{\tau_\Sigma}|\le b,\ t_0<\tau_\Sigma\le M\right\}.
\]
Introduce the pre-hit localization
\[
    E_b^{\mathsf O}
    :=E_b\cap\{\tau_\Sigma<\tau_{\mathsf O^c}\}.
\]
The event $E_b^{\mathsf O}$ belongs to $\mathcal F_{\tau_\Sigma}$.  Moreover,
the post-hit tube in Lemma~\ref{lem:occupation-after-small-r-hit} is contained
in $\mathsf O$, because $\mathsf O$ contains the fixed boundary collar.  We may
therefore apply the conditional lower bound at $\tau_\Sigma$ and then use
Tonelli's theorem to obtain
\begin{align}
    qcb^2 \Pbb_z\left(E_b^{\mathsf O}\right)
    &\le
    \E_z\left[
    {\bfone}_{E_b^{\mathsf O}}
    \int_{\tau_\Sigma}^{\tau_\Sigma+cb^2}
    {\bfone}_{\{t<\tau_{\mathsf O^c},\
        \dist(Z_t,\Sigma)\le Cb^2,\ |r_t|\le2b\}}dt
    \right]\nonumber \\
    &\le
    \int_{t_0}^{M+cb^2}
    \Pbb_z\left(
        t<\tau_{\mathsf O^c},\ 
        \dist(Z_t,\Sigma)\le Cb^2,\ |r_t|\le2b
    \right) dt.\label{eqn:RHS}
\end{align}
Applying Lemma~\ref{lem:boundary-layer-small-r} with $M+1$ in place of $M$ and
$a=Cb^2$ gives, uniformly in the boundary component and in the starting point,
\[
\int_{t_0}^{M+cb^2}
    \Pbb_z\left(
        t<\tau_{\mathsf O^c},\
        \dist(Z_t,\Sigma)\le Cb^2,\ |r_t|\le2b
    \right)dt
    \le Cb^{d+2}.
\]
Thus the full constant chain is
\[
    qcb^2
    \Pbb_z\left(E_b^{\mathsf O}\right)
    \le Cb^{d+2}.
\]
After dividing by $b^2$, we get
\[
    \sup_{z\in K}
    \Pbb_z\left(E_b^{\mathsf O}\right)\le Cb^d .
\]
Since $\Sigma$ is contained in the interior of $\mathsf O$,
$E_b\setminus E_b^{\mathsf O}$ implies
$\tau_{\mathsf O^c}\le\tau_\Sigma\le M$.  Thus, for all sufficiently small
$b$,
\[
    \sup_{z\in K}\Pbb_z(E_b)
    \le
    Cb^d+
    \sup_{z\in K}\Pbb_z\left(\tau_{\mathsf O^c}\le M\right).
\]
Letting $b\downarrow0$ gives an error controlled by the exit probability from
$\mathsf O$.

For the regularized process, set
$b_\delta=\delta^{\lambda/4}$ and
$\widetilde b_\delta=K_0b_\delta$.  Since $\lambda/2<1$ and $K_0>0$ is fixed,
we have $\delta\le\widetilde b_\delta^2$ for all sufficiently small
$\delta$.  Thus Lemma~\ref{lem:occupation-after-small-r-hit} applies to the
regularized process with $b=\widetilde b_\delta$.  Define the
$\mathcal F_{\tau_\Sigma^\delta}$-measurable event
\[
    E_\delta^{\mathsf O}
    :=
    \left\{
        |r^\delta_{\tau_\Sigma^\delta}|\le\widetilde b_\delta,\
        \ t_0<\tau_\Sigma^\delta\le M,\
        \ \tau_\Sigma^\delta<\tau_{\mathsf O^c}^\delta
    \right\}.
\]
Repeating the preceding conditional occupation estimate gives
\[
    \sup_{z\in K}
    \Pbb_z^\delta\left(E_\delta^{\mathsf O}\right)
    \le C\widetilde b_\delta^d
    =CK_0^d\delta^{\lambda d/4}
    \rightarrow0,\qquad\text{as $\delta\downarrow0$}.
\]
The complement of $E_\delta^{\mathsf O}$ inside the event with
$t_0<\tau_\Sigma^\delta\le M$ is controlled by
$\{\tau_{\mathsf O^c}^\delta\le M\}$.  Consequently,
\[
\begin{aligned}
&\limsup_{\delta\downarrow0}\sup_{z\in K}
    \Pbb_z^\delta\left(
        |r^\delta_{\tau_\Sigma^\delta}|\le K_0b_\delta,\ 
        \tau_\Sigma^\delta\le M
    \right)
\\
&\quad\le
    \sup_{0\le\delta\le1}\sup_{z\in K}
    \Pbb_z^\delta\left(\tau_\Sigma^\delta\le t_0\right)
    +
    \sup_{0\le\delta\le1}\sup_{z\in K}
    \Pbb_z^\delta\left(\tau_{\mathsf O^c}^\delta\le M\right).
\end{aligned}
\]
The same bound, with the first term restricted to $\delta=0$, holds for the
limiting estimate after taking $b\downarrow0$.

By the local moment bound in
Assumption~\ref{ass:controlled-coefficients}, choosing $\mathsf O$ large makes
the exit term
\[
    \sup_{0\le\delta\le1}\sup_{z\in K}
    \Pbb_z^\delta\left(\tau_{\mathsf O^c}^\delta\le M\right)
\]
arbitrarily small.  Then Lemma~\ref{lem:uniform-small-time-boundary} sends the
small-time term to zero as $t_0\downarrow0$.  This proves both forward
estimates.  Since there are only three boundary components, the same estimate
may be summed over $\partial A$, $\partial B$, and $\partial\calD$ whenever the
first hit of the boundary union is used.  The adjoint proof uses the adjoint
local moment bound and is identical.
\end{proof}

Proposition~\ref{prop:small-r-entry} shows that entries near the
characteristic set $\{r=0\}$ are negligible.  Once the boundary is approached
away from that set, the following
collar estimate shows that a nearby process reaches the same killed side
quickly.  Together with the chain-compatible density estimate used in
Lemma~\ref{lem:boundary-layer-small-r}, this is the propagation step replacing
any normal-trace argument: the $r\to p\to\theta$ H\"ormander chain controls the
probability of near-characteristic entries, while non-characteristic entries
leave a macroscopic boundary layer on the fast collar scale.

\begin{lemma}[Hypoelliptic propagation control from a non-characteristic boundary layer]
\label{lem:fast-entry-collar}
Let $\lambda\in(0,1/2)$.  For $\alpha>0$, define
\[
    C_{\alpha,\lambda}
    :=
    \left\{z\in\calD\setminus\calC:
        \dist(z,\calC)\le \alpha^\lambda,\quad
        |r|\ge \alpha^{\lambda/4}
      \right\}.
\]
Then
\[
    \sup_{z\in C_{\alpha,\lambda}}
    \Pbb_z\left(\tau_\calC>\alpha^\lambda\right)\to0
    \qquad\text{as }\alpha\downarrow0,
\]
and
\[
    \sup_{z\in C_{\alpha,\lambda}}
    \Pbb_z^\alpha\left(\tau_\calC^\alpha>\alpha^\lambda\right)
    \longrightarrow0
    \qquad\text{as }\alpha\downarrow0 .
\]
The same estimates hold for the adjoint processes.
\end{lemma}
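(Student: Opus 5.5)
The plan is to work with the signed defining function $\rho_\Sigma$ of the boundary component $\Sigma\in\{\partial B,\partial\calD\}$ closest to the starting point. We show that over the time window $[0,\alpha^\lambda]$ its martingale part has fluctuations of order $\alpha^{3\lambda/4}$. These dominate both the initial distance $\alpha^\lambda$ and the drift contribution $O(\alpha^\lambda)$, so $\rho_\Sigma(Z_t)$ reaches $0$ with probability tending to one.

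Fix $z\in C_{\alpha,\lambda}$. For $\alpha$ small, $\alpha^\lambda<\eta_{\rm tub}$, so $z$ lies in the collar of a unique $\Sigma\subset\calC$. Let $\kappa$ be the exit time from a fixed slightly larger compact collar $U$. On $U$, all drifts, their derivatives and the It\^o corrections are bounded uniformly in $0\le\delta\le1$ and in forward/adjoint. By It\^o's formula, as in \eqref{eq:crossing-semimartingale-decomposition},
\[
    \rho_\Sigma(Z_{t\wedge\kappa})=\rho_\Sigma(z)+A_{t\wedge\kappa}+M_{t\wedge\kappa},
    \qquad |A_t|\le Ct,\qquad |\rho_\Sigma(z)|\le c_2\alpha^\lambda .
\]
The martingale $M$ has quadratic variation \eqref{eq:crossing-quadratic-variation}, which is at least $4\sigma^2\int_0^t|r_s|^2ds$.

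First step: a good event for $r$. The $r$-equation is identical for the limiting and regularized processes; the $\delta$-noises enter only $p,\theta$. Hence on $[0,\alpha^\lambda\wedge\kappa]$,
\[
    |r_t-r_0|\le C\alpha^\lambda+\sigma\sup_{s\le\alpha^\lambda}|B_s| .
\]
The Brownian supremum is of order $\alpha^{\lambda/2}\ll\alpha^{\lambda/4}$. So with probability $1-o(1)$, uniformly in $z$, we have $|r_t|\ge\tfrac12\alpha^{\lambda/4}$ on that interval. By the same uniform small-time estimate as in Lemma~\ref{lem:uniform-small-time-boundary}, $\kappa>\alpha^\lambda$ with probability $1-o(1)$. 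On the intersection $G_\alpha$ of these two events,
\[
    \langle M\rangle_{\alpha^\lambda}\ge \sigma^2\alpha^{\lambda/2}\alpha^\lambda=\sigma^2\alpha^{3\lambda/2}.
\]
This lower bound only improves when $\delta>0$, since the extra terms in \eqref{eq:crossing-quadratic-variation} are nonnegative.

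Second step: Dambis--Dubins--Schwarz. Write $M_t=\beta_{\langle M\rangle_t}$. If $\rho_\Sigma(Z_t)$ never vanishes on $[0,\alpha^\lambda]$ and $G_\alpha$ holds, then the time-changed Brownian motion $\beta$ never passes the level $-\operatorname{sgn}(\rho_\Sigma(z))(c_2+C)\alpha^\lambda$ before time $\sigma^2\alpha^{3\lambda/2}$. By the reflection principle this has probability at most
\[
    \Pbb\bigl(|N|\le (c_2+C)\alpha^\lambda/(\sigma\alpha^{3\lambda/4})\bigr)\le C\alpha^{\lambda/4}\to0,
\]
where $N$ is a standard Gaussian. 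Hence, with probability $1-o(1)$ uniformly in $z\in C_{\alpha,\lambda}$, the process hits $\Sigma$ before time $\alpha^\lambda$ (say before $\alpha^\lambda/2$, after shrinking constants). If $\Sigma=\partial\calD$, this is already $\tau_\calC$. If $\Sigma=\partial B$, the hit occurs with $|r|\ge\tfrac12\alpha^{\lambda/4}\neq0$. By the strong Markov property and Lemma~\ref{lem:crossing-nonzero-r}, the process enters the open ball $B$ in every subsequent time interval almost surely, so $\tau_\calC\le\alpha^\lambda$.

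The same computation applies verbatim to $Z^\alpha$, i.e.\ with $\delta=\alpha$, and to the adjoint processes, because every constant is uniform in $\delta$ and in the sign $\varsigma$. The main obstacle is keeping all bounds uniform over the starting point and over $\delta$. In particular, the lower bound on $\langle M\rangle$ must come from the $r$-noise alone, without appealing to the degenerating $\delta$-ellipticity. This works because the $r$-equation is $\delta$-free and the extra quadratic variation is nonnegative. The exponent bookkeeping, $\lambda<3\lambda/4\cdot\frac43$ in the form $\alpha^{3\lambda/4}\gg\alpha^\lambda$, is where the choice of $|r|\ge\alpha^{\lambda/4}$ is used.
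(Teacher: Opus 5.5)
Your proposal is correct and follows essentially the same route as the paper: a signed defining function, a good event keeping $|r|\ge\tfrac12\alpha^{\lambda/4}$ on $[0,\alpha^\lambda]$, the quadratic-variation lower bound $\gtrsim\alpha^{3\lambda/2}$ from the $r$-noise alone, and Dambis--Dubins--Schwarz plus reflection to get the $C\alpha^{\lambda/4}$ bound. The differences from the paper are only technical. You absorb the extra $\delta$-martingale into $M$, so you avoid the paper's separate $C\alpha^{1-\lambda}$ estimate and your bound is uniform in $\delta$. You control exit from the collar by a small-time displacement bound, where the paper argues via exit through the interior side. Finally, you pass through hitting $\Sigma$ and Lemma~\ref{lem:crossing-nonzero-r}, whereas the paper uses directly that survival forces $\rho\ge0$ on $[0,\alpha^\lambda]$.
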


\begin{proof}
Choose the signed defining function $\rho$ of the closest component of
$\calC$.  For $\partial B$ we use
\[
    \rho_s(z)=|z-(s,0,0)|^2-\rho^2,
\]
while for $\partial\calD$ we use $-\rho_N(z)=N^2-|z|^2$.  With this convention,
the killed side is always $\{\rho\le0\}$.  The tubular neighborhoods of
$\partial B$ and $\partial\calD$ are disjoint, so this choice is unambiguous for
small $\alpha$.

Fix a smaller tubular neighborhood $\mathsf T_\Sigma$ of the selected component
$\Sigma$, with closure disjoint from the other killed component.  For the
limiting process set
\[
    \zeta_\alpha
    :=\inf\{t\ge0:Z_t\notin\mathsf T_\Sigma\},
    \qquad
    \kappa_\alpha:=\tau_\calC\wedge\zeta_\alpha .
\]
For the regularized process the same symbols, with a superscript $\alpha$ on
the process and stopping times, will be used.  All coefficient bounds below
are taken on the compact closure of $\mathsf T_\Sigma$ and are uniform in the
two components, the forward and adjoint drifts, and $0\le\alpha\le1$.

Set
\[
    T_\alpha=\alpha^\lambda,\qquad
    v_\alpha=\alpha^{\lambda/4},\qquad
    d_\alpha=\alpha^\lambda .
\]
For $z\in C_{\alpha,\lambda}$ we have
$\dist(z,\calC)\le d_\alpha$ and $|r_0|\ge v_\alpha$.  Since
$z\in\calD\setminus\calC$ and the killed side is $\{\rho\le0\}$, the interior
side satisfies $\rho(z)\ge0$.  In the fixed tubular neighborhood, the absolute
value of $\rho$ is comparable to the distance from the boundary.  Hence, for all
sufficiently small $\alpha$,
\[
    0\le\rho(z)\le C\dist(z,\calC)\le Cd_\alpha .
\]
Up to $\kappa_\alpha$, It\^o's formula for the limiting process or the
$\alpha$-regularized process gives
\[
    \rho(Z_{t\wedge\kappa_\alpha})-\rho(Z_0)
    =
    A_t+M_t+R_t^\alpha,
\]
where
$|A_t|\le Ct$,
and $M$ is the one-dimensional martingale obtained from the $r$-noise term; up
to a harmless sign,
\[
    M_t=2\sigma\int_0^{t\wedge\kappa_\alpha} r_s\cdot dB_s .
\]
The lower bound on $|r|$ supplies stochastic non-degeneracy through the
quadratic variation
\[
    d\langle M\rangle_t
    =4\sigma^2|r_t|^2\mathbf 1_{\{t<\kappa_\alpha\}}\,dt .
\]
Here $R^\alpha=0$ for the limiting process, while for the
$\alpha$-regularized process
\[
    R_t^\alpha
    =
    \sqrt{2\eps\alpha}
    \int_0^{t\wedge\kappa_\alpha^\alpha}
        \nabla_{\theta,p}\rho(Z_s^\alpha)\cdot dW_s .
\]

We first justify the localization.  On
$\{\zeta_\alpha\le T_\alpha<\tau_\calC\}$, the path leaves
$\mathsf T_\Sigma$ through its interior, non-killed side.  Consequently,
$\rho(Z_{\zeta_\alpha})\ge c_*>0$, where $c_*$ is independent of $\alpha$.
For sufficiently small $\alpha$, at least half of this fixed increase must be
produced by the stopped martingale.  The quadratic variation of
the full stopped martingale $M+R^\alpha$ is bounded by
$C(T_\alpha+\alpha T_\alpha)$.  Doob's inequality therefore gives, for both the
limiting and regularized processes,
\begin{equation}
    \sup_{z\in C_{\alpha,\lambda}}
    \Pbb_z\left(\zeta_\alpha\le T_\alpha<\tau_\calC\right)
    \le CT_\alpha\longrightarrow0.
    \label{eq:fast-entry-outward-exit}
\end{equation}
with the evident superscript-$\alpha$ version for the regularized law.

Next define
\[
    E_\alpha^r
    :=
    \left\{
      \sup_{s\le T_\alpha\wedge\kappa_\alpha}|r_s-r_0|
      \le\frac12v_\alpha
    \right\}.
\]
The stopped $r$-drift is bounded, and the stopped $r$-martingale has quadratic
variation bounded by $CT_\alpha$.  The Burkholder--Davis--Gundy inequality and
Chebyshev's inequality yield, uniformly over the collar,
\begin{equation}
    \Pbb_z\left((E_\alpha^r)^c\right)
    \le
    C\frac{T_\alpha+T_\alpha^2}{v_\alpha^2}
    \le C\alpha^{\lambda/2}\longrightarrow0.
    \label{eq:fast-entry-r-good}
\end{equation}
The same estimate holds for the regularized and adjoint processes.  On
$E_\alpha^r\cap\{\kappa_\alpha>T_\alpha\}$,
\[
    \langle M\rangle_{T_\alpha}
    \ge c v_\alpha^2T_\alpha
    =:Q_\alpha
    =c\alpha^{3\lambda/2}.
\]
By Dambis--Dubins--Schwarz and the reflection principle,
\begin{equation}
\begin{aligned}
&\Pbb_z\left(
    \inf_{0\le t\le T_\alpha}M_t>-C_1d_\alpha,\
    E_\alpha^r,\ \kappa_\alpha>T_\alpha
    \right) \\
&\qquad\le
    \Pbb\left(
       \inf_{0\le u\le Q_\alpha}\widetilde B_u>-C_1d_\alpha
    \right)
    \le C\frac{d_\alpha}{\sqrt{Q_\alpha}}
    =C\alpha^{\lambda/4}\longrightarrow0.
\end{aligned}
\label{eq:fast-entry-reflection}
\end{equation}
Here $\widetilde B$ is the Brownian motion supplied by the time-change theorem.
This form of the estimate explicitly retains the event on which the lower
quadratic-variation bound is valid.

For the regularized process, boundedness of
$\nabla_{\theta,p}\rho$ on the stopped tube gives
\[
    \langle R^\alpha\rangle_{T_\alpha}
    \le C\alpha T_\alpha=C\alpha^{1+\lambda},
\]
and hence
\begin{equation}
    \sup_{z\in C_{\alpha,\lambda}}
    \Pbb_z^\alpha\left(
       \sup_{t\le T_\alpha}|R_t^\alpha|>d_\alpha
    \right)
    \le C\frac{\alpha T_\alpha}{d_\alpha^2}
    =C\alpha^{1-\lambda}\longrightarrow0.
    \label{eq:fast-entry-extra-noise}
\end{equation}

On the event that the process survives until $T_\alpha$, stays in the tube,
belongs to $E_\alpha^r$, and (in the regularized case) satisfies
$\sup_{t\le T_\alpha}|R_t^\alpha|\le d_\alpha$, one has
$\rho(Z_t)\ge0$ for $t\le T_\alpha$.  Since
$\rho(Z_0)\le Cd_\alpha$ and
$\sup_{t\le T_\alpha}|A_t|\le CT_\alpha=Cd_\alpha$, the stopped decomposition
implies
\[
    \inf_{0\le t\le T_\alpha}M_t>-C_1d_\alpha.
\]
Combining \eqref{eq:fast-entry-outward-exit}--
\eqref{eq:fast-entry-extra-noise} therefore yields
\[
    \sup_{z\in C_{\alpha,\lambda}}
    \left[
      \Pbb_z\left(\tau_\calC>T_\alpha\right)
      +\Pbb_z^\alpha\left(\tau_\calC^\alpha>T_\alpha\right)
    \right]
    \le
    C\left(
       \alpha^\lambda+\alpha^{\lambda/2}
       +\alpha^{\lambda/4}+\alpha^{1-\lambda}
    \right)
    \longrightarrow0.
\]
The adjoint proof is identical, and the same uniform estimates may be applied
after stopping times by the strong Markov property.
\end{proof}

The next proposition transfers pathwise convergence and the boundary estimates
into uniform convergence of truncated killing times for starts on $\partial A$.

\begin{proposition}[Uniform convergence of truncated killing times]
\label{prop:uniform-truncated-killing}
For every $M<\infty$, as $\delta\downarrow0$,
\begin{equation}\label{eq:truncated-killing-L1}
    \sup_{z\in\partial A}
    \E_z\left[
        \left|(\tau_\calC^\delta\wedge M)-(\tau_\calC\wedge M)\right|
    \right]
    \longrightarrow0 .
\end{equation}
The same statement holds for the adjoint processes.
\end{proposition}

\begin{proof}
Couple $Z^\delta$ and $Z$ by the same $r$-Brownian motion.  Fix
$\lambda\in(0,1/2)$, and set
\[
    a_\delta=\delta^\lambda,
    \qquad
    b_\delta=\delta^{\lambda/4}.
\]
Let
\[
    \tau_*^\delta=\tau_\calC\wedge\tau_\calC^\delta,
    \qquad
    G_\delta=
    \left\{
    \sup_{0\le s\le (M+1)\wedge\tau_*^\delta}
    |Z_s^\delta-Z_s|\le a_\delta
    \right\}.
\]
All stopping times in this proof are taken with respect to the joint filtration
generated by the Brownian motions used in the coupling; after such a stopping
time, the post-stopping Brownian increments have the usual strong Markov
property for the coupled construction.
Before time $\tau_*^\delta$, both paths remain in the bounded set
$\overline\calD$.  Since $\partial A$ is compact, choose one fixed compact
neighborhood $\calK$ of $\overline\calD$ and apply the $L^2$ estimate in
Lemma~\ref{lem:pathwise} with $K=\partial A$ and $T=M+1$.  The stopped supremum
defining $G_\delta$ is bounded by the supremum up to the exit time from this
compact neighborhood, and hence
\[
    \sup_{z\in\partial A}
    \E_z\left[
        \sup_{0\le s\le (M+1)\wedge\tau_*^\delta}
        |Z_s^\delta-Z_s|^2
    \right]
    \le C_M\delta .
\]
Therefore, Chebyshev's inequality implies that
\[
    \sup_{z\in\partial A}
    \Pbb_z(G_\delta^c)
    \le
    a_\delta^{-2}C_M\delta
    =
    C_M\delta^{1-2\lambda}
    \longrightarrow0 ,\qquad\text{as $\delta\downarrow 0$},
\]
because $\lambda<1/2$.

For the first direction, on $G_\delta$, if $\tau_\calC\le M$,
$|r_{\tau_\calC}|>2b_\delta$, and the regularized path remains alive at
time $\tau_\calC$, then
\[
    \dist\left(Z_{\tau_\calC}^\delta,\calC\right)\le a_\delta,
    \qquad
    |r_{\tau_\calC}^\delta|\ge b_\delta .
\]
Thus $Z_{\tau_\calC}^\delta\in C_{\delta,\lambda}$, and the strong Markov
property gives
\[
\begin{aligned}
&
    \Pbb_z\left(
        \tau_\calC^\delta>\tau_\calC+2a_\delta,\ \tau_\calC\le M
    \right)
\\
&\le
    \Pbb_z(G_\delta^c)
    +\Pbb_z\left(|r_{\tau_\calC}|\le2b_\delta,\ \tau_\calC\le M\right)
    +\sup_{w\in C_{\delta,\lambda}}
      \Pbb_w^\delta\left(\tau_\calC^\delta>a_\delta\right).
\end{aligned}
\]
The small-$r$ term is controlled by
Proposition~\ref{prop:small-r-entry}, applied to the first hit of
$\partial B$ or $\partial\calD$, and the collar term is controlled by
Lemma~\ref{lem:fast-entry-collar}.

For the reverse direction, on $G_\delta$, if $\tau_\calC^\delta\le M$,
$|r_{\tau_\calC^\delta}^\delta|>2b_\delta$, and the limiting path remains alive
at $\tau_\calC^\delta$, then
$Z_{\tau_\calC^\delta}\in C_{\delta,\lambda}$. Hence,
\[
\begin{aligned}
&
    \Pbb_z\left(
        \tau_\calC>\tau_\calC^\delta+2a_\delta,\ 
        \tau_\calC^\delta\le M
    \right)
\\
&\le
    \Pbb_z(G_\delta^c)
    +\Pbb_z^\delta\left(
        |r_{\tau_\calC^\delta}^\delta|\le2b_\delta,\ 
        \tau_\calC^\delta\le M\right)
    +\sup_{w\in C_{\delta,\lambda}}
      \Pbb_w(\tau_\calC>a_\delta).
\end{aligned}
\]
The second term is the diagonal regularized estimate in
Proposition~\ref{prop:small-r-entry}, and the last term is the limiting part of
Lemma~\ref{lem:fast-entry-collar}.

Consequently,
\[
    \sup_{z\in\partial A}
    \Pbb_z\left(
        \left|(\tau_\calC^\delta\wedge M)-(\tau_\calC\wedge M)\right|>2a_\delta
    \right)\longrightarrow0
    \qquad\text{as }\delta\downarrow0 .
\]
Since the difference is bounded by $M$,
\[
    \E_z\left|
        (\tau_\calC^\delta\wedge M)-(\tau_\calC\wedge M)
    \right|
    \le
    2a_\delta+
    M\Pbb_z\left(
        |(\tau_\calC^\delta\wedge M)-(\tau_\calC\wedge M)|>2a_\delta
    \right).
\]
Taking the supremum over $z\in\partial A$ proves
\eqref{eq:truncated-killing-L1}.
The proof for the adjoint process is identical.
\end{proof}

The same stability mechanism also leads to continuity of the limiting truncated
killing functional along the entrance boundary.

\begin{lemma}[Continuity of the truncated killing functional on $\partial A$]
\label{lem:gM-continuity-boundary}
For every $M<\infty$, the function
\[
    g_M(z)=\E_z[\tau_\calC\wedge M]
\]
is continuous on $\partial A$.
\end{lemma}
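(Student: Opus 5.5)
The plan is to show that $g_M$ is a uniform limit on $\partial A$ of continuous functions. Proposition~\ref{prop:uniform-truncated-killing} already gives
\[
    \sup_{z\in\partial A}|g_{\delta,M}(z)-g_M(z)|\to0 .
\]
So it suffices to prove that, for each fixed $\delta>0$, the function $z\mapsto g_{\delta,M}(z)=\E_z^\delta[\tau_\calC^\delta\wedge M]$ is continuous on $\partial A$. Continuity of $g_M$ then follows because a uniform limit of continuous functions is continuous.

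For fixed $\delta>0$ the regularized process is uniformly elliptic on compact sets with smooth coefficients, and $\partial A$ lies at positive distance $d_\partial$ from $\calC=B\cup\partial\calD$. I would prove continuity of $g_{\delta,M}$ by a direct coupling argument that mirrors the proof of Proposition~\ref{prop:uniform-truncated-killing}, rather than appealing to PDE regularity of a parabolic Dirichlet problem.

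Fix $z,z'\in\partial A$ and drive $Z^{\delta,z}$ and $Z^{\delta,z'}$ by the same Brownian motions $(W^\theta,W^p,B)$. Before either path exits a fixed compact neighborhood $\calK$ of $\overline\calD$, the coefficients are Lipschitz. Gr\"onwall then gives
\[
    \sup_{t\le (M+1)\wedge\sigma}|Z_t^{\delta,z}-Z_t^{\delta,z'}|\le C|z-z'| ,
\]
pathwise, since the noise is additive and cancels. For $\eta>0$ put $a=|z-z'|$ and take the comparison threshold $C a$. The two first killing times can then differ by more than a small amount only through two mechanisms. The first is that the first hit of $\calC$ by one path occurs with small $|r|$; this is controlled by Proposition~\ref{prop:small-r-entry} with a fixed small $b$. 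The second is that, after a non-characteristic hit, the other path, which is within $Ca$ of $\calC$ and has $|r|\ge b/2$, fails to be killed quickly. For fixed $\delta$ the second mechanism is handled by the crossing argument of Lemma~\ref{lem:fast-entry-collar}: the martingale part $2\sigma\int r\cdot dB$ has quadratic-variation rate at least $cb^2$, and the reflection estimate gives a probability bound of order $Ca/(b\sqrt{t})$ for survival beyond time $t$ near the boundary. The tube exit is controlled by the moment bounds of Assumption~\ref{ass:controlled-coefficients}. Choosing first $b$ small, then $t$ small, then $a\to0$ yields
\[
    \E|\tau^{\delta,z}_\calC\wedge M-\tau^{\delta,z'}_\calC\wedge M|\to0 .
\]

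Alternatively, and perhaps more simply, one can bypass the $\delta$-family and apply the same coupling directly to the limiting process $Z$, using the $\delta=0$ parts of Proposition~\ref{prop:small-r-entry} and Lemma~\ref{lem:fast-entry-collar}. In this version the same-noise coupling of two degenerate paths from $z,z'$ differs only by the deterministic Gr\"onwall bound $C|z-z'|$, because the additive noise cancels. This makes the argument self-contained, and I would present it this way, noting that it also gives continuity of $g_{\delta,M}$ by the identical reasoning.

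The main obstacle is uniformity of the small-$r$ entry estimate over the starting points $z\in\partial A$. Proposition~\ref{prop:small-r-entry} requires $K\subset\overline\calD\setminus\Sigma$ compact, and this holds since $\partial A$ is compact and disjoint from $\partial B$ and $\partial\calD$. One must also check that the collar lemma is applied at the random hitting point via the strong Markov property with constants uniform over the collar, exactly as in Proposition~\ref{prop:uniform-truncated-killing}. Once these two uniform inputs are in place, the remainder is the bounded-difference estimate
\[
    \E|X|\le 2\eta+M\,\Pbb(|X|>2\eta) ,
\]
applied with $X$ the difference of the two truncated killing times.
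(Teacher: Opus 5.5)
Your presented route (synchronous coupling of the limiting processes so that the additive noise cancels, Gr\"onwall, Proposition~\ref{prop:small-r-entry} for small-$r$ first entries uniformly over $\partial A$, the collar mechanism of Lemma~\ref{lem:fast-entry-collar} applied through the strong Markov property at the earlier killing time, then the bounded-difference estimate) is essentially the paper's proof. The only real difference is in how the scales are chosen. The paper ties all scales to $e_n=|z_n-z|$ (via $a_n=e_n^{1/2}$ and $b_n=a_n^{1/4}$) so that Lemma~\ref{lem:fast-entry-collar} applies verbatim, whereas you fix $b$, then $t$, then let $|z-z'|\to0$. Your ordering means rerunning the collar estimate with decoupled parameters, and there the survival bound is of order $(a+t)/(b\sqrt t)+t/b^2$ (from the drift and the $r$-fluctuation), not just $a/(b\sqrt t)$; your order of limits still handles this.
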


\begin{proof}
Let $z_n,z\in\partial A$ with $z_n\to z$, and couple the limiting processes
started from these points by the same Brownian motion.  Let $\mathsf K$ be a
compact neighborhood of $\overline\calD$, and stop the coupled paths at the
first time either path leaves $\mathsf K$.
The limiting third-order Langevin equation has a constant diffusion matrix,
or equivalently additive noise.  Hence, under this synchronous coupling, the
Brownian terms cancel in $Z^{z_n}-Z^z$.  Since the drift is Lipschitz on
$\mathsf K$, Gr\"onwall's lemma implies
\[
    \E\left[
        \sup_{0\le s\le (M+1)\wedge\sigma_n}
        |Z_s^{z_n}-Z_s^z|^2
    \right]
    \le C_M|z_n-z|^2,
\]
where $\sigma_n$ is this joint exit time.  Fix $\lambda\in(0,1/2)$.  After
discarding indices for which $z_n=z$, set
\[
    e_n:=|z_n-z|,\qquad
    a_n:=e_n^{1/2},\qquad
    \alpha_n:=a_n^{1/\lambda},\qquad
    b_n:=\alpha_n^{\lambda/4}=a_n^{1/4}.
\]
Thus $a_n=\alpha_n^\lambda$ and $e_n^2/a_n^2=e_n\to0$.  Write
\[
    \tau_n:=\tau_\calC(Z^{z_n}),\qquad
    \tau:=\tau_\calC(Z^z),\qquad
    \tau_{*,n}:=\tau_n\wedge\tau,
\]
and define
\[
    G_n
    :=\left\{
       \sup_{0\le s\le(M+1)\wedge\tau_{*,n}}
       |Z_s^{z_n}-Z_s^z|\le a_n
    \right\}.
\]
Before $\tau_{*,n}$ both paths remain in the bounded set
$\overline\calD\setminus B$.  The preceding synchronous-coupling estimate and
Chebyshev's inequality therefore give
\[
    \Pbb(G_n^c)\le C_M\frac{e_n^2}{a_n^2}\longrightarrow0.
\]

On $G_n$, whenever one path hits $\calC$ by time $M$ with
$|r|>2b_n$ while the other path remains alive, the latter lies in
$C_{\alpha_n,\lambda}$.  Applying the strong Markov property at the earlier
hitting time in both directions gives
\begin{align*}
&\Pbb\left(
   |(\tau_n\wedge M)-(\tau\wedge M)|>2a_n
 \right) \\
&\quad\le
 \Pbb(G_n^c)
 +\Pbb_z\left(|r_\tau|\le2b_n,\ \tau\le M\right)
 +\Pbb_{z_n}\left(|r_{\tau_n}|\le2b_n,\ \tau_n\le M\right)\\
&\qquad
 +2\sup_{w\in C_{\alpha_n,\lambda}}
      \Pbb_w\left(\tau_\calC>a_n\right).
\end{align*}
The two small-$r$ terms tend to zero uniformly for starting points on
$\partial A$ by Proposition~\ref{prop:small-r-entry}, after summing over
$\partial B$ and $\partial\calD$.  The last term tends to zero by the limiting
part of Lemma~\ref{lem:fast-entry-collar}.  Hence
$\tau_n\wedge M\to\tau\wedge M$ in probability.  These variables are bounded
by $M$, so their expectations converge, proving continuity of $g_M$.
\end{proof}

The next deterministic lemma isolates the pathwise topological fact behind the
stochastic first-hit convergence.

\begin{lemma}[Stability of the first hitting time and label]
\label{lem:hitting-decision-stability}
Let $x^n,x\in C([0,M];\R^{3d})$ and suppose $x^n\to x$ uniformly.  
Suppose the
first hit of $\partial A\cup\partial B\cup\partial\calD$ by $x$ occurs at a
time $\tau\in(0,M)$, at a unique boundary component, at a point with $r\ne0$,
and the path crosses that boundary immediately after the hit.  Let $\tau_n$ be
the first hitting time of $\partial A\cup\partial B\cup\partial\calD$ by
$x^n$.  Then $\tau_n\to\tau$, and, for all large $n$, the first-hit label of
$x^n$ agrees with that of $x$.
\end{lemma}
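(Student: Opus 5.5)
This is a deterministic statement, and I would prove it with a lower-semicontinuity argument plus an upper bound from the crossing. Write $S:=\partial A\cup\partial B\cup\partial\calD$, which is closed. Let $\Sigma$ be the component hit by $x$ at time $\tau$. The crossing hypothesis means the following: the signed defining function $\rho_\Sigma$ of Definition~\ref{def:signed-boundary-functions} takes values of the sign opposite to $\rho_\Sigma(x_0)$ at times arbitrarily close to $\tau$ from the right. As in Lemma~\ref{lem:crossing-nonzero-r}, this amounts to $\rho_\Sigma(x_t)$ taking both signs immediately after $\tau$. The condition $r\ne0$ is what makes this crossing hypothesis natural. In the argument I expect it to enter only through that hypothesis, so I would not use it separately.

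First, the lower bound $\liminf_n\tau_n\ge\tau$. Fix $\eta\in(0,\tau)$. Since $x$ does not meet $S$ on the compact interval $[0,\tau-\eta]$, the quantity
\[
m_\eta:=\min_{t\le\tau-\eta}\dist(x_t,S)
\]
is positive. Uniform convergence gives $\sup_{t\le M}|x^n_t-x_t|<m_\eta$ for large $n$, so $x^n$ avoids $S$ on $[0,\tau-\eta]$. Hence $\tau_n>\tau-\eta$ for all large $n$.

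Second, the upper bound $\limsup_n\tau_n\le\tau$. Fix $\eta>0$ with $\tau+\eta<M$. On $[0,\tau)$ the path stays on one side of $\Sigma$, say $\rho_\Sigma(x_t)>0$; this uses that $x_0\notin S$, which follows from $\tau>0$. By the crossing hypothesis there is $t_1\in(\tau,\tau+\eta)$ with $\rho_\Sigma(x_{t_1})<0$. Choose also $t_2<\tau$ with $\rho_\Sigma(x_{t_2})>0$. Since $\rho_\Sigma$ is continuous and $x^n\to x$ uniformly, $\rho_\Sigma(x^n_{t_2})>0$ and $\rho_\Sigma(x^n_{t_1})<0$ for large $n$. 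The intermediate value theorem then puts a zero of $\rho_\Sigma(x^n)$ in $(t_2,t_1)$, so $x^n$ meets $\Sigma$ before $\tau+\eta$. Therefore $\tau_n<\tau+\eta$, and with the first step this gives $\tau_n\to\tau$.

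For the label, I would use that the three components have pairwise disjoint collars of width $\eta_{\rm tub}$. The hitting point $x^n_{\tau_n}$ lies on some component $\Sigma_n$. Equicontinuity of the convergent family together with $\tau_n\to\tau$ gives $|x^n_{\tau_n}-x_\tau|\le \|x^n-x\|_\infty+|x_{\tau_n}-x_\tau|\to0$. Since $x_\tau\in\Sigma$, for large $n$ the point $x^n_{\tau_n}$ lies in $\mathsf T_{\eta_{\rm tub}}(\Sigma)$. That collar meets no other component, so $\Sigma_n=\Sigma$. The only delicate point is the crossing step: the conclusion needs a genuine sign change of $\rho_\Sigma(x_t)$ after $\tau$, not mere tangency, and that is exactly what the immediate-crossing hypothesis supplies.
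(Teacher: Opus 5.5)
Your proof is correct and follows the paper's argument. The same positive-distance bound on $[0,\tau-\eta]$ gives $\liminf_n\tau_n\ge\tau$, and the same sign change of $\rho_\Sigma$ across $\tau$, transferred to $x^n$ by uniform convergence and the intermediate value theorem, gives $\limsup_n\tau_n\le\tau$. The only difference is cosmetic and lies in the label step: you use $x^n_{\tau_n}\to x_\tau$, which needs only continuity of $x$ rather than equicontinuity, together with the disjoint collars. The paper instead keeps $x^n$ inside the collar of $\Sigma$ on the whole window $[\tau-\eta,\tau+s_\eta]$, and both routes work.
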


\begin{proof}
Let $\Gamma=\partial A\cup\partial B\cup\partial\calD$, and let $\Sigma$ be the
unique component hit by $x$ at time $\tau$.  For any
$0<\eta<\tau$, the compact set $x([0,\tau-\eta])$ is disjoint from $\Gamma$;
therefore
\[
    d_\eta:=\dist\left(x([0,\tau-\eta]),\Gamma\right)>0 .
\]
Uniform convergence implies $x^n([0,\tau-\eta])\cap\Gamma=\varnothing$ for all
large $n$, and hence $\tau_n\ge\tau-\eta$.  Letting $\eta\downarrow0$ yields
\[
    \liminf_{n\to\infty}\tau_n\ge\tau .
\]

For the reverse inequality, choose $\eta>0$ so small that
$\tau+\eta<M$ and $x([\tau-\eta,\tau+\eta])$ lies in a tubular neighborhood of
$\Sigma$ which is disjoint from the tubular neighborhoods of the other boundary
components.  Let $\rho_\Sigma$ be a signed defining function for $\Sigma$,
oriented so that the crossed side has positive sign.  Since $x$ crosses
immediately after $\tau$, there exists $s_\eta\in(0,\eta)$ such that
$\rho_\Sigma(x_{\tau+s_\eta})$ has the crossed-side sign and is non-zero.  By
boundary avoidance before $\tau$, after decreasing $\eta$ if necessary,
$\rho_\Sigma(x_{\tau-\eta})$ has the pre-hit sign and is non-zero.  Uniform
convergence implies that, for all large $n$, $x^n$ remains away from the other
boundary components on $[\tau-\eta,\tau+s_\eta]$ and changes the sign of
$\rho_\Sigma$ between $\tau-\eta$ and $\tau+s_\eta$.  By continuity, $x^n$ hits
$\Sigma$ in this interval.  Therefore,
\[
    \limsup_{n\to\infty}\tau_n\le\tau+s_\eta\le\tau+\eta .
\]
Letting $\eta\downarrow0$ implies that $\tau_n\to\tau$.  The preceding argument also
shows that, for all large $n$, the first boundary hit is on the same component
$\Sigma$, so the first-hit label agrees with that of $x$.
\end{proof}

The following tail estimate lets the pointwise first-hit convergence be proved
on finite horizons and then extended to the full committor.

\begin{lemma}[Finite-time reduction for first-hit decisions]
\label{lem:first-hit-tail-reduction}
Suppose Assumptions~\ref{ass:U} and~\ref{ass:controlled-coefficients} hold,
and adopt the \hyperref[setup:domains]{bounded-domain setup}.
Let $K\subset\Omega$ be compact.  Then
\[
    \lim_{M\to\infty}
    \limsup_{\delta\downarrow0}
    \sup_{z\in K}
    \Pbb_z^\delta\left(
        \tau_A^\delta\wedge\tau_\calC^\delta>M
    \right)=0 .
\]
Moreover,
\[
    \lim_{M\to\infty}
    \sup_{z\in K}
    \Pbb_z\left(
        \tau_A\wedge\tau_\calC>M
    \right)=0 .
\]
The same statements hold for the adjoint family.
\end{lemma}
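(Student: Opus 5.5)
The strategy is the one behind Lemma~\ref{lem:finite-killing}: a uniform positive probability of being killed within a fixed time, followed by geometric decay through the strong Markov property. The new feature is that the lower bound must hold uniformly in $\delta\in[0,\delta_0]$. Since $\tau_A\wedge\tau_\calC\le\tau_\calC$, it suffices to bound $\Pbb_z^\delta(\tau_\calC^\delta>M)$ uniformly over $z\in\overline\calD\setminus B$ and small $\delta$. The limiting statement already follows from Lemma~\ref{lem:finite-killing}, because $\sup_z\Pbb_z(\tau_\calC>M)\le \sup_z\E_z[\tau_\calC]/M\to0$. So the work is in the regularized family.

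\textbf{Step 1: a uniform one-step exit bound.} For each $z_i\in\overline\calD\setminus B$, reuse the controlled path $(\Theta_i,P_i,R_i)$ and control $h_i$ from the proof of Lemma~\ref{lem:finite-killing}, together with the neighborhood $U_i$, time $T_i$, margin $\eta_i$ and tube radius $\varepsilon_i$. On the tube event $E_i$, every limiting path started in $U_i$ stays within $\eta_i$ of $\phi^{z,h_i}$ and ends at distance at least $2\eta_i$ from $\calD$. Couple $Z^\delta$ with $Z$ through the same $B$, as in Theorem~\ref{thm:delta-coupling}. Lemma~\ref{lem:pathwise}, applied on the compact $2\eta_i$-neighborhood of $K_i$, gives
\[
\sup_{z\in U_i}\Pbb\Bigl(\sup_{t\le T_i\wedge\sigma_\delta}|Z^\delta_t-Z_t|>\eta_i/2\Bigr)\le C\delta/\eta_i^2 .
\]
On $E_i$ minus this event, $Z^\delta$ also stays in the localized region up to $T_i$ and lands outside $\calD$, so $\tau_\calC^\delta\le T_i$. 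Hence $\inf_{z\in U_i}\Pbb_z^\delta(\tau_\calC^\delta\le T_i)\ge a_i-C\delta/\eta_i^2\ge a_i/2$ for $\delta\le\delta_i$. Extracting a finite subcover gives $T,a>0$ and $\delta_0>0$ with
\[
\inf_{0\le\delta\le\delta_0}\ \inf_{z\in\overline\calD\setminus B}\Pbb_z^\delta(\tau_\calC^\delta\le T)\ge a/2 .
\]
If the path hits $B$ before leaving $\calD$, this still counts as killing. For the adjoint family, use the adjoint controls $h_i^*$ from the same lemma.

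\textbf{Step 2: iteration.} Each regularized process is strong Markov, and a path that has not been killed remains in $\overline\calD\setminus B$. So the one-step bound iterates:
\[
\sup_{0\le\delta\le\delta_0}\sup_{z}\Pbb_z^\delta(\tau_\calC^\delta>nT)\le(1-a/2)^n .
\]
Taking $M=nT$ and letting $n\to\infty$ gives the first claim. The $\limsup_{\delta\downarrow0}$ only involves $\delta\le\delta_0$, so the statement is in fact uniform over that range.

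\textbf{Main obstacle.} The delicate point is Step 1: $\delta$-uniformity near $\partial B$ and $\partial\calD$, in particular making sure that the regularized path exits $\calD$ rather than merely approaching it. The fixed margin $2\eta_i$ handles this, since the coupling error $\eta_i/2$ is strictly smaller, and no boundary-regularity input is needed. A second technical point is checking that the stopping time $\sigma_\delta$ in Lemma~\ref{lem:pathwise} does not occur before $T_i$ on the good event. This follows because both paths stay within $\eta_i$ of $K_i^\circ$, which lies inside the localized compact set.
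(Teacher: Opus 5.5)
Your proof is correct and follows the paper's argument. Like the paper, you reuse the finite cover of controlled tubes from Lemma~\ref{lem:finite-killing}, get a one-step killing bound that is uniform in small $\delta$, iterate it with the strong Markov property, and take the limiting case from Lemma~\ref{lem:finite-killing}. The one difference is how you obtain $\delta$-uniformity: you route through the limiting path, combining its tube event with the $L^2$ coupling bound of Lemma~\ref{lem:pathwise} and Chebyshev to get $a_i-C\delta/\eta_i^2$, whereas the paper compares $Z^\delta$ directly with the controlled skeleton and intersects the $r$-tube with an independent small-ball event for $W^\theta,W^p$, which gives the lower bound $\Pbb(E_i^r)\Pbb(E_i^{\rm extra})$, independent of $\delta\le\delta_0$.
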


\begin{proof}
Since $\tau_A^\delta\wedge\tau_\calC^\delta\le\tau_\calC^\delta$, it is enough
to prove a uniform tail for $\tau_\calC^\delta$.  Let
$\{O_i\}_{i=1}^J$ be the finite cover of
$\overline\calD\setminus B$ used in Lemma~\ref{lem:finite-killing}, and let
$\phi^{i,z}$, $T_i$, and $\eta_i$ denote the corresponding controlled path,
time horizon, and tube radius for $z\in O_i$.  Up to the first exit from the
compact union of these tubes, the Lipschitz estimate for the controlled SDE
and Gr\"onwall's lemma give
\begin{align}
 \sup_{0\le t\le T_i}\left|Z_t^{\delta,z}-\phi_t^{i,z}\right|
 \le C_i\left(
       \|B-h_i\|_{\infty,[0,T_i]}+\delta T_i 
       +\sqrt\delta\left(
          \|W^\theta\|_{\infty,[0,T_i]}
          +\|W^p\|_{\infty,[0,T_i]}
        \right)\right),
 \label{eq:regularized-controlled-tube}
\end{align}
where $h_i$ is the Cameron--Martin path realizing the $r$-control and fixed
diffusion coefficients have been absorbed into $C_i$.  Let $E_i^r$ be a
Brownian tube on which the first term on the right of
\eqref{eq:regularized-controlled-tube} is at most $\eta_i/4$.  Choose
$\delta_0>0$ so that $C_i\delta_0T_i\le\eta_i/4$ for every $i$.  On the
additional event
\[
    E_i^{\rm extra}
    =
    \left\{
        \sup_{t\le T_i}\left(\left|W_t^\theta\right|+\left|W_t^p\right|\right)
        \le
        \frac{\eta_i}{4C_i\sqrt{\delta_0}}
    \right\},
\]
the artificial martingales satisfy, for every $0<\delta\le\delta_0$,
\[
    C_i\sqrt{\delta}
    \sup_{t\le T_i}\left(\left|W_t^\theta\right|+\left|W_t^p\right|\right)
    \le \frac{\eta_i}{4} .
\]
Thus $E_i^r\cap E_i^{\rm extra}$ forces the regularized path, uniformly over
the corresponding initial neighborhood and all $0<\delta\le\delta_0$, to remain
inside the same controlled tube and to hit $\calC$ by time $T_i$.  Independence
of the Brownian coordinates gives
\[
    a_i:=\Pbb(E_i^r\cap E_i^{\rm extra})
    =\Pbb(E_i^r)\Pbb(E_i^{\rm extra})>0.
\]
Since the cover is finite, with $T:=\max_iT_i$ and
$a:=\min_i a_i$, there are $T>0$, $a>0$, and $\delta_0>0$ such
that
\[
    \inf_{0\le\delta\le\delta_0}
    \inf_{z\in\overline\calD\setminus B}
    \Pbb_z^\delta\left(\tau_\calC^\delta\le T\right)\ge a .
\]
The strong Markov property yields
\[
    \sup_{0\le\delta\le\delta_0}\sup_{z\in K}
    \Pbb_z^\delta\left(\tau_\calC^\delta>nT\right)
    \le (1-a)^n .
\]
This proves the regularized tail reduction.  The limiting case is
Lemma~\ref{lem:finite-killing}.  The adjoint case is identical, since the
controlled $r$-equation remains directly forced.
\end{proof}

With finite-time reduction and boundary stability established, the regularized
first killed time and first-hit label converge outside a $\pi^\eps$-null
exceptional set.

\begin{proposition}[Finite-horizon stability of the first killed hit]
\label{prop:hitting-time-vector-stability}
Let
\[
    \sigma^\delta=\tau_A^\delta\wedge\tau_\calC^\delta,
    \qquad
    \sigma=\tau_A\wedge\tau_\calC .
\]
For $\pi^\eps$-almost every $z\in\Omega$ and every $M<\infty$, as
$\delta\downarrow0$,
\[
    \sigma^\delta\wedge M
    \Longrightarrow
    \sigma\wedge M .
\]
Moreover, as $\delta\downarrow0$,
\[
    \bfone_{\{\tau_A^\delta<\tau_\calC^\delta\}}
    \Longrightarrow
    \bfone_{\{\tau_A<\tau_\calC\}}
    \qquad\text{under }\Pbb_z .
\]
The same statement holds for the adjoint processes.
\end{proposition}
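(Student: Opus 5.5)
We work under the explicit SDE coupling of Theorem~\ref{thm:delta-coupling}, with both processes started at $z\in\Omega$. The plan is to reduce everything to the deterministic Lemma~\ref{lem:hitting-decision-stability}, applied along a.s.\ convergent subsequences. The needed good event has probability tending to one, and is assembled from Theorem~\ref{thm:delta-coupling}, Lemma~\ref{lem:crossing-nonzero-r}, Lemma~\ref{lem:no-fixed-time-atoms-d1} and Proposition~\ref{prop:small-r-entry}.

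\emph{Step 1: reduction to a good event.} Fix $M$. Let $G_z^M$ be the event on which the limiting path satisfies one of two conditions. Either (a) $\sigma>M$, and the path does not touch $\Gamma:=\partial A\cup\partial B\cup\partial\calD$ on $[0,M]$. Or (b) $\sigma<M$, and the first hit is on a unique component at a point with $r_\sigma\neq0$, with immediate crossing afterwards. We then show $\Pbb_z(G_z^M)=1$ for every $z\in\Omega$ off a $\pi^\eps$-null set; the removed starting points make this set ``almost every''.
\begin{itemize}[label=$\cdot$]
\item Uniqueness of the component and $\Pbb_z(\sigma=M)=0$ both come from Lemma~\ref{lem:no-fixed-time-atoms-d1}.
\item Immediate crossing given $r_\sigma\ne0$ follows from Lemma~\ref{lem:crossing-nonzero-r} and the strong Markov property at $\sigma$.
\item $\Pbb_z(r_\sigma=0,\ \sigma\le M)=0$ follows from \eqref{eq:small-r-entry-limit}, applied to each component with $K=\{z\}$ and letting $b\downarrow0$. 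This is valid because $z\in\Omega$ lies at positive distance from $\Gamma$.
\item In case (a), the path can touch $\Gamma$ on $[0,M]$ without reaching $A$ or $\calC$ only through a tangential touch of $\partial A$ from outside. Such a touch occurs at $r=0$, since otherwise the crossing lemma forces entry into $A$; at $\partial B$ or $\partial\calD$ it would already be killing. This configuration is again excluded by the small-$r$ estimate.
\end{itemize}
This is where the open-ball convention of Remark~\ref{rem:open-ball-convention} enters.

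\emph{Step 2: convergence in probability.} Given any sequence $\delta_n\downarrow0$, Theorem~\ref{thm:delta-coupling} lets us extract a subsequence along which $Z^{\delta_n}\to Z$ uniformly on $[0,M+1]$ almost surely. On $G_z^M\cap\{\sigma<M\}$, Lemma~\ref{lem:hitting-decision-stability} gives $\tau_\Gamma(Z^{\delta_n})\to\sigma$ together with eventual agreement of the first-hit labels.

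We must still identify the first $\Gamma$-hit of $Z^{\delta_n}$ with $\sigma^{\delta_n}$. For fixed $\delta>0$ the process is uniformly elliptic, so the regular boundary identification holds, and Lemma~\ref{lem:crossing-nonzero-r} gives immediate crossing when $r\ne0$. The remaining case is a regularized first touch at small $r$. It is controlled by the diagonal part of Proposition~\ref{prop:small-r-entry}: with $b_\delta=\delta^{\lambda/4}$, this part yields $\Pbb(|r^\delta_{\tau^\delta_\Sigma}|\le K_0b_\delta,\ \tau^\delta_\Sigma\le M)\to0$. On the complement, the limit $r$ at the hit is bounded away from zero by uniform convergence.

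On $G_z^M\cap\{\sigma>M\}$, the limiting path has positive distance from $\Gamma$ on $[0,M]$, so $\sigma^{\delta_n}>M$ eventually. Hence $\sigma^{\delta_n}\wedge M\to\sigma\wedge M$ and the labels converge in probability along the full family. Weak convergence under $\Pbb_z$ follows, and this also gives the in-probability statement of Theorem~\ref{thm:ball-stability}(iv).

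\emph{Step 3: full labels.} The label $\bfone_{\{\tau_A<\tau_\calC\}}$ involves all times. We bound the discrepancy by $\Pbb_z(\sigma>M)+\Pbb_z^\delta(\sigma^\delta>M)$ plus the finite-horizon error. Lemma~\ref{lem:first-hit-tail-reduction}, taking $\limsup_{\delta}$ first and then $M\to\infty$, sends these tails to zero. The adjoint case is identical, using the adjoint versions of all cited results.

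\emph{Main obstacle.} The delicate point is Step 1(a) together with the regularized side of Step 2: excluding tangential first contacts with $\partial A$ (or with $\calC$) at $r=0$, where the uniform path limit need not transfer the hit. The plan is to reduce both to the two parts of Proposition~\ref{prop:small-r-entry}. If the pointwise version at a fixed $z$ fails, the fallback is to use the supremum over compact $K\subset\Omega$. This makes the exceptional set empty in $\Omega$, and in particular $\pi^\eps$-null.
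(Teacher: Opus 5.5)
Your proposal is correct and follows essentially the same route as the paper. You build a full-probability good event from fixed-time atomlessness, immediate crossing at $r\ne0$, and the small-$r$ entry estimate at the first contact of each component. You then apply the deterministic Lemma~\ref{lem:hitting-decision-stability} via a.s.\ subsequences, where the paper uses the continuous mapping theorem with a cutoff $|r_\sigma|>b$ and $b\downarrow0$. Finally, Lemma~\ref{lem:first-hit-tail-reduction} removes the horizon for the untruncated labels.

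Two small corrections. First, under the open-ball convention a tangential touch of $\partial B$ is not killing. It is, however, excluded by the same first-contact small-$r$ argument you use for $\partial A$, and that argument also rules out such touches before $\sigma$ in your case (b). Second, the small-$r$ paragraph in Step~2 is superfluous, and its last claim does not follow, since $K_0\delta^{\lambda/4}\to0$. For fixed $\delta>0$, elliptic crossing already identifies the first contact with $\sigma^\delta$, and the proof of Lemma~\ref{lem:hitting-decision-stability} places $x^n$ strictly on the crossed side anyway.
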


\begin{proof}
Fix a starting point $z\in\Omega$ and use the full finite-horizon coupling
from Theorem~\ref{thm:delta-coupling}.  Lemma~\ref{lem:no-fixed-time-atoms-d1}
and Proposition~\ref{prop:small-r-entry}, applied to the finite family of
boundary components, show directly under $\Pbb_z$ that the exceptional
hitting configurations used below have probability zero.  Thus, for every
fixed $M<\infty$,
\[
    \sup_{0\le t\le M}|Z_t^\delta-Z_t|\longrightarrow0
    \qquad\text{in probability}.
\]
For $b>0$, define the good-path event
\[
    \widehat\sigma
    :=\inf\{t\ge0:Z_t\in\partial A\cup\partial B\cup\partial\calD\},
\]
and
\[
    G_{M,b}:=\{\sigma>M,\ \widehat\sigma>M\}
    \cup\{\sigma<M,\ \mathsf U,\ |r_\sigma|>b,\ \mathsf X\},
\]
where $\mathsf U$ is the event that the first boundary component is unique and
$\mathsf X$ is the event that this boundary is crossed immediately.
If $\widehat\sigma\le M<\sigma$, then the first boundary contact must have
$r_{\widehat\sigma}=0$, because Lemma~\ref{lem:crossing-nonzero-r} would
identify that contact with entry into the corresponding open ball (or with the
outer-boundary killing time).  Proposition~\ref{prop:small-r-entry}, applied to
the first boundary contact, shows that the remaining characteristic-contact
event has probability zero.  Fixed-time atomlessness and first-hit separation,
followed by the same non-grazing and immediate-crossing estimates, therefore
imply
\begin{equation}\label{eq:good-hit-event-limit}
    \lim_{b\downarrow0}\Pbb_z(G_{M,b}^c)=0.
\end{equation}
On $\{\sigma<M,\mathsf U,|r_\sigma|>b,\mathsf X\}$,
Lemma~\ref{lem:hitting-decision-stability} says that the maps
\[
    x\mapsto \left(\tau_A(x)\wedge\tau_\calC(x)\right)\wedge M,
    \qquad
    x\mapsto
    \bfone_{\{\tau_A(x)<\tau_\calC(x),\,
                   \tau_A(x)\wedge\tau_\calC(x)\le M\}}
\]
are continuous at the limiting path.  On
$\{\sigma>M,\widehat\sigma>M\}$, the compact path
$Z([0,M])$ has strictly positive distance from the boundary union, so the same
two truncated maps are also continuous there.  Fixed-time atomlessness
removes the remaining event $\{\sigma=M\}$.  Consequently, for every $\eta>0$,
the full path coupling and the continuous mapping theorem give
\begin{align}
    \limsup_{\delta\downarrow0}
    \Pbb_z\left(
       |\sigma^\delta\wedge M-\sigma\wedge M|>\eta
    \right)
    &\le \Pbb_z(G_{M,b}^c),\label{eq:truncated-hit-time-error}\\
    \limsup_{\delta\downarrow0}
    \Pbb_z(I_M^\delta\ne I_M)
    &\le \Pbb_z(G_{M,b}^c),\label{eq:truncated-hit-label-error}
\end{align}
where
\[
    I_M^\delta
    :=\bfone_{\{\tau_A^\delta<\tau_\calC^\delta,\,\sigma^\delta\le M\}},
    \qquad
    I_M
    :=\bfone_{\{\tau_A<\tau_\calC,\,\sigma\le M\}}.
\]
Letting $b\downarrow0$ in
\eqref{eq:truncated-hit-time-error}--\eqref{eq:truncated-hit-label-error}
proves convergence in probability of both truncated functionals, and hence
the stated convergence in law of the time.

For the untruncated labels, put
$I^\delta=\bfone_{\{\tau_A^\delta<\tau_\calC^\delta\}}$ and
$I=\bfone_{\{\tau_A<\tau_\calC\}}$.  Then
\[
\begin{aligned}
    \Pbb_z(I^\delta\ne I)
    &\le \Pbb_z(I_M^\delta\ne I_M)
       +\Pbb_z(\sigma^\delta>M)+\Pbb_z(\sigma>M).
\end{aligned}
\]
First let $\delta\downarrow0$, then use
Lemma~\ref{lem:first-hit-tail-reduction} and let $M\to\infty$.  This proves
$I^\delta\to I$ in probability and therefore in law.  The same estimates
hold for the adjoint coupling, proving the adjoint statement.
\end{proof}

The convergence of the first-hit times and labels can now be passed through
the corresponding bounded path functionals.  This yields pointwise
convergence of the committors and the finite-horizon killing functionals.

\begin{proposition}[Pointwise convergence of hitting functionals]
\label{prop:pointwise-functional-convergence-lrs}
For $\pi^\eps$-almost every $z\in\calD$, as $\delta\downarrow0$,
\[
    h_\delta(z)\to h(z),
    \qquad
    h_\delta^*(z)\to h^*(z),
\]
and, for every $M<\infty$,
\[
    F_{\delta,M}(z)\to F_M(z).
\]
\end{proposition}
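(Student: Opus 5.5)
The three convergences are expectations of bounded path functionals, so the plan is to obtain them from the convergence in probability already proved in Proposition~\ref{prop:hitting-time-vector-stability}, together with bounded convergence. Work under the coupling of Theorem~\ref{thm:delta-coupling}, which puts $Z^\delta$ and $Z$ on one probability space. Then
\[
    h_\delta(z)=\E_z\left[\bfone_{\{\tau_A^\delta<\tau_\calC^\delta\}}\right],
    \qquad
    h(z)=\E_z\left[\bfone_{\{\tau_A<\tau_\calC\}}\right].
\]
For $z\in\Omega$ outside the $\pi^\eps$-null exceptional set of that proposition, the labels converge in probability. Since they are indicators, $|h_\delta(z)-h(z)|\le\Pbb_z(I^\delta\ne I)\to0$. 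The adjoint statement gives $h_\delta^*(z)\to h^*(z)$ in the same way. On $\calD\setminus\Omega=\overline A\cup\overline B$, both committors are fixed by the boundary convention. The spheres $\partial A,\partial B$ are Lebesgue-null, so they can be added to the exceptional set. Points of $A$ and $B$ need no argument, because the values there are $1$ and $0$ for every $\delta$.

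For $F_{\delta,M}$ I would mirror the proof of Proposition~\ref{prop:hitting-time-vector-stability}, with the killed set $\calC$ alone in place of $A\cup\calC$. Let $\widehat\sigma_\calC$ be the first contact of $Z$ with $\partial B\cup\partial\calD$. Define a good event as follows: either the path stays a positive distance from the killed boundary on $[0,M]$, or $\tau_\calC<M$ with a unique component, $|r_{\tau_\calC}|>b$, and immediate crossing. On this event, Lemma~\ref{lem:hitting-decision-stability} gives continuity of $x\mapsto\bfone_{\{\tau_\calC(x)\le M\}}$ at the limiting path. That lemma is stated for the full union but applies verbatim to the subfamily $\{\partial B,\partial\calD\}$. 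Three earlier results make the bad event small:
\begin{itemize}
    \item Lemma~\ref{lem:no-fixed-time-atoms-d1} removes $\{\tau_\calC=M\}$ and simultaneous hits.
    \item Lemma~\ref{lem:crossing-nonzero-r} gives immediate crossing when $r\ne0$.
    \item Proposition~\ref{prop:small-r-entry}, with $K$ a small compact neighborhood of $z$ away from $\Sigma$, makes the small-$r$ contact probability vanish as $b\downarrow0$.
\end{itemize}
Combining these with uniform path convergence in probability gives $\bfone_{\{\tau_\calC^\delta\le M\}}\to\bfone_{\{\tau_\calC\le M\}}$ in probability, and hence $F_{\delta,M}(z)\to F_M(z)$ by bounded convergence.

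I expect the main obstacle to be contacts of the limiting path with the killed boundary at $r=0$ that do not enter the open ball. Before $\tau_\calC$ the path may touch $\partial B$ tangentially. The hitting time of the open set and the first contact time may then differ, and $\bfone_{\{\tau_\calC\le M\}}$ fails to be continuous there. The fix is to apply Proposition~\ref{prop:small-r-entry} not only to the first contact but to every contact before $M$. Split $[0,M]$ at rational times $t_k$, and apply the estimate via the strong Markov property at each $t_k$ to the first contact after $t_k$. The starting laws are then $Z_{t_k}$, which is in a compact region away from $\Sigma$ with high probability. Dropping the restriction to the first hit is allowed, since the proof of Proposition~\ref{prop:small-r-entry} covers arbitrary hits. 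A countable union of null events is null, so zero-$r$ contacts have probability zero. This justifies identifying the contact time with $\tau_\calC$ in the good event.
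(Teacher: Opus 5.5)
Your proposal is correct and follows the paper's route. The committor convergence comes from the label convergence in probability proved in Proposition~\ref{prop:hitting-time-vector-stability}, together with the boundary convention on $\overline A\cup\overline B$, and $F_{\delta,M}\to F_M$ comes from rerunning the same good-path argument for the killed family $\partial B\cup\partial\calD$. The rational-time device in your last paragraph is unnecessary, and as stated it would not reach a contact that is a limit of earlier contacts: Proposition~\ref{prop:small-r-entry} applied at the first contact with $\partial B$ already gives $r\ne0$ there almost surely, and Lemma~\ref{lem:crossing-nonzero-r} with the strong Markov property then makes that first contact the entry time into $B$, which is all the identification with $\tau_\calC$ requires.
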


\begin{proof}
Fix $z\in\Omega$ outside the $\pi^\eps$-null exceptional set in
Proposition~\ref{prop:hitting-time-vector-stability}, and work under its
coupling.  Introduce
\[
\begin{gathered}
    I^\delta:=\bfone_{\{\tau_A^\delta<\tau_\calC^\delta\}},
    \qquad
    I:=\bfone_{\{\tau_A<\tau_\calC\}},\\
    I_M^\delta
      :=I^\delta\bfone_{\{\tau_A^\delta\wedge\tau_\calC^\delta\le M\}},
    \qquad
    I_M
      :=I\bfone_{\{\tau_A\wedge\tau_\calC\le M\}}.
\end{gathered}
\]
By using \eqref{eq:truncated-hit-label-error} and
\eqref{eq:good-hit-event-limit}, we get
\begin{equation}\label{eq:pointwise-truncated-label}
    \Pbb_z\left(I_M^\delta\ne I_M\right)\longrightarrow0
    \qquad\text{as $\delta\downarrow0$}.
\end{equation}
Moreover,
\begin{align*}
    \Pbb_z\left(I^\delta\ne I\right)
    &\le \Pbb_z\left(I_M^\delta\ne I_M\right)
       +\Pbb_z\left(\tau_A^\delta\wedge\tau_\calC^\delta>M\right)
       +\Pbb_z\left(\tau_A\wedge\tau_\calC>M\right).
\end{align*}
Letting first $\delta\downarrow0$ and then $M\to\infty$, using
Lemma~\ref{lem:first-hit-tail-reduction}, proves
$I^\delta\to I$ in probability.  Since the indicators are bounded,
\[
    \left|h_\delta(z)-h(z)\right|
    =\left|\E_z I^\delta-\E_z I\right|
    \le \E_z\left|I^\delta-I\right|
    =\Pbb_z\left(I^\delta\ne I\right)\longrightarrow0.
\]

For the finite-horizon killing functional, set
\[
    J_M^\delta:=\bfone_{\{\tau_\calC^\delta\le M\}},
    \qquad
    J_M:=\bfone_{\{\tau_\calC\le M\}}.
\]
Applying the same good-path argument to the boundary family
$\partial B\cup\partial\calD$ gives
\(\Pbb_z(J_M^\delta\ne J_M)\to0\).  Therefore,
\[
    |F_{\delta,M}(z)-F_M(z)|
    \le \E_z\left|J_M^\delta-J_M\right|\longrightarrow0.
\]
On $A$ and $B$, the extended committor values make the first convergence
immediate, and the boundary union is $\pi^\eps$-null.  Repeating the indicator
estimates under the adjoint coupling proves $h_\delta^*(z)\to h^*(z)$.
\end{proof}

Finally, we are ready to prove Theorem~\ref{thm:ball-stability}.

\begin{proof}[Proof of Theorem~\ref{thm:ball-stability}]
Item (i) follows from
Proposition~\ref{prop:pointwise-functional-convergence-lrs} for the adjoint
process and bounded convergence.

For item (ii), Proposition~\ref{prop:pointwise-functional-convergence-lrs}
gives pointwise convergence of $h_\delta^*$ and $F_{\delta,M}$.  Since the
products are bounded by one, bounded convergence gives convergence in
$L^1(\calD,\pi^\eps)$.

For item (iii), Proposition~\ref{prop:uniform-truncated-killing} implies, as
$\delta\downarrow0$, that
\[
    \sup_{z\in\partial A}|g_{\delta,M}(z)-g_M(z)|
    \le
    \sup_{z\in\partial A}
    \E_z\left|
        \left(\tau_\calC^\delta\wedge M\right)-(\tau_\calC\wedge M)
    \right|
    \to0 .
\]
Continuity of $g_M$ on $\partial A$ is
Lemma~\ref{lem:gM-continuity-boundary}.

Item (iv) is Proposition~\ref{prop:hitting-time-vector-stability}.

Item (v) is Lemma~\ref{lem:finite-killing}.  The adjoint statements are proved
in the same way.
\end{proof}

The proof of Theorem~\ref{thm:ball-stability} is now complete.  The remaining
results form a second main block of the paper: we construct the weak
equilibrium measure and weak capacity from the adjoint committor formula, and
then prove the hitting identity and two useful extensions.

\subsection{Weak Equilibrium Measures and Capacity--Hitting Identities}\label{sec:weak}

This part constructs the weak equilibrium measure and capacity directly from
the path-space stability proved in Theorem~\ref{thm:ball-stability}.

\begin{definition}[Admissible extensions]\label{def:admissible}
Let $\varphi\in C^2(\partial A)$.  An admissible extension of $\varphi$ is a
function $\Phi\in C^2(\overline{\calD})$ such that
\[
    \Phi|_{\partial A}=\varphi,
    \qquad
    \Phi=0\quad\text{in a neighborhood of }\overline B\cup\partial\calD.
\]
\end{definition}

For each fixed $0<\delta\le1$, the regularized operator supplies classical
elliptic equilibrium and killed-semigroup identities.  The limit is taken
directly at the level of these integral identities through path-space
stability.

\begin{lemma}[Fixed-$\delta$ elliptic auxiliary identities]\label{lem:elliptic}
Fix $\delta>0$.  Let $h_\delta$ and $h_\delta^*$ be the hitting committors in
\eqref{eq:committors}.  There exists a finite positive Borel measure
$\eta_{\eps,\delta}$ supported on $\partial A$ such that, for every
$\varphi\in C^2(\partial A)$ and every admissible extension $\Phi$,
\begin{equation}\label{eq:eta-delta}
    \int_{\partial A}\varphi d\eta_{\eps,\delta}
    =
    \int_{\calD}h_\delta^*(z)
    \left(-\calL_{\eps,\delta}\Phi\right)(z) \pi^\eps(dz).
\end{equation}
If $\psi\in C^2(\overline\calD)$ satisfies
\[
    \psi=1\quad\text{near }\overline A,
    \qquad
    \psi=0\quad\text{near }\overline B\cup\partial\calD,
\]
then
\begin{equation}\label{eq:cap-delta}
    \Cap_{\eps,\delta}(A,B;\calD):=\eta_{\eps,\delta}(\partial A)
    =
    \int_{\calD}h_\delta^*
    \left(-\calL_{\eps,\delta}\psi\right) d\pi^\eps.
\end{equation}
For every $M<\infty$,
\begin{equation}\label{eq:truncated-identity-delta}
    \int_{\partial A}g_{\delta,M} d\eta_{\eps,\delta}
    =
    \int_{\calD}h_\delta^*F_{\delta,M} d\pi^\eps.
\end{equation}
\end{lemma}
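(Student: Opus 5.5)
The plan is to use that, for fixed $\delta>0$, $\calL_{\eps,\delta}$ is uniformly elliptic on $\overline\Omega$ and $\partial\Omega=\partial A\cup\partial B\cup\partial\calD$ is a finite union of smooth spheres. Classical Dirichlet theory then applies. The adjoint committor $h_\delta^*$ is the unique solution in $C^\infty(\Omega)\cap C(\overline\Omega)$ of
\[
    \calL_{\eps,\delta}^*h_\delta^*=0\ \text{in }\Omega,
    \qquad h_\delta^*=1\ \text{on }\partial A,
    \qquad h_\delta^*=0\ \text{on }\partial B\cup\partial\calD .
\]
Every boundary point is regular, by the exterior ball condition. Schauder boundary estimates make $h_\delta^*$ smooth up to each boundary sphere. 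The probabilistic representation follows from It\^o's formula and Lemma~\ref{lem:first-hit-tail-reduction}, which gives finiteness of $\tau_A^\delta\wedge\tau_\calC^\delta$. Remark~\ref{rem:open-ball-convention} identifies the open-ball hitting times with first contact.

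\textbf{Construction of $\eta_{\eps,\delta}$.} I would apply Green's formula on $\Omega$ in the weighted divergence form \eqref{eq:Ldelta-div}, writing $\pi^\eps(dz)=Z_\eps^{-1}e^{-H/\eps}dz$. The adjoint uses $\D_\delta-\Q$, and $(\D_\delta+\Q)^\top=\D_\delta-\Q$. Take an admissible $\Phi$, extended by $\varphi$ on $\partial A$. Since $h_\delta^*=1$ on $\overline A$, the integral over $\calD$ splits into $\int_\Omega$ and $\int_A(-\calL_{\eps,\delta}\Phi)\,d\pi^\eps$. For $\int_\Omega h_\delta^*(-\calL_{\eps,\delta}\Phi)\,d\pi^\eps$ I integrate by parts twice. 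The interior term $\int_\Omega\Phi\,\calL^*_{\eps,\delta}h_\delta^*$ vanishes. The boundary terms on $\partial B\cup\partial\calD$ vanish because $\Phi$ and $\nabla\Phi$ vanish there. On $\partial A$ one term is $-\eps\int_{\partial A}(\D_\delta+\Q)\nabla\Phi\cdot n\,e^{-H/\eps}dS$. The divergence theorem on $A$ gives $\int_A(-\calL_{\eps,\delta}\Phi)\,d\pi^\eps=+\eps\int_{\partial A}(\D_\delta+\Q)\nabla\Phi\cdot n\,e^{-H/\eps}dS$, with $n$ the outward normal of $A$. The two $\nabla\Phi$ terms therefore cancel exactly.

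What remains is
\[
    Z_\eps^{-1}\eps\int_{\partial A}\varphi\,
    (\D_\delta-\Q)\nabla h_\delta^*\cdot n_\Omega\,e^{-H/\eps}\,dS ,
\]
up to the correct sign. I then define $\eta_{\eps,\delta}$ by this density times surface measure, with the sign chosen to match \eqref{eq:eta-delta}. This step also shows that the right side of \eqref{eq:eta-delta} depends only on $\varphi$.

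\textbf{Positivity.} I expect positivity of the density to be the main obstacle, because $\Q$ contributes a tangential-looking term. My first approach: since $h_\delta^*\equiv1$ on $\partial A$, its tangential gradient vanishes, so $\nabla h_\delta^*=(\partial_n h_\delta^*)\,n$ there. The density then becomes $\eps\,\partial_nh_\delta^*\,(\D_\delta-\Q)n\cdot n=\eps\,\partial_nh_\delta^*\,\D_\delta n\cdot n$, because $\Q$ is antisymmetric. Here $\D_\delta n\cdot n>0$ since $\D_\delta\succ0$ for $\delta>0$. The Hopf lemma for the uniformly elliptic $\calL^*_{\eps,\delta}$ gives a strict sign for $\partial_nh_\delta^*$, since $0\le h_\delta^*\le1$ attains its maximum $1$ on $\partial A$. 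Hence $\eta_{\eps,\delta}$ is a finite positive measure. Choosing $\varphi\equiv1$ with $\Phi=\psi$ gives \eqref{eq:cap-delta}.

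\textbf{Truncated identity.} For \eqref{eq:truncated-identity-delta}, $g_{\delta,M}|_{\partial A}$ need not be $C^2$, so I would not apply \eqref{eq:eta-delta} to it directly. Instead I work with the killed semigroup $P_t^{\calC}$ of $Z^\delta$ on $\calD\setminus\overline B$, with no killing at $A$. It satisfies $g_{\delta,M}=\int_0^M P_t^{\calC}\bfone\,dt$ and $\partial_t P_t^\calC\bfone=-\partial_tF_{\delta,t}$. Its density is smooth up to the boundary, and the duality of the killed forward and adjoint semigroups with respect to $\pi^\eps$ holds by the same Green formula. Let $u_M:=g_{\delta,M}$ on $\calD\setminus\overline B$. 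Then $u_M$ solves $\calL_{\eps,\delta}u_M=-(1-F_{\delta,M})$ with zero boundary values on $\calC$, by differentiating $\int_0^MP_t^\calC\bfone\,dt$. Repeating the Green computation with $u_M$ in place of $\Phi$ gives $\int_{\partial A}u_M\,d\eta_{\eps,\delta}=\int_\calD h_\delta^*(-\calL_{\eps,\delta}u_M)\,d\pi^\eps$. There is one new ingredient: $u_M$ is not constant near $\partial B\cup\partial\calD$, but $u_M=0$ there while $h_\delta^*=0$ there too, so those boundary terms still vanish. The right side then becomes $\int h_\delta^*(1-F_{\delta,M})\,d\pi^\eps$. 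Matching this with \eqref{eq:truncated-identity-delta} needs a sign/normalization bookkeeping check. If that fails, I would mollify $g_{\delta,M}$ on $\partial A$, apply \eqref{eq:eta-delta}, and pass to the limit using continuity of $g_{\delta,M}$ on $\partial A$.
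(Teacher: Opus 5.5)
Your proof of \eqref{eq:eta-delta} and \eqref{eq:cap-delta} follows the paper's argument. The measure $\eta_{\eps,\delta}$ is the conormal flux of $h_\delta^*$ on $\partial A$; the tangential gradient of $h_\delta^*$ vanishes there; $n^\top\Q n=0$ removes the antisymmetric part; and Hopf, after the strong maximum principle gives $0<h_\delta^*<1$ in $\Omega$, gives positivity. Keep one orientation throughout, though. With $n$ the outward normal of $A$, the divergence theorem gives $\int_A(-\calL_{\eps,\delta}\Phi)\,d\pi^\eps=-\eps Z_\eps^{-1}\int_{\partial A}e^{-H/\eps}(\D_\delta+\Q)\nabla\Phi\cdot n\,dS$, not $+$. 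With that corrected, the $\nabla\Phi$ fluxes do cancel, and the surviving density is $\eps Z_\eps^{-1}e^{-H/\eps}(n_\Omega^\top\D_\delta n_\Omega)\,\partial_{n_\Omega}h_\delta^*$, with $n_\Omega$ pointing into $A$. Its sign is forced by the computation (and positive by Hopf); it is not chosen.

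The genuine gap is in the truncated identity: your generator identity for $g_{\delta,M}$ has the wrong right-hand side. The integrated-semigroup identity gives
\[
    \calL_{\eps,\delta}g_{\delta,M}
    =P_M^{\calC}\bfone-P_0^{\calC}\bfone
    =P_M^{\calC}\bfone-\bfone
    =-F_{\delta,M}
    \qquad\text{on }G=\calD\setminus\overline B,
\]
because $P_0^{\calC}\bfone=\bfone$ on $G$ (equivalently $F_{\delta,0}=0$ there). It is not $-(1-F_{\delta,M})$. With your formula the Green computation would give $\int_{\partial A}g_{\delta,M}\,d\eta_{\eps,\delta}=\int_\calD h_\delta^*(1-F_{\delta,M})\,d\pi^\eps$, which is false. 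Your mollification fallback cannot repair this, because the error is in identifying $-\calL_{\eps,\delta}g_{\delta,M}$, not in the regularity of the trace.

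That regularity concern is unfounded anyway. The sphere $\partial A$ lies in the interior of $G$, where $g_{\delta,M}$ solves $-\calL_{\eps,\delta}g_{\delta,M}=F_{\delta,M}$ with smooth right-hand side, so $g_{\delta,M}$ is smooth near $\partial A$. The paper uses exactly this. It takes $\Phi_M=\chi g_{\delta,M}$, with $\chi\in C_c^\infty(G)$ and $\chi=1$ near $\overline A$, as an admissible extension and applies \eqref{eq:eta-delta}. It then shows $\int_\Omega h_\delta^*\calL_{\eps,\delta}(\Phi_M-g_{\delta,M})\,d\pi^\eps=0$ from the vanishing traces of $g_{\delta,M}$ and $h_\delta^*$ on $\partial B\cup\partial\calD$, which is the observation you make. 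Your direct Green computation with $u_M=g_{\delta,M}$ is equivalent and yields \eqref{eq:truncated-identity-delta} once the sign is fixed. You should also note that $g_{\delta,M}$ is $C^1$ up to $\partial B\cup\partial\calD$ with zero trace there. This follows from boundary elliptic regularity, since $F_{\delta,M}=1-P_M^{\calC}\bfone$ is smooth up to $\partial G$ for $M>0$.
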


\begin{proof}
For fixed $\delta>0$, the operator $\calL_{\eps,\delta}$ is uniformly elliptic
on the bounded smooth domain
$\Omega=\calD\setminus(\overline A\cup\overline B)$, and the
committors are the classical solutions of the corresponding Dirichlet
problems:
\[
    \calL_{\eps,\delta}h_\delta=0,\qquad
    \calL_{\eps,\delta}^*h_\delta^*=0
    \quad\text{in }\Omega,
\]
with boundary values $1$ on $\partial A$ and $0$ on
$\partial B\cup\partial\calD$.  Classical
elliptic potential theory for non-self-adjoint uniformly elliptic operators
therefore defines the equilibrium measure on $\partial A$ as the conormal-flux
measure associated with the adjoint equilibrium potential $h_\delta^*$; see,
for instance, the fixed-operator Green identities and capacity construction in
\cite{LandimMarianiSeo2019}.  More explicitly, if $n_\Omega$ denotes the
outward unit normal of $\Omega$ and hence points into $A$ on $\partial A$, then
on $\partial A$
\[
    d\eta_{\eps,\delta}
    =
    \eps Z_\eps^{-1}e^{-H/\eps}
    \left[(\D_\delta-\Q)\nabla h_\delta^*\right]\cdot n_\Omega\,dS .
\]
On $\partial A$, the Dirichlet value of $h_\delta^*$ is constant.  Hence its
tangential gradient vanishes and
\[
    \nabla h_\delta^*
    =
    (\partial_{n_\Omega}h_\delta^*)n_\Omega .
\]
Since $\Q^\top=-\Q$, we have $n_\Omega^\top\Q n_\Omega=0$, and therefore
\[
    \left[(\D_\delta-\Q)\nabla h_\delta^*\right]\cdot n_\Omega
    =
    \left(n_\Omega^\top\D_\delta n_\Omega\right)
    \partial_{n_\Omega}h_\delta^* .
\]
For fixed $\delta>0$, $\D_\delta$ is uniformly positive definite, so
$n_\Omega^\top\D_\delta n_\Omega>0$.  The strong maximum principle gives
$0<h_\delta^*<1$ in $\Omega$.  Since $n_\Omega$ points out of $\Omega$ and
into $A$ on $\partial A$, the Hopf boundary point lemma
\cite[Lemma~3.4]{GilbargTrudinger2001} gives
\[
    \partial_{n_\Omega}h_\delta^*>0
    \qquad\text{on }\partial A.
\]
Hence the conormal density is nonnegative.  The full distribution obtained by
extending $h_\delta^*$ by
its Dirichlet values may contain boundary components on
$\partial A$, $\partial B$, and $\partial\calD$.  We define
$\eta_{\eps,\delta}$ to be only its $\partial A$-component.  Since an admissible
extension $\Phi$ vanishes in a neighborhood of
$\partial B\cup\partial\calD$, Green's identity implies that
\[
    \int_{\partial A}\varphi\,d\eta_{\eps,\delta}
    =
    \int_{\calD}h_\delta^*(-\calL_{\eps,\delta}\Phi)\,d\pi^\eps .
\]
This proves \eqref{eq:eta-delta} and also shows that the right-hand side
depends only on the trace of $\Phi$ on $\partial A$.  Taking
$\varphi\equiv1$ and a separating cutoff $\psi$ yields the fixed-$\delta$
capacity formula \eqref{eq:cap-delta}.  Its total mass is precisely
$\Cap_{\eps,\delta}(A,B;\calD)$; normalization is introduced after positivity
of the capacity is established.

Finally, we derive the fixed-$\delta$ truncated last-exit identity from the
killed semigroup.  Let
\[
    G:=\calD\setminus\overline B .
\]
The process used to define $g_{\delta,M}$ has killing set
$\calC=B\cup\partial\calD$.  Thus its natural state space is $G$.  Uniform
ellipticity and regularity of
$\partial B$ identify entry into the open ball $B$ with the Dirichlet exit from
$G$ at $\partial B$.  Let
\[
    P_t^{\delta,\calC}f(z)
    :=
    \E_z^\delta\left[f(Z_t^\delta)\bfone_{\{t<\tau_\calC^\delta\}}\right]
\]
be this killed semigroup on $G$.  Then
\[
    g_{\delta,M}(z)
    =
    \int_0^M P_t^{\delta,\calC}\mathbf 1(z)\,dt .
\]
View $(P_t^{\delta,\calC})_{t\ge0}$ as the strongly continuous killed
semigroup on $L^2(G,\pi^\eps)$, and let
$\calL_{\eps,\delta}^{\calC}$ denote its generator.  Since
$\mathbf1\in L^2(G,\pi^\eps)$, the integrated-semigroup generator identity
\cite[Lemma~II.1.9]{EngelNagel2000} gives
\[
    g_{\delta,M}
    =\int_0^M P_t^{\delta,\calC}\mathbf1\,dt
    \in\operatorname{Dom}\left(\calL_{\eps,\delta}^{\calC};G\right),
    \qquad
    \calL_{\eps,\delta}^{\calC}g_{\delta,M}
    =P_M^{\delta,\calC}\mathbf1-\mathbf1.
\]
Equivalently,
\[
    -\calL_{\eps,\delta}g_{\delta,M}
    =
    1-P_M^{\delta,\calC}\mathbf1
    =
    \Pbb_z^\delta\left(\tau_\calC^\delta\le M\right)
    =F_{\delta,M}(z).
\]
Interior and boundary elliptic regularity for the fixed uniformly elliptic
operator supply the local regularity and the zero Dirichlet trace needed in
the cutoff and Green-identity argument below.  Choose
$\chi\in C_c^\infty(G)$ such that $\chi=1$ in a neighborhood of
$\overline A$, and set $\Phi_M=\chi g_{\delta,M}$.  Interior elliptic
regularity on the support of $\chi$ shows that $\Phi_M$ is an admissible
extension of $g_{\delta,M}|_{\partial A}$.  Put
$u_M=\Phi_M-g_{\delta,M}$ on $G$.  Then $u_M$ vanishes in a neighborhood of
$\overline A$, while $u_M$ has zero Dirichlet trace on
$\partial B\cup\partial\calD$.

Extend $h_\delta^*$ by $1$ on $\overline A$ and by $0$ on $\overline B$, as
in \eqref{eq:committors}.  Splitting $G$ into $A$ and $\Omega$, we have
$u_M=0$ on $A$.  Green's identity on $\Omega$ implies that
\[
    \int_{\Omega}h_\delta^*
    \calL_{\eps,\delta}u_M\,d\pi^\eps=0.
\]
Indeed, on $\partial A$, both $u_M$ and its conormal derivative vanish because
$u_M$ vanishes in a full neighborhood of $\overline A$; on
$\partial B\cup\partial\calD$, both $u_M$ and $h_\delta^*$ have zero trace.
All identities involving $g_{\delta,M}$ are justified by the
killed-generator domain approximations just described.  Consequently,
\eqref{eq:eta-delta} implies that
\[
    \int_{\partial A}g_{\delta,M}\,d\eta_{\eps,\delta}
    =
    \int_{\calD}h_\delta^*
    \left(-\calL_{\eps,\delta}\Phi_M\right)d\pi^\eps
    =
    \int_G h_\delta^*
    \left(-\calL_{\eps,\delta}g_{\delta,M}\right)d\pi^\eps
    =
    \int_{\calD}h_\delta^*F_{\delta,M}\,d\pi^\eps,
\]
which is \eqref{eq:truncated-identity-delta}, a fixed-$\delta$ integral
identity.
\end{proof}

The next theorem is the first main weak-potential result.  It defines the
weak equilibrium measure through the adjoint probabilistic committor.  The
regularized measures serve as fixed-$\delta$ auxiliary elliptic objects
whose identities are stable under the coupling theorem.

\begin{theorem}[Weak equilibrium measure and weak capacity]
\label{thm:weak-measure}
Suppose Assumptions~\ref{ass:U} and~\ref{ass:controlled-coefficients} hold,
and adopt the \hyperref[setup:domains]{bounded-domain setup}.
Then there exists a
unique finite positive Borel measure $\eta_\eps=\eta_{A,B}^\eps$ on
$\partial A$ such that, for every $\varphi\in C^2(\partial A)$ and every
admissible extension $\Phi$,
\begin{equation}\label{eq:weak-measure}
    \int_{\partial A}\varphi d\eta_\eps
    =
    \int_{\calD}h^*(z)\left(-\calL_\eps\Phi\right)(z) \pi^\eps(dz).
\end{equation}
The right-hand side is independent of the chosen admissible extension, and
$\eta_{\eps,\delta}\Rightarrow\eta_\eps$ weakly on $\partial A$ as
$\delta\downarrow0$.
Equivalently, for any smooth separating cutoff $\psi$ satisfying
\[
    \psi=1\quad\text{near }\overline A,
    \qquad
    \psi=0\quad\text{near }\overline B\cup\partial\calD,
\]
we have
\begin{equation}\label{eq:cap-cutoff}
    \Cap_\eps(A,B;\calD)
    =
    \int_{\calD}h^*(z)\left(-\calL_\eps\psi\right)(z) \pi^\eps(dz),
\end{equation}
where $\Cap_\eps(A,B;\calD)$ is the weak capacity associated with $(A,B;\calD)$ defined as:
\begin{equation}\label{eq:weak-cap}
\Cap_\eps(A,B;\calD)=\eta_\eps(\partial A).
\end{equation}
\end{theorem}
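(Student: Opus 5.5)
The plan is to pass to the limit $\delta\downarrow0$ in the fixed-$\delta$ identity \eqref{eq:eta-delta} of Lemma~\ref{lem:elliptic}, using Theorem~\ref{thm:ball-stability}(i) for the committor factor and an explicit computation for the operator factor. Fix $\varphi\in C^2(\partial A)$ and an admissible extension $\Phi\in C^2(\overline\calD)$. By \eqref{eq:Ldelta},
\[
    -\calL_{\eps,\delta}\Phi
    =-\calL_\eps\Phi
    -\delta\left(\eps\Delta_\theta\Phi-\nabla U\cdot\nabla_\theta\Phi\right)
    -\delta\left(\eps\Delta_p\Phi-Lp\cdot\nabla_p\Phi\right).
\]
Since $\Phi\in C^2(\overline\calD)$ and $\calD$ is bounded, this gives $\sup_{\calD}|\calL_{\eps,\delta}\Phi-\calL_\eps\Phi|\le C_\Phi\delta$. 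Together with $0\le h_\delta^*\le1$, we obtain
\[
    \left|\int_\calD h_\delta^*(-\calL_{\eps,\delta}\Phi)\,d\pi^\eps
    -\int_\calD h^*(-\calL_\eps\Phi)\,d\pi^\eps\right|
    \le C_\Phi\delta+\|\calL_\eps\Phi\|_{L^\infty(\calD)}\,\|h_\delta^*-h^*\|_{L^1(\calD,\pi^\eps)},
\]
and both terms tend to zero by Theorem~\ref{thm:ball-stability}(i). Hence $\int\varphi\,d\eta_{\eps,\delta}\to\ell(\varphi):=\int_\calD h^*(-\calL_\eps\Phi)\,d\pi^\eps$. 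The fixed-$\delta$ left side depends only on $\varphi=\Phi|_{\partial A}$, so its limit $\ell(\varphi)$ does too. This gives independence of the extension.

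Next I build the measure. Taking $\varphi\equiv1$ with extension $\psi$, the total masses $\eta_{\eps,\delta}(\partial A)$ converge to $\int h^*(-\calL_\eps\psi)\,d\pi^\eps<\infty$, so they are bounded for small $\delta$. Since $\partial A$ is compact, Prokhorov gives a subsequence with $\eta_{\eps,\delta_k}\Rightarrow\eta$ for some finite positive measure $\eta$. Every $\varphi\in C^2(\partial A)$ is the trace of an admissible extension: extend radially in the collar $\mathsf T_{\eta_{\rm tub}}(\partial A)$ and multiply by a cutoff equal to one near $\partial A$ and supported in the collar, which vanishes near $\overline B\cup\partial\calD$ because the collars are disjoint. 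So $\int\varphi\,d\eta=\ell(\varphi)$ for all $\varphi\in C^2(\partial A)$. Since $C^2(\partial A)$ is uniformly dense in $C(\partial A)$, any two limit points agree. Hence $\eta$ is independent of the subsequence, the whole family converges, $\eta_{\eps,\delta}\Rightarrow\eta_\eps:=\eta$, and \eqref{eq:weak-measure} holds.

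Uniqueness follows because a finite Borel measure on the compact set $\partial A$ is determined by its integrals against the dense class $C^2(\partial A)$. Positivity of $\eta_\eps$ is inherited from the weak limit of the positive measures $\eta_{\eps,\delta}$. Finally, \eqref{eq:cap-cutoff} follows by taking $\varphi\equiv1$ and $\Phi=\psi$, which is admissible, and using the definition \eqref{eq:weak-cap}.

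The main obstacle is modest. The delicate input is the $L^1(\pi^\eps)$ convergence $h^*_\delta\to h^*$, and that is already provided by Theorem~\ref{thm:ball-stability}(i). The only points to check are the uniform $O(\delta)$ bound on the added operator terms, which uses that $\nabla U$ is bounded on the bounded set $\calD$, and the existence of admissible extensions for every $C^2$ trace.
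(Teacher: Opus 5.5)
Your proposal is correct and follows essentially the same route as the paper: you pass to the limit in the fixed-$\delta$ Green identity \eqref{eq:eta-delta}, using Theorem~\ref{thm:ball-stability}(i) together with the $O(\delta)$ uniform convergence of $\calL_{\eps,\delta}\Phi$ to $\calL_\eps\Phi$ on $\calD$, and this yields extension-independence, positivity, and convergence of $\int\varphi\,d\eta_{\eps,\delta}$ for every $\varphi\in C^2(\partial A)$. The only difference is that you produce $\eta_\eps$ as a subsequential weak limit identified through density of $C^2(\partial A)$, whereas the paper applies Riesz--Markov to the limiting positive functional; your explicit construction of an admissible extension for every $C^2$ trace also fills in a point the paper leaves implicit.
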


\begin{proof}
Let $\varphi\in C^2(\partial A)$ and let $\Phi$ be admissible.  
By definition, set
\[
    \Lambda(\varphi)
    :=
    \int_{\calD}h^*(z)(-\calL_\eps\Phi)(z)\,\pi^\eps(dz).
\]
If $\Phi_1$ and $\Phi_2$ are two admissible extensions of the same boundary
function, apply \eqref{eq:eta-delta} to the auxiliary path-space measures
$\eta_{\eps,\delta}$ and pass to the limit using
Theorem~\ref{thm:ball-stability} and the uniform convergence
$\calL_{\eps,\delta}(\Phi_1-\Phi_2)\to\calL_\eps(\Phi_1-\Phi_2)$ on
$\overline\calD$.  The fixed-$\delta$ identity used here is the classical
Green identity of Lemma~\ref{lem:elliptic}.  Passing directly to the limit in
this integral identity shows that $\Lambda$ is well-defined on boundary traces.

For $\varphi\ge0$, choose any admissible extension $\Phi$.  Lemma~\ref{lem:elliptic}
implies that
\[
    \int_{\partial A}\varphi\,d\eta_{\eps,\delta}
    =
    \int_{\calD}h_\delta^*(-\calL_{\eps,\delta}\Phi)\,d\pi^\eps\ge0 .
\]
The right-hand side converges to $\Lambda(\varphi)$ by
Theorem~\ref{thm:ball-stability}; hence $\Lambda(\varphi)\ge0$.  If $\psi$ is
a separating cutoff, the same argument with $\varphi\equiv1$ implies that
\[
    \Lambda(1)=\int_{\calD}h^*(-\calL_\eps\psi)d\pi^\eps<\infty .
\]
Thus $\Lambda$ is a positive bounded functional on the dense algebra
$C^2(\partial A)\subset C(\partial A)$ and extends uniquely to a positive
bounded functional on $C(\partial A)$.  By the Riesz--Markov representation
theorem~\cite[Theorem~2.14]{Rudin1987}, there is a
unique finite positive Borel measure $\eta_\eps$ satisfying
\eqref{eq:weak-measure}.

Moreover, for every $\varphi\in C^2(\partial A)$, as $\delta\downarrow 0$,
\[
    \int_{\partial A}\varphi\,d\eta_{\eps,\delta}
    =
    \int_{\calD}h_\delta^*(-\calL_{\eps,\delta}\Phi)d\pi^\eps
    \longrightarrow
    \Lambda(\varphi)
    =
    \int_{\partial A}\varphi\,d\eta_\eps .
\]
The masses converge by taking $\varphi\equiv1$, and density of
$C^2(\partial A)$ in $C(\partial A)$ yields
$\eta_{\eps,\delta}\Rightarrow\eta_\eps$ as $\delta\downarrow 0$.  Taking $\varphi\equiv1$ in
\eqref{eq:weak-measure} yields \eqref{eq:cap-cutoff}.
\end{proof}

Having constructed the weak measure from the path-space committor, we use the
coupling stability of the stopped processes to obtain the capacity--hitting
formula.

\begin{theorem}[Capacity--hitting identity]\label{thm:hitting}
Suppose Assumptions~\ref{ass:U} and~\ref{ass:controlled-coefficients} hold,
and adopt the \hyperref[setup:domains]{bounded-domain setup}.
Let
\[
    g(z)=\E_z[\tau_\calC]
    =\E_z[\tau_B\wedge\tau_{\partial\calD}].
\]
Then
\begin{equation}\label{eq:hitting-unnormalized}
    \int_{\partial A}g(z) \eta_\eps(dz)
    =
    \int_{\calD}h^*(z) \pi^\eps(dz).
\end{equation}
\end{theorem}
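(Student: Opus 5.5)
The plan is to pass to the limit $\delta\downarrow0$ in the truncated identity \eqref{eq:truncated-identity-delta} at fixed $M$, and only afterwards let $M\to\infty$. Two limiting objects are needed: the limiting truncated identity
\[
    \int_{\partial A}g_M\,d\eta_\eps=\int_\calD h^*F_M\,d\pi^\eps ,
\]
and then monotone convergence in $M$.

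\textbf{Step 1 (left-hand side, $\delta\downarrow0$).} Split
\[
    \int g_{\delta,M}\,d\eta_{\eps,\delta}-\int g_M\,d\eta_\eps
    =\int (g_{\delta,M}-g_M)\,d\eta_{\eps,\delta}
    +\Bigl(\int g_M\,d\eta_{\eps,\delta}-\int g_M\,d\eta_\eps\Bigr).
\]
\begin{itemize}
\item The first term is at most $\sup_{\partial A}|g_{\delta,M}-g_M|\cdot\eta_{\eps,\delta}(\partial A)$. This tends to zero by Theorem~\ref{thm:ball-stability}(iii), because the masses $\eta_{\eps,\delta}(\partial A)$ converge to $\Cap_\eps(A,B;\calD)<\infty$ and are therefore bounded for small $\delta$.
\item The second term tends to zero by the weak convergence $\eta_{\eps,\delta}\Rightarrow\eta_\eps$ of Theorem~\ref{thm:weak-measure}. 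This step needs $g_M\in C(\partial A)$, which is also part of Theorem~\ref{thm:ball-stability}(iii).
\end{itemize}

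\textbf{Step 2 (right-hand side, $\delta\downarrow0$).} By Theorem~\ref{thm:ball-stability}(ii), $h_\delta^*F_{\delta,M}\to h^*F_M$ in $L^1(\calD,\pi^\eps)$. Together with Step 1, this yields the limiting truncated identity displayed above for every $M<\infty$.

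\textbf{Step 3 ($M\to\infty$).} Both sides are monotone in $M$.
\begin{itemize}
\item On the left, $g_M=\E_z[\tau_\calC\wedge M]\uparrow g$ pointwise. Monotone convergence against the finite positive measure $\eta_\eps$ gives $\int g_M\,d\eta_\eps\uparrow\int g\,d\eta_\eps$. The limit is finite, since Lemma~\ref{lem:finite-killing} gives $\sup_{\overline\calD\setminus B}\E_z[\tau_\calC]<\infty$ and $\partial A\subset\overline\calD\setminus B$.
\item On the right, $F_M\uparrow\Pbb_z(\tau_\calC<\infty)$, which equals $1$ on $\calD\setminus B$ by Theorem~\ref{thm:ball-stability}(v).
\item On $B$, the extended committor satisfies $h^*=0$, so the integrand $h^*F_M$ vanishes there regardless of the value of $F_M$.
\item The spheres are $\pi^\eps$-null, so they do not contribute.
\end{itemize}
Monotone convergence therefore gives $\int_\calD h^*F_M\,d\pi^\eps\uparrow\int_\calD h^*\,d\pi^\eps$, which is \eqref{eq:hitting-unnormalized}.

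\textbf{Main subtlety.} The delicate point is the interplay between $\tau_\calC$ on the right and points of $\overline A$. For $z\in\overline A$ the extended $h^*$ equals $1$, so $F_M$ must be understood as the genuine killing probability $\Pbb_z(\tau_\calC\le M)$ from those points, not as a committor-type extended value. One has to check that the fixed-$\delta$ identity \eqref{eq:truncated-identity-delta} was derived with exactly this interpretation, namely integration over $G=\calD\setminus\overline B$, including $A$, of $h_\delta^*F_{\delta,M}$. Theorem~\ref{thm:ball-stability}(ii) must then be read on all of $\calD$ with this convention. Pointwise convergence of $F_{\delta,M}$ on $A$ holds by the same good-path argument used in Proposition~\ref{prop:pointwise-functional-convergence-lrs}, for $\pi^\eps$-a.e.\ starting point. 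Beyond this bookkeeping, the argument is routine once Theorems~\ref{thm:ball-stability} and~\ref{thm:weak-measure} are available.
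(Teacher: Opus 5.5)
Your proposal is correct and follows the paper's proof essentially step for step. You pass $\delta\downarrow0$ in the fixed-$\delta$ truncated identity \eqref{eq:truncated-identity-delta} using the uniform convergence of $g_{\delta,M}$ on $\partial A$, the bounded masses, the continuity of $g_M$, the weak convergence $\eta_{\eps,\delta}\Rightarrow\eta_\eps$, and the $L^1(\calD,\pi^\eps)$ convergence of $h_\delta^*F_{\delta,M}$; you then let $M\to\infty$ by monotone convergence on both sides, and your extra bookkeeping about $F_M$ on $A$ is a point the paper absorbs into Theorem~\ref{thm:ball-stability}(ii) being stated on all of $\calD$.
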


\begin{proof}
For $M<\infty$, Lemma~\ref{lem:elliptic}, specifically the fixed-$\delta$
identity \eqref{eq:truncated-identity-delta}, implies that
\[
    \int_{\partial A}g_{\delta,M} d\eta_{\eps,\delta}
    =
    \int_{\calD}h_\delta^*F_{\delta,M} d\pi^\eps.
\]
Let $\delta\downarrow0$.  By Theorem~\ref{thm:ball-stability},
$g_{\delta,M}\to g_M$ uniformly on $\partial A$, and the masses
$\eta_{\eps,\delta}(\partial A)$ are uniformly bounded. Hence,
\[
    \int_{\partial A}g_{\delta,M} d\eta_{\eps,\delta}
    -
    \int_{\partial A}g_M d\eta_{\eps,\delta}
    \longrightarrow0.
\]
The function $g_M$ is continuous on $\partial A$ by
Lemma~\ref{lem:gM-continuity-boundary}.  Since
$\eta_{\eps,\delta}\Rightarrow\eta_\eps$ as $\delta\downarrow0$ by
Theorem~\ref{thm:weak-measure},
\[
    \int_{\partial A}g_M d\eta_{\eps,\delta}
    \longrightarrow
    \int_{\partial A}g_M d\eta_\eps.
\]
For the right-hand side, Theorem~\ref{thm:ball-stability} implies, as
$\delta\downarrow0$, that
\[
    h_\delta^*F_{\delta,M}\to h^*F_M
    \qquad\text{in }L^1(\calD,\pi^\eps).
\]
Therefore, for every $M<\infty$,
\begin{equation}\label{eq:truncated-identity-limit}
    \int_{\partial A}g_M d\eta_\eps
    =
    \int_{\calD}h^*F_M d\pi^\eps.
\end{equation}

Finally, we let $M\to\infty$.  The functions
$g_M(z)=\E_z[\tau_\calC\wedge M]$ increase pointwise to
$g(z)=\E_z[\tau_\calC]$.  By finite killing in
Theorem~\ref{thm:ball-stability}, 
$F_M(z)=\Pbb_z(\tau_\calC\le M)\uparrow1$ for
$\pi^\eps$-almost every $z\in\calD$, and
$0\le h^*F_M\le h^*$. Applying the monotone convergence theorem on the left-hand side of \eqref{eq:truncated-identity-limit} with respect to
$\eta_\eps$ and on the right-hand side of \eqref{eq:truncated-identity-limit} with respect to $\pi^\eps$ yields
\eqref{eq:hitting-unnormalized}.
\end{proof}

The preceding identity immediately yields positivity of the weak capacity and
the normalized hitting identity used in applications.

\begin{corollary}[Strict positivity and normalized hitting identity]
\label{cor:capacity-positive}
Under the hypotheses of Theorem~\ref{thm:hitting},
\[
    0<\Cap_\eps(A,B;\calD)<\infty.
\]
Consequently,
\[
    \nu_\eps=\frac{\eta_\eps}{\Cap_\eps(A,B;\calD)}
\]
is a probability measure on $\partial A$, and
\begin{equation}\label{eq:hitting-normalized}
    \int_{\partial A}
    \E_z[\tau_B\wedge\tau_{\partial\calD}] \nu_\eps(dz)
    =
    \frac{\displaystyle\int_{\calD}h^*(z) \pi^\eps(dz)}
    {\Cap_\eps(A,B;\calD)}.
\end{equation}
\end{corollary}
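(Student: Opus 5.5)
The plan is to read both inequalities off the capacity--hitting identity \eqref{eq:hitting-unnormalized} of Theorem~\ref{thm:hitting}, together with the finiteness already contained in Theorem~\ref{thm:weak-measure}. The upper bound is immediate: $\Cap_\eps(A,B;\calD)=\eta_\eps(\partial A)$ is the total mass of a finite positive Borel measure. Concretely, by \eqref{eq:cap-cutoff} it equals $\int_\calD h^*(-\calL_\eps\psi)\,d\pi^\eps$, and this is bounded by $\|\calL_\eps\psi\|_{L^\infty(\overline\calD)}$ because $0\le h^*\le1$ and $\psi\in C^2(\overline\calD)$.

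For strict positivity I argue by contradiction. If $\eta_\eps(\partial A)=0$, then $\eta_\eps=0$, so the left-hand side of \eqref{eq:hitting-unnormalized} vanishes. Note that $g\le\sup_{\overline\calD\setminus B}\E_z[\tau_\calC]<\infty$ by Lemma~\ref{lem:finite-killing}, so this left-hand side is a genuine finite number and no $0\cdot\infty$ ambiguity arises. It then suffices to show
\[
\int_\calD h^*\,d\pi^\eps>0 .
\]
This holds because $h^*=1$ on $\overline A$ by the boundary extension convention. The set $A$ is a nonempty open ball, so it has positive Lebesgue measure, and $\pi^\eps$ has a strictly positive density. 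Therefore
\[
\int_\calD h^*\,d\pi^\eps\ge\pi^\eps(A)>0,
\]
which contradicts \eqref{eq:hitting-unnormalized}. Hence $\Cap_\eps(A,B;\calD)>0$.

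The only point needing care is that the identity \eqref{eq:hitting-unnormalized} is being used with the extended committor, equal to $1$ on $\overline A$. This is how $h^*$ enters the fixed-$\delta$ identity \eqref{eq:truncated-identity-delta} and its limit, since the integrals there are over all of $\calD$. So no additional argument about interior positivity of $h^*$ in $\Omega$ is needed.

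With $0<\Cap_\eps<\infty$ established, $\nu_\eps=\eta_\eps/\Cap_\eps(A,B;\calD)$ is a positive measure of total mass one on $\partial A$. Dividing \eqref{eq:hitting-unnormalized} by $\Cap_\eps(A,B;\calD)$ and using $\tau_\calC=\tau_B\wedge\tau_{\partial\calD}$ gives \eqref{eq:hitting-normalized}. I expect no real obstacle here; the step requiring the most attention is simply confirming finiteness of $g$ on $\partial A$ before manipulating the identity.
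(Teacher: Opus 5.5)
Your proposal is correct and matches the paper's proof essentially step for step. Finiteness comes from Theorem~\ref{thm:weak-measure}, and positivity follows by contradiction with \eqref{eq:hitting-unnormalized}, using $h^*=1$ on $A$, $\pi^\eps(A)>0$, and boundedness of $g$ on $\partial A$ from Lemma~\ref{lem:finite-killing}; dividing by the capacity then gives the normalized identity.
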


\begin{proof}
Finiteness follows from Theorem~\ref{thm:weak-measure}.  By the extension
convention in \eqref{eq:limiting-committors}, $h^*=1$ on $A$.  The set $A$ has
non-empty interior and $\pi^\eps$ has a smooth strictly positive density, hence
\[
    \int_{\calD}h^* d\pi^\eps\ge \pi^\eps(A)>0.
\]
By Lemma~\ref{lem:finite-killing}, $g$ is bounded on $\partial A$.  If
$\eta_\eps(\partial A)=0$, then the left-hand side of
\eqref{eq:hitting-unnormalized} would be zero, contradicting the strict
positivity of the right-hand side.  Thus
$\Cap_\eps(A,B;\calD)=\eta_\eps(\partial A)>0$.  Dividing
\eqref{eq:hitting-unnormalized} by the capacity gives
\eqref{eq:hitting-normalized}.
\end{proof}

\subsection{Passage to the Whole-Space Identity}\label{sec:passage}

The purpose of this subsection is to derive the whole-space identity from the
bounded-domain construction.  Following the truncation strategy of
Lee--Ramil--Seo, we first record the recurrence condition, proved here through a
Lyapunov drift estimate, and then let the outer radius tend to infinity.

\begin{lemma}[Lyapunov drift for the forward and adjoint chains]
\label{lem:whole-space-lyapunov-drift}
Assume Assumptions~\ref{ass:U} and~\ref{ass:whole-space-growth}.  Write
$\calL_\eps^{(+)}=\calL_\eps$ and
$\calL_\eps^{(-)}=\calL_\eps^*$.  Equivalently,
\[
    \calL_\eps^{(\sigma)} f
    =
    \sigma p\cdot\nabla_\theta f
    +\left(-\sigma L^{-1}\nabla U(\theta)+\sigma\gamma r\right)
        \cdot\nabla_p f
    +(-\sigma\gamma p-\gamma r)\cdot\nabla_r f
    +\frac{\gamma\eps}{L}\Delta_r f,
    \qquad \sigma\in\{+1,-1\}.
\]
There exist constants $\kappa,\alpha,\eta,C_V,c,C>0$, with
$\eta=\kappa\alpha$, such that the functions
\[
    V_\sigma(\theta,p,r)
    =
    C_V+H(\theta,p,r)+\eta H(\theta,p,r)^2
    +\alpha K_\sigma(\theta,p,r),
    \qquad \sigma\in\{+1,-1\},
\]
where
\[
    K_\sigma
    =
    \sigma\theta\cdot p+\theta\cdot r
    +\frac{2\sigma}{\gamma}p\cdot r
    +\frac{\gamma}{2}|\theta|^2,
\]
are nonnegative, have compact sublevel sets, and satisfy
\begin{equation}\label{eq:lyap-drift-forward-adjoint}
    \calL_\eps^{(\sigma)}V_\sigma
    \le
    -c\left(U(\theta)+|p|^2+|r|^2+H(\theta,p,r)|r|^2\right)+C,
    \qquad \sigma\in\{+1,-1\}.
\end{equation}
In particular, for every
\[
    R\ge \frac{C+1}{c},
\]
the compact set $\mathsf K_R$ defined in
\eqref{eq:whole-space-lyapunov-set} satisfies
$\calL_\eps^{(\sigma)}V_\sigma\le -1$ on
$\mathsf K_R^c$ for both signs $\sigma$.
\end{lemma}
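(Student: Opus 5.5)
The plan is a direct generator computation followed by Young-inequality absorption. Constants are chosen in the order $\kappa$, then $\alpha$, then $C_V$. Note that $\calL_\eps^{(\sigma)}$ differs from $\calL_\eps$ only in the sign of the antisymmetric transport, so $H$ is treated identically for both signs. First compute, as in Proposition~\ref{prop:basic},
\[
    \calL_\eps^{(\sigma)}H=-\gamma L|r|^2+\gamma d\eps .
\]
The carré-du-champ is $\Gamma(H)=\frac{2\gamma\eps}{L}|\nabla_rH|^2=2\gamma\eps L|r|^2$, which gives
\[
    \calL_\eps^{(\sigma)}H^2=-2\gamma L\,H|r|^2+2\gamma d\eps\,H+2\gamma\eps L|r|^2 .
\]
Thus $\eta H^2$ produces the good term $-2\gamma L\eta H|r|^2$, which explains $H|r|^2$ in $\Psi$. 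It also produces the bad term $2\gamma d\eps\,\eta H$, which the corrector $K_\sigma$ must dominate.

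Next compute $\calL_\eps^{(\sigma)}K_\sigma$ using $\dot\theta=\sigma p$, $\dot p=\sigma(-L^{-1}\nabla U+\gamma r)$ and $\dot r=-\sigma\gamma p-\gamma r$. Since $K_\sigma$ is linear in $r$, $\Delta_rK_\sigma=0$. A term-by-term check shows the $\theta\cdot p$ and $\theta\cdot r$ contributions cancel; the coefficients $2\sigma/\gamma$ and $\gamma/2$ are chosen exactly for this. One is left with
\[
    \calL_\eps^{(\sigma)}K_\sigma
    =-|p|^2-\frac1L\theta\cdot\nabla U+2|r|^2-\sigma p\cdot r-\frac{2}{\gamma L}r\cdot\nabla U .
\]
For $|\theta|\ge R_0$, \eqref{eq:radial-growth} gives $-\frac{\alpha}{L}\theta\cdot\nabla U\le-\frac{\alpha c_0}{L}(|\theta|^2+U)$. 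This absorbs the $U$-part of $2\gamma d\eps\,\kappa\alpha H$ once $\kappa$ is small relative to $c_0/(\gamma d\eps L)$. Likewise $-\alpha|p|^2$ absorbs the $p$-part $\kappa\alpha\gamma d\eps L|p|^2$ for $\kappa$ small. The $r$-part $\kappa\alpha\gamma d\eps L|r|^2$ is absorbed by $-\gamma L|r|^2$. The region $|\theta|\le R_0$ only contributes a bounded constant.

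The main obstacle is the chain cross term $\frac{2\alpha}{\gamma L}|r\cdot\nabla U|$, and here Assumption~\ref{ass:whole-space-growth} enters. By Young,
\[
    \frac{2\alpha}{\gamma L}|r||\nabla U|
    \le \frac{\alpha c_0}{4L}\bigl(1+\theta\cdot\nabla U\bigr)
    +C\alpha|r|^2\frac{|\nabla U|^2}{1+\theta\cdot\nabla U}.
\]
By \eqref{eq:whole-space-growth}, the last ratio is at most $o(1)(1+U)$ as $|\theta|\to\infty$, and it is bounded on compacts. Hence it is at most $\beta(1+U)+C_\beta$ for every $\beta>0$, so the second term is at most $C\alpha\beta H|r|^2+C\alpha C_\beta|r|^2$. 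Choosing $\beta\ll\kappa$ lets $-2\gamma L\kappa\alpha H|r|^2$ absorb the first piece; this is why $\eta=\kappa\alpha$ must be proportional to $\alpha$. The remaining $|r|^2$ terms are $2\alpha|r|^2$, $C\alpha C_\beta|r|^2$, $2\gamma\eps L\eta|r|^2$ and $\alpha|p\cdot r|\le\frac\alpha2|p|^2+\frac\alpha2|r|^2$. All are absorbed by $-\gamma L|r|^2$ once $\alpha$ is small, with $\kappa,\beta$ already fixed. Collecting terms yields \eqref{eq:lyap-drift-forward-adjoint}, with $c$ a small multiple of $\min\{\alpha c_0/L,\ \alpha,\ \gamma L,\ \kappa\alpha\gamma L\}$; the $-c\,U$ contribution uses $|\theta|^2\ge0$ and $U\le C(1+|\theta|^2+\theta\cdot\nabla U)$ on the relevant region.

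Finally, for nonnegativity and compactness: $|K_\sigma|\le C(|\theta|^2+|p|^2+|r|^2)$. By the proof of Proposition~\ref{prop:basic}, $U\ge c|\theta|^2-C$, so $\alpha|K_\sigma|\le \tfrac12H+C$ for $\alpha$ small. Therefore $V_\sigma\ge C_V-C+\tfrac12H\ge\tfrac12H\ge0$ once $C_V$ is large, and the sublevel sets lie in sublevel sets of $H$, which are compact. The last claim is immediate: on $\mathsf K_R^c$ we have $\Psi>R$, so $\calL_\eps^{(\sigma)}V_\sigma\le-cR+C\le-1$ whenever $R\ge(C+1)/c$.
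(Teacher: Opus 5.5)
Your proposal is correct and follows the paper's proof essentially step for step. It uses the same identities for $\calL_\eps^{(\sigma)}H$, $\calL_\eps^{(\sigma)}H^2$ and $\calL_\eps^{(\sigma)}K_\sigma$, controls the trace term $2\gamma d\eps\eta H$ by taking $\kappa$ small, and then takes $\alpha$ small so that $-\gamma L|r|^2$ absorbs all $O(\alpha)|r|^2$ terms.

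The only deviation is the Young split of $r\cdot\nabla U$, and it works equally well. The paper pairs $(1+H)|r|^2$ with $|\nabla U|^2/(1+H)\le|\nabla U|^2/(1+U)=o(\theta\cdot\nabla U)$ and absorbs into the radial term. You instead pair $1+\theta\cdot\nabla U$ with $|r|^2|\nabla U|^2/(1+\theta\cdot\nabla U)$ and absorb into $-2\eta\gamma LH|r|^2$ via $\beta\ll\kappa$. One small fix is needed: $1+\theta\cdot\nabla U$ can be nonpositive on $\{|\theta|<R_0\}$, so apply your split only on $\{|\theta|\ge R_0\}$, and on the compact remainder use $\frac{2\alpha}{\gamma L}|r\cdot\nabla U|\le C\alpha(1+|r|^2)$. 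Also, your coefficient $c_0/4$ need not be less than $1$; any fixed fraction smaller than $1$ in its place will do.
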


\begin{proof}
Assumption~\ref{ass:U} implies $|\theta|^2\le C(1+U(\theta))$, after changing
$C$ to cover a compact set.  Hence $|K_\sigma|\le C(1+H)$, uniformly in
$\sigma$, and choosing $C_V$ large makes $V_\sigma$ nonnegative and proper.

The Hamiltonian identities are independent of $\sigma$:
\[
    \calL_\eps^{(\sigma)}H=-\gamma L|r|^2+\gamma d\eps,
\]
and, since $\nabla_rH=Lr$,
\[
    \calL_\eps^{(\sigma)}H^2
    =
    -2\gamma L H|r|^2+2\gamma d\eps H+2\gamma\eps L|r|^2 .
\]
The corrector was chosen so that the chain terms cancel.  Its four pieces give
\[
\begin{aligned}
\calL_\eps^{(\sigma)}(\sigma\theta\cdot p)
    &=
    |p|^2-\frac1L\theta\cdot\nabla U(\theta)
    +\gamma\theta\cdot r,\\
\calL_\eps^{(\sigma)}(\theta\cdot r)
    &=
    \sigma p\cdot r
    -\sigma\gamma\theta\cdot p
    -\gamma\theta\cdot r,\\
\calL_\eps^{(\sigma)}\left(\frac{2\sigma}{\gamma}p\cdot r\right)
    &=
    -\frac{2}{\gamma L}r\cdot\nabla U(\theta)
    -2|p|^2+2|r|^2-2\sigma p\cdot r,\\
\calL_\eps^{(\sigma)}\left(\frac{\gamma}{2}|\theta|^2\right)
    &=
    \sigma\gamma\theta\cdot p .
\end{aligned}
\]
Thus the $\theta\cdot r$ and $\theta\cdot p$ terms cancel, and the
$p\cdot r$ coefficient is $\sigma-2\sigma=-\sigma$.  Consequently,
\begin{equation}\label{eq:lyap-K-computation}
    \calL_\eps^{(\sigma)}K_\sigma
    =
    -\frac1L\theta\cdot\nabla U(\theta)
    -\frac{2}{\gamma L}r\cdot\nabla U(\theta)
    -|p|^2+2|r|^2-\sigma p\cdot r .
\end{equation}
Combining these formulas gives the exact expansion of
$\calL_\eps^{(\sigma)}V_\sigma$:
\begin{align}
\calL_\eps^{(\sigma)}V_\sigma
={}&
-\gamma L|r|^2+\gamma d\eps
-2\eta\gamma L H|r|^2
+2\eta\gamma d\eps H
+2\eta\gamma\eps L|r|^2 \notag\\
&-\frac{\alpha}{L}\theta\cdot\nabla U(\theta)
-\frac{2\alpha}{\gamma L}r\cdot\nabla U(\theta)
-\alpha|p|^2+2\alpha|r|^2-\alpha\sigma p\cdot r .
\label{eq:lyap-exact-expansion}
\end{align}

We estimate the indefinite-sign terms.  First choose $\kappa>0$
small enough.  By \eqref{eq:radial-growth}, for $|\theta|\ge R_0$,
\[
    U(\theta)\le c_0^{-1}\theta\cdot\nabla U(\theta).
\]
Hence, by choosing
\[
    \kappa
    \le
    \min\left\{
        \frac{c_0}{256\gamma d\eps L},
        \frac{1}{64\gamma d\eps L}
    \right\},
\]
we get, after increasing the final constant to cover the compact set
$\{|\theta|\le R_0\}$,
\[
    2\kappa\gamma d\eps U(\theta)
    \le \frac{1}{64L}\theta\cdot\nabla U(\theta)+C,
    \qquad
    \kappa\gamma d\eps L\le \frac1{32}.
\]
Let $\eta=\kappa\alpha$.  For a numerical constant $C_1$, Young's
inequality and $1+H\ge1+U(\theta)$ give
\[
    \frac{2\alpha}{\gamma L}|r\cdot\nabla U|
    \le
    \frac{\eta\gamma L}{2}(1+H)|r|^2
    +
    \frac{C_1\alpha}{\gamma^3L^3\kappa}
    \frac{|\nabla U|^2}{1+H}
    \le
    \frac{\eta\gamma L}{2}(1+H)|r|^2
    +
    \frac{C_1\alpha}{\gamma^3L^3\kappa}
    \frac{|\nabla U(\theta)|^2}{1+U(\theta)}.
\]
After $\kappa$ is fixed, Assumption~\ref{ass:whole-space-growth}, in the
equivalent form \eqref{eq:whole-space-growth-equivalent}, implies there exists some
$R_\kappa<\infty$ such that, for $|\theta|\ge R_\kappa$,
\[
    \frac{C_1}{\gamma^3L^3\kappa}
    \frac{|\nabla U(\theta)|^2}{1+U(\theta)}
    \le
    \frac{1}{64L}\theta\cdot\nabla U(\theta).
\]
On the compact set $\{|\theta|\le R_\kappa\}$, the positive part of the
difference between the left-hand side and the right-hand side is bounded;
hence
\[
    \frac{C_1\alpha}{\gamma^3L^3\kappa}
    \frac{|\nabla U(\theta)|^2}{1+U(\theta)}
    \le
    \frac{\alpha}{64L}\theta\cdot\nabla U(\theta)+\alpha C_\kappa .
\]
Therefore the gradient-ratio term and the positive potential contribution
$2\eta\gamma d\eps U$ from $\eta\calL_\eps^{(\sigma)}H^2$ are absorbed by
the negative term $-(\alpha/L)\theta\cdot\nabla U$, up to a bounded
remainder.  The positive $p$-part is controlled by the choice of $\kappa$:
\[
    2\eta\gamma d\eps\frac L2|p|^2
    =
    \kappa\alpha\gamma d\eps L|p|^2
    \le \frac{\alpha}{32}|p|^2.
\]
Finally,
\[
    \alpha|p\cdot r|
    \le \frac{\alpha}{16}|p|^2+C_{\rm pr}\alpha |r|^2.
\]
We now collect the preceding estimates.  First, the trace contribution in
\eqref{eq:lyap-exact-expansion} splits as
\[
    2\eta\gamma d\eps H
    =2\eta\gamma d\eps U(\theta)
    +\eta\gamma d\eps L|p|^2
    +\eta\gamma d\eps L|r|^2.
\]
The first term on the right and the gradient-ratio term in
\eqref{eq:lyap-exact-expansion} each consume at most $1/64$ of
$(\alpha/L)\theta\cdot\nabla U$.  The second term consumes at most
$1/32$ of $\alpha|p|^2$, and the $p\cdot r$ term consumes at most
$1/16$ of $\alpha|p|^2$.  The Young bound for
$r\cdot\nabla U$ consumes one half of the coefficient of
$\eta\gamma L H|r|^2$.  Consequently, after increasing a constant $C_3$ to
absorb all bounded remainders, we obtain
\begin{align*}
\calL_\eps^{(\sigma)}V_\sigma
\le{}&
-\left(1-\frac1{64}-\frac1{64}\right)
    \frac{\alpha}{L}\theta\cdot\nabla U(\theta)\\
&\qquad-\left(1-\frac1{32}-\frac1{16}\right)\alpha|p|^2
-\left(2-\frac12\right)\eta\gamma L H|r|^2
-\gamma L|r|^2\\
&\qquad\qquad+\left(
    2\alpha+\frac{\eta\gamma L}{2}
    +\eta\gamma d\eps L+2\eta\gamma\eps L
    +C_{\rm pr}\alpha
  \right)|r|^2+C_3.
\end{align*}
Since $\eta=\kappa\alpha$, define
\[
    C_2
    :=2+C_{\rm pr}+\frac{\kappa\gamma L}{2}
      +\kappa\gamma d\eps L+2\kappa\gamma\eps L.
\]
Thus $C_2<\infty$ is independent of $\alpha$ and of
$\sigma\in\{+1,-1\}$, and we have
\begin{align}
\calL_\eps^{(\sigma)}V_\sigma
&\le
-\frac{31\alpha}{32L}\,\theta\cdot\nabla U(\theta)
-\frac{29\alpha}{32}|p|^2
-\frac{3\eta\gamma L}{2}H|r|^2
-\gamma L|r|^2
+C_2\alpha|r|^2+C_3.                         \label{eq:lyap-collected}
\end{align}
Choose $\alpha>0$ sufficiently small that
$C_2\alpha\le\gamma L/2$.  Weakening the remaining numerical constants in
\eqref{eq:lyap-collected} then yields
\begin{equation}\label{eq:lyap-coercive-intermediate}
\calL_\eps^{(\sigma)}V_\sigma
\le
-\frac{\alpha}{2L}\,\theta\cdot\nabla U(\theta)
-\frac{\alpha}{2}|p|^2
-\frac{\gamma L}{2}|r|^2
-\eta\gamma L H|r|^2+C_3.
\end{equation}
Finally, Assumption~\ref{ass:U} and compactness of
$\{\theta:|\theta|\le R_0\}$ imply that, for some $C_0<\infty$,
\[
\theta\cdot\nabla U(\theta)
\ge c_0\left(U(\theta)+|\theta|^2\right)-C_0,
\qquad \theta\in\R^d.
\]
Substituting this estimate into
\eqref{eq:lyap-coercive-intermediate}, and decreasing $c>0$ if necessary,
gives
\[
\calL_\eps^{(\sigma)}V_\sigma
\le
-c\left(U(\theta)+|p|^2+|r|^2+H|r|^2\right)+C,
\]
uniformly for $\sigma\in\{+1,-1\}$, which is
\eqref{eq:lyap-drift-forward-adjoint}.
\end{proof}

The Lyapunov drift estimate gives finite-mean entrance into a compact set.  This
return property is then combined with a controlled-path argument and the strong
Markov property to prove the almost-sure hitting statements needed for
whole-space exhaustion.

\begin{proposition}[Whole-space recurrence for exhaustion]
\label{prop:whole-space-recurrence}
Suppose Assumptions~\ref{ass:U}, \ref{ass:controlled-coefficients},
\ref{ass:whole-space-growth}, and \ref{ass:two-balls} hold.  Then the forward
and adjoint limiting processes are non-explosive, admit the invariant
probability measure $\pi^\eps$, and return to a compact Lyapunov set in finite
mean time.  Moreover,
\[
    \Pbb_z(\tau_B<\infty)=1,
    \qquad
    \Pbb_z^*(\tau_A\wedge\tau_B<\infty)=1,
    \qquad z\in\R^{3d}.
\]
\end{proposition}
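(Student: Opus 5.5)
We prove the four assertions in order. Non-explosion of the forward process is Proposition~\ref{prop:basic}(ii). For the adjoint process we repeat that computation with $\calL_\eps^*H=-\gamma L|r|^2+\gamma d\eps$; the antisymmetric part again annihilates $H$. This is also already covered by Assumption~\ref{ass:controlled-coefficients}(b). Invariance of $\pi^\eps$ for both semigroups comes from Proposition~\ref{prop:basic}(iii), together with its adjoint analogue from \eqref{eq:adjoint}, and non-explosion via the Echeverr\'ia criterion \cite{Echeverria1982}. Here $\pi^\eps$ is a probability measure by Proposition~\ref{prop:basic}(i).

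\textbf{Finite-mean return.} Fix $R\ge (C+1)/c$ as in Lemma~\ref{lem:whole-space-lyapunov-drift}, so that $\calL_\eps^{(\sigma)}V_\sigma\le-1$ off $\mathsf K_R$. Localize with $\tau_n=\inf\{t:V_\sigma(Z_t)\ge n\}$ and apply Dynkin's formula to $V_\sigma(Z_{t\wedge\tau_n\wedge\tau_{\mathsf K_R}})$. This gives
\[
\E_z\left[t\wedge\tau_n\wedge\tau_{\mathsf K_R}\right]\le V_\sigma(z).
\]
Letting $n\to\infty$ (using non-explosion) and then $t\to\infty$ yields $\E_z[\tau_{\mathsf K_R}]\le V_\sigma(z)<\infty$ for every $z$. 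The same holds for the adjoint with $V_{-1}$.

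\textbf{Hitting $B$ (forward) and $A\cup B$ (adjoint).} Enlarge $R$ so that $\overline A\cup\overline B\subset\mathsf K_R^\circ$. Then choose a ball $\calD'=B(0,N')$ containing a neighborhood of $\mathsf K_R$. We would like a uniform lower bound
\[
\inf_{z\in\mathsf K_R}\Pbb_z(\tau_B\le T)\ge a>0 .
\]
To get it, we rerun the controlled-path construction of Lemma~\ref{lem:finite-killing}. The only change is the target: we steer into the interior of $B$ rather than out of $\calD$. For each $z_i\in\mathsf K_R$ we choose a cubic interpolation $\Theta_i$ with the three initial jets fixed by $z_i$, and with terminal data $\Theta_i(T_i)=s$, $\dot\Theta_i(T_i)=0$, $\ddot\Theta_i(T_i)=-L^{-1}\nabla U(s)=0$. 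This is a degree-five polynomial, and it lands exactly at $(s,0,0)$. The control $h_i$ is then defined by the same formulas. Continuity of the controlled flow, the uniform tube estimate for additive noise, and Cameron--Martin/full support of Wiener measure give $\Pbb(E_i)>0$ on a neighborhood $U_i$ of $z_i$. Compactness of $\mathsf K_R$ gives a finite subcover and constants $T,a$.

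We then combine the two ingredients by a standard renewal argument. Define the successive return times $\rho_k$ to $\mathsf K_R$, separated by excursions of duration $T$. Each $\rho_k$ is a.s.\ finite because the mean return time is finite from every point. By the strong Markov property,
\[
\Pbb_z(\tau_B>\rho_k+T\ \text{for all}\ k\le n)\le(1-a)^n .
\]
Hence $\tau_B<\infty$ a.s. For the adjoint process we use $V_{-1}$ and the adjoint controlled equations, with $P^*=-\dot\Theta$ as at the end of Lemma~\ref{lem:finite-killing}. Steering into $B$ already gives $\tau_A\wedge\tau_B<\infty$ $\Pbb^*_z$-a.s.

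\textbf{Main obstacle.} The delicate point is the uniform controllability from all of $\mathsf K_R$ into the open ball $B$. The interpolation must match the terminal jet so that the terminal state is $(s,0,0)$, not merely a point with the right $\theta$-coordinate. The tube also leaves the bounded domain used in Lemma~\ref{lem:finite-killing}, so we need a cutoff on a neighborhood of the union of the finitely many tubes. Provided the tube radius is less than $\rho$, the noisy path then enters $B$, and the rest is routine.
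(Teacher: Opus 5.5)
Your proposal is correct and follows the paper's proof essentially step for step: non-explosion and Echeverr\'ia invariance, the stopped Dynkin bound $\E_z[\tau_{\mathsf K_R}]\le V_\sigma(z)$, a Hermite controlled path landing at the center of the target ball combined with the Cameron--Martin/support tube argument and a finite cover, and then repeated attempts via the strong Markov property. The differences are cosmetic: you write ``cubic'' where the six endpoint conditions require a quintic, as you yourself note, and you steer the adjoint into $B$ rather than into $A$ (the paper takes $q_-=m$), which gives the same conclusion.
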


\begin{proof}
Proposition~\ref{prop:basic} implies non-explosion, finiteness of $\pi^\eps$, and
the infinitesimal invariance identity for the forward process.  The adjoint
non-explosion is included in Assumption~\ref{ass:controlled-coefficients}, and
infinitesimal invariance follows from the adjoint relation:
\[
    \int_{\R^{3d}}\calL_\eps^*fd\pi^\eps
    =
    \int_{\R^{3d}}f\calL_\eps1d\pi^\eps=0,
    \qquad f\in C_c^\infty(\R^{3d}).
\]
Since the forward and adjoint martingale problems are well-posed and the
processes are conservative, the Echeverr\'ia invariant-measure criterion
\cite{Echeverria1982} applies to the test class $C_c^\infty(\R^{3d})$, using
conservativeness, well-posedness of the martingale problem, and the identities
\[
    \int_{\R^{3d}}\calL_\eps f\,d\pi^\eps=0,
    \qquad
    \int_{\R^{3d}}\calL_\eps^*f\,d\pi^\eps=0,
    \qquad f\in C_c^\infty(\R^{3d}).
\]
Hence $\pi^\eps$ is invariant for both semigroups.
Equivalently, $\pi^\eps$ is a distributional stationary solution of the
Fokker--Planck equation, and well-posedness of the martingale problem gives
uniqueness of that measure-valued evolution.  Thus,
\[
    \int_{\R^{3d}}P_tfd\pi^\eps
    =
    \int_{\R^{3d}}fd\pi^\eps,
    \qquad
    \int_{\R^{3d}}P_t^*fd\pi^\eps
    =
    \int_{\R^{3d}}fd\pi^\eps,
\]
first for bounded continuous $f$, and then for bounded measurable $f$ by a
monotone-class argument.

Choose
\[
    R\ge \frac{C+1}{c},
    \qquad \mathsf K:=\mathsf K_R,
\]
where $c,C$ are from Lemma~\ref{lem:whole-space-lyapunov-drift}.
The function $V_\sigma$ is bounded on $\mathsf K$, and the generator estimate
gives
\[
    \calL_\eps^{(\sigma)}V_\sigma\le -1+b\mathbf 1_{\mathsf K},
\]
for a finite constant $b$ and both $\sigma\in\{+1,-1\}$.  Applying Dynkin's
formula up to
$t\wedge\tau_{\mathsf K}$ gives, for $z\notin\mathsf K$,
\[
    \mathbb E_z[t\wedge\tau_{\mathsf K}]
    \le V_\sigma(z),
\]
and monotone convergence as $t\to\infty$ implies
$\mathbb E_z\tau_{\mathsf K}<\infty$.  Thus, from every starting point outside
$\mathsf K$, each process enters $\mathsf K$ almost surely in finite time and
with finite mean.  This stopped Dynkin estimate is the only recurrence input
needed below; in particular, no uniformly elliptic recurrence criterion is
invoked for the degenerate chain.

It remains to show that each return to $\mathsf K$ gives a uniformly positive
chance of reaching the required target.  Put
\[
    \Pbb_z^{(+)}:=\Pbb_z,
    \qquad
    \Pbb_z^{(-)}:=\Pbb_z^*,
    \qquad
    G_+:=B,
    \qquad
    G_-:=A\cup B.
\]
The controlled skeleton associated with $\calL_\eps^{(\sigma)}$ is
\begin{equation}\label{eq:whole-space-controlled-skeleton}
\dot\theta=\sigma p,
\qquad
\dot p=-\frac{\sigma}{L}\nabla U(\theta)+\sigma\gamma r,
\qquad
\dot r=-\sigma\gamma p-\gamma r+a_r u,
\qquad
a_r:=\sqrt{2\gamma\eps/L}.
\end{equation}
Fix $q_+=s$ and $q_-=m$, so that $(q_\sigma,0,0)\in G_\sigma$ and
$\nabla U(q_\sigma)=0$.  For every
$z=(\theta_0,p_0,r_0)\in\mathsf K$, choose a smooth path
$\Theta_z^\sigma:[0,1]\to\R^d$ satisfying
\[
\begin{gathered}
\Theta_z^\sigma(0)=\theta_0,
\qquad
\dot\Theta_z^\sigma(0)=\sigma p_0,
\qquad
\ddot\Theta_z^\sigma(0)=-L^{-1}\nabla U(\theta_0)+\gamma r_0,\\
\Theta_z^\sigma(1)=q_\sigma,
\qquad
\dot\Theta_z^\sigma(1)=0,
\qquad
\ddot\Theta_z^\sigma(1)=0.
\end{gathered}
\]
For example, these six endpoint conditions determine a componentwise quintic
Hermite polynomial.  Define
\[
P_z^\sigma=\sigma\dot\Theta_z^\sigma,
\qquad
R_z^\sigma=\frac1\gamma\left(
\ddot\Theta_z^\sigma+\frac1L\nabla U(\Theta_z^\sigma)
\right),
\]
and
\[
u_z^\sigma
=a_r^{-1}\left(
\dot R_z^\sigma+\sigma\gamma P_z^\sigma+\gamma R_z^\sigma
\right).
\]
Then $(\Theta_z^\sigma,P_z^\sigma,R_z^\sigma)$ solves
\eqref{eq:whole-space-controlled-skeleton}, starts from $z$, and ends at
$(q_\sigma,0,0)\in G_\sigma$.  Since $\mathsf K$ is compact, the family of
controlled paths is contained in one compact set and the controls have
uniformly bounded Cameron--Martin norms.

Localize the coefficients on a compact neighborhood of these paths.  The
Stroock--Varadhan support theorem \cite[Theorem~5.2]{StroockVaradhan1972},
continuous dependence on the initial point, and a finite covering of
$\mathsf K$ then give constants $q_\sigma>0$ such that
\begin{equation}\label{eq:whole-space-uniform-accessibility}
    \inf_{z\in\mathsf K}
    \Pbb_z^{(\sigma)}\left(\tau_{G_\sigma}\le1\right)
    \ge q_\sigma>0,
    \qquad \sigma\in\{+1,-1\}.
\end{equation}

Finally, start the first attempt when the process enters $\mathsf K$ (at time
zero if it already lies there) and allow one unit of time for the attempt.  At
the end of a missed attempt, start the next one immediately if the process is
in $\mathsf K$; otherwise, wait until its next entrance into $\mathsf K$.  The
finite-mean entrance estimate above, applied at the end of each attempt through
the strong Markov property, makes every such waiting time almost surely finite.
Iterating the strong Markov property and
\eqref{eq:whole-space-uniform-accessibility} therefore yields
\[
    \Pbb_z^{(\sigma)}\left(
        \text{the first $n$ attempts all miss }G_\sigma
    \right)
    \le(1-q_\sigma)^n.
\]
Letting $n\to\infty$ proves
\[
    \Pbb_z(\tau_B<\infty)=1,
    \qquad
    \Pbb_z^*(\tau_A\wedge\tau_B<\infty)=1
\]
for every $z\in\R^{3d}$.
\end{proof}

\begin{remark}[Relation with the Lyapunov step in Lee--Ramil--Seo]
In Lee--Ramil--Seo, the recurrence comes from the Lyapunov estimate in
\cite[Lemma~3.2]{LeeRamilSeo2026}: non-explosion and positive recurrence are
then stated in \cite[Proposition~3.3]{LeeRamilSeo2026}.  Their proof cites
Pinsky's diffusion Lyapunov criterion
\cite[Chapter~2, Assumption~A, and Theorem~6.1.3]{Pinsky1995}, whose stated
assumptions require a strictly elliptic diffusion matrix.  Since both the
underdamped chain and the present third-order chain are degenerate, we do not
invoke that criterion here.
Instead, the stopped Dynkin formula gives finite-mean entrance into the compact
Lyapunov set directly.  The controlled-path accessibility estimate and the
strong Markov property then give the repeated attempts and the almost-sure
target-hitting statements.
Lee--Ramil--Seo use recurrence in the exhaustion passage in
\cite[Section~4.5, Proof of Proposition~2.9]{LeeRamilSeo2026}, both for the
outer-domain limit and for the final time-truncation limit.  The present
third-order chain uses the additional $\eta H^2$ term in
Lemma~\ref{lem:whole-space-lyapunov-drift} and the gradient-ratio estimate to
absorb $r\cdot\nabla U$, whereas the LRS underdamped corrector cancels the force
terms directly.
\end{remark}

The preceding return-and-accessibility argument controls the infinite-time
tails, whereas the exhaustion argument also requires continuity in the initial
point at each fixed horizon.  The next lemma supplies this continuity on the
entrance boundary.

\begin{lemma}[Continuity of finite-time survival probabilities]
\label{lem:whole-space-survival-continuity}
Assume Assumptions~\ref{ass:U}, \ref{ass:controlled-coefficients}, and
\ref{ass:two-balls}.  Choose $R_*$ large enough that
$\overline A\cup\overline B\subset B(0,R_*)$ and, for every $R\ge R_*$, the
boundary components
$\partial A$, $\partial B$, and $\partial B(0,R)$ have pairwise disjoint tubular
neighborhoods.  For every $t>0$ and $R\ge R_*$, with $\calD_R=B(0,R)$ and
$\calC_R=B\cup\partial\calD_R$, the maps
\[
    z\mapsto\Pbb_z(\tau_{\calC_R}>t),
    \qquad
    z\mapsto\Pbb_z(\tau_B>t)
\]
are continuous on $\partial A$.
\end{lemma}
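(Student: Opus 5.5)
We follow the template of Lemma~\ref{lem:gM-continuity-boundary}, with fixed-time indicators in place of truncated expectations. Fix $t>0$ and $R\ge R_*$, and let $z_n\to z$ in $\partial A$. Couple the limiting processes $Z^{z_n}$ and $Z^z$ through the same Brownian motion $B$. Since the noise is additive, the Brownian terms cancel in the difference. Localizing at the joint exit time $\sigma_n$ from a compact neighborhood $\mathsf K$ of $\overline{\calD_R}$, Gr\"onwall's lemma gives
\[
\E\Bigl[\sup_{s\le (t+1)\wedge\sigma_n}|Z^{z_n}_s-Z^z_s|^2\Bigr]\le C|z_n-z|^2 .
\]
For the second map, $\tau_B$, we cannot localize to a bounded domain by killing. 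Instead we use Assumption~\ref{ass:controlled-coefficients}(b): choosing $\mathsf K$ large makes the probability of exiting $\mathsf K$ before $t+1$ small, uniformly over starting points on $\partial A$. After that, the same estimate applies. This gives uniform convergence in probability of the coupled paths on $[0,t]$.

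Next, set $e_n=|z_n-z|$, $a_n=e_n^{1/2}$, $\alpha_n=a_n^{1/\lambda}$ and $b_n=a_n^{1/4}$, exactly as in the proof of Lemma~\ref{lem:gM-continuity-boundary}. Let $\tau_n$ and $\tau$ denote the hitting times of the killing set ($\calC_R$, or $B$ alone) for $Z^{z_n}$ and $Z^z$. The good event $G_n=\{\sup_{s\le (t+1)\wedge\tau_n\wedge\tau}|Z^{z_n}_s-Z^z_s|\le a_n\}$ has $\Pbb(G_n^c)\to0$. We then argue in both directions. On $G_n$, suppose one path is killed by time $t+1$ with $|r|>2b_n$ at the killing time while the other is still alive. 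Then the survivor lies in the fast-entry collar $C_{\alpha_n,\lambda}$ of Lemma~\ref{lem:fast-entry-collar}, and by the strong Markov property it is killed within time $a_n$ with probability tending to one. Small-$r$ entries are negligible by Proposition~\ref{prop:small-r-entry}, applied uniformly over starting points in the compact set $\partial A\subset\overline{\calD_R}\setminus(\partial B\cup\partial\calD_R)$ and summed over $\partial B$ and $\partial\calD_R$. These results were stated for the fixed domain $\calD$, but their proofs use only compactness and disjoint collars, which $R\ge R_*$ guarantees. Together these give
\[
\Pbb\bigl(|\tau_n\wedge(t+1)-\tau\wedge(t+1)|>2a_n\bigr)\to0 .
\]

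Finally, we pass from convergence of hitting times to convergence of the survival indicators. Here we have
\[
|\Pbb_{z_n}(\tau_n>t)-\Pbb_z(\tau>t)|\le \Pbb(|\tau_n\wedge(t+1)-\tau\wedge(t+1)|>2a_n)+\Pbb_z(|\tau-t|\le 2a_n).
\]
The last term tends to $\Pbb_z(\tau=t)$, which is zero by the fixed-time atomlessness of Lemma~\ref{lem:no-fixed-time-atoms-d1}: hitting $\calC_R$ or $B$ exactly at time $t$ forces $Z_t$ to lie on the boundary union, which is null at fixed $t$. This lemma was stated for $\partial\calD$, but its proof uses only a density and hypersurfaces, so it extends to $\partial B(0,R)$. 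Two points need care. First, the collar and small-$r$ estimates must be checked uniformly for the domain $\calD_R$ and for the target $B$ alone. Second, and this is the step I expect to be the main obstacle, the case $\tau=\infty$ for $\tau_B$ must be handled through the moment-based localization rather than a killing boundary. I expect it to be handled by truncating at $t+1$ as above, since only events before $t+1$ matter.
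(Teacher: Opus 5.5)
Your proof is correct. For the killed map $z\mapsto\Pbb_z(\tau_{\calC_R}>t)$ it is the paper's argument: coupling, the good event at scale $a_n$, the small-$r$ and collar estimates on $\calD_R$, and fixed-time atomlessness. You also write out the passage from hitting times to indicators explicitly, via $\Pbb_z(|\tau-t|\le 2a_n)\to\Pbb_z(\tau=t)=0$.

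For $z\mapsto\Pbb_z(\tau_B>t)$ you take a different route. You rerun the whole coupling for the unkilled target and move the localization into the Gr\"onwall step. The limits go in the order $n\to\infty$ first with $\mathsf K$ fixed, so that $C_{\mathsf K}e_n\to0$, and then $\mathsf K\uparrow\R^{3d}$ by Assumption~\ref{ass:controlled-coefficients}(b). You then reuse the boundary estimates for $\partial B$ alone. That is legitimate. Proposition~\ref{prop:small-r-entry} already concerns the first hit of $\partial B$ by the unkilled process and carries its own exit term. The collar bound for $B$ alone follows from the $\calC$-version together with Lemma~\ref{lem:uniform-small-time-boundary}, which prevents reaching $\partial\calD$ within time $a_n$ from the $\partial B$ collar.

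The paper avoids reopening any of this. It uses $|\Pbb_z(\tau_B>t)-\Pbb_z(\tau_{B\cup\partial\calD_S}>t)|\le\Pbb_z(\tau_{\partial\calD_S}\le t)$, which is small uniformly on $\partial A$ for large $S$ by the moment bounds. Hence the second map is a uniform limit of functions already shown continuous in the first part (with $R=S$).

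That reduction is shorter and strictly modular: the delicate boundary inputs are used only on bounded killed domains, where they were proved. Yours is a single self-contained argument covering both maps. In either version the obstacle you anticipate does not arise, since truncation at $t+1$ (or the comparison with $\calD_S$) makes the event $\{\tau_B=\infty\}$ irrelevant.
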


\begin{proof}
By the choice of $R_*$, for each $R\ge R_*$ the triple $(A,B,\calD_R)$ satisfies
the bounded-domain geometry assumption with $N=R$.
Consider first $z\mapsto\Pbb_z(\tau_{\calC_R}>t)$.  The finite-horizon
boundary-decision stability argument from
Lemma~\ref{lem:gM-continuity-boundary} applies to the boundary union
$\partial B\cup\partial\calD_R$.  The possible characteristic hits are controlled
by the local estimates in Proposition~\ref{prop:small-r-entry} and
Lemma~\ref{lem:fast-entry-collar}, applied to the ball $\calD_R$.  Fixed-time
boundary atoms are excluded by the density argument used in
Lemma~\ref{lem:no-fixed-time-atoms-d1}.  Hence, if $z_n,z\in\partial A$ and
$z_n\to z$, the indicators
${\bfone}_{\{\tau_{\calC_R}(Z^{z_n})>t\}}$ converge in probability to
${\bfone}_{\{\tau_{\calC_R}(Z^z)>t\}}$, and bounded convergence proves
continuity of $z\mapsto\Pbb_z(\tau_{\calC_R}>t)$ on $\partial A$.

For the second map, localize paths in a larger ball $\calD_S=B(0,S)$, $S>R_*$,
up to time $t$.  The finite-time moment bounds in
Assumption~\ref{ass:controlled-coefficients} give
\begin{equation}\label{eq:whole-space-localization-tail}
    \sup_{z\in\partial A}\Pbb_z(\tau_{\partial\calD_S}\le t)\longrightarrow0
    \qquad\text{as }S\to\infty.
\end{equation}
On the event $\{\tau_{\partial\calD_S}>t\}$, the survival event
$\{\tau_B>t\}$ is the same as the corresponding event in the localized ball.
The first part, applied with $R=S$, gives the continuity of
$z\mapsto\Pbb_z(\tau_{B\cup\partial\calD_S}>t)$ on $\partial A$.  Letting
$S\to\infty$ and using the uniform localization bound
\eqref{eq:whole-space-localization-tail} proves
the continuity of $z\mapsto\Pbb_z(\tau_B>t)$.
\end{proof}

The preceding continuity result allows Dini's theorem to be used when the
outer radius tends to infinity.  Together with non-explosion, it yields the
finite-time exhaustion limits collected below.

\begin{lemma}[Finite-time exhaustion stability]
\label{lem:finite-time-exhaustion}
Assume Assumptions~\ref{ass:U}, \ref{ass:controlled-coefficients}, and
\ref{ass:two-balls}.  Choose $R_*$ large enough that
$\overline A\cup\overline B\subset B(0,R_*)$, set $\calD_R=B(0,R)$ and
$\calC_R=B\cup\partial\calD_R$, and extend
\[
    h_R^*(z)=\Pbb_z^*(\tau_A<\tau_{\calC_R})
\]
by zero outside $\calD_R$.  Let
\[
    h_\infty^*(z)=\Pbb_z^*(\tau_A<\tau_B),
    \qquad
    g_{R,M}(z)=\E_z[\tau_{\calC_R}\wedge M],
    \qquad
    g_{\infty,M}(z)=\E_z[\tau_B\wedge M],
\]
and
\[
    F_{R,M}(z)=\Pbb_z(\tau_{\calC_R}\le M),
    \qquad
    F_{\infty,M}(z)=\Pbb_z(\tau_B\le M).
\]
We extend $F_{R,M}$ by zero outside $\calD_R$; since $h_R^*$ is also extended by
zero there, the product $h_R^*F_{R,M}$ is a Borel function on $\R^{3d}$.
Then, as $R\to\infty$,
\[
    h_R^*\to h_\infty^*
    \qquad\text{in }L^1(\pi^\eps),
\]
and, for every $M<\infty$,
\[
    \sup_{z\in\partial A}
    |g_{R,M}(z)-g_{\infty,M}(z)|
    \longrightarrow0,
    \qquad
    g_{\infty,M}\in C(\partial A).
\]
Moreover,
\[
    h_R^*F_{R,M}\to h_\infty^*F_{\infty,M}
    \qquad\text{in }L^1(\pi^\eps).
\]
\end{lemma}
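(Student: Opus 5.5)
The proof rests on monotonicity in $R$ and path continuity, together with non-explosion and the recurrence of Proposition~\ref{prop:whole-space-recurrence}. For the domain increase, note that $\tau_{\partial\calD_R}\uparrow\infty$ almost surely as $R\to\infty$, because the forward and adjoint limiting processes are non-explosive (Proposition~\ref{prop:basic} and Assumption~\ref{ass:controlled-coefficients}(b)). Since $\calC_R=B\cup\partial\calD_R$ and paths started in $\calD_R$ must hit $\partial\calD_R$ before leaving it, we have $\tau_{\calC_R}=\tau_B\wedge\tau_{\partial\calD_R}$ for starting points in $\calD_R$. Hence $\tau_{\calC_R}\uparrow\tau_B$ almost surely. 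More precisely, on the event $\{\tau_{\partial\calD_R}>\tau_B\wedge M\}$ we have $\tau_{\calC_R}\wedge M=\tau_B\wedge M$, and similarly for the adjoint labels.

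\emph{Committors.} Fix $z$ and take $R$ so large that $z\in\calD_R$. The event $\{\tau_A<\tau_{\calC_R}\}$ is contained in $\{\tau_A<\tau_B\}$ for the adjoint process, and the two events differ only on $\{\tau_{\partial\calD_R}<\tau_A\wedge\tau_B\}$. By Proposition~\ref{prop:whole-space-recurrence}, $\tau_A\wedge\tau_B<\infty$ holds $\Pbb_z^*$-almost surely. Non-explosion then gives $\Pbb^*_z(\tau_{\partial\calD_R}<\tau_A\wedge\tau_B)\to0$, so $h_R^*(z)\uparrow h_\infty^*(z)$ pointwise. Since $0\le h_R^*\le1$ and $\pi^\eps$ is a probability measure, bounded convergence yields convergence in $L^1(\pi^\eps)$.

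\emph{Products.} For the product $h_R^*F_{R,M}$ the argument is the same, but now only a finite-time localization is needed. We have $|F_{R,M}(z)-F_{\infty,M}(z)|\le \Pbb_z(\tau_{\partial\calD_R}\le M)$, which tends to $0$ pointwise by non-explosion. Together with the pointwise convergence of $h_R^*$ and the bound by $1$, dominated convergence gives $h_R^*F_{R,M}\to h_\infty^*F_{\infty,M}$ in $L^1(\pi^\eps)$. The zero extensions outside $\calD_R$ are harmless, since every fixed $z$ eventually lies in $\calD_R$.

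\emph{Uniform convergence on $\partial A$.} This is the step requiring care, and I would use Dini's theorem. We have
\[
g_{R,M}(z)=\int_0^M\Pbb_z(\tau_{\calC_R}>t)\,dt,
\]
which is nondecreasing in $R$ because $\tau_{\calC_R}$ increases with $R$. It converges pointwise to $g_{\infty,M}(z)=\int_0^M\Pbb_z(\tau_B>t)\,dt$: indeed, $\Pbb_z(\tau_B>t)-\Pbb_z(\tau_{\calC_R}>t)\le\Pbb_z(\tau_{\partial\calD_R}\le t)$, and this bound vanishes as $R\to\infty$. By Lemma~\ref{lem:whole-space-survival-continuity}, both integrands are continuous in $z$ on $\partial A$ for each $t$, and they are bounded by $1$. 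Dominated convergence in $t$ therefore makes $g_{R,M}$ and $g_{\infty,M}$ continuous on the compact set $\partial A$. Dini's theorem, applied to this monotone sequence of continuous functions with a continuous limit, then gives uniform convergence.

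Alternatively, one can bypass Dini. The bound
\[
\sup_{z\in\partial A}|g_{R,M}-g_{\infty,M}|\le M\sup_{z\in\partial A}\Pbb_z(\tau_{\partial\calD_R}\le M)
\]
follows directly from the uniform moment bounds of Assumption~\ref{ass:controlled-coefficients}(b) on the compact set $K=\partial A$, as in \eqref{eq:whole-space-localization-tail}. I would present this second route as the main argument and use Dini only as a cross-check.

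I expect the main obstacle to be the continuity of $g_{\infty,M}$, which rests on the characteristic-hit control for the survival probabilities. That control is already packaged in Lemma~\ref{lem:whole-space-survival-continuity}, so here it reduces to integrating in $t$.
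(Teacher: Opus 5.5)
\textbf{One step needs repair.} In the committor step you invoke Proposition~\ref{prop:whole-space-recurrence}. That proposition requires Assumption~\ref{ass:whole-space-growth}, which is not a hypothesis of this lemma. Without it there is no a priori reason that $\tau_A^*\wedge\tau_B^*<\infty$ almost surely, so your bound $\Pbb_z^*(\tau_{\partial\calD_R}<\tau_A\wedge\tau_B)$ has no justification for tending to zero.

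Recurrence is not needed. Fix $z$ and take $R$ large. The difference of the events $\{\tau_A^*<\tau_B^*\}$ and $\{\tau_A^*<\tau_{\calC_R}^*\}$ is exactly $\{\tau_{\partial\calD_R}^*<\tau_A^*<\tau_B^*\}$; equality $\tau_A^*=\tau_{\partial\calD_R}^*$ is impossible because $\overline A\subset\calD_R$. On this event $\tau_A^*<\infty$, so non-explosion ($\tau_{\partial\calD_R}^*\uparrow\infty$) alone sends its probability to zero. This is the paper's monotone-indicator argument. Recurrence enters only later, in the $M\to\infty$ step of Proposition~\ref{prop:exhaustion}.

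\textbf{Comparison for the uniform convergence.} With that change your proof is correct, and your product step matches the paper's. On $\partial A$:
\begin{itemize}
\item \emph{Paper's route.} It applies Dini's theorem for each fixed $t$ to $\Pbb_z(\tau_{\calC_R}>t)\uparrow\Pbb_z(\tau_B>t)$, with continuity from Lemma~\ref{lem:whole-space-survival-continuity}, and then integrates the uniform error over $[0,M]$. Your cross-check is the same argument with Dini applied after integrating in $t$.
\item \emph{Your primary route.} It is genuinely different and more direct: $0\le g_{\infty,M}-g_{R,M}\le M\,\Pbb_z(\tau_{\partial\calD_R}\le M)$. The uniform moment bound of Assumption~\ref{ass:controlled-coefficients}(b) on the compact set $\partial A$ makes the right side uniformly small. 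This is exactly the tail bound \eqref{eq:whole-space-localization-tail} that the paper already proves inside Lemma~\ref{lem:whole-space-survival-continuity}.
\end{itemize}
Your route gives an explicit rate and removes all continuity input from the uniform-convergence claim. Continuity is then needed only for the separate assertion $g_{\infty,M}\in C(\partial A)$. You correctly obtain it by integrating the continuous survival probabilities in $t$. Alternatively, it follows as the uniform limit of the $g_{R,M}$, each continuous by Lemma~\ref{lem:gM-continuity-boundary} applied in $\calD_R$. The Dini route gives nothing here beyond what the uniform tail bound already provides.
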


\begin{proof}
Let $\tau_E^*$ denote the adjoint hitting time of $E$.
By non-explosion, $\tau_{\partial\calD_R}\uparrow\infty$ almost surely for both
the forward and adjoint processes.  Since $\overline A$ and $\overline B$ are
disjoint compact sets with positive distance, path continuity excludes
simultaneous first entry into $A$ and $B$.  Hence, for each fixed $z$,
\[
    {\bfone}_{\{\tau_A^*<\tau_B^*\wedge\tau_{\partial\calD_R}^*\}}
    \uparrow
    {\bfone}_{\{\tau_A^*<\tau_B^*\}},
\]
after $R$ is large enough to contain $z$.  Bounded convergence implies the
pointwise convergence of $h_R^*$, and dominated convergence with respect to the
probability measure $\pi^\eps$ gives $L^1(\pi^\eps)$ convergence.

Fix $t>0$.  Lemma~\ref{lem:whole-space-survival-continuity} gives continuity on
$\partial A$ of
\[
    u_R(z)=\Pbb_z(\tau_{\calC_R}>t),
    \qquad
    u_\infty(z)=\Pbb_z(\tau_B>t).
\]

For each $z\in\partial A$,
$\tau_{\calC_R}=\tau_B\wedge\tau_{\partial\calD_R}\uparrow\tau_B$, and therefore
\[
    u_R(z)\uparrow u_\infty(z).
\]
The functions $u_R$ and $u_\infty$ are continuous on the compact set
$\partial A$.  Dini's theorem, applied along any sequence $R_n\uparrow\infty$ and
then using monotonicity in $R$, yields that
\[
    \sup_{z\in\partial A}
    |u_R(z)-u_\infty(z)|\longrightarrow0
\]
for each $t>0$.  Since the integrand is bounded by $1$,
\[
    \sup_{z\in\partial A}|g_{R,M}(z)-g_{\infty,M}(z)|
    \le
    \int_0^M
    \sup_{z\in\partial A}
    \left|
        \Pbb_z(\tau_{\calC_R}>t)-\Pbb_z(\tau_B>t)
    \right|dt
    \longrightarrow0.
\]
The preceding continuity of the survival probabilities also gives
$g_{\infty,M}\in C(\partial A)$.

Finally, $\tau_{\calC_R}\uparrow\tau_B$, and therefore
\[
    {\bfone}_{\{\tau_{\calC_R}\le M\}}
    \longrightarrow
    {\bfone}_{\{\tau_B\le M\}}
\]
almost surely.  Bounded convergence therefore gives
$F_{R,M}(z)\to F_{\infty,M}(z)$ for every $z$.
Since the $F$'s are bounded by $1$,
\[
    |h_R^*F_{R,M}-h_\infty^*F_{\infty,M}|
    \le |h_R^*-h_\infty^*|
       +h_\infty^*|F_{R,M}-F_{\infty,M}|,
\]
and dominated convergence proves the asserted $L^1(\pi^\eps)$ convergence.
\end{proof}

The preceding lemma controls all fixed-horizon terms appearing in the
bounded-domain identity.  The almost-sure hitting statements in
Proposition~\ref{prop:whole-space-recurrence} then remove the time truncation
and allow passage to the limit at the level of equilibrium measures and
capacities.

\begin{proposition}[Exhaustion of the outer boundary]
\label{prop:exhaustion}
Suppose Assumptions~\ref{ass:U}, \ref{ass:controlled-coefficients},
\ref{ass:whole-space-growth}, and \ref{ass:two-balls} hold.  Choose $R_*$ large enough that
$\overline A\cup\overline B\subset B(0,R_*)$, and let $\calD_R=B(0,R)$,
$R\ge R_*$.  For each $R$,
let $\eta_R$ and $\Cap_R=\eta_R(\partial A)$ be the bounded-domain weak
equilibrium measure and weak capacity in $\calD_R$ for the pair $(A,B)$.
Set
\[
    h_\infty^*(z)=\Pbb_z^*(\tau_A<\tau_B),
    \qquad z\in\R^{3d}.
\]
As before, $h_\infty^*$ is extended by the boundary values $1$ on
$\overline A$ and $0$ on $\overline B$.
Then there exists a unique finite positive Borel measure $\eta_\infty$ on
$\partial A$ such that, for every $\varphi\in C^2(\partial A)$ and every
$\Phi\in C_c^2(\R^{3d})$ satisfying $\Phi|_{\partial A}=\varphi$ and
$\Phi=0$ in a neighborhood of $\overline B$,
\begin{equation}\label{eq:exhaustion-weak-measure}
    \int_{\partial A}\varphi d\eta_\infty
    =
    \int_{\R^{3d}}h_\infty^*(-\calL_\eps\Phi) d\pi^\eps .
\end{equation}
Moreover, as $R\to\infty$, $\eta_R\Rightarrow\eta_\infty$ weakly on
$\partial A$ and
$\Cap_R\to\Cap_\infty:=\eta_\infty(\partial A)$.
In addition,
\begin{equation}\label{eq:exhaustion-hitting}
    \int_{\partial A}\E_z[\tau_B] \eta_\infty(dz)
    =
    \int_{\R^{3d}}h_\infty^* d\pi^\eps .
\end{equation}
Consequently $\Cap_\infty>0$, and the corresponding normalized whole-space
hitting identity follows by dividing \eqref{eq:exhaustion-hitting} by
$\Cap_\infty$.
\end{proposition}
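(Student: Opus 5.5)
The plan is to run the bounded-domain results for each $R\ge R_*$ and let $R\to\infty$, using Lemma~\ref{lem:finite-time-exhaustion} for the fixed-horizon terms and Proposition~\ref{prop:whole-space-recurrence} for the infinite-time tails.

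\emph{Convergence of the measures.} Fix $\varphi\in C^2(\partial A)$ and $\Phi\in C_c^2(\R^{3d})$ with $\Phi|_{\partial A}=\varphi$ and $\Phi=0$ near $\overline B$. For all $R$ so large that $\supp\Phi\subset B(0,R-1)$, $\Phi$ is an admissible extension in $\calD_R$. Theorem~\ref{thm:weak-measure} then gives
\[
\int_{\partial A}\varphi\,d\eta_R=\int h_R^*(-\calL_\eps\Phi)\,d\pi^\eps .
\]
The function $\calL_\eps\Phi$ is bounded and compactly supported, and $h_R^*\to h_\infty^*$ in $L^1(\pi^\eps)$ by Lemma~\ref{lem:finite-time-exhaustion}. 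Hence the right-hand side converges to
\[
\Lambda_\infty(\varphi):=\int h_\infty^*(-\calL_\eps\Phi)\,d\pi^\eps .
\]
This limit is independent of $\Phi$, because each prelimit value is. It is nonnegative for $\varphi\ge0$. It is bounded, since $\Lambda_\infty(1)$ is finite for a fixed cutoff $\psi\in C_c^\infty$ with $\psi=1$ near $\overline A$ and $\psi=0$ near $\overline B$. As in the proof of Theorem~\ref{thm:weak-measure}, density of $C^2(\partial A)$ in $C(\partial A)$ and Riesz--Markov produce a unique $\eta_\infty$ satisfying \eqref{eq:exhaustion-weak-measure}. Since the masses $\Cap_R=\eta_R(\partial A)$ converge to $\Lambda_\infty(1)$, the same density argument gives $\eta_R\Rightarrow\eta_\infty$ and $\Cap_R\to\Cap_\infty$.

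\emph{Hitting identity.} From \eqref{eq:truncated-identity-limit} in $\calD_R$ (established in the proof of Theorem~\ref{thm:hitting}) we have
\[
\int_{\partial A}g_{R,M}\,d\eta_R=\int h_R^*F_{R,M}\,d\pi^\eps .
\]
We pass to the limit $R\to\infty$ at fixed $M$ on each side.
\begin{itemize}
\item On the left, $g_{R,M}\to g_{\infty,M}$ uniformly on $\partial A$, the masses $\eta_R(\partial A)$ are bounded, $g_{\infty,M}$ is continuous, and $\eta_R\Rightarrow\eta_\infty$. Together these give convergence to $\int g_{\infty,M}\,d\eta_\infty$.
\item On the right, Lemma~\ref{lem:finite-time-exhaustion} gives $L^1(\pi^\eps)$ convergence to $\int h_\infty^*F_{\infty,M}\,d\pi^\eps$.
\end{itemize}
Then let $M\to\infty$. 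On the left, $g_{\infty,M}\uparrow\E_z[\tau_B]$ by monotone convergence. On the right, Proposition~\ref{prop:whole-space-recurrence} gives $\Pbb_z(\tau_B<\infty)=1$, so $F_{\infty,M}\uparrow1$ pointwise, and monotone convergence yields \eqref{eq:exhaustion-hitting}. The left side may be $+\infty$ a priori, but the right side is at most $1$, so $\E_z[\tau_B]$ is $\eta_\infty$-integrable.

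\emph{Positivity.} As in Corollary~\ref{cor:capacity-positive}, $\int h_\infty^*\,d\pi^\eps\ge\pi^\eps(A)>0$, so $\eta_\infty\neq0$ and $\Cap_\infty>0$; dividing \eqref{eq:exhaustion-hitting} by $\Cap_\infty$ gives the normalized identity. The main obstacle is the possible unboundedness of $\E_z[\tau_B]$ on $\partial A$. This is handled by monotone convergence alone, which needs only $\tau_B<\infty$ almost surely, not a uniform bound.
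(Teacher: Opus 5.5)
Your proposal is correct and follows the paper's proof: bounded-domain identities in $\calD_R$, then $R\to\infty$ at fixed $M$ via Lemma~\ref{lem:finite-time-exhaustion}, then $M\to\infty$ by monotone convergence and Proposition~\ref{prop:whole-space-recurrence}, with positivity from $\pi^\eps(A)>0$. The only cosmetic difference is that you obtain $\eta_\infty$ by applying Riesz--Markov directly to the limiting functional, as in Theorem~\ref{thm:weak-measure}, whereas the paper extracts weakly convergent subsequences from the uniformly bounded masses and identifies every subsequential limit through \eqref{eq:exhaustion-weak-measure}.
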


\begin{proof}
For $R$ large, the triple $(A,B,\calD_R)$ satisfies the
\hyperref[setup:domains]{bounded-domain setup}.  Hence
Theorem~\ref{thm:weak-measure} gives the bounded-domain weak equilibrium
measure $\eta_R$ and the cutoff formula for $\Cap_R$,
Theorem~\ref{thm:hitting} gives the bounded-domain hitting identity, and
Corollary~\ref{cor:capacity-positive} gives $\Cap_R>0$.
Fix a smooth compactly supported cutoff $\psi$ such that
$\psi=1$ in a neighborhood of $\overline A$ and $\psi=0$ in a neighborhood of
$\overline B$.  For all
large $R$, $\psi$ is admissible in $\calD_R$, and
\[
    \Cap_R
    =
    \int_{\calD_R}h_R^*(-\calL_\eps\psi)d\pi^\eps .
\]
Since $0\le h_R^*\le1$ and $\calL_\eps\psi$ is bounded with compact support, the
masses $\Cap_R=\eta_R(\partial A)$ are uniformly bounded.  The compactness of
$\partial A$ implies tightness.

Let $R_n\to\infty$ be any sequence along which
$\eta_{R_n}\Rightarrow\eta$ weakly.  If $\Phi$ is as in the statement, then
$\supp\Phi\subset\calD_R$ for all large $R$, so the bounded-domain
weak-measure identity implies that
\[
    \int_{\partial A}\varphi d\eta_R
    =
    \int_{\calD_R}h_R^*(-\calL_\eps\Phi)d\pi^\eps .
\]
By Lemma~\ref{lem:finite-time-exhaustion}, the right-hand side in the above
equation converges to
$\int_{\R^{3d}} h_\infty^*(-\calL_\eps\Phi)d\pi^\eps$,
while the left-hand side in the above equation converges
to $\int_{\partial A}\varphi d\eta$.  Thus every subsequential limit satisfies
\eqref{eq:exhaustion-weak-measure}.  In particular, the right-hand side of
\eqref{eq:exhaustion-weak-measure} is independent of the chosen compactly
supported extension $\Phi$, because it is the weak limit of
$\int_{\partial A}\varphi d\eta_R$.  This formula determines the measure
uniquely because $C^2(\partial A)$ is dense in $C(\partial A)$.  Hence the full
family converges weakly to a single measure, denoted by $\eta_\infty$.  Taking
$\varphi\equiv1$ shows that $\Cap_R\to\Cap_\infty$.

It remains to pass the hitting identity.  For fixed $M<\infty$, the truncated
identity \eqref{eq:truncated-identity-limit}, applied in the bounded domain
$\calD_R$, implies that
\[
    \int_{\partial A}g_{R,M}d\eta_R
    =
    \int_{\calD_R}h_R^*F_{R,M}d\pi^\eps .
\]
The left-hand side converges to
\[
    \int_{\partial A}g_{\infty,M}d\eta_\infty .
\]
Indeed,
$g_{R,M}\to g_{\infty,M}$ uniformly on $\partial A$ by
Lemma~\ref{lem:finite-time-exhaustion}, the masses $\eta_R(\partial A)$ are
uniformly bounded, $g_{\infty,M}$ is continuous on $\partial A$, and
$\eta_R\Rightarrow\eta_\infty$.  The right-hand side converges to
\[
    \int_{\R^{3d}}h_\infty^*(z)F_{\infty,M}(z)\,\pi^\eps(dz),
\]
by the last assertion of Lemma~\ref{lem:finite-time-exhaustion}.
Therefore
\[
    \int_{\partial A}\E_z[\tau_B\wedge M]d\eta_\infty
    =
    \int_{\R^{3d}}h_\infty^*
    \Pbb_z(\tau_B\le M)d\pi^\eps .
\]
Letting $M\to\infty$, monotone convergence on the left and
Proposition~\ref{prop:whole-space-recurrence} on the right yield
\eqref{eq:exhaustion-hitting}.  Finally, the right-hand side is strictly
positive because $h_\infty^*=1$ on $A$ by extension and
$\pi^\eps(A)>0$.  Hence $\eta_\infty(\partial A)=\Cap_\infty>0$.
\end{proof}

\begin{remark}[Comparison with Lee--Ramil--Seo]
The preceding proof follows the finite-time truncation strategy used in
\cite[Proof of Proposition~2.9]{LeeRamilSeo2026}.  The passage $R\to\infty$ is
performed after fixing the cutoff $M$, where Dini's theorem gives uniform
convergence on the compact entrance boundary.  The final $M\to\infty$ step is
then only monotone convergence, backed by recurrence.
\end{remark}

\section{Conclusion}\label{sec:conclusion}

We have developed a fixed-temperature weak equilibrium-measure and capacity
framework for the hypoelliptic third-order Langevin diffusion in phase-space
balls for every $d\ge1$, under the potential and controlled-coefficient
assumptions of Section~\ref{sec:main}.  The bounded-domain argument uses
chain-compatible localization together with uniform Pigato-type density and
covariance estimates.  The equilibrium measure is characterized directly by
the adjoint hitting committor (Theorem~\ref{thm:weak-measure}):
\[
    \int_{\partial A}\varphi d\eta_\eps
    =
    \int_{\calD}h^*(-\calL_\eps\Phi) d\pi^\eps.
\]
The auxiliary elliptic measures serve as fixed-$\delta$ approximating
objects: their classical Green and last-exit identities are passed to the
degenerate limit through hitting-law stability.  The main output is the
capacity--hitting identity (Theorem~\ref{thm:hitting}) and its normalized form
(Corollary~\ref{cor:capacity-positive}), where
$\nu_\eps=\eta_\eps/\Cap_\eps(A,B;\calD)$:
\[
    \int_{\partial A}
    \E_z[\tau_B\wedge\tau_{\partial\calD}] \nu_\eps(dz)
    =
    \frac{1}
    {\Cap_\eps(A,B;\calD)}\int_{\calD}h^* d\pi^\eps.
\]

Under the additional whole-space growth condition used for the Lyapunov
argument, the artificial outer boundary can be removed.
In that case, the bounded-domain weak equilibrium measures
$\eta_R$ converge to a
whole-space weak equilibrium measure $\eta_\infty$, and the bounded-domain
identity passes to (Proposition~\ref{prop:exhaustion}):
\[
    \int_{\partial A}\E_z[\tau_B]\eta_\infty(dz)
    =
    \int_{\R^{3d}}h_\infty^*d\pi^\eps .
\]

Regularization stability is obtained by combining pathwise coupling, immediate
crossing at non-characteristic hits, and a uniform small-$r$ boundary-entry
estimate.  The latter adapts the boundary strategy of
Lee--Ramil--Seo~\cite{LeeRamilSeo2026} to the third-order chain and uses
chain-compatible localization together with Pigato-type anisotropic density
estimates.


\appendix

\section{Uniform Perturbation Estimates for the Cutoff Covariance}
\label{app:uniform-perturbation}

This appendix records the uniform perturbation step used in
Lemma~\ref{lem:uniform-pigato-covariance}.  Fix a bounded smooth
$\mathsf O\subset\R^{3d}$, a compact set $K_0\subset\mathsf O$, and use the
block order
\[
    X^1=r,\qquad X^2=p,\qquad X^3=\theta .
\]
For the cutoff regularized process write
\[
    dX_t
    =
    B^{\delta,\mathsf O}(X_t)\,dt
    +\Sigma_r\,dW_t^r
    +\Sigma_p^\delta\,dW_t^p
    +\Sigma_\theta^\delta\,dW_t^\theta ,
\]
where
\[
    \Sigma_r=
    \begin{pmatrix}
        S_r\\ 0\\ 0
    \end{pmatrix},
    \qquad
    S_r=\sqrt{2\gamma\eps/L}\,I_d,
\]
and
\[
    \Sigma_p^\delta=
    \begin{pmatrix}
        0\\ \sqrt{2\eps\delta}\,I_d\\ 0
    \end{pmatrix},
    \qquad
    \Sigma_\theta^\delta=
    \begin{pmatrix}
        0\\ 0\\ \sqrt{2\eps\delta}\,I_d
    \end{pmatrix}.
\]
All these diffusion matrices are constant in space and
$|\Sigma_p^\delta|+|\Sigma_\theta^\delta|\le C$ for $0\le\delta\le1$.
Let
\[
    Y_t^\delta=\partial_zX_t^{\delta,\mathsf O},
    \qquad
    Z_t^\delta=(Y_t^\delta)^{-1}.
\]
Since the diffusion matrices are constant,
\begin{equation}\label{eq:appendix-inverse-flow}
    dZ_t^\delta
    =
    -Z_t^\delta DB^{\delta,\mathsf O}(X_t^{\delta,\mathsf O})\,dt .
\end{equation}

The short-time covariance expansion begins with the three frozen noise
directions generated by the chain.  The next lemma shows that their signed
chain matrix is uniformly invertible in both $\delta$ and the initial point.

\begin{lemma}[Uniform local chain non-degeneracy]
\label{lem:appendix-local-chain}
Let $K_0\subset\mathsf O_0$, where $K_0$ is compact and $\mathsf O_0$ is an
open set with compact closure satisfying
$\overline{\mathsf O_0}\subset\mathsf O$, and choose the cutoff maps in
Lemma~\ref{lem:chain-compatible-cutoffs} equal to the identity near
$\overline{\mathsf O_0}$.  For $z\in K_0$, set
\[
    \widetilde A^\delta(z)=A_3^\delta(z)
    =
    \operatorname{diag}\left(
        S_r,\,
        -J_{x^1}B_2^{\delta,\mathsf O}(z)S_r,\,
        J_{x^2}B_3^{\delta,\mathsf O}(z)
        J_{x^1}B_2^{\delta,\mathsf O}(z)S_r
    \right).
\]
For a square matrix $M$, write
$s_{\min}(M):=\inf_{|v|=1}|Mv|=\sqrt{\lambda_{\min}(MM^\top)}$ for its
smallest singular value.  Then
\begin{equation}\label{eq:appendix-chain-smin}
    \inf_{\substack{0\le\delta\le1\\z\in K_0}}
    s_{\min}\left(A_3^\delta(z)\right)>0,
\end{equation}
and equivalently,
\begin{equation}\label{eq:appendix-chain-gram}
    \inf_{\substack{0\le\delta\le1\\z\in K_0}}
    \lambda_{\min}\left(A_3^\delta(z)A_3^\delta(z)^\top\right)>0.
\end{equation}
That is,
\[
    \sup_{\substack{0\le\delta\le1\\z\in K_0}}
    \left(
        \left\|\widetilde A^\delta(z)\right\|
        +\left\|\left(\widetilde A^\delta(z)\right)^{-1}\right\|
    \right)<\infty .
\]
The same conclusion holds for the adjoint cutoff family.
\end{lemma}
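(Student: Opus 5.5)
The matrix $A_3^\delta(z)$ is block diagonal, so its singular values are the union of the singular values of its three $d\times d$ diagonal blocks. The plan is to compute these blocks explicitly on $K_0$ using the chain-link identities \eqref{eq:cutoff-chain-links} of Lemma~\ref{lem:chain-compatible-cutoffs}, and then read off the bounds.

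First, since the cutoff maps are the identity near $\overline{\mathsf O_0}\supset K_0$, the identities \eqref{eq:cutoff-chain-links} hold at every $z\in K_0$ and for every $0\le\delta\le1$:
\[
J_{x^1}B_2^{\delta,\mathsf O}(z)=\gamma I_d,\qquad J_{x^2}B_3^{\delta,\mathsf O}(z)=I_d .
\]
Neither link depends on $\delta$, because the regularizing drifts $-\delta Lx^2$ and $-\delta\nabla U(x^3)$ do not involve $x^1$ in $B_2$ or $x^2$ in $B_3$. With $S_r=\sigma I_d$ and $\sigma=\sqrt{2\gamma\eps/L}$, this gives
\[
A_3^\delta(z)=\operatorname{diag}\left(\sigma I_d,\,-\gamma\sigma I_d,\,\gamma\sigma I_d\right).
\]

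This matrix is independent of $\delta$ and $z$. Its singular values are therefore $\sigma$ and $\gamma\sigma$, so
\[
s_{\min}\left(A_3^\delta(z)\right)=\sigma\min\{1,\gamma\}>0 .
\]
This proves \eqref{eq:appendix-chain-smin}. The Gram bound \eqref{eq:appendix-chain-gram} then follows from $\lambda_{\min}(MM^\top)=s_{\min}(M)^2$. For the two-sided statement, note that $\|\widetilde A^\delta\|=\sigma\max\{1,\gamma\}$ and $\|(\widetilde A^\delta)^{-1}\|=1/s_{\min}(\widetilde A^\delta)$ are constants.

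For the adjoint cutoff family, \eqref{eq:adjoint-cutoff-agreement} gives the links $-\gamma I_d$ and $-I_d$ on $\mathsf O$. The blocks become $\sigma I_d$, $\gamma\sigma I_d$ and $\gamma\sigma I_d$, which differ from the forward blocks only in sign. The singular values are unchanged, and the same bounds hold.

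The only point needing care is that the Jacobians are evaluated in the region where the cutoff agrees with the true drift. The hypothesis $\overline{\mathsf O_0}\subset\mathsf O$, with the cutoff equal to the identity near $\overline{\mathsf O_0}$, guarantees this. No uniformity issue in $\delta$ arises, because the chain links are exactly $\delta$-independent.
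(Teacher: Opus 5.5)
Your proposal is correct and follows the paper's own proof. Both use the chain-link identities on $\mathsf O_0$ to reduce $A_3^\delta(z)$ to the $\delta$- and $z$-independent matrix $\operatorname{diag}(S_r,-\gamma S_r,\gamma S_r)$, and both note that the adjoint sign changes leave the singular values unchanged.
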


\begin{proof}
On $\mathsf O_0$ the cutoff maps are identity, so
$J_{x^1}B_2^{\delta,\mathsf O}=\gamma I_d$ and
$J_{x^2}B_3^{\delta,\mathsf O}=I_d$ there.  Hence, for $z\in K_0$,
$A_3^\delta(z)=\operatorname{diag}(S_r,-\gamma S_r,\gamma S_r)$, independent of
$\delta$.  Since $S_r=\sqrt{2\gamma\eps/L}\,I_d$ is invertible, the lower
bounds \eqref{eq:appendix-chain-smin} and
\eqref{eq:appendix-chain-gram} follow.
The adjoint drift changes only the corresponding signs,
which leaves the singular values unchanged.
\end{proof}

Finally, put
\[
    G(u)=
    \begin{pmatrix}
        I_d\\ uI_d\\ \frac{u^2}{2}I_d
    \end{pmatrix},
    \qquad 0\le u\le1.
\]
The sign in the middle block follows from the convention in the definition of
$A_3^\delta$.

The matrix $G$ is the deterministic leading profile of the normalized
inverse-flow expansion.  We next estimate the error produced by evaluating
the coefficients along the stochastic trajectory rather than at the initial
point.

\begin{lemma}[Uniform chain remainders]\label{lem:appendix-chain-remainder}
There exists $t_0>0$ such that, for every $k\ge0$ and $m\ge1$, there is a
constant $C_{k,m}$ satisfying, for all $0\le\delta\le1$, $z\in K_0$, and
$0<t\le t_0$,
\[
\begin{aligned}
    Z_s^\delta\Sigma_r
    &=
    \begin{pmatrix}
        S_r\\
        -sJ_{x^1}B_2^{\delta,\mathsf O}(z)S_r\\
        \frac{s^2}{2}
        J_{x^2}B_3^{\delta,\mathsf O}(z)
        J_{x^1}B_2^{\delta,\mathsf O}(z)S_r
    \end{pmatrix}
    +
    \begin{pmatrix}
        \overline R_s^{\,1}\\
        \overline R_s^{\,2}\\
        \overline R_s^{\,3}
    \end{pmatrix},
    \qquad 0\le s\le t,
\end{aligned}
\]
where, for $\ell=1,2,3$,
\[
    \sup_{\substack{0\le\delta\le1\\z\in K_0\\0<s\le t_0}}
    s^{-(\ell-1/2)}
    \left\|\overline R_s^{\,\ell}\right\|_{k,m}
    \le C_{k,m}.
\]
Equivalently,
\begin{equation}\label{eq:appendix-normalized-chain-expansion}
    T_t^{-1}\left(A_3^\delta(z)\right)^{-1}
    Z_{tu}^{\delta,z}\Sigma_r
    =
    t^{-1/2}\left(G(u)+\mathcal R_{t,u}^\delta(z)\right),
    \qquad 0\le u\le1,
\end{equation}
where the $\ell$-th block satisfies
\begin{equation}\label{eq:appendix-normalized-remainder-bound}
    \left\|\mathcal R_{t,u}^{\delta,\ell}(z)\right\|_{k,m}
    \le C_{k,m}t^{1/2}u^{\ell-1/2},
    \qquad 0\le u\le1,\quad \ell=1,2,3.
\end{equation}
Moreover, for every $m\ge1$,
\[
    \sup_{\substack{0\le\delta\le1\\z\in K_0\\0<t\le t_0}}
    \mathbb E_z\left[\left\|T_t^{-1}Z_t^\delta T_t\right\|^m\right]<\infty.
\]
\end{lemma}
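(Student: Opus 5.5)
We expand the inverse flow $Z_s^\delta\Sigma_r$ by iterating the linear equation \eqref{eq:appendix-inverse-flow}. In block form $DB^{\delta,\mathsf O}$ is lower-triangular in the chain sense: $B_3$ depends only on $(x^2,x^3)$, and $B_2$ has $x^1$-Jacobian $\gamma I_d$ on $\mathsf O_0$. Set $W_s:=Z_s^\delta\Sigma_r$, with blocks $W_s=(W_s^1,W_s^2,W_s^3)$ in the order of the rows of $Z$ multiplying $\Sigma_r$. Then $W_0=\Sigma_r$ and $dW_s=-W_s\,\cdot$ (appropriate transposed action) $DB(X_s)\,ds$.

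The key structural fact is that $W^\ell$ is forced at leading order only through the chain links. Concretely:
\[
dW_s^1=\mathcal O(|W_s|)\,ds,\qquad dW_s^2=-J_{x^1}B_2(X_s)W_s^1\,ds+\mathcal O(|W_s^2|+|W_s^3|)\,ds,
\]
\[
dW_s^3=J_{x^2}B_3(X_s)W_s^2\,ds+\mathcal O(|W_s^3|)\,ds.
\]
The precise sign bookkeeping follows the convention fixed in $A_3^\delta$. Here we use that $B_3$ is independent of $x^1$, so no $W^1$ term enters the third block directly.

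First step: a crude bound. Because all derivatives of $B^{\delta,\mathsf O}$ are bounded uniformly in $\delta$ (Lemma~\ref{lem:chain-compatible-cutoffs}), Gr\"onwall gives $\sup_{s\le t_0}\|W_s\|$ bounded, deterministically and uniformly in $\delta,z$. The same holds for the inverse flow, since $Z$ solves a random linear ODE with bounded coefficients.

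Second step: induction over blocks. We freeze the coefficients at $z$ and write $J(X_s)=J(z)+(J(X_s)-J(z))$. Note that $J_{x^1}B_2$ and $J_{x^2}B_3$ are constant on $\mathsf O_0$. Hence, before the exit time from $\mathsf O_0$, the frozen-coefficient errors vanish identically. The errors then come only from the lower-order couplings, for example $\nabla^2U$ acting in the $(2,3)$ entry.
\begin{itemize}
\item Block 1: $W^1_s-S_r=\mathcal O(s)$, which is consistent with $s^{1/2}$.
\item Block 2: $W^2_s=-sJ_{x^1}B_2S_r+\mathcal O(s^2)$.
\item Block 3: $W^3_s=\tfrac{s^2}{2}J_{x^2}B_3J_{x^1}B_2S_r+\mathcal O(s^3)$.
\end{itemize}
These are better than the claimed orders $s^{\ell-1/2}$. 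The half-power loss is reserved for the event of leaving $\mathsf O_0$ and for Malliavin derivatives.

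To handle exit from $\mathsf O_0$, we bound the remainder on $\{\tau_{\mathsf O_0^c}\le s\}$ by a constant times an indicator. Since $\dist(K_0,\mathsf O_0^c)>0$ and the diffusion coefficients are bounded, $\Pbb_z(\tau_{\mathsf O_0^c}\le s)\le C_N s^N$ for every $N$, uniformly in $\delta$. This gives all $L^m$ bounds with room to spare.

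For the $\|\cdot\|_{k,m}$ norms, we Malliavin-differentiate the linear equation for $W$. Each $D^j W_s$ solves the same linear equation, with source terms involving $D^iX$ and bounded derivatives of $B$. Since $\|D_rX_s\|\le C$ for $r\le s$ and the derivative kernel is supported on $[0,s]$, each Malliavin order contributes factors controlled in $L^m(\Omega;L^2[0,s]^{\otimes j})$ by powers of $s^{1/2}$. This keeps the orders $s^{\ell-1/2}$ (up to the $t^{1/2}$ gained from $[0,s]$ integration). The exit indicator is not smooth, so we replace it by a smooth functional of $\sup_{u\le s}|X_u-z|^2$ (or of an integrated version). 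This step, making the localization compatible with $\mathbb D^{k,m}$ uniformly in $\delta$, is the main technical obstacle. We would try a smooth cutoff $\phi(\int_0^s|X_u-z|^{2p}du/s)$ in the style of Pigato.

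Normalization: multiply by $(A_3^\delta)^{-1}$, which is uniformly bounded by Lemma~\ref{lem:appendix-local-chain}, and then by $T_t^{-1}$ with $s=tu$. The block-$\ell$ leading term $(tu)^{\ell-1}/(\ell-1)!$ divided by $t^{\ell-1/2}$ gives $t^{-1/2}G(u)_\ell$. The remainder $C(tu)^{\ell-1/2}\,t^{1/2}$-improved bound divided by $t^{\ell-1/2}$ gives \eqref{eq:appendix-normalized-remainder-bound}.

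Finally, for $T_t^{-1}Z_tT_t$ we apply the same triangular expansion to all of $Z_t$ rather than only its $\Sigma_r$ columns. The $(i,j)$ block with $i>j$ carries $t^{i-j}$, the upper blocks carry at least $t^{0}$, and the scaling factor is $t^{j-i}$. So every entry of $T_t^{-1}Z_tT_t$ is bounded, and the bound is uniform because all the Gr\"onwall constants are.
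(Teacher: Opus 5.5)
Your zeroth-order argument is sound and differs from the paper's. Freezing at $z$, using that $J_{x^1}B_2^{\delta,\mathsf O}$ and $J_{x^2}B_3^{\delta,\mathsf O}$ are constant before exit from $\mathsf O_0$, and paying $C_Ns^N$ for the exit event gives $L^m$ remainders of order $s^{\ell}$. Your bound on $T_t^{-1}Z_t^\delta T_t$ is also correct, even pathwise.

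\textbf{The block system is wrong.} The column $W_s=Z_s^\delta\Sigma_r$ does not solve a closed linear equation: $d(Z_s^\delta\Sigma_r)=-Z_s^\delta\,DB^{\delta,\mathsf O}(X_s)\,\Sigma_r\,ds$ involves the second block column of $Z_s^\delta$. Your equations ($W^2$ driven by $-J_{x^1}B_2W^1$, $W^3$ by $+J_{x^2}B_3W^2$) are the forward-flow structure of $Y_s\Sigma_r$ with one sign changed. Integrated honestly, they give $-\tfrac{s^2}{2}A_{32}A_{21}S_r$ in the third block, where $A_{ij}:=J_{x^j}B_i^{\delta,\mathsf O}$. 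Deferring to ``the sign convention in $A_3^\delta$'' cannot repair an error in the dynamics.

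The correct mechanism uses the full matrix. The block rows of $Z^\delta$ obey $dZ_{i\cdot}=-Z_{i\cdot}\,DB\,ds$, and the only structural zero needed is $A_{31}\equiv0$. One first gets $Z_{21}=-sA_{21}(z)+\dots$ and $Z_{32}=-sA_{32}(z)+\dots$. Then $dZ_{31}=-(Z_{31}A_{11}+Z_{32}A_{21})\,ds$ produces $+\tfrac{s^2}{2}A_{32}(z)A_{21}(z)$. The second chain link passes through $Z_{32}$, which is not a block of $Z\Sigma_r$. For the same reason, your claim that each $D^jW_s$ ``solves the same linear equation'' must be replaced by Malliavin differentiation of the full block system for $Z^\delta$.

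\textbf{The Malliavin part is not proved.} The $\|\cdot\|_{k,m}$ bounds for $k\ge1$, which the lemma asserts for every $k$, are never established. You tie the estimate to an exit indicator, call its smoothing the main obstacle, and stop at a suggestion. That obstacle is self-inflicted. The norm $\|\cdot\|_{k,m}$ is the $L^m$ norm plus the $L^m$ norms of the $L^2([0,s]^j)$-norms of $D^j$ applied to the remainder, $j\le k$. So a localization is needed at most in the zeroth-order term, where nothing is differentiated.

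The derivative terms need no localization. The noise is additive, so $D_vX_u=Y_u^\delta Z_v^\delta\Sigma^\delta$, and all higher Malliavin derivatives of $X$, $Y^\delta$, $Z^\delta$ solve random linear ODEs with globally bounded coefficients. Since $D_vZ_u^\delta$ vanishes at $u=v$, one gets pathwise $|D_vZ_{21}(s)|\le C(s-v)$ and $|D_vZ_{31}(s)|\le Cs(s-v)$. These give $L^2[0,s]$ norms $Cs^{3/2}$ and $Cs^{5/2}$, and higher orders only improve.

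The paper avoids localization altogether. The cutoff derivatives are globally bounded and $\sup_{u\le s}\|X_u-z\|_{k,m}\le Cs^{1/2}$. Hence $A_{21}(X_u)-A_{21}(z)$ and $A_{32}(X_u)-A_{32}(z)$ are $\mathcal O_{k,m}(u^{1/2})$. Inserting this into the block equations for $Z_{21}$, $Z_{32}$, $Z_{31}$ gives the orders $s^{\ell-1/2}$ in every $\|\cdot\|_{k,m}$ at once. This is exactly where the half-power loss in the statement comes from.

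Finally, no extra $t^{1/2}$ gain is needed in the normalization. Since $\mathcal R^{\delta,\ell}_{t,u}=t^{-(\ell-1)}(\widetilde A^\delta_{\ell\ell})^{-1}\overline R^{\,\ell}_{tu}$, the bound $\|\overline R^{\,\ell}_s\|_{k,m}\le Cs^{\ell-1/2}$ at $s=tu$ already yields $Ct^{1/2}u^{\ell-1/2}$.
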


\begin{proof}
The cutoff construction gives bounded derivatives of all orders, uniformly for
$0\le\delta\le1$.  Let $\mathcal A^{\delta,\mathsf O}$ denote the generator of
the cutoff regularized process.  For every smooth cutoff coefficient $F$,
It\^o's formula along the full regularized trajectory implies that
\begin{align}
    dF(X_t^{\delta,\mathsf O})
    ={}&
    \mathcal A^{\delta,\mathsf O}F(X_t^{\delta,\mathsf O})\,dt
    +DF(X_t^{\delta,\mathsf O})\Sigma_r\,dW_t^r \notag\\
    &+DF(X_t^{\delta,\mathsf O})\Sigma_p^\delta\,dW_t^p
    +DF(X_t^{\delta,\mathsf O})\Sigma_\theta^\delta\,dW_t^\theta .
    \label{eq:appendix-ito-coefficient}
\end{align}
The two additional martingales have uniformly bounded integrands.  
Hence, Burkholder-Davis-Gundy inequality
implies that, for every $m\ge1$,
\[
    \left\|
        \sup_{s\le t}\left|
        \int_0^sDF(X_u^{\delta,\mathsf O})\Sigma_p^\delta\,dW_u^p
        \right|
    \right\|_{L^m}
    \le C_m t^{1/2},
\]
and the same estimate holds with $\Sigma_\theta^\delta dW^\theta$ in place of
$\Sigma_p^\delta dW^p$.  To make the Malliavin bounds explicit, let
$\alpha\in\{r,p,\theta\}$ denote a noise block.  Since the noise is additive,
for $0\le u\le t$,
\begin{equation}\label{eq:appendix-first-malliavin-derivative}
    D_u^\alpha X_t^{\delta,\mathsf O}
    =Y_t^\delta Z_u^\delta\Sigma_\alpha^\delta,
    \qquad
    D_u^\alpha X_t^{\delta,\mathsf O}=0\quad(u>t),
\end{equation}
where $\Sigma_r^\delta:=\Sigma_r$.  The boundedness of
$DB^{\delta,\mathsf O}$ and the variational equations for $Y^\delta,Z^\delta$
therefore imply, for all $m\ge1$ and $T<\infty$,
\begin{equation}\label{eq:appendix-first-malliavin-bound}
    \sup_{\substack{0\le\delta\le1,\ z\in K_0\\
                    0\le u\le t\le T}}
    \left\|D_u^\alpha X_t^{\delta,\mathsf O}\right\|_{L^m}
    \le C_{m,T}.
\end{equation}
For $j\ge2$, differentiating once more gives a linear variation-of-constants
formula of the form
\begin{equation}\label{eq:appendix-higher-malliavin-recursion}
\begin{aligned}
    D_{\boldsymbol u}^{\boldsymbol\alpha,j}X_t^{\delta,\mathsf O}
    =\int_{u_*}^tY_t^\delta Z_s^\delta
      \sum_{\substack{\mathcal P\text{ a partition of }\{1,\ldots,j\}\\
                      |\mathcal P|\ge2}}
      D^{|\mathcal P|}B^{\delta,\mathsf O}(X_s^{\delta,\mathsf O})
      \left[
        D_{\boldsymbol u_I}^{\boldsymbol\alpha_I,|I|}
        X_s^{\delta,\mathsf O}:I\in\mathcal P
      \right]ds,
\end{aligned}
\end{equation}
with $u_*:=\max_i u_i$; terms with $u_*>t$ vanish.  All derivatives of the
cutoff drift are bounded uniformly in $\delta$.  Induction in $j$, using
H\"older's inequality and Gr\"onwall's lemma in
\eqref{eq:appendix-higher-malliavin-recursion}, yields
\[
    \sup_{0\le\delta\le1,\ z\in K_0}
    \sup_{0\le u_1,\ldots,u_j\le t\le T}
    \left\|D_{\boldsymbol u}^{\boldsymbol\alpha,j}
    X_t^{\delta,\mathsf O}\right\|_{L^m}
    \le C_{j,m,T}.
\]
Differentiating the variational equations
\[
    dY_t^\delta
    =DB^{\delta,\mathsf O}(X_t^{\delta,\mathsf O})Y_t^\delta\,dt,
    \qquad
    dZ_t^\delta
    =-Z_t^\delta DB^{\delta,\mathsf O}(X_t^{\delta,\mathsf O})\,dt
\]
with respect to the noise variables gives linear equations whose forcing terms
contain only bounded derivatives of $B^{\delta,\mathsf O}$ and lower-order
Malliavin derivatives of $X^\delta,Y^\delta,Z^\delta$.  The same induction
therefore gives, for every $k\ge0$, $m\ge1$, and $T<\infty$,
\[
    \sup_{\substack{0\le\delta\le1,\ z\in K_0\\0\le t\le T}}
    \left(
        \|Y_t^\delta\|_{k,m}+\|Z_t^\delta\|_{k,m}
    \right)<\infty .
\]
The Malliavin chain rule (equivalently, the Fa\`a di Bruno formula) transfers
these bounds to $F(X_t^{\delta,\mathsf O})$; Burkholder--Davis--Gundy inequality then
gives the same uniform bounds for the stochastic integrals in
\eqref{eq:appendix-ito-coefficient}.  Thus the coefficients frozen at
$z\in K_0$ give the leading chain directions, while the coefficients sampled
along the full trajectory $X_s^{\delta,\mathsf O}$ are estimated as remainders.
The globally bounded cutoff derivatives provide uniform local-to-global bounds.

We next write the three-layer expansion directly from the matrix blocks of the
inverse flow.  Put
\[
    A_{ij}^\delta(x)=J_{x^j}B_i^{\delta,\mathsf O}(x),
    \qquad 1\le i,j\le3,
\]
and write $Z_s^\delta=((Z_s^\delta)_{ij})_{1\le i,j\le3}$ in $d\times d$
blocks.  The chain-compatible cutoff preserves the triangular links
$A_{21}^\delta=J_{x^1}B_2^{\delta,\mathsf O}$,
$A_{32}^\delta=J_{x^2}B_3^{\delta,\mathsf O}$, and
$A_{31}^\delta=0$.  We use the notation
$E_s=\mathcal O_{k,m}(a_s)$ when
$\sup_{0\le u\le s}\|E_u\|_{k,m}\le C_{k,m}a_s$, uniformly in
$0\le\delta\le1$ and $z\in K_0$.

The diagonal blocks satisfy
\[
    (Z_s^\delta)_{ii}=I_d+\mathcal O_{k,m}(s),\qquad i=1,2,3,
\]
and the lower blocks satisfy the equations
\begin{subequations}\label{eq:appendix-lower-block-system}
\begin{align}
    \frac{d}{ds}(Z_s^\delta)_{21}
    &=
    -(Z_s^\delta)_{21}A_{11}^\delta(X_s^\delta)
    -(Z_s^\delta)_{22}A_{21}^\delta(X_s^\delta)
    -(Z_s^\delta)_{23}A_{31}^\delta(X_s^\delta),
    \label{eq:appendix-block-21}\\
    \frac{d}{ds}(Z_s^\delta)_{32}
    &=
    -(Z_s^\delta)_{31}A_{12}^\delta(X_s^\delta)
    -(Z_s^\delta)_{32}A_{22}^\delta(X_s^\delta)
    -(Z_s^\delta)_{33}A_{32}^\delta(X_s^\delta),
    \label{eq:appendix-block-32}\\
    \frac{d}{ds}(Z_s^\delta)_{31}
    &=
    -(Z_s^\delta)_{31}A_{11}^\delta(X_s^\delta)
    -(Z_s^\delta)_{32}A_{21}^\delta(X_s^\delta)
    -(Z_s^\delta)_{33}A_{31}^\delta(X_s^\delta).
    \label{eq:appendix-block-31}
\end{align}
\end{subequations}
Since $A_{31}^\delta=0$, bounded cutoff derivatives and Gr\"onwall's lemma first give
$(Z_s^\delta)_{21}=\mathcal O_{k,m}(s)$ and
$(Z_s^\delta)_{32}=\mathcal O_{k,m}(s)$, and then
$(Z_s^\delta)_{31}=\mathcal O_{k,m}(s^2)$.  Using also
\[
    \sup_{0\le u\le s}\|X_u^\delta-z\|_{k,m}
    \le C_{k,m}s^{1/2},
    \qquad
    \left\|\sup_{0\le u\le s}|X_u^\delta-z|\right\|_{L^m}
    \le C_m s^{1/2},
\]
the first two lower links improve to
\begin{align*}
    (Z_s^\delta)_{21}
    &=
    -sA_{21}^\delta(z)+\mathcal O_{k,m}(s^{3/2}),\\
    (Z_s^\delta)_{32}
    &=
    -sA_{32}^\delta(z)+\mathcal O_{k,m}(s^{3/2}).
\end{align*}
Indeed, after subtracting the frozen leading term in
\eqref{eq:appendix-block-21}, the difference from
$-A_{21}^\delta(z)$ is the sum of
$((Z_s^\delta)_{22}-I_d)A_{21}^\delta(X_s^\delta)$,
$(A_{21}^\delta(X_s^\delta)-A_{21}^\delta(z))$, and
$(Z_s^\delta)_{21}A_{11}^\delta(X_s^\delta)$, all of order
$\mathcal O_{k,m}(s^{1/2})$ or better before integration.  The argument for
\eqref{eq:appendix-block-32} is the same, with the additional term
$(Z_s^\delta)_{31}A_{12}^\delta(X_s^\delta)=\mathcal O_{k,m}(s^2)$.

Finally, integrating \eqref{eq:appendix-block-31} gives the second-order link
with the correct matrix order:
\[
\begin{aligned}
    (Z_s^\delta)_{31}S_r
    &=
    -\int_0^s
        (Z_u^\delta)_{32}A_{21}^\delta(X_u^\delta)S_r\,du
      -\int_0^s
        (Z_u^\delta)_{31}A_{11}^\delta(X_u^\delta)S_r\,du\\
    &=
    \int_0^s
        uA_{32}^\delta(z)A_{21}^\delta(z)S_r\,du
      +\mathcal O_{k,m}(s^{5/2})\\
    &=
    \frac{s^2}{2}A_{32}^\delta(z)A_{21}^\delta(z)S_r
      +\mathcal O_{k,m}(s^{5/2}).
\end{aligned}
\]
The second integral is $\mathcal O_{k,m}(s^3)$, while the error in replacing
$(Z_u^\delta)_{32}$ by $-uA_{32}^\delta(z)$ and
$A_{21}^\delta(X_u^\delta)$ by $A_{21}^\delta(z)$ integrates to
$\mathcal O_{k,m}(s^{5/2})$.

Since 
\[
Z_s^\delta\Sigma_r=\left((Z_s^\delta)_{11}S_r,
(Z_s^\delta)_{21}S_r,(Z_s^\delta)_{31}S_r\right)^\top, 
\]
we have, uniformly over
$0\le s\le t\le t_0$,
\[
\begin{aligned}
    R_s
    &=S_r+\overline R_s^{\,1},\\
    P_s
    &=-sJ_{x^1}B_2^{\delta,\mathsf O}(z)S_r
        +\overline R_s^{\,2},\\
    \Theta_s
    &=\frac{s^2}{2}
        J_{x^2}B_3^{\delta,\mathsf O}(z)
        J_{x^1}B_2^{\delta,\mathsf O}(z)S_r
        +\overline R_s^{\,3},
\end{aligned}
\]
where
\[
    \sup_{0\le s\le t}\left\|\overline R_s^{\,j}\right\|_{k,m}
    \le C_{k,m}t^{j-1/2},
    \qquad j=1,2,3.
\]
For $j=1$ this bound deliberately records the weaker half-integer scale
$t^{1/2}$, although the block equation gives the stronger
$\mathcal O(t)$ estimate.
The weaker statement is convenient after normalization and matches the scale of
the coefficient fluctuations caused by the full regularized trajectory.  For
the second and third blocks, the estimates $t^{3/2}$ and $t^{5/2}$ enter through
the integrated covariance rather than through a pointwise higher-order-error
comparison with the raw principal terms.  After the anisotropic normalization,
the principal part is separated from these coefficient-fluctuation terms.
Lemma~\ref{lem:appendix-pigato-stopping}
performs the additional integration in the trajectory-time variable $s$ over
$0\le s\le t\xi$ and controls the normalized remainder-covariance matrix
\[
    D_\xi^{-1}
    \left(
        \int_0^{t\xi}
        \widetilde R_{t,s}^\delta
        \left(\widetilde R_{t,s}^\delta\right)^\top\,ds
    \right)
    D_\xi^{-1},
    \qquad 0<\xi\le1.
\]
This makes the comparison quantitative.  The principal terms above are
generated only by
\[
    \Sigma_r,\qquad
    J_{x^1}B_2^{\delta,\mathsf O}\Sigma_r,\qquad
    J_{x^2}B_3^{\delta,\mathsf O}
    J_{x^1}B_2^{\delta,\mathsf O}\Sigma_r .
\]
The constant $p$- and $\theta$-diffusion fields are spatially constant, so their
diffusion-derivative commutators vanish in the Jacobian equation.
Their contributions enter the uniformly controlled martingale and drift
remainders in the It\^o expansion \eqref{eq:appendix-ito-coefficient}.
Multiplication by $(A_3^\delta(z))^{-1}$ and by the diagonal weights in $T_t$
turns these three estimates into
\eqref{eq:appendix-normalized-remainder-bound}.

The final estimate follows from \eqref{eq:appendix-inverse-flow}.  Uniform
Gr\"onwall bounds give all block moments of $Z_t^\delta$.  The triangular
dependence implies the refined lower-block estimates
\[
    \|(Z_t^\delta)_{21}\|_{L^m}\le C_mt,\qquad
    \|(Z_t^\delta)_{32}\|_{L^m}\le C_mt,\qquad
    \|(Z_t^\delta)_{31}\|_{L^m}\le C_mt^2,
\]
while diagonal and upper blocks are uniformly bounded.  The weights in
	$T_t^{-1}Z_t^\delta T_t$ exactly compensate these lower-block powers.
	\end{proof}

The estimates above are unconditional.  In particular, they include
trajectories that leave $\mathsf O_0$ before time $t$.  The quantitative chain
lower bound is used only in the frozen matrices
$A_{21}^\delta(z)$ and $A_{32}^\delta(z)$ at the initial point
$z\in K_0$.  After cutoff, all coefficients and their derivatives are globally
bounded uniformly in $\delta$, so rapid displacement from $z$, including rapid
exit from $\mathsf O_0$, is already controlled by the Malliavin--Sobolev
remainder estimates and by the remainder-covariance stopping time in
Lemma~\ref{lem:appendix-pigato-stopping}.

The preceding lemma gives pointwise-in-time remainder estimates.  Since the
Malliavin covariance involves their time integrals, we next derive the
corresponding blockwise covariance-moment bounds.

\begin{lemma}[Normalized remainder covariance moments]
\label{lem:appendix-remainder-covariance}
For fixed $0<t\le t_0$ and $0\le s\le t$, write
\[
    \widetilde R_{t,s}^{\delta,\ell}
    =
    t^{-\ell+1/2}
    \left(\widetilde A_{\ell\ell}^\delta(z)\right)^{-1}
    \overline R_s^{\,\ell},
    \qquad \ell=1,2,3,
\]
where $\widetilde A_{\ell\ell}^\delta$ denotes the $\ell$-th diagonal block of
$\widetilde A^\delta$.  Then
\[
    T_t^{-1}\left(\widetilde A^\delta(z)\right)^{-1}Z_s^\delta\Sigma_r
    =
    t^{-1/2}G(s/t)+\widetilde R_{t,s}^\delta .
\]
Moreover, for $1\le \ell,j\le3$, $p\ge2$, and $0<\tau\le t\le t_0$,
\[
    \mathbb E_z\left[
        \left\|
        \int_0^\tau
        \widetilde R_{t,s}^{\delta,\ell}
        \left(\widetilde R_{t,s}^{\delta,j}\right)^\top ds
        \right\|_{F}^p
    \right]
    \le
    C_p\frac{\tau^{p(\ell+j)}}{t^{p(\ell+j-1)}} .
\]
\end{lemma}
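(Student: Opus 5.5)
The identity splits into two parts: an algebraic rewriting of the normalized flow, and a moment bound obtained from the pointwise remainder estimates of Lemma~\ref{lem:appendix-chain-remainder} by Minkowski and H\"older inequalities.

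\textbf{Step 1: the algebraic identity.} Lemma~\ref{lem:appendix-local-chain} shows that $\widetilde A^\delta(z)=A_3^\delta(z)$ is block diagonal and invertible. Likewise $T_t$ is block diagonal, with $\ell$-th block $t^{\ell-1/2}I_d$. Hence $T_t^{-1}(\widetilde A^\delta)^{-1}$ acts blockwise as $t^{-\ell+1/2}(\widetilde A_{\ell\ell}^\delta)^{-1}$. Apply this to the expansion of $Z_s^\delta\Sigma_r$ in Lemma~\ref{lem:appendix-chain-remainder}.
\begin{itemize}
\item The principal blocks are $S_r$, $-sJ_{x^1}B_2S_r$ and $\tfrac{s^2}{2}J_{x^2}B_3J_{x^1}B_2S_r$. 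Multiplied by the inverse diagonal blocks of $\widetilde A^\delta$, they give $I_d$, $sI_d$ and $\tfrac{s^2}{2}I_d$. The sign convention in $A_3^\delta$ is chosen exactly so that the middle block comes out as $+s$.
\item Multiplying by $t^{-\ell+1/2}$ then gives $t^{-1/2}(s/t)^{\ell-1}/(\ell-1)!$ in block $\ell$. This is precisely $t^{-1/2}G(s/t)$.
\item The remainder blocks become $t^{-\ell+1/2}(\widetilde A_{\ell\ell}^\delta)^{-1}\overline R_s^{\,\ell}=\widetilde R_{t,s}^{\delta,\ell}$ by definition.
\end{itemize}
This proves the displayed decomposition.

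\textbf{Step 2: the moment bound.} We only need the $L^m$ (that is, $k=0$) part of Lemma~\ref{lem:appendix-chain-remainder}:
\[
\|\overline R_s^{\,\ell}\|_{L^m}\le C_m s^{\ell-1/2}\quad\text{uniformly in }\delta,z,\ 0<s\le t_0.
\]
The block-diagonal bound of Lemma~\ref{lem:appendix-local-chain} gives $\sup\|(\widetilde A^\delta_{\ell\ell})^{-1}\|<\infty$. Therefore
\[
\|\widetilde R_{t,s}^{\delta,\ell}\|_{L^m}\le C_m t^{-\ell+1/2}s^{\ell-1/2}.
\]
Now estimate the integral.
\begin{itemize}
\item By Minkowski's integral inequality in $L^p$,
\[
\Big\|\big\|\int_0^\tau \widetilde R^{\ell}(\widetilde R^{j})^\top ds\big\|_F\Big\|_{L^p}\le\int_0^\tau \big\|\|\widetilde R^\ell\|_F\|\widetilde R^j\|_F\big\|_{L^p}ds.
\]
\item By Cauchy--Schwarz the integrand is at most $\|\widetilde R^\ell\|_{L^{2p}}\|\widetilde R^j\|_{L^{2p}}$. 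This is bounded by $C_p t^{-(\ell+j)+1}s^{\ell+j-1}$.
\item Integrating over $[0,\tau]$ gives $C_p\tau^{\ell+j}/t^{\ell+j-1}$.
\item Raising to the power $p$ gives the claimed bound $C_p\tau^{p(\ell+j)}/t^{p(\ell+j-1)}$.
\end{itemize}
All constants are uniform in $0\le\delta\le1$ and $z\in K_0$ because those of Lemma~\ref{lem:appendix-chain-remainder} are. The adjoint case differs only by signs in $\widetilde A^\delta$, which do not affect norms.

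\textbf{Main obstacle.} Step 2 is routine. The only point needing care is the sign and factorial bookkeeping in Step 1. There one must check that the middle block of $(\widetilde A^\delta)^{-1}$ exactly cancels the $-J_{x^1}B_2S_r$ factor, and that the third block cancels the product $J_{x^2}B_3J_{x^1}B_2S_r$ in the correct matrix order. On $\mathsf O_0$ these matrices are the constants $\gamma S_r$ (up to sign) by Lemma~\ref{lem:appendix-local-chain}, so they commute, and the check is immediate.
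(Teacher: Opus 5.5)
Your proposal is correct and follows essentially the same route as the paper: the identity comes from multiplying the block expansion of Lemma~\ref{lem:appendix-chain-remainder} blockwise by $T_t^{-1}(\widetilde A^\delta(z))^{-1}$, and the moment bound comes from the $L^{2p}$ remainder bounds combined with Minkowski's integral inequality and H\"older/Cauchy--Schwarz. Your closing remark about commutativity is not needed: $\widetilde A_{33}^\delta$ is the same ordered product $J_{x^2}B_3J_{x^1}B_2S_r$ that appears in the principal term, so the cancellation holds without it.
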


\begin{proof}
The identity is just the block expansion in
Lemma~\ref{lem:appendix-chain-remainder} after multiplication by the signed
chain matrix and by $T_t^{-1}$.  By the same lemma and
Lemma~\ref{lem:appendix-local-chain},
\[
    \left\|\widetilde R_{t,s}^{\delta,\ell}\right\|_{L^{2p}}
    \le C_p t^{-\ell+1/2}s^{\ell-1/2}.
\]
H\"older's inequality implies that
\[
    \mathbb E_z\left[
        \left\|
        \int_0^\tau
        \widetilde R_{t,s}^{\delta,\ell}
        \left(\widetilde R_{t,s}^{\delta,j}\right)^\top ds
        \right\|_{F}^p
    \right]
    \le
    \left(
        \int_0^\tau
        \left\|\widetilde R_{t,s}^{\delta,\ell}
        \widetilde R_{t,s}^{\delta,j}\right\|_{L^p}\,ds
    \right)^p  \le
    C_p\left(
        t^{-\ell-j+1}\int_0^\tau s^{\ell+j-1}\,ds
    \right)^p,
\]
which is the stated estimate.
\end{proof}

These moment bounds allow us to stop the comparison before the normalized
remainder covariance becomes comparable with the deterministic Gram matrix.
The next lemma shows that such an early stopping event has probability of
arbitrarily high polynomial order.

\begin{lemma}[Pigato stopping time for the remainder covariance]
\label{lem:appendix-pigato-stopping}
Let
\[
    Q=\int_0^1G(u)G(u)^\top\,du,
    \qquad q_0=\lambda_{\min}(Q)>0.
\]
For $0<\xi\le1$ define the block matrix process
\[
    \rho_s^{\delta,t,\xi}
    =
    \left(
        \xi^{-(\ell+j-1)}
        \int_0^s
        \widetilde R_{t,u}^{\delta,\ell}
        \left(\widetilde R_{t,u}^{\delta,j}\right)^\top du
    \right)_{1\le \ell,j\le3}
\]
and the stopping time
\[
    S_\xi
    =
    \inf\left\{0\le s\le t:\
        \left\|\rho_s^{\delta,t,\xi}\right\|_{\mathrm{op}}
        \ge q_0/4\right\}\wedge t ,
\]
with the convention $\inf\varnothing=\infty$.
For every $N\ge1$, after decreasing $t_0$ if necessary,
\[
    \sup_{0\le\delta\le1,\ z\in K_0,\ 0<t\le t_0}
    \mathbb P_z(S_\xi<t\xi)
    \le C_N\xi^N,
    \qquad 0<\xi\le1 .
\]
\end{lemma}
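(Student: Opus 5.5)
The plan is a Markov inequality at a high moment, after reducing the event $\{S_\xi<t\xi\}$ to a bound on the size of $\rho^{\delta,t,\xi}$ at time $t\xi$.

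\emph{Step 1 (reduction to time $t\xi$).} Each diagonal block $\int_0^s \widetilde R^{\delta,\ell}_{t,u}(\widetilde R^{\delta,\ell}_{t,u})^\top du$ is nondecreasing in $s$ in the positive semidefinite order. Moreover, $\rho_s^{\delta,t,\xi}=D_\xi^{-1}\bigl(\int_0^s \widetilde R\widetilde R^\top du\bigr)D_\xi^{-1}$ is itself positive semidefinite, with $D_\xi=\operatorname{diag}(\xi^{1/2}I_d,\xi^{3/2}I_d,\xi^{5/2}I_d)$; this is consistent with the exponents, since $\xi^{-(\ell+j-1)}=\xi^{-(\ell-1/2)}\xi^{-(j-1/2)}$. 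For a positive semidefinite matrix the operator norm equals the largest eigenvalue, and the integrand $\widetilde R\widetilde R^\top$ is positive semidefinite, so $s\mapsto\|\rho_s\|_{\mathrm{op}}$ is nondecreasing. Hence
\[
\{S_\xi<t\xi\}\subset\{\|\rho^{\delta,t,\xi}_{t\xi}\|_{\mathrm{op}}\ge q_0/4\},
\]
and it suffices to bound the probability of the right-hand event.

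\emph{Step 2 (blockwise moments).} Bound $\|\rho_{t\xi}\|_{\mathrm{op}}\le\sum_{\ell,j}\|\rho^{\ell j}_{t\xi}\|_F$. Lemma~\ref{lem:appendix-remainder-covariance} with $\tau=t\xi$ gives
\[
\mathbb E_z\|\rho^{\ell j}_{t\xi}\|_F^p\le C_p\,\xi^{-p(\ell+j-1)}\frac{(t\xi)^{p(\ell+j)}}{t^{p(\ell+j-1)}}=C_p\,t^{p}\xi^{p},
\]
uniformly in $0\le\delta\le1$ and $z\in K_0$. Markov's inequality then yields
\[
\mathbb P_z(S_\xi<t\xi)\le 9\Bigl(\tfrac{36}{q_0}\Bigr)^p C_p\,(t\xi)^p\le C_p' t_0^p\,\xi^p .
\]
Taking $p=N$ gives $C_N\xi^N$. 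Decreasing $t_0$ is only needed so that Lemma~\ref{lem:appendix-chain-remainder} applies; the $t_0^p$ factor in fact gives extra smallness.

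\emph{Step 3 (checks).} Two things remain to verify. First, $q_0>0$: $Q$ is the Gram matrix of $u\mapsto G(u)$, and the blocks $1,u,u^2/2$ are linearly independent on $[0,1]$, so $v^\top Qv=\int_0^1|G(u)^\top v|^2du>0$ for $v\ne0$. Second, the constants in Lemma~\ref{lem:appendix-remainder-covariance} are uniform in $\delta,z$, which follows from Lemmas~\ref{lem:appendix-local-chain} and \ref{lem:appendix-chain-remainder}.

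The main obstacle is Step 1, the monotonicity of $\|\rho_s\|_{\mathrm{op}}$ in $s$. If one prefers to avoid the positive semidefinite structure, a fallback is to bound $\sup_{s\le t\xi}\|\rho^{\ell j}_s\|_F\le\xi^{-(\ell+j-1)}\int_0^{t\xi}\|\widetilde R^\ell\|\|\widetilde R^j\|du$. The moment estimate from the proof of Lemma~\ref{lem:appendix-remainder-covariance} then gives the same $(t\xi)^p$ bound for this supremum, and the rest of the argument is unchanged.
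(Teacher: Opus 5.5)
Your proposal is correct and follows the paper's proof essentially step for step: the positive semidefinite representation $\rho_s^{\delta,t,\xi}=D_\xi^{-1}\bigl(\int_0^s\widetilde R_{t,u}^\delta(\widetilde R_{t,u}^\delta)^\top du\bigr)D_\xi^{-1}$ gives monotonicity and reduces $\{S_\xi<t\xi\}$ to $\{\|\rho_{t\xi}^{\delta,t,\xi}\|_{\mathrm{op}}\ge q_0/4\}$, and then Lemma~\ref{lem:appendix-remainder-covariance} with $\tau=t\xi$ and Markov's inequality give the bound $C_p(t\xi)^p$. One small adjustment: take $p=\max\{N,2\}$ instead of $p=N$, because that lemma is stated only for $p\ge2$, and $\xi^p\le\xi^N$ when $\xi\le1$.
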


\begin{proof}
Let
\begin{align}\label{defn:D:xi}
    D_\xi
    :=
    \operatorname{diag}
    \left(\xi^{1/2}I_d,\xi^{3/2}I_d,
        \xi^{5/2}I_d\right).
\end{align}
By definition,
\[
    \rho_s^{\delta,t,\xi}
    =
    D_\xi^{-1}
    \left(
        \int_0^s
        \widetilde R_{t,u}^{\delta}
        \left(\widetilde R_{t,u}^{\delta}\right)^\top du
    \right)
    D_\xi^{-1}.
\]
Hence $0\preceq\rho_s^{\delta,t,\xi}
\preceq\rho_v^{\delta,t,\xi}$ for $s\le v$.  Therefore
\[
    \{S_\xi<t\xi\}
    \subseteq
    \left\{
        \left\|\rho_{t\xi}^{\delta,t,\xi}\right\|_{\mathrm{op}}
        \ge q_0/4
    \right\}.
\]
For every $p\ge2$, Markov's inequality implies that
\[
    \mathbb P_z(S_\xi<t\xi)
    \le
    \left(\frac4{q_0}\right)^p
    \mathbb E_z
    \left\|\rho_{t\xi}^{\delta,t,\xi}\right\|_{\mathrm{op}}^p .
\]
Using the finite block decomposition and
Lemma~\ref{lem:appendix-remainder-covariance} with $\tau=t\xi$,
\[
    \mathbb E_z\left\|\rho_{t\xi}^{\delta,t,\xi}\right\|_{\mathrm{op}}^p
    \le
    C_p\sum_{\ell,j=1}^3
    \xi^{-p(\ell+j-1)}
    \frac{(t\xi)^{p(\ell+j)}}{t^{p(\ell+j-1)}}
    \le C_p t^p\xi^p .
\]
This is the point at which the apparently critical remainder scale is used:
after the covariance integration on $[0,t\xi]$ and the anisotropic
$D_\xi$ normalization, every block gains the common factor
$t\xi$.  The right side is bounded by $C_p\xi^p$ for
$t\le t_0\le1$.  Markov's inequality, with $p$ chosen larger than $N$, proves
the claim.
\end{proof}

Outside the exceptional stopping event, the deterministic Gram matrix
dominates the remainder covariance.  This yields lower-tail estimates and,
consequently, negative moments for the reduced covariance.

\begin{lemma}[Reduced covariance negative moments]
\label{lem:appendix-reduced-negative-moments}
Set
\[
    C_{t,r}^{\delta,\mathsf O}
    =
    \int_0^tZ_s^\delta\Sigma_r\Sigma_r^\top\left(Z_s^\delta\right)^\top\,ds,
    \qquad
    \overline C_{t,r}^\delta
    =
    T_t^{-1}\left(\widetilde A^\delta(z)\right)^{-1}
    C_{t,r}^{\delta,\mathsf O}
    \left(\widetilde A^\delta(z)\right)^{-\top}T_t^{-1}.
\]
For every $q\ge1$, after decreasing $t_0$ if necessary,
\[
    \sup_{\substack{0\le\delta\le1,\ z\in K_0\\0<t\le t_0}}
    \mathbb E_z\left[
        \lambda_{\min}\left(\overline C_{t,r}^\delta\right)^{-q}
    \right]<\infty .
\]
\end{lemma}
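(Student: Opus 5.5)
The plan is a Pigato-type lower-tail argument. Write the normalized increment as
\[
\overline C_{t,r}^\delta=\int_0^t\bigl(t^{-1/2}G(s/t)+\widetilde R_{t,s}^\delta\bigr)\bigl(t^{-1/2}G(s/t)+\widetilde R_{t,s}^\delta\bigr)^\top ds,
\]
which is available by Lemma~\ref{lem:appendix-remainder-covariance}. It then suffices to prove a polynomial lower-tail bound of the form
\[
\mathbb P_z\bigl(\lambda_{\min}(\overline C_{t,r}^\delta)<c\,\xi^{5}\bigr)\le C_N\xi^N, \qquad 0<\xi\le 1,
\]
uniformly in $0\le\delta\le1$, $z\in K_0$, and $0<t\le t_0$, with a fixed $c>0$. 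The negative moments follow from the layer-cake formula
\[
\mathbb E[\lambda^{-q}]=\int_0^\infty q\,x^{-q-1}\mathbb P(\lambda<x)\,dx,
\]
with $x=c\xi^5$ for $x\le c$. Large values of $\lambda$ contribute at most $c^{-q}$, so there is no need for upper moments.

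For the lower-tail bound, fix $\xi$ and restrict the integral to $[0,t\xi\wedge S_\xi]$; this can only decrease the matrix. On the event $\{S_\xi\ge t\xi\}$, compute $D_\xi^{-1}(\cdots)D_\xi^{-1}$ for the restricted matrix, with $D_\xi$ as in \eqref{defn:D:xi}, after the change of variables $s=t\xi u$.

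\begin{itemize}
\item The principal part becomes $\int_0^1 G_\xi(u)G_\xi(u)^\top du$. By the homogeneity of $G$ (blocks $1,u,u^2/2$ with weights $\xi^{1/2},\xi^{3/2},\xi^{5/2}$), one has $D_\xi^{-1}G(\xi u)\xi^{1/2}=G(u)$, so this part is exactly $Q$.
\item The remainder Gram part is $\rho_{t\xi}^{\delta,t,\xi}$, with operator norm $<q_0/4$ by the definition of $S_\xi$.
\end{itemize}

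The cross terms are handled by $(a+b)(a+b)^\top\succeq \tfrac12 aa^\top-bb^\top$, applied to the vectors. This gives the restricted normalized matrix $\succeq \tfrac12Q-\rho\succeq (q_0/2-q_0/4)I$. Undoing the normalization yields
\[
\lambda_{\min}(\overline C_{t,r}^\delta)\ge \lambda_{\min}(D_\xi)^2\,q_0/4=\xi^5q_0/4,
\]
since $D_\xi\preceq I$ and $\lambda_{\min}(D_\xi^2)=\xi^5$. Lemma~\ref{lem:appendix-pigato-stopping} then bounds $\mathbb P_z(S_\xi<t\xi)\le C_N\xi^N$ uniformly, which completes the tail estimate.

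The main obstacle is checking the homogeneity identity $D_\xi^{-1}\xi^{1/2}G(\xi u)=G(u)$ together with the ordering of the factors $t^{-1/2}$ and the $dt$ measure. The principal term must come out exactly as $Q$, not as $Q$ times a power of $t$; otherwise the constant in the lower tail would depend on $t$. I would first verify the identity blockwise: the $\ell$-th block of $G(\xi u)$ is $\xi^{\ell-1}u^{\ell-1}/(\ell-1)!$, and dividing by $\xi^{\ell-1/2}$ and multiplying by $\xi^{1/2}$ returns $u^{\ell-1}/(\ell-1)!$. The prefactor $t^{-1}$ is cancelled by $ds=t\xi\,du$, and the extra factor $\xi$ is absorbed by the $\xi^{1/2}$ squared. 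A second point is the uniformity in $\delta$ of $q_0$ and $(\widetilde A^\delta)^{-1}$. The first holds because $Q$ is deterministic, and the second is Lemma~\ref{lem:appendix-local-chain}. The adjoint family is identical, since only the signs in $\widetilde A^\delta$ change.
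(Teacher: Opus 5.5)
Your proposal is correct and follows the paper's proof essentially step for step. You restrict to $[0,t\xi]$ on $\{S_\xi\ge t\xi\}$, use the scaling identity $\frac1t\int_0^{t\xi}G(s/t)G(s/t)^\top ds=D_\xi QD_\xi$ (which you verify blockwise) and the inequality $(a+b)(a+b)^\top\succeq\frac12aa^\top-bb^\top$ to get $\overline C_{t,r}^\delta\succeq\frac{q_0}{4}D_\xi^2$, then finish with the stopping-time tail of Lemma~\ref{lem:appendix-pigato-stopping} and tail integration; the only nit is that on $\{S_\xi\ge t\xi\}$ continuity gives $\|\rho_{t\xi}^{\delta,t,\xi}\|_{\mathrm{op}}\le q_0/4$ rather than a strict inequality, which changes nothing.
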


\begin{proof}
By Lemma~\ref{lem:appendix-remainder-covariance},
\[
    \overline C_{t,r}^\delta
    =
    \int_0^t
    \left(t^{-1/2}G(s/t)+\widetilde R_{t,s}^\delta\right)
    \left(t^{-1/2}G(s/t)+\widetilde R_{t,s}^\delta\right)^\top ds .
\]
On $\{S_\xi\ge t\xi\}$ we integrate only over
$[0,t\xi]$ and use
$(a+b)(a+b)^\top\succeq \frac12aa^\top-bb^\top$.  With
$D_\xi$ defined in \eqref{defn:D:xi}, we have
\begin{equation}\label{eq:appendix-leading-gram}
    \frac1t\int_0^{t\xi}G(s/t)G(s/t)^\top ds
    =D_\xi QD_\xi .
\end{equation}
Moreover,
\[
    \rho_{t\xi}^{\delta,t,\xi}
    =
    D_\xi^{-1}
    \left(
        \int_0^{t\xi}
        \widetilde R_{t,s}^\delta
        \left(\widetilde R_{t,s}^\delta\right)^\top ds
    \right)
    D_\xi^{-1}.
\]
Thus, on $\{S_\xi\ge t\xi\}$,
\begin{equation}\label{eq:appendix-stopped-remainder-upper}
    \int_0^{t\xi}
    \widetilde R_{t,s}^\delta
    \left(\widetilde R_{t,s}^\delta\right)^\top ds
    \preceq
    \frac{q_0}{4}D_\xi^2 .
\end{equation}
Combining \eqref{eq:appendix-leading-gram} and
\eqref{eq:appendix-stopped-remainder-upper} with
$(a+b)(a+b)^\top\succeq \frac12aa^\top-bb^\top$ gives
\[
    \overline C_{t,r}^\delta\succeq \frac{q_0}{4}D_\xi^2,
    \qquad
    \lambda_{\min}\left(\overline C_{t,r}^\delta\right)\ge c\xi^5,
\]
on $\{S_\xi\ge t\xi\}$, with $c=q_0/4$.  Thus, the deterministic
Gram matrix is required to dominate the remainder covariance only on the
stopped event; the complement is handled by
Lemma~\ref{lem:appendix-pigato-stopping}.  More precisely,
\[
    \left\{
        \lambda_{\min}\left(\overline C_{t,r}^\delta\right)<c\xi^5
    \right\}
    \subseteq
    \{S_\xi<t\xi\},
\]
and hence, for every $N\ge1$,
\[
    \mathbb P_z\left(
        \lambda_{\min}\left(\overline C_{t,r}^\delta\right)<c\xi^5
    \right)
    \le C_N\xi^N .
\]
Therefore, for $0<\eta\le c$ and $\xi=(\eta/c)^{1/5}$,
\[
    \mathbb P_z\left(
        \lambda_{\min}\left(\overline C_{t,r}^\delta\right)<\eta
    \right)
    \le C_N\eta^{N/5},
\]
for every $N\ge1$.  For $c<\eta\le1$ the same bound is absorbed by increasing
the constant.  Since $N$ is arbitrary, replacing it by $5N$ gives, after
renaming the constant,
\[
    \mathbb P_z\left(
        \lambda_{\min}\left(\overline C_{t,r}^\delta\right)<\eta
    \right)
    \le C_N\eta^N,
    \qquad 0<\eta\le1,
\]
for every $N\ge1$.  Choosing $N>q$, tail integration yields that
\[
    \mathbb E_z\left[
        \lambda_{\min}\left(\overline C_{t,r}^\delta\right)^{-q}
    \right]
    \le
    1+q\int_0^1
        \eta^{-q-1}
        \mathbb P_z\left(
            \lambda_{\min}\left(\overline C_{t,r}^\delta\right)<\eta
        \right)d\eta
    <\infty,
\]
uniformly in $z$, $\delta$, and $0<t\le t_0$.
\end{proof}

The reduced covariance omits the endpoint Jacobian factor $Y_t^\delta$.  To
restore this factor, we first control the inverse Jacobian
under the anisotropic conjugation by $T_t$.

\begin{lemma}[Scaled inverse Jacobian moments]
\label{lem:appendix-scaled-inverse-jacobian}
For every $m\ge1$, after decreasing $t_0$ if necessary,
\[
    \sup_{\substack{0\le\delta\le1,\ z\in K_0\\0<t\le t_0}}
    \mathbb E_z\left[
        \left\|T_t^{-1}Z_t^\delta T_t\right\|^m
    \right]<\infty .
\]
\end{lemma}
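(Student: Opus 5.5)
The plan is to work directly with the linear random ODE \eqref{eq:appendix-inverse-flow} for $Z_t^\delta$, written in $d\times d$ blocks in the order $(r,p,\theta)$, and to exploit the fact that conjugation by $T_t$ multiplies the $(i,j)$ block by $t^{-(2i-1)/2}t^{(2j-1)/2}=t^{j-i}$. So $T_t^{-1}Z_t^\delta T_t$ has blocks $t^{j-i}(Z_t^\delta)_{ij}$. Upper blocks ($j>i$) carry a nonnegative power of $t$, and diagonal blocks carry $t^0$; for both it suffices to bound all moments of $Z_t^\delta$ uniformly. Lower blocks carry $t^{-1}$ for $(2,1)$ and $(3,2)$, and $t^{-2}$ for $(3,1)$. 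Hence the claim reduces to three estimates: $\|(Z_t^\delta)_{21}\|_{L^m}\le C_mt$, $\|(Z_t^\delta)_{32}\|_{L^m}\le C_mt$, and $\|(Z_t^\delta)_{31}\|_{L^m}\le C_mt^2$, uniformly in $0\le\delta\le1$, $z\in K_0$ and $0<t\le t_0$.

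First I will get uniform moments of $\sup_{s\le t}\|Z_s^\delta\|$. By Lemma~\ref{lem:chain-compatible-cutoffs}, $\|DB^{\delta,\mathsf O}\|_\infty\le C$ uniformly in $\delta$, so \eqref{eq:appendix-inverse-flow} is a linear ODE with a bounded random coefficient. Gr\"onwall then gives the deterministic bound $\|Z_s^\delta\|\le e^{Cs}\le e^{C}$ for $s\le t_0\le1$, which is pathwise and so yields all moments at once. Integrating the block equations also gives $\|(Z_s^\delta)_{ij}\|\le Cs$ for every off-diagonal block.

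Next come the lower blocks, using the triangular structure $A_{31}^\delta=J_{x^1}B_3^{\delta,\mathsf O}\equiv0$, which holds globally because $B_3^{\delta,\mathsf O}$ depends only on $(x^2,x^3)$. From \eqref{eq:appendix-block-21} and \eqref{eq:appendix-block-32}, every term on the right is bounded by $C\sup_{u\le s}\|Z_u^\delta\|$, so integrating from $Z_0=I$ gives pathwise $\|(Z_s^\delta)_{21}\|+\|(Z_s^\delta)_{32}\|\le Cs$. For \eqref{eq:appendix-block-31}, the $A_{31}$ term vanishes identically. The remaining terms are $(Z_s)_{31}A_{11}$ and $(Z_s)_{32}A_{21}$, and the latter is $O(s)$ pathwise. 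Gr\"onwall applied to $\|(Z_s)_{31}\|$ therefore gives $\|(Z_s^\delta)_{31}\|\le C\int_0^s u\,du\,e^{Cs}\le Cs^2$.

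All of these are in fact deterministic bounds, uniform in $\delta$, $z$ and $t$. Combined with the block scaling, they give $\|T_t^{-1}Z_t^\delta T_t\|\le C$ almost surely, which is stronger than the moment statement required.

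The main obstacle I anticipate is making sure the vanishing of $A_{31}$ holds \emph{globally} and not only on $\mathsf O$, because trajectories may leave $\mathsf O_0$ before time $t$. This is why the cutoff $B_3^{\delta,\mathsf O}=b_3^\delta(\kappa_{23}(x^2,x^3))$ was built independent of $x^1$, and I will cite that construction explicitly. The same conclusion applies verbatim to the adjoint family, since only signs change. Note that the bound does not need non-degeneracy of $A_{21}$ or $A_{32}$ at all, which is why no restriction to $\mathsf O_0$ enters.
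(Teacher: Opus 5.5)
Your proposal is correct and follows the paper's argument. The block scaling $(T_t^{-1}Z_t^\delta T_t)_{ij}=t^{j-i}(Z_t^\delta)_{ij}$ reduces the claim to the triangular estimates $(Z_t^\delta)_{21},(Z_t^\delta)_{32}=\mathcal O(t)$ and $(Z_t^\delta)_{31}=\mathcal O(t^2)$, which follow from Gr\"onwall applied to \eqref{eq:appendix-inverse-flow} together with the global vanishing of $J_{x^1}B_3^{\delta,\mathsf O}$. Your remark that, because the noise is additive, these bounds hold pathwise (so $\|T_t^{-1}Z_t^\delta T_t\|\le C$ almost surely) is a harmless sharpening of the paper's $L^m$ formulation.
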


\begin{proof}
This is the final estimate of Lemma~\ref{lem:appendix-chain-remainder}.  It
follows from the inverse-flow equation, uniform Gr\"onwall bounds for all
blocks of $Z_t^\delta$, and the triangular estimates
$(Z_t^\delta)_{21}=\mathcal O(t)$,
$(Z_t^\delta)_{32}=\mathcal O(t)$, and
$(Z_t^\delta)_{31}=\mathcal O(t^2)$ in every $L^m$ norm.  The diagonal weights in
$T_t^{-1}Z_t^\delta T_t$ exactly compensate these lower-block powers.
\end{proof}

Combining reduced-covariance non-degeneracy with the scaled inverse-Jacobian
moments transfers the negative-moment bound to the full Malliavin covariance.

\begin{lemma}[Short-time full covariance negative moments]
\label{lem:appendix-negative-moments}
For every $q\ge1$, after decreasing $t_0$ if necessary,
\[
    \sup_{\substack{
        0\le\delta\le1,\ z\in K_0\\
        0<t\le t_0}}
    \mathbb E_z\left[
        \lambda_{\min}\left(
            T_t^{-1}\mathcal M_{t,r}^{\delta,\mathsf O}T_t^{-1}
        \right)^{-q}
    \right]<\infty .
\]
\end{lemma}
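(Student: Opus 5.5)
The plan is to write the full $r$-noise covariance as a conjugation of the reduced covariance and transfer the bounds already proved. Since the noise is additive, $J_{t,s}^{\delta,\mathsf O}=Y_t^\delta Z_s^\delta$, and therefore
\[
    \mathcal M_{t,r}^{\delta,\mathsf O}
    =Y_t^\delta C_{t,r}^{\delta,\mathsf O}\left(Y_t^\delta\right)^\top .
\]
Inserting the normalizations gives
\[
    T_t^{-1}\mathcal M_{t,r}^{\delta,\mathsf O}T_t^{-1}
    =\mathcal Y_t\,\overline C_{t,r}^\delta\,\mathcal Y_t^\top,
    \qquad
    \mathcal Y_t:=T_t^{-1}Y_t^\delta\widetilde A^\delta(z)T_t .
\]
Here $\overline C_{t,r}^\delta$ is the reduced normalized covariance of Lemma~\ref{lem:appendix-reduced-negative-moments}. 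The identity uses that $\widetilde A^\delta(z)$ is block diagonal and hence commutes with the diagonal-block matrix $T_t$.

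For any invertible $\mathcal Y$ and positive definite $C$ one has
\[
    \lambda_{\min}(\mathcal Y C\mathcal Y^\top)\ge \lambda_{\min}(C)\,\|\mathcal Y^{-1}\|^{-2}.
\]
Hence
\[
    \lambda_{\min}\left(T_t^{-1}\mathcal M_{t,r}^{\delta,\mathsf O}T_t^{-1}\right)^{-q}
    \le \lambda_{\min}\left(\overline C_{t,r}^\delta\right)^{-q}\,\|\mathcal Y_t^{-1}\|^{2q}.
\]
We compute $\mathcal Y_t^{-1}=T_t^{-1}(\widetilde A^\delta(z))^{-1}Z_t^\delta T_t$. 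Again using that $\widetilde A^\delta$ commutes with $T_t$, this is bounded by $\|(\widetilde A^\delta(z))^{-1}\|\,\|T_t^{-1}Z_t^\delta T_t\|$.

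By Lemma~\ref{lem:appendix-local-chain}, the factor $\|(\widetilde A^\delta)^{-1}\|$ is bounded uniformly in $\delta$ and $z\in K_0$. We then apply Cauchy--Schwarz:
\[
    \E_z\left[\lambda_{\min}\left(\overline C_{t,r}^\delta\right)^{-q}\|\mathcal Y_t^{-1}\|^{2q}\right]
    \le \E_z\left[\lambda_{\min}\left(\overline C_{t,r}^\delta\right)^{-2q}\right]^{1/2}
    \,C\,\E_z\left[\|T_t^{-1}Z_t^\delta T_t\|^{4q}\right]^{1/2}.
\]
Lemma~\ref{lem:appendix-reduced-negative-moments} with exponent $2q$ bounds the first factor. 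Lemma~\ref{lem:appendix-scaled-inverse-jacobian} with $m=4q$ bounds the second. Both hold uniformly over $0\le\delta\le1$, $z\in K_0$ and $0<t\le t_0$, after taking the smaller of the two values of $t_0$. The adjoint family is handled identically, since only the signs in $\widetilde A^\delta$ change.

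The main obstacle is bookkeeping rather than analysis. Lemma~\ref{lem:appendix-reduced-negative-moments} normalizes by $(\widetilde A^\delta)^{-1}$ on both sides, so one must check that the conjugation factor is exactly $T_t^{-1}Y_t^\delta\widetilde A^\delta T_t$. This relies on the block-diagonal commutation. If it failed, for instance if $\widetilde A^\delta$ were not block diagonal, I would instead bound $\|T_t^{-1}\widetilde A^\delta T_t\|$ directly via its lower-triangular structure.
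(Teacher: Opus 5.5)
Your proposal is correct and follows the paper's proof essentially line by line: the same factorization $T_t^{-1}\mathcal M_{t,r}^{\delta,\mathsf O}T_t^{-1}=K_t^\delta\,\overline C_{t,r}^\delta\,(K_t^\delta)^\top$ with $K_t^\delta=T_t^{-1}Y_t^\delta\widetilde A^\delta(z)T_t$, the same bound $\lambda_{\min}(K\overline C K^\top)^{-q}\le\|K^{-1}\|^{2q}\lambda_{\min}(\overline C)^{-q}$, and H\"older combined with Lemmas~\ref{lem:appendix-local-chain}, \ref{lem:appendix-scaled-inverse-jacobian}, and~\ref{lem:appendix-reduced-negative-moments}. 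One minor correction: the conjugation identity holds by direct substitution without any commutation, which is needed only to rewrite $(K_t^\delta)^{-1}$ as $(\widetilde A^\delta(z))^{-1}\,(T_t^{-1}Z_t^\delta T_t)$.
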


\begin{proof}
Since $J_{t,s}^{\delta,\mathsf O}=Y_t^\delta Z_s^\delta$,
\[
    \mathcal M_{t,r}^{\delta,\mathsf O}
    =Y_t^\delta C_{t,r}^{\delta,\mathsf O}\left(Y_t^\delta\right)^\top .
\]
Put
\[
    K_t^\delta=T_t^{-1}Y_t^\delta\widetilde A^\delta(z)T_t .
\]
Then
\[
    T_t^{-1}\mathcal M_{t,r}^{\delta,\mathsf O}T_t^{-1}
    =K_t^\delta\overline C_{t,r}^\delta\left(K_t^\delta\right)^\top .
\]
Moreover,
\[
    \left(K_t^\delta\right)^{-1}
    =T_t^{-1}\left(\widetilde A^\delta(z)\right)^{-1}Z_t^\delta T_t .
\]
Both $T_t$ and $\widetilde A^\delta(z)$ are block diagonal, and each block of
$T_t$ is a scalar multiple of $I_d$; hence they commute.  Consequently,
\[
    \left(K_t^\delta\right)^{-1}
    =\left(\widetilde A^\delta(z)\right)^{-1}
      \left(T_t^{-1}Z_t^\delta T_t\right).
\]
Lemma~\ref{lem:appendix-local-chain} and
Lemma~\ref{lem:appendix-scaled-inverse-jacobian} give finite moments of
$\|(K_t^\delta)^{-1}\|$ of every order, uniformly in $z$, $\delta$, and
$0<t\le t_0$.  Since
\[
    \lambda_{\min}\left(
        K_t^\delta\overline C_{t,r}^\delta(K_t^\delta)^\top
    \right)^{-q}
    \le
    \left\|\left(K_t^\delta\right)^{-1}\right\|^{2q}
    \lambda_{\min}\left(\overline C_{t,r}^\delta\right)^{-q},
\]
H\"older's inequality and Lemma~\ref{lem:appendix-reduced-negative-moments}
prove the short-time bound.
\end{proof}

The preceding argument establishes non-degeneracy on a short initial
interval.  The flow decomposition now transports this lower bound to every
fixed finite time horizon.

\begin{lemma}[Finite-time extension]\label{lem:appendix-finite-time-extension}
For every $q\ge1$, every $M<\infty$, and every compact
$K_0\subset\mathsf O$,
\[
    \sup_{\substack{
        0\le\delta\le1,\ z\in K_0\\
        0<t\le M}}
    \mathbb E_z\left[
        \lambda_{\min}\left(
            T_t^{-1}\mathcal M_{t,r}^{\delta,\mathsf O}T_t^{-1}
        \right)^{-q}
    \right]<\infty .
\]
The same estimate holds for the adjoint cutoff family.
\end{lemma}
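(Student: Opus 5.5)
We extend the short-time bound of Lemma~\ref{lem:appendix-negative-moments} to all of $(0,M]$ by splitting at a fixed time. For $0<t\le t_0$ there is nothing to prove. For $t_0<t\le M$ we use the monotonicity of the covariance in the integration window together with the flow decomposition $J_{t,s}^{\delta,\mathsf O}=J_{t,t_0}^{\delta,\mathsf O}J_{t_0,s}^{\delta,\mathsf O}$ for $s\le t_0$. Since the integrand is positive semidefinite,
\[
    \mathcal M_{t,r}^{\delta,\mathsf O}
    \succeq
    \int_0^{t_0}J_{t,s}^{\delta,\mathsf O}\Sigma_r\Sigma_r^\top
    \left(J_{t,s}^{\delta,\mathsf O}\right)^\top ds
    =J_{t,t_0}^{\delta,\mathsf O}\,\mathcal M_{t_0,r}^{\delta,\mathsf O}\,
    \left(J_{t,t_0}^{\delta,\mathsf O}\right)^\top .
\]
This gives
\[
    \lambda_{\min}\left(\mathcal M_{t,r}^{\delta,\mathsf O}\right)^{-1}
    \le\left\|\left(J_{t,t_0}^{\delta,\mathsf O}\right)^{-1}\right\|^2
    \lambda_{\min}\left(\mathcal M_{t_0,r}^{\delta,\mathsf O}\right)^{-1}.
\]
On the window $[t_0,M]$ the normalization $T_t$ has singular values bounded above and below by constants depending only on $t_0$ and $M$. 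Hence $\lambda_{\min}(T_t^{-1}\mathcal M T_t^{-1})^{-1}\le C_{t_0,M}\lambda_{\min}(\mathcal M)^{-1}$. The same bound converts between $\mathcal M_{t_0,r}$ and its $T_{t_0}$-normalized version, to which Lemma~\ref{lem:appendix-negative-moments} applies.

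Next we need moments of the inverse Jacobian from $t_0$ to $t$. We have $(J_{t,t_0})^{-1}=Y_{t_0}^\delta Z_t^\delta$, which satisfies linear random ODEs driven by $DB^{\delta,\mathsf O}$. These derivatives are bounded uniformly in $\delta$ by \eqref{eq:cutoff-uniform-Ck}, so Gr\"onwall's lemma gives the deterministic bound $\|(J_{t,t_0})^{-1}\|\le e^{CM}$. Consequently,
\[
    \E_z\left[\lambda_{\min}\left(T_t^{-1}\mathcal M_{t,r}^{\delta,\mathsf O}T_t^{-1}\right)^{-q}\right]
    \le C_{t_0,M}^q e^{2qCM}\,
    \E_z\left[\lambda_{\min}\left(T_{t_0}^{-1}\mathcal M_{t_0,r}^{\delta,\mathsf O}T_{t_0}^{-1}\right)^{-q}\right].
\]
The right-hand side is bounded uniformly in $\delta$ and $z\in K_0$ by the short-time lemma evaluated at $t=t_0$. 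No H\"older step is even needed, because the Jacobian bound is pathwise.

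The main point to check is that the short-time lemma applies to the given compact $K_0\subset\mathsf O$. The appendix required $K_0\subset\mathsf O_0$ with the cutoffs equal to the identity near $\overline{\mathsf O_0}$. We handle this by choosing $\mathsf O_0$ with $K_0\subset\mathsf O_0\Subset\mathsf O$; Lemma~\ref{lem:chain-compatible-cutoffs} already makes the cutoffs the identity on a neighborhood of $\overline{\mathsf O}$. We also note that the short-time estimate uses only the initial point $z$: the frozen chain matrices appear only at $z$, and later displacement is absorbed in the remainders. So it does not matter where the trajectory is at time $t_0$, and the decomposition above never restarts the process from $X_{t_0}$.

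For the adjoint cutoff family the argument is identical. Lemma~\ref{lem:appendix-negative-moments} holds for it by the sign-invariance of singular values noted in Lemma~\ref{lem:appendix-local-chain}, and the Gr\"onwall bound uses the same uniform derivative bounds on the adjoint cutoff drift.
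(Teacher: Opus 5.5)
Your proof is correct and follows the paper's argument. Both restrict the covariance integral to $[0,t_0]$, factor $J_{t,s}^{\delta,\mathsf O}=J_{t,t_0}^{\delta,\mathsf O}J_{t_0,s}^{\delta,\mathsf O}$, absorb the bounded scalings $T_t^{\pm1},T_{t_0}^{\pm1}$ on $[t_0,M]$, and apply Lemma~\ref{lem:appendix-negative-moments} at $t=t_0$. The one difference is that you bound the inverse Jacobian pathwise by Gr\"onwall: the noise is additive, so the Jacobian solves a random linear ODE with uniformly bounded coefficient $DB^{\delta,\mathsf O}$. This lets you skip the paper's H\"older step with Jacobian moments.
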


\begin{proof}
The range $0<t\le t_0$ is Lemma~\ref{lem:appendix-negative-moments}.  Let
$t_0\le t\le M$.  The covariance over the first interval $[0,t_0]$ gives the
positive-semidefinite lower bound
\[
    \mathcal M_{t,r}^{\delta,\mathsf O}
    \succeq
    J_{t,t_0}^{\delta,\mathsf O}
    \mathcal M_{t_0,r}^{\delta,\mathsf O}
    \left(J_{t,t_0}^{\delta,\mathsf O}\right)^\top .
\]
Thus, before applying the anisotropic normalization,
\[
    \lambda_{\min}\left(\mathcal M_{t,r}^{\delta,\mathsf O}\right)^{-q}
    \le
    \left\|\left(J_{t,t_0}^{\delta,\mathsf O}\right)^{-1}\right\|^{2q}
    \lambda_{\min}\left(\mathcal M_{t_0,r}^{\delta,\mathsf O}\right)^{-q}.
\]
With
\[
    K_{t,t_0}^\delta
    =
    T_t^{-1}J_{t,t_0}^{\delta,\mathsf O}T_{t_0},
\]
this implies
\[
    \lambda_{\min}\left(
        T_t^{-1}\mathcal M_{t,r}^{\delta,\mathsf O}T_t^{-1}
    \right)^{-q}
    \le
    \left\|\left(K_{t,t_0}^\delta\right)^{-1}\right\|^{2q}
    \lambda_{\min}\left(
        T_{t_0}^{-1}\mathcal M_{t_0,r}^{\delta,\mathsf O}T_{t_0}^{-1}
    \right)^{-q}.
\]
Equivalently, the lower bound generated on $[0,t_0]$ is transported by
$K_{t,t_0}^\delta$ and loses only the inverse singular value of this rescaled
Jacobian.  The weak H\"ormander lower bound is used only on the initial interval
$[0,t_0]$; for $t\ge t_0$ the argument uses only moments of the Jacobian flow
and its inverse.
The inverse matrix equals
$T_{t_0}^{-1}J_{t_0,t}^{\delta,\mathsf O}T_t$.  Because
$t\in[t_0,M]$, the deterministic matrices $T_t$, $T_t^{-1}$,
$T_{t_0}$, and $T_{t_0}^{-1}$ are bounded by constants depending only on
$t_0$ and $M$.  The cutoff drift has uniformly bounded derivatives, so the
Jacobian flow and its inverse have finite moments of every order, uniformly in
$0\le\delta\le1$, $z\in K_0$, and $t\in[t_0,M]$; explicitly,
\[
    \sup_{\substack{0\le\delta\le1,\ z\in K_0\\t_0\le t\le M}}
    \mathbb E_z
    \left\|
        \left(J_{t,t_0}^{\delta,\mathsf O}\right)^{-1}
    \right\|^p
    <\infty,\qquad p\ge1,
\]
Together with the bounded deterministic scaling matrices,
Lemma~\ref{lem:appendix-negative-moments}, and H\"older's inequality, this
proves the finite-time bound.  For the adjoint cutoff family, the drift changes only the
signs of the local chain links.  The signed matrix
$\widetilde A^\delta(z)$ is therefore still uniformly invertible, the Gram
matrix $Q=\int_0^1G(u)G(u)^\top du$ is unchanged, the cutoff derivative bounds
and scaled Jacobian estimates are identical, and the constant $p$- and
$\theta$-diffusion fields enter only through the same uniformly controlled
remainder estimates.  Thus all constants can be chosen common to the forward
and adjoint families.
\end{proof}

\bibliographystyle{alpha}
\bibliography{bibtex}

\end{document}